\documentclass{amsart}%
\usepackage{amsfonts}%
\usepackage{amsmath}%
\setcounter{MaxMatrixCols}{30}%
\usepackage{amssymb}%
\usepackage{graphicx}
\usepackage{amsthm}
\usepackage{setspace}
\usepackage{color}
\usepackage{newclude}
\usepackage[top = 1in, bottom = 1in, left = 1.2in, right =
1.2in]{geometry}

\providecommand{\U}[1]{\protect\rule{.1in}{.1in}}
\newtheorem{theorem}{Theorem}[section]
\theoremstyle{plain}

\newtheorem{definition}{Definition}[section]

\newtheorem{lemma}{Lemma}[section]

\newtheorem{proposition}{Proposition}[section]
\newtheorem{remark}{Remark}[section]

\numberwithin{equation}{section}

\begin{document}
\title[]{Nonuniqueness for the kinetic Fokker-Planck equation
with inelastic boundary conditions.}


\author{Hyung Ju Hwang}
\address{Department of Mathematics, Pohang University of Science and
Technology, Pohang, GyungBuk 790-784, Republic of Korea}
\email{hjhwang@postech.ac.kr}
\thanks{}

\author{Juhi Jang}
\address{Department of Mathematics, University of Southern California,
Los Angeles, CA 90089, USA and Korea Institute for Advanced Study, Seoul, Korea}
\email{juhijang@usc.edu}
\curraddr{}
\thanks{}

\author{Juan J. L. Vel\'{a}zquez}
\address{Institute of Applied Mathematics, University of Bonn, Endenicher
Allee 60, 53115 Bonn, Germany.}
\email{velazquez@iam.uni-bonn.de}
\curraddr{}
\thanks{}

\date{today}

\subjclass[2010]{Primary 35Q84, 35K65, 35A20,
Secondary 35Q70, 35R60, 35R06, 60H15, 60H30, 47D07}

\keywords{Fokker-Planck equation, Nonuniqueness of solutions, Measure-valued solutions, Inelastic collapse, Singular set, Hypoelliptic operator, Hille-Yosida theorem}

\begin{abstract}
We describe the structure of solutions of the kinetic Fokker-Planck
equations in domains with boundaries near the singular set in one-space
dimension. We study in particular the behaviour of the solutions of this
equation for inelastic boundary conditions which are characterized by means
of a coefficient $r$ describing the amount of energy lost in the collisions
of the particles with the boundaries of the domain. A peculiar feature of
this problem is the onset of a critical exponent $r_{c}$ which follows from
the analysis of McKean (cf. \cite{McK}) of the properties of the stochastic
process associated to the Fokker-Planck equation under consideration. In
this paper, we prove rigorously that the solutions of the considered problem
are nonunique if $r<r_{c}$ and unique if $r_{c}<r\leq 1.$ In particular,
this nonuniqueness explains the different behaviours found in the physics
literature for numerical simulations of the stochastic differential equation
associated to the Fokker-Planck equation. In the proof of the results of this paper 
we use several asymptotic formulas and computations in the companion paper \cite{HVJ2}. 
\end{abstract}

\maketitle


%
%
%



\section{Introduction}




A general feature of several kinetic equations is the fact that their
solutions in a domain $\Omega $ with boundaries cannot be infinitely
differentiable at the points of the phase space $\left( x,v\right) $ for
which $x\in \partial \Omega $ and $v$ is tangent to $\partial \Omega ,$ even
if the initial data are arbitrarily smooth. This set of points is 
denoted as the singular set. This property of the solutions of kinetic equations
was found by Guo in his study of the Vlasov-Poisson system (cf. \cite{G1}%
).

In the case of the one-species Vlasov-Poisson system in 3-dimensional
bounded domains with specular reflection boundary conditions, the convexity
of the boundary plays a crucial role in determining the regularity of
solutions (cf. \cite{Hw}, \cite{HV}). If the boundary of the domain is flat
or convex, then one can show that a trajectory on the singular set is
separated from regular one in the interior, so that classical solutions
exist globally in time and are $C^{1}$ in phase space variables except on
the singular set.
However, if a domain is nonconvex, then even in the Vlasov equation without
interactions between the particles and with inflow boundary condition, a
classical $C^{1}$ solution fails to exist in general (cf. \cite{G0}).
Similar singular behaviours near the singular set have been found in the
Boltzmann equation (cf. \cite{Kim}).

This paper is devoted to the analysis of 
the
kinetic Fokker-Planck equation, in which some interesting phenomena related
to the presence of the singular set appear. More precisely, we will be
concerned with the analysis of the following equation:%
\begin{equation}
\partial _{t}P+v\partial _{x}P=\partial _{vv}P\ \ ,\ \ P=P\left(
x,v,t\right) ,\ \ \ x>0,\ \ v\in \mathbb{R}\ \ ,\ \ t>0  \label{S0E1}
\end{equation}%
with the so-called inelastic boundary condition:%
\begin{equation}
P\left( 0,-v,t\right) =r^{2}P\left( 0,rv,t\right) \ \ ,\ \ v>0\ \ ,\ \ t>0
\label{S0E2}
\end{equation}%
where $0<r<\infty .$ We are particularly interested in the case in which $%
r\leq 1.$ 
The physical meaning of (\ref{S0E2}) is that the
particles arriving to the wall $\left\{ x=0\right\}$ with a velocity $-v$
bounce back to the domain $\left\{ x>0\right\}$ with a new velocity $v.$
Notice that in the case $r=1$ the particles bounce elastically, but for $%
r<1,$ the collisions are inelastic and the particles lose a fraction of
their energy in the collisions. We will solve (\ref{S0E1}), (\ref{S0E2})
with the initial condition:%
\begin{equation}
P\left( x,v,0\right) =P_{0}\left( x,v\right)  \label{S0E3}
\end{equation}%
where $P_{0}$ is a probability measure in $\left( x,v\right) \in \mathbb{R}%
^{+}\times \mathbb{R}$.

The singular set reduces in the case of the problem (\ref{S0E1}), (\ref{S0E2}%
) to the point $\left( x,v\right) =\left( 0,0\right) .$

In this paper, 
we will prove that for $%
r<r_{c}$ where 
\begin{equation}
r_{c}=\exp \left( -\frac{\pi }{\sqrt{3}}\right)  \label{S1E3}
\end{equation}%
there exist different measure valued solutions of the boundary value problem
(\ref{S0E1})-(\ref{S0E3}). These solutions differ in their asymptotics near
the singular point $\left( x,v\right) =\left( 0,0\right).$ Roughly
speaking, these solutions differ in the precise interaction between the
singular set and the domain $\left\{ \left( x,v\right) \neq \left(
0,0\right) ,\ x\geq 0,\ v\in \mathbb{R}\right\} .$ The resulting solutions
are $C^{\infty }$ functions in $\left\{ \left( x,v\right) \neq \left(
0,0\right) \right\} ,$ but near the singular point it is only possible to
obtain uniform estimates in suitable H\"{o}lder norms. Some of the solutions
obtained satisfy $\int_{\left\{ \left( 0,0\right) \right\} }P\left(
dxdv,t\right) >0$ for $t>0$ while for some of them $\int_{\left\{ \left(
0,0\right) \right\} }P\left( dxdv,t\right) =0.$

Different solutions of the problem (\ref{S0E1})-(\ref{S0E3}) obtained in
this paper have different natural interpretations in terms of particles
evolving by means of a differential stochastic equation. This has been
discussed in detail in the companion paper \cite{HVJ2}. In that paper,
the asymptotic behaviour of the solutions constructed in this paper was
described by means of formal computations. In fact, several formulas
derived in \cite{HVJ2} will be used in this paper.

The exponent $r_{c}$ in (\ref{S1E3}) was first found by McKean (cf. \cite%
{McK}) and it can be interpreted as the critical value of the recovery
coefficient in the inelastic collisions taking place at the wall $\left\{
x=0\right\} $ below which Brownian particles described by the
Uhlenbeck-Ornstein processes reach the point $\left( x,v\right) =\left(
0,0\right) $ in finite time. See a more detailed discussion in \cite%
{HVJ2}. Intuitively, if $r>r_{c}$ we do not need additional information
to define a unique evolution for the solutions of (\ref{S0E1})-(\ref{S0E3})
because the trajectories associated to the Uhlenbeck-Ornstein processes do
not reach the singular point in finite time, while for $r<r_{c}$ the
singular point is reached in finite time and additional information is
required to determine uniquely the behaviour of the solutions of (\ref{S0E1}%
)-(\ref{S0E3}). This information is encoded in the different asymptotic
behaviours which can be imposed to the solutions of (\ref{S0E1}) near the
singular set (cf. \cite{HVJ2}). The dynamics of Brownian particles
bouncing inelastically at one wall has been also considered in the physical
literature. We refer also to \cite{HVJ2} for details about the relation
between the results of the physical literature and the results in 
\cite{HVJ2} and this paper.


The main result of this paper can be formulated in the following form:


\begin{theorem}
\label{ThIntro}Let  $\mathcal{X}$ be the space of functions $%
C^{\infty }\left( \mathbb{R}^{+}\times \mathbb{R}\times \left( 0,\infty
\right) \right) \cap C\left( \left[ 0,\infty \right) :\mathcal{M}_{+}\left( 
\mathbb{R}^{+}\times \mathbb{R}\right) \right) \cap C\left( \overline{%
\mathbb{R}^{+}\times \mathbb{R}}\times \left( 0,\infty \right) \right) .$
Suppose that $0<r<r_{c}.$ For any Radon measure $P_{0}\in \mathcal{M}%
_{+}\left( \mathbb{R}^{+}\times \mathbb{R}\right) ,\ $such that $\int_{%
\mathbb{R}^{+}\times \mathbb{R}}P_{0}<\infty ,$ there exist infinitely many
different solutions of the problem (\ref{S0E1})-(\ref{S0E3}). The solutions $%
P=P\left( x,v,t\right) \in \mathcal{X}$ satisfy (\ref{S0E1}), (\ref{S0E2})
in classical sense and (\ref{S0E3}) in the sense of distributions. If $%
r_{c}<r\leq 1$ there exists a unique weak solution of (\ref{S0E1})-(\ref%
{S0E3}) with initial data $P_{0}.$
\end{theorem}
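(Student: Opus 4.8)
The plan is to separate the two assertions — nonuniqueness for $r<r_{c}$ and uniqueness for $r_{c}<r\leq 1$ — since they require genuinely different techniques. For the nonuniqueness statement, the strategy is to reduce the construction of solutions to a boundary‑value problem near the singular point $\left(x,v\right)=\left(0,0\right)$ and then to exhibit a one‑parameter family of admissible local behaviours there. I would first use the self‑similar structure of \eqref{S0E1}, \eqref{S0E2}: the scaling $\left(x,v,t\right)\mapsto\left(\lambda^{3}x,\lambda v,\lambda^{2}t\right)$ preserves the equation and the boundary condition, so near the origin one expects solutions with homogeneous leading asymptotics, and the relevant exponents are determined by an eigenvalue problem whose spectrum is governed exactly by the McKean constant $r_{c}=\exp\left(-\pi/\sqrt{3}\right)$. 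The key point is that when $r<r_{c}$ there is more than one admissible exponent (equivalently, more than one boundary condition at the origin that is compatible with finite mass and with the classical equation away from $\left(0,0\right)$): these are the trapping, nontrapping, and partially trapping conditions described in the introduction. I would then, for each such choice of boundary condition at the origin, construct an actual solution $P\in\mathcal{X}$ with the prescribed initial datum $P_{0}$ — by splitting $P_{0}$ into a part supported near the origin (handled by the self‑similar/barrier analysis) and a part bounded away from it (handled by the classical smoothing theory for the Kolmogorov operator together with the reflecting boundary condition \eqref{S0E2}), gluing the two via a fixed‑point or linear‑superposition argument, and checking that \eqref{S0E3} holds in the distributional sense. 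The different choices produce solutions differing in $\int_{\left\{\left(0,0\right)\right\}}P\left(dxdv,t\right)$, hence infinitely many distinct solutions.

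For the uniqueness statement when $r_{c}<r\leq 1$, the plan is an energy / duality argument combined with the fact that in this regime the stochastic trajectories never reach the origin, so no mass can be created or destroyed at the singular point. Concretely, I would take two weak solutions $P_{1},P_{2}$ with the same $P_{0}$, set $Q=P_{1}-P_{2}$, and test against solutions of the adjoint (backward Kolmogorov) problem with the conjugate boundary condition $\varphi\left(0,v,t\right)=\varphi\left(0,-rv,t\right)/\text{(appropriate weight)}$; pairing $Q$ with a well‑chosen adjoint solution $\varphi$ and integrating over $\left(0,T\right)$ yields $\int Q\left(\cdot,T\right)\varphi\left(\cdot,T\right)=0$, and since the adjoint problem is solvable for a sufficiently rich class of terminal data $\varphi\left(\cdot,T\right)$, one concludes $Q\equiv 0$. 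The crucial input here is the well‑posedness and regularity of the adjoint problem near the singular point in the range $r>r_{c}$ — precisely the Hölder estimates for the adjoint problem that the introduction promises — together with the fact that for $r>r_{c}$ the origin carries no independent degree of freedom, so the boundary term at $\left(0,0\right)$ that appears in the duality identity vanishes automatically. This is where the threshold $r_{c}$ enters the uniqueness side.

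The main obstacle I expect is controlling the solutions (and the adjoint solutions) uniformly near the singular point $\left(0,0\right)$: away from it everything follows from standard hypoellipticity, but the matching of the self‑similar expansions to the global solution, the justification that the constructed measure‑valued solutions actually satisfy \eqref{S0E1}–\eqref{S0E2} classically for $\left(x,v\right)\neq\left(0,0\right)$ and \eqref{S0E3} distributionally, and — on the uniqueness side — the proof that the boundary contribution at the origin in the duality pairing genuinely vanishes when $r>r_{c}$, all hinge on sharp asymptotics near the singular point. These asymptotics in turn rest on the spectral analysis of the associated homogeneous problem, whose eigenvalues reproduce McKean's exponent; making that analysis rigorous, with error estimates good enough to close the construction and the duality argument, is the technical heart of the paper.
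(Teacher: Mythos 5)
Your high‑level reading of the problem is on target: the self‑similar scaling $\left(x,v,t\right)\mapsto\left(\lambda^{3}x,\lambda v,\lambda^{2}t\right)$, the two homogeneous exponents $\gamma\in\left\{-\tfrac{2}{3},\alpha\left(r\right)\right\}$ meeting at $r=r_{c}$, the interpretation of trapping/nontrapping/partially‑trapping as a one‑parameter family of admissible conditions at $\left(0,0\right)$, and the role of the adjoint problem for uniqueness all agree with the paper. Where the proposal diverges, and where I think there is a genuine gap, is in how you actually build the solutions $P$ when $r<r_{c}$.

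You propose a direct construction: split $P_{0}$ into a piece near the origin and a piece away from it, analyse the near‑origin piece by self‑similar barriers, handle the rest by hypoelliptic smoothing, and glue via ``fixed‑point or linear‑superposition.'' This is not a workable plan as stated. The equation is linear, so there is no contraction to iterate; and ``linear superposition'' of what objects is left unspecified. More fundamentally, the forward solution $P\left(\cdot,t\right)$ is genuinely unbounded near $\left(0,0\right)$ (its leading profile $a_{\alpha}\left(t\right)G_{\alpha}$ has $\alpha<-\tfrac{2}{3}$), it is measure‑valued with a possible atom at the origin, and the delicate issue is precisely to prescribe, propagate, and match the coefficients $a_{\alpha}\left(t\right),a_{-2/3}\left(t\right)$ and the mass $m\left(t\right)$ consistently in time. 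A gluing argument would need uniform control of these singular quantities up to $t=0^{+}$, with initial data only a finite Radon measure, and you give no mechanism for that. The paper avoids this entirely by working on the dual side: the adjoint operators $\Omega_{\sigma}$ (Section~\ref{Operators}) act on the \emph{bounded} space $C\left(X\right)$, the different boundary conditions at $\left(0,0\right)$ are encoded in the domains $\mathcal{D}\left(\Omega_{\sigma}\right)$ via the expansion \eqref{phi_decom}, one proves $\Omega_{\sigma}$ is a Markov generator (Hille--Yosida, Theorem~\ref{HY}), and then the measure‑valued solution is \emph{defined} by duality, $\int\varphi\,dP_{\sigma}\left(t\right)=\int S_{\sigma}\left(t\right)\varphi\,dP_{0}$. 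Nonuniqueness is then immediate from the fact that the semigroups $S_{t,sub},S_{nt,sub},S_{pt,sub}$ are pairwise distinct and $\mu_{\ast}$ ranges over $\left(0,\infty\right)$. The real technical work is the range condition $\mathcal{R}\left(I-\lambda\Omega_{\sigma}\right)=C\left(X\right)$, proved by a Perron method adapted to discontinuous subsolutions and to test functions of the form $\theta\zeta G_{\gamma}+\bar{\psi}$ which can be singular at the origin (Definitions~\ref{SubSuper1}--\ref{SubSuperSuper}), plus the trace theory of Proposition~\ref{Traces} and the Liouville theorems feeding Theorem~\ref{AsSingPoint}. Nothing resembling this appears in your plan, and without it the construction does not close.

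On uniqueness for $r_{c}<r\leq1$ your sketch is essentially the paper's argument: pair $Q=P_{1}-P_{2}$ with adjoint solutions whose terminal data is dense in $C\left(X\right)$. But note two points you should make explicit. First, the conjugate boundary condition you write for the adjoint is wrong in form: the correct one is $\varphi\left(0,-v,t\right)=\varphi\left(0,rv,t\right)$ for $v>0$ with \emph{no} weight (compare \eqref{S6E3}); the factor $r^{2}$ lives on the forward side \eqref{S0E2}, not on the adjoint side. Second, the claim that ``the boundary term at $\left(0,0\right)$ vanishes automatically'' for $r>r_{c}$ is not automatic from the probabilistic intuition; it is a consequence of the asymptotic expansion of $\varphi$ near the origin (Theorem~\ref{AsSingSuper}, using that $\beta<0$ when $r>r_{c}$), which in turn rests on the same Liouville machinery. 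So the uniqueness side, too, depends on the spectral/asymptotic analysis that you correctly flag as ``the technical heart'' but do not supply.
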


\begin{remark}
We have denoted as $\mathcal{M}_{+}\left( B\right) $ the set of Radon
measures in a given Borel set $B\subset \mathbb{R}^{n},\ n\geq 1.$
\end{remark}

Precise definitions of solutions of (\ref{S0E1})-(\ref{S0E3}) as well as
more detailed information about the solutions described in Theorem \ref%
{ThIntro} will be given later (cf. Proposition 4.1 in Section 4, as well as the Definitions 4.2-4.5). 
Roughly speaking, the solutions obtained for $0<r<r_{c}$ differ in the boundary
condition imposed at the singular point $\left( x,v\right) =\left(
0,0\right) .$ Intuitively, in the case of the solution satisfying $\int_{%
\mathbb{R}^{+}\times \mathbb{R}}P\left( \cdot ,\cdot ,t\right) =\int_{%
\mathbb{R}^{+}\times \mathbb{R}}P_{0}$ we assume that the particles arriving
to the origin continue instantaneously their motion. In all the other
solutions, particles arriving to the singular point remain trapped there for
all later times or during some characteristic time before resuming their
motion in the region $\mathbb{R}^{+}\times \mathbb{R}$. The existence and
uniqueness of solutions for $r>r_{c}$ will be also proved in this paper,
although in this case we do not need any additional information about the
solutions near the singular set.

\bigskip 

The main tool used in the proof of the results of this paper is the
classical Hille-Yosida Theorem. Most of the technical difficulties in this
paper arise from the fact that we need to prove that the operators involved
in our problem satisfy the assumptions which allow to apply the Hille-Yosida
Theorem. In particular, the application of this Theorem requires to prove
the solvability of some problems of Partial Differential Equations which 
encode information about the behaviour of the solutions of (\ref%
{S0E1})-(\ref{S0E3}) near the singular set $\left\{ \left( x,v\right)
=\left( 0,0\right) \right\} .$ Due to the fact that we are interested in
Measure Valued solutions of (\ref{S0E1})-(\ref{S0E3}) it will be convenient
to work instead with the solutions of some suitable adjoint problems of (\ref%
{S0E1})-(\ref{S0E3}) which encode the different types of boundary conditions
mentioned above if $r<r_{c}.$ In order to apply Hille-Yosida's Theorem we
need to prove the solvability of some elliptic problems. This will be made
by means of a suitable generalization of the classical Perron's method which
is able to deal with the singular behaviour of the solutions near the
singular point. It is well known that Perron's method allows us to obtain
the solution of some elliptic equations as the supremum of the subsolutions
associated to such equations. However, the application of such ideas to this
problem yields to several technical difficulties. The main ones are the
following ones. Since the operator $D_{v}^{2}+vD_{x}$ contains only the
first order derivative in $x$ instead of the second order ones, it is
possible to construct subsolutions for this problem which are discontinuous
and have discontinuities along subsets of lines $\left\{ x=x_{0}\right\} $
for $x_{0}\in \mathbb{R}$. Moreover, such discontinuous subsolutions arise
naturally applying Perron's method to this class of equations. This fact
will yield several technical difficulties which will be considered in
Section 3. On the other hand the nonuniqueness of the solutions of (\ref%
{S0E1})-(\ref{S0E3}) will be related to the different asymptotic behaviours
of the solutions near the singular point. The application of Perron's method
to the problem under consideration will require to study the properties of
suitably defined sub and supersolutions with different behaviours near the
singular point.

We remark that the Fokker-Planck equation for a general class of boundary conditions but different from the ones considered in this paper has been considered in \cite{Nier}, and the connection between the methods of that paper and this one has been made in the companion paper \cite{HVJ2}. In the case of the absorbing boundary condition, H\"{o}lder continuity of solutions of the Fokker-Planck equation near the singular set has been shown by the authors in \cite{HJJ}, \cite{HJJ2}, \cite{HVJ}.






The plan of this paper is the following. In section \ref{AdjProblems}, we present the adjoint problem 
of the original problem  (\ref{S0E1}), (\ref{S0E2}) with boundary conditions.
The well-possedness of the adjoint problems will be obtained in Section \ref{WellPosed}. 
This section is the most technical part of the paper and it requires some detailed study of the
asymptotics of the solutions of the adjoint problem near the singular point $%
\left( x,v\right) =\left( 0,0\right).$ In Section \ref{weakSolDef}, we prove the existence of measure valued solutions for 
the problem (\ref{S0E1})-(\ref{S0E3}), which gives nonuniqueness for subcritical case and uniqueness for supercritical case. 


%
%
%

\section{Adjoint problem of our original equation \label{AdjProblems}}

\subsection{Definition of some differential operators.}

Our goal now is to obtain suitable adjoint operators for (\ref{S0E1}), (\ref{S0E2}) 
with one of the 
trapping, nontrapping, and partially trapping boundary
conditions, which will be defined in Section \ref{Operators}. These
operators will act over the class of continuous functions on a topological
space $X.$ The action of the operators will be given by a differential
operator $\mathcal{L}$ with suitable boundary conditions. In this Section we
give the precise definitions of $X$ and $\mathcal{L}$.

\begin{definition}
\label{DefX}We define as $X_{0}$ the set obtained identifying the subset of
points $\left[ 0,\infty \right) \times \left( -\infty ,\infty \right) $ such
that $\left( x,v\right) =\left( 0,-v\right) $ and $\left( x,v\right) =\left(
0,rv\right) $, $v>0.$ We then define $X=X_{0}\cup \left\{ \infty \right\} ,$
and we endowed it with the natural topology inherited from $\mathbb{R}^{2}$
complemented with the following set of neighbourhoods of the point $\infty :$%
\begin{equation*}
\mathcal{O}_{M}=\left\{ \left( x,v\right) \in \left[ 0,\infty \right) \times
\left( -\infty ,\infty \right) :v<-M\text{ or }v>rM\ \text{or\ }x>M\right\}
\ \ ,\ \ M>0
\end{equation*}
\end{definition}

The set $X$ is a topological compact set. The continuous functions of this
space can be identified with the bounded continuous functions $\varphi $ in $%
\left[ 0,\infty \right) \times \left( -\infty ,\infty \right) $ such that
\begin{equation}
\varphi \left( 0,-v\right) =\varphi \left( 0,rv\right) ,\ v>0  \label{comp1}
\end{equation}%
and such that the limit $\lim_{x+\left\vert v\right\vert \rightarrow \infty
}\varphi \left( x,v\right) $ exists. We will denote this set of functions as
$C\left( X\right) .$ 

We denote as $\mathcal{U}$ the set:\
\[
\mathcal{U}=\left\{ \left( x,v\right) :x\geq 0,\ v\in \mathbb{R},\ \left(
x,v\right) \neq \left( 0,0\right) \right\} 
\]
Notice that a function $\varphi \in C\left( X\right) $
defines a function in $C\left( \mathcal{U}\right) $ satisfying (\ref{comp1}%
). We will use the same notation $\varphi $ to refer to both functions for
the sake of simplicity.

We need to introduce some local directionality in a neighbourhood of each
point of $X\diagdown \left\{ \left( 0,0\right) ,\infty \right\} $ in order
to compute directional limits.

\begin{definition}
\label{defLeft}Given two points $\left( x_{1},v_{1}\right) ,\left(
x_{2},v_{2}\right) \in X\diagdown \left[ \left\{ \left( x,0\right) :x\geq
0\right\} \cup \left\{ \infty \right\} \right] .$ We will say that $\left(
x_{1},v_{1}\right) $ is to the left of $\left( x_{2},v_{2}\right) $ and we
will write $\left( x_{1},v_{1}\right) \ll \left( x_{2},v_{2}\right) $ if $x_{1}%
\operatorname{sgn}\left( v_{1}\right) <x_{2}\operatorname{sgn}\left( v_{2}\right) .$\
\end{definition}

\bigskip Notice that the previous definition just means that $\left(
x_{1},v_{1}\right) \ll \left( x_{2},v_{2}\right) $ in one of the following
three cases: (i)\ If $v_{1}>0$\ and $v_{2}>0$\ we have $x_{1}<x_{2}.$ (ii) $%
v_{1}<0<v_{2}.$ (iii) If $v_{1}<0$\ and $v_{2}<0$\ we have $x_{1}>x_{2}.$

\begin{definition}
\label{LeftNei}Given a point $\left( x_{0},v_{0}\right) \in X\setminus
\left\{ \left( x,0\right) :x\geq 0\right\} \cup \left\{ \infty \right\} $
and a neighbourhood $\mathcal{B}$ of $\left( x_{0},v_{0}\right) $ in the
topological space $X$ we define the left neighbourhood $\mathcal{B}%
^{-}\left( x_{0},v_{0}\right) $ as:%
\begin{equation*}
\mathcal{B}^{-}\left( x_{0},v_{0}\right) =\left\{ \left( x,v\right) \in
\mathcal{B}:\left( x,v\right) \ll \left( x_{0},v_{0}\right) \right\}
\end{equation*}
\end{definition}

\begin{remark}
Notice that the neighbourhood $\mathcal{B}$ must be understood as a
neighbourhood of the topological space $X.$ In particular, if $\left(
x_{0},v_{0}\right) =\left( 0,v_{0}\right) $ any neighbourhood of $\left(
x_{0},v_{0}\right) $ contains points $\left( x,v\right) $ with $v>0$ and $%
v<0.$
\end{remark}

\begin{definition}
\label{LineLeft}We will say that $L\subset X$ is a vertical segment if it
has the form $\left\{ \left( x_{0},v\right) \in X: v\in \left( \alpha ,\beta \right) \right\} $ 
for some $x_{0}\geq 0,\ \alpha ,\beta \in \mathbb{R}$
with $\alpha \cdot \beta >0,$ $\alpha <\beta .$ Given two vertical segments $%
L_{1},\ L_{2}$ we will say that $L_{1}$ is to the left of $L_{2}$ if for any
$\left( x_{k},v_{k}\right) \in L_{k}$ with $k=1,2$ we have\ $\left(
x_{1},v_{1}\right) \ll \left( x_{2},v_{2}\right) .$ We will then write $%
L_{1}\ll L_{2}$. We will say that $L\subset X$ is a horizontal segment if it
has the form $\left\{ \left( x,v_{0}\right) \in X:x\in \left( \alpha ,\beta
\right) \right\} $ for some $v_{0}\in \mathbb{R}$ $\alpha ,\beta \in \mathbb{%
R}$ with $0\leq \alpha <\beta .$
\end{definition}

It will be convenient to define a suitable concept of convergence in the set
of segments.

\begin{definition}
\label{Distance}Given two segments $L_{1},\ L_{2}$ in $X$ we define a
distance between them as:%
\begin{equation}
\operatorname{dist}{}_{H}\left( L_{1},L_{2}\right) =\inf \left\{ \operatorname{dist}%
\left( \left( x,v\right) ,L_{2}\right) :\left( x,v\right) \in L_{1}\right\}
\label{distH}
\end{equation}
\end{definition}

The action of the operators $\Omega _{\sigma }$ on smooth functions
supported in $\mathcal{U}$ is given by the differential
operator
\begin{equation}
\mathcal{L}=D_{v}^{2}+vD_{x}  \label{diffOp}
\end{equation}%
where the operator $\mathcal{L}$ will be defined in the sense of
distributions as indicated later. Nevertheless, the operators $\Omega
_{\sigma }$ will differ in the different cases (cf. trapping, nontrapping, partially trapping) 
in its domain of definition which will encode
the asymptotic behaviour of $\varphi $ near the singular point $\left(
x,v\right) =\left( 0,0\right) .$

We endow the set $C\left( X\right) $ with a Banach space structure using the
norm:
\begin{equation}
\left\Vert \varphi \right\Vert =\sup_{\left( x,v\right) \in X}|\varphi (x,v)|
\label{Norm}
\end{equation}

We need to impose suitable regularity and compatibility conditions in the
class of test functions in order to take into account the compatibility
conditions imposed in $X.$ Let $V$ be an open subset of $\ \mathcal{U}$. We
will consider functions $\zeta \in C\left( \bar{V}\right) $ satisfying:%
\begin{equation}
\zeta \left( 0,-v\right) =r^{2}\zeta \left( 0,rv\right) \ ;\ v>0\ ,\ \text{%
if }\left( 0,-v\right) ,\left( 0,rv\right) \in \bar{V}  \label{comp1t}
\end{equation}%
\begin{equation}
\text{There exist }\zeta _{x},\zeta _{vv}\in C\left( \bar{V}\right)
\label{comp3}
\end{equation}%
\begin{equation}
\zeta _{x}\left( 0,-v\right) =r^{2}\zeta _{x}\left( 0,rv\right) \ ;\ v>0%
\text{ if }\left( 0,-v\right) ,\left( 0,rv\right) \in \bar{V}\
\label{comp4}
\end{equation}%
\begin{equation}
\operatorname{supp}\left( \zeta \right) \cap \left[ \left\{ x+\left\vert
v\right\vert \geq R\right\} \cup \left\{ 0,0\right\} \right] =\varnothing \
\ \text{for some }R>0  \label{comp2}
\end{equation}

We can define a set of functions
\begin{equation}
\mathcal{F}\left( V\right) \mathcal{=}\left\{ \zeta \in C\left( \bar{V}%
\right) :\text{(\ref{comp1t}),\ (\ref{comp3}), (\ref{comp4}), (\ref{comp2})
hold}\right\}  \label{defF}
\end{equation}

We now define the action of the operator $\mathcal{L}$ in a subset of $%
C\left( X\right) .$

\begin{definition}
\label{LadjDef}Suppose that $W$ is any open subset of $X.$ Given $\varphi
\in C\left( W\right) ,$ we will say that $\mathcal{L}\varphi $ is defined if
there exists $w\in \mathcal{M} \left( W\right) $ such that for any $\zeta \in \mathcal{F%
}\left( \mathcal{U}\cap W\right) $ we have:%
\begin{equation}
\int_{\mathcal{U}\cap W}\varphi \mathcal{L}^{\ast }\left( \zeta \right)
dxdv=\int_{\mathcal{U}\cap W}w\zeta dxdv  \label{Ladjoint}
\end{equation}%
where $\mathcal{L}^{\ast }=D_{v}^{2}-vD_{x}.$ We will then write $w=\mathcal{%
L}\varphi .$ Given $V\subset \mathcal{U}$, we will say that $\mathcal{L}%
\varphi $ is defined in $V$ if there exists $w\in C\left( V\right) $
satisfying (\ref{comp1}) such that for any $\zeta \in \mathcal{F}\left(
\mathcal{U}\right) $ such that $\operatorname{supp}\left( \zeta \right) \cap
\left( \partial V\right) =\varnothing ,$ (\ref{Ladjoint}) holds.
\end{definition}

\subsection{Regularity properties of the solutions: Hypoellipticity.\label%
{HypSection}}

In this Subsection we will formulate some regularity results associated to PDEs
containing the operator $\mathcal{L}$ which will be used repeatedly in the
following.

One of the key features of the solutions of equations like (\ref{S0E1}) is
that their solutions are smooth in spite of the fact that they contain only
second derivatives in the direction of $v$ and not in the direction of $x$.
This smoothness can be proved for a large class of initial data using the
fundamental solution associated to this equation in the whole space, which
was first computed by Kolmogorov (cf. \cite{K}).

On the other hand, hypoellipticity properties for equations with the form (%
\ref{S0E1}) have been known for a long time and they have been formulated in
different functional spaces in several papers (cf. \cite{BCLP}, \cite{HBook}%
, \cite{H}, \cite{Lu}, \cite{Pa}, \cite{RS}). Hypoellipticity properties for
the evolution problem associated to the equation (\ref{S0E1}) with absorbing
boundary conditions\ have been proved in \cite{HVJ}. In \ this paper, we
will need regularizing effects only for the stationary problem. We collect
in this Section some regularity results used in this paper. We first need
some notation to denote a portion of the boundaries of a class of domains
which will be used repeatedly in this paper. More precisely, we will
restrict ourselves to the following class of domains contained in $[0,\infty
)\times (-\infty ,\infty )\smallsetminus \left\{ \left( 0,0\right) \right\}
: $

\begin{definition}
\label{admissible}$\Xi \subset \lbrack 0,\infty )\times (-\infty ,\infty
)\smallsetminus \left\{ \left( 0,0\right) \right\} $ is an admissible domain
if it has one of the following forms:

(a) It is a cartesian product $\left( x_{1},x_{2}\right) \times \left(
v_{1},v_{2}\right) $ with $\left( x_{1},x_{2}\right) \subset \left( 0,\infty
\right) ,\ \left( v_{1},v_{2}\right) \subset \mathbb{R}$ and $0\notin \left[ v_{1},v_{2}\right] .$

(b) It has the form $\left( 0,x_{2}\right) \times \left( v_{1},v_{2}\right)
\diagdown \left( 0,x_{1}\right) \times \left( \bar{v}_{1},\bar{v}_{2}\right)
$ with $0<x_{1}<x_{2},\ v_{1}<\bar{v}_{1}<0<\bar{v}_{2}<v_{2}.$

In the case (a), we will denote\ as $\partial _{a}\Xi $ and term as
admissible boundary the subset of the boundary $\partial \Xi $ defined by
means of:%
\begin{eqnarray*}
\partial _{a}\Xi &=&\partial _{a,h}\Xi \cup \partial _{a,v}\Xi \\
\partial _{a,h}\Xi &=&\left( \left[ x_{1},x_{2}\right] \times \left\{
v_{1}\right\} \right) \cup \left( \left[ x_{1},x_{2}\right] \times \left\{
v_{2}\right\} \right) \\
\partial _{a,v}\Xi &=&\left( \left( \left[ v_{1},v_{2}\right] \cap \left(
0,\infty \right) \right) \times \left\{ x_{2}\right\} \right) \cup \left(
\left( \left[ v_{1},v_{2}\right] \cap \left( -\infty ,0\right) \right)
\times \left\{ x_{1}\right\} \right) .
\end{eqnarray*}%
and we will denote as $\partial _{a}^{\ast }\Xi $ the adjoint admissible
boundary given by:%
\begin{eqnarray*}
\partial _{a}^{\ast }\Xi &=&\partial _{a,h}\Xi \cup \partial _{a,v}^{\ast
}\Xi \\
\partial _{a,v}^{\ast }\Xi &=&\left( \left( \left[ v_{1},v_{2}\right] \cap
\left( -\infty ,0\right) \right) \times \left\{ x_{2}\right\} \right) \cup
\left( \left( \left[ v_{1},v_{2}\right] \cap \left( 0,\infty \right) \right)
\times \left\{ x_{1}\right\} \right) .
\end{eqnarray*}

In the case (b)\ we will denote as $\partial _{a}\Xi $ the set:%
\begin{eqnarray*}
\partial _{a}\Xi &=&\partial _{a,h}\Xi \cup \partial _{a,v}\Xi \\
\partial _{a,h}\Xi &=&\left( \left[ 0,x_{2}\right] \times \left\{
v_{1}\right\} \right) \cup \left( \left[ 0,x_{2}\right] \times \left\{
v_{2}\right\} \right) \cup \left( \left[ 0,x_{1}\right] \times \left\{ \bar{v%
}_{1}\right\} \right) \cup \left( \left[ 0,x_{1}\right] \times \left\{ \bar{v%
}_{2}\right\} \right) \\
\partial _{a,v}\Xi &=&\left( \left[ \bar{v}_{1},0\right] \times \left\{
x_{1}\right\} \right) \cup \left( \left[ 0,v_{2}\right] \times \left\{
x_{2}\right\} \right)
\end{eqnarray*}%
and the adjoint admissible boundary $\partial _{a}^{\ast }\Xi $ as:%
\begin{eqnarray*}
\partial _{a}^{\ast }\Xi &=&\partial _{a,h}\Xi \cup \partial _{a,v}^{\ast
}\Xi \\
\partial _{a,v}^{\ast }\Xi &=&\left( \left[ 0, \bar{v}_{2}\right] \times \left\{
x_{1}\right\} \right) \cup \left( \left[ v_{1},0\right] \times \left\{
x_{2}\right\} \right) .
\end{eqnarray*}
\end{definition}

\begin{figure}[htbp]
\centering \includegraphics[width=0.9\linewidth]{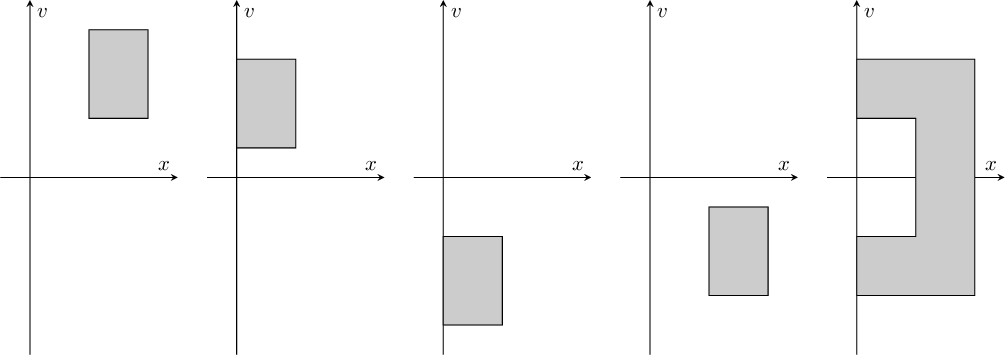}
\caption{Admissible domains in Definition \protect\ref{admissible} }
\label{fig:domain-1}
\end{figure}


\begin{theorem}
\label{Regul}Let $W\subset X$ be one admissible domain in the sense of
Definition \ref{admissible} and $\varphi \in C\left( W\right) .$ Suppose
that $\mathcal{L}\varphi =w\in C\left( W\right) $ in the sense of Definition %
\ref{LadjDef}. Then $D_{v}\varphi \in $ $C\left( W\right) .$ Moreover, if $%
W_{\delta }$ denotes the subset of $W$ such that $\operatorname{dist}\left(
\left( x,v\right) ,\partial _{a}W\right) \geq \delta >0$ we have:%
\begin{equation*}
\left\Vert D_{v}\varphi \right\Vert _{L^{\infty }\left( W_{\delta }\right)
}\leq C_{\delta ,W}\left\Vert \varphi \right\Vert _{L^{\infty }\left(
W\right) }
\end{equation*}%
where $C_{\delta ,W}>0$ depends only on $W,$ $\delta .$
\end{theorem}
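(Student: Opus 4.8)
The plan is to reduce the statement to the classical interior hypoellipticity of the Kolmogorov operator $\mathcal{L}=D_v^2+vD_x$, which is stated in the references collected in Section \ref{HypSection}, by carefully handling the only nonstandard feature of our domains: part of the admissible boundary $\partial_a W$ lies on the line $\{x=0\}$, and on that line the equation degenerates (there is no $x$-diffusion) but the transport coefficient $v$ can be nonzero. First I would observe that away from $\{x=0\}$ there is nothing to do: on any compact subset of $W\cap\{x>0\}$ the operator $\mathcal{L}$ is a genuine hypoelliptic operator of Kolmogorov type with smooth coefficients, so the distributional identity (\ref{Ladjoint}) together with $w=\mathcal{L}\varphi\in C(W)$ forces $\varphi\in C^\infty$ there, and in particular $D_v\varphi\in C$, with the interior estimate $\|D_v\varphi\|_{L^\infty(K')}\le C\|\varphi\|_{L^\infty(K)}$ for $K'\Subset K\Subset W\cap\{x>0\}$ coming from the standard a priori bounds for this operator (e.g. via the explicit Kolmogorov fundamental solution and a cutoff argument). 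So the whole content is the regularity of $D_v\varphi$ up to, and in a neighbourhood of, the portion of $\partial_a W$ that sits on $\{x=0\}$.

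Next I would localize near a point $(0,v_0)$ of that boundary portion. Because $W$ is an admissible domain in the sense of Definition \ref{admissible}, near $\{x=0\}$ the set $W$ is, up to the identification in $X$, a rectangle $(0,x_2)\times(v_1,v_2)$ (case (a)) or a thin L-shaped region (case (b)); in either description, on the part of $\partial W$ contained in $\{x=0\}$ \emph{no} boundary condition is imposed in the definition of $\mathcal{F}$ except the compatibility relations (\ref{comp1t})--(\ref{comp4}) that merely encode the gluing $(0,-v)\sim(0,rv)$. The right viewpoint is that $\{x=0\}$ is a \emph{characteristic} hyperplane for $\mathcal{L}$ (for $v<0$ it is outgoing, for $v>0$ incoming relative to the $x$-drift), so no trace/boundary data are needed there: the equation $\mathcal{L}\varphi=w$ already propagates regularity in $v$ up to $\{x=0\}$. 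Concretely, I would introduce cutoffs $\chi(x,v)$ supported near $(0,v_0)$ with $\chi\equiv1$ on a smaller neighbourhood and chosen to respect the identification (so $\chi$ and its derivatives satisfy the same parity/scaling relations as the test functions in $\mathcal{F}$), extend $\varphi$ trivially for $x<0$ if convenient, and write an equation for $\chi\varphi$ of the form $\mathcal{L}(\chi\varphi)=\chi w+[\mathcal{L},\chi]\varphi$, where the commutator involves only $\varphi$, $D_v\varphi$ and first-order terms — so one must bootstrap. The gain of one $v$-derivative then follows from the one-sided hypoelliptic estimate for $\mathcal{L}$ on a half-space with characteristic boundary $\{x=0\}$, which is exactly the type of result proved (in the absorbing setting, but the argument for the gain of $v$-regularity near a characteristic face does not use the boundary condition) in \cite{HJV}, and more classically can be extracted from the Kolmogorov kernel: convolving $\chi w+[\mathcal{L},\chi]\varphi$ against the explicit fundamental solution and differentiating once in $v$ gives a bounded kernel, because the $v$-derivative of the Kolmogorov kernel is integrable in time. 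This yields $D_v(\chi\varphi)\in C$ near $(0,v_0)$, hence $D_v\varphi\in C(W)$ globally by a finite covering of $\overline{W_\delta}$, together with the quantitative bound $\|D_v\varphi\|_{L^\infty(W_\delta)}\le C_{\delta,W}\|\varphi\|_{L^\infty(W)}$ obtained by tracking the constants in the covering (note the right-hand side needs only $\|\varphi\|_{L^\infty}$ because $w=\mathcal{L}\varphi$ is itself controlled: one first uses the equation to bound $w$ in terms of $\varphi$ on a slightly larger set via the same kernel representation, or simply absorbs $\|w\|_{L^\infty}$ using that $w$ is determined by $\varphi$ — if $\|w\|$ genuinely had to appear one would state the estimate with $\|\varphi\|+\|w\|$, but the cleaner route is the a priori estimate for the homogeneous-data problem).

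The step I expect to be the main obstacle is making the half-space estimate at the characteristic face $\{x=0\}$ fully rigorous at the level of the \emph{distributional} solutions of Definition \ref{LadjDef}, rather than for a priori smooth functions: one has to justify that the weak formulation (\ref{Ladjoint}) is stable under the cutoff-and-convolve procedure, that the boundary term on $\{x=0\}$ produced by integration by parts against $\mathcal{L}^*=D_v^2-vD_x$ genuinely vanishes (it does, because $\zeta\in\mathcal{F}$ has support away from $\{x=0\}$ for the interior version, and for the up-to-the-boundary version the $vD_x$ term integrated over $\{x=0\}$ is tested against functions with no prescribed trace, so one argues by density/approximation from the interior), and that the identification gluing the halves $v>0$ and $v<0$ along $\{x=0\}$ does not spoil continuity of $D_v\varphi$ — this last point uses the compatibility condition (\ref{comp4}) on $\zeta_x$, which is precisely the adjoint counterpart of (\ref{comp1}) and guarantees that the kernel representation on the two sides matches up to $C^1$ order. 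Once these technical points are dispatched, the theorem follows; I would present it by first proving the interior case carefully and then indicating the modifications needed for the characteristic face, since the latter is where all the real work lies.
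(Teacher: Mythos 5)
Your proposal heads in roughly the right direction but misses the single structural point that the paper's short proof turns on, and it substitutes a considerably heavier machine for what the paper does with one line.

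The paper's argument treats $x$ as a time variable and then uses one-dimensional parabolic theory, with the direction of time determined by the sign of $v$. On $\{v>0\}$ the equation $v\partial_x\varphi=-\partial_{vv}\varphi+w$ is parabolic \emph{backward} in $x$, so it regularizes as $x\to 0^+$ and hence gives full interior regularity on $\{x=0,\ v>0\}\cap W$; on $\{v<0\}$ the same equation is parabolic \emph{forward} in $x$, so $\{x=0,\ v<0\}$ is the initial face and carries no smoothing from inside the domain. Regularity there is obtained only by using the gluing condition $\varphi(0,-v)=\varphi(0,rv)$ of (\ref{comp1}): the data at $\{x=0,\ v<0\}$ \emph{is}, up to the identification, the trace at $\{x=0,\ v>0\}$, which step one has already shown to be smooth. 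The final step is again parabolic theory forward in $x$ on $\{v<0\}$, now with regular initial data, to get the uniform bound on $W_\delta$.

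This is where your write-up goes wrong. You assert that because $\{x=0\}$ is characteristic for $\mathcal{L}$, ``the equation $\mathcal{L}\varphi=w$ already propagates regularity in $v$ up to $\{x=0\}$.'' That is only true on the half $\{v>0\}$; on the half $\{v<0\}$ it is false (it is the initial face), and that is precisely why the admissible boundary $\partial_a W$ in Definition \ref{admissible} includes $\{x=0\}\cap\{v<0\}$ but not $\{x=0\}\cap\{v>0\}$. Your proposal never invokes the gluing of the solution values, which is the indispensable step. The passing reference to the test-function compatibility (\ref{comp4}) as ``the adjoint counterpart of (\ref{comp1})'' does not substitute for it: (\ref{comp4}) constrains the class $\mathcal{F}$ used in the weak formulation and plays no role in propagating regularity of $\varphi$ across the $v=0$ identification; it is (\ref{comp1}) that does.

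Beyond that gap, the route you sketch (extend across $\{x=0\}$, cut off, commute, convolve against the Kolmogorov kernel) is more delicate than the problem requires and is not cleanly set up: extending $\varphi$ by zero to $x<0$ produces a jump at the characteristic face, so $\mathcal{L}(\chi\varphi)$ acquires a singular contribution $v[\varphi]\delta_{\{x=0\}}$ that you would then have to control; the paper sidesteps this entirely by never leaving the domain. Finally, your worry about whether $\|w\|$ should appear in the estimate is legitimate and worth flagging, but the ``bound $w$ in terms of $\varphi$ via the same kernel'' fix is circular; the honest resolution is either that the applications only require the estimate for $w$ controlled by $\varphi$, or that the statement should read $\|\varphi\|_{L^\infty(W)}+\|w\|_{L^\infty(W)}$.
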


\begin{proof}
The regularity at the interior points of $\mathcal{U}\cap W$ is a
consequence of the hypoellipticity results in \cite{HBook}. The regularity
in the sets $\left\{ x=0,\ v>0\right\} \cap W$ follows from classical
parabolic theory \cite{Fr}, assuming that $x$ is the time variable. We then obtain the
desired continuity in $\left\{ x=0,\ v<0\right\} \cap W$ using (\ref{comp1}%
). The uniform regularity in $\left\{ v<0\right\} \cap W$ then follows from
parabolic theory \cite{Fr}.
\end{proof}

We define a family of domains as follows:

\begin{definition}
\label{domainsR}For any given $r>0,$ we define:%
\begin{equation}
\mathcal{R}_{\delta}=\left\{ (x,v):0\leq x^{\frac{1}{3}}\leq 
\delta ,\ -\delta \leq v\leq r\delta \right\} \   \label{domainR}
\end{equation}%
\end{definition}

We will use also the following result for the solutions of the evolution
problem (\ref{S0E1}), (\ref{S0E2}).

\begin{theorem}
\label{RegTime}Let $W=\mathcal{R}_{2}\diagdown \overline{\mathcal{R}_{\frac{1%
}{2}}}$ with $\mathcal{R}_{\delta }$ as in Definition \ref{domainsR}.
Suppose that $P,P_{t},P_{xx},P_{v}\in C\left( \left[ 0,1\right] \times
W\right) \ $and that $P$ solves (\ref{S0E1}), (\ref{S0E2}) in $\left(
0,1\right) \times W.$ Then:%
\begin{equation*}
\left\Vert D_{v}P\right\Vert _{L^{\infty }\left( \left[ \frac{1}{2},1\right]
\times \left( \mathcal{R}_{\frac{3}{2}}\diagdown \overline{\mathcal{R}_{%
\frac{2}{3}}}\right) \right) }\leq C\left\Vert P\right\Vert _{L^{\infty
}\left( \left[ 0,1\right] \times W\right) }
\end{equation*}
\end{theorem}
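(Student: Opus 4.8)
The plan is to reduce the statement to an interior parabolic regularity estimate, treating the variable $x$ as a time-like variable in the region where $v>0$ and the boundary condition never couples in. First I would observe that the domain $W=\mathcal{R}_2\diagdown\overline{\mathcal{R}_{1/2}}$ is an annular region whose inner and outer boundaries both stay away from the singular point $(0,0)$; in particular $W$ is a union of finitely many admissible domains in the sense of Definition~\ref{admissible}, and the closed subregion $\mathcal{R}_{3/2}\diagdown\overline{\mathcal{R}_{2/3}}$ sits at positive distance from $\partial W$ and from the line $\{x=0,v=0\}$. The equation (\ref{S0E1}) is $\partial_t P + v\partial_x P = \partial_{vv}P$, which, after freezing attention on a neighbourhood of any point with $v\neq 0$, is a uniformly parabolic equation in the variables $(x,P)$ with the roles of time and space played by $x$ and $v$ respectively (for $v>0$, increasing $x$ is the forward direction; for $v<0$, decreasing $x$). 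Thus away from $v=0$ the operator $v\partial_x - \partial_{vv}$ is, up to a smooth change of the evolution direction, a standard heat-type operator, and we may invoke classical interior Schauder or $L^p$ estimates for parabolic equations.

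The key steps, in order, are: (1) decompose $\mathcal{R}_{3/2}\diagdown\overline{\mathcal{R}_{2/3}}$ into the part with $v>0$, the part with $v<0$, and a neighbourhood of the two segments where $v=0$ but $x>0$; (2) on the parts with $v$ bounded away from $0$, apply interior parabolic estimates on slightly larger subdomains of $(0,1)\times W$ — since $P, P_t, P_{xx}, P_v\in C([0,1]\times W)$ and $P$ solves (\ref{S0E1}) there, these estimates give a bound on $D_v P$ on $[\tfrac12,1]$ times the smaller $v$-region, controlled by $\|P\|_{L^\infty([0,1]\times W)}$; (3) on the neighbourhood of $\{v=0,\ x\in[\tfrac23,\tfrac32]\}$, note that here $x$ is bounded away from $0$, so the full hypoelliptic structure of the operator $D_v^2+vD_x$ applies just as in the interior part of Theorem~\ref{Regul}: the hypoellipticity results of \cite{HBook} (now including the time derivative, exactly as in the parabolic hypoellipticity theory) yield continuity and a quantitative bound on $D_v P$ on the slightly smaller set; (4) patch the three estimates together with a partition of unity, which is harmless because all the constants depend only on the fixed geometry of $W$ and the fixed distances involved.

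The main obstacle is step (3): near the line $\{v=0\}$ the equation degenerates (the first-order term $v\partial_x$ vanishes), so one genuinely needs the hypoelliptic, rather than uniformly parabolic, machinery there. The point to check carefully is that the bad point $(0,0)$ is excluded — in $\mathcal{R}_{3/2}\diagdown\overline{\mathcal{R}_{2/3}}$ one has $x\geq (\tfrac23)^{1/3}$-ish bounded below, so the whole relevant piece of the $v=0$ line is an honest interior region for the Kolmogorov operator and the time-dependent hypoellipticity estimate (e.g. the one underlying Theorem~\ref{Regul}, now with the $\partial_t$ term present) applies with constants depending only on that fixed distance. The boundary condition (\ref{S0E2}) at $x=0$ never enters because $\mathrm{dist}(\mathcal{R}_{3/2}\diagdown\overline{\mathcal{R}_{2/3}},\{x=0\})>0$; this is why we may localize and avoid any boundary analysis. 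Once the three local estimates are in hand, assembling them into the stated global bound on $[\tfrac12,1]\times(\mathcal{R}_{3/2}\diagdown\overline{\mathcal{R}_{2/3}})$ is routine.
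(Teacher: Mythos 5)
Your plan has a genuine gap, and it comes from a misreading of the domains. Recall from Definition~\ref{domainsR} that $\mathcal{R}_{\delta}=\{(x,v):0\leq x^{1/3}\leq\delta,\ -\delta\leq v\leq r\delta\}$, so in particular $\mathcal{R}_{\delta}$ contains the segment $\{x=0,\ -\delta\leq v\leq r\delta\}$. Consequently $\mathcal{R}_{3/2}\setminus\overline{\mathcal{R}_{2/3}}$ contains the two segments $\{x=0,\ -3/2\leq v<-2/3\}$ and $\{x=0,\ 2r/3<v\leq 3r/2\}$, and your assertion that $\mathrm{dist}\bigl(\mathcal{R}_{3/2}\setminus\overline{\mathcal{R}_{2/3}},\{x=0\}\bigr)>0$ is false. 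The region on which the estimate must hold reaches the boundary $\{x=0\}$, so the boundary condition (\ref{S0E2}) does enter, and this is precisely why the paper's proof cites it alongside the hypoellipticity results of \cite{HBook}, \cite{RS}. Your steps (1)--(3) (interior parabolic estimates for $v$ away from $0$, hypoelliptic estimates near $\{v=0,\ x\approx 8/27\}$) are fine for the interior portion, but the piece touching $\{x=0\}$ cannot be handled by a purely interior argument.

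The missing piece is the boundary regularity argument, which mirrors the proof of Theorem~\ref{Regul} but with the time derivative present. Near $\{x=0,\ v<0\}$ one views (\ref{S0E1}) as a parabolic equation in which decreasing $x$ is the well-posed evolution direction (since $v<0$ the coefficient $1/v$ of $\partial_{vv}-\partial_t$ is negative), and obtains regularity of $P$ on $\{x=0,\ v<0\}$ from the interior bound $\|P\|_{L^\infty([0,1]\times W)}$. The boundary condition (\ref{S0E2}) then transfers this regularity to $\{x=0,\ v>0\}$, where it serves as initial data for the well-posed forward evolution in increasing $x$, giving the bound on $D_vP$ there. Only after this boundary loop is closed can the local estimates be patched together as in your step (4). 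Without invoking (\ref{S0E2}) the estimate on $D_vP$ near $\{x=0,\ v>0\}$ has no source of control, so the proof as written does not go through.
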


\begin{proof}
It is just a consequence of the results in \cite{HBook}, \cite{RS} as well
as the boundary condition (\ref{S0E2}).
\end{proof}

We will use also the following hypoellipticity property.

\begin{theorem}
\label{Hypoell}Suppose that $W\subset X$ and $\varphi \in C\left( W\right) ,$
with $\left( 0,0\right) \notin W.$

(i) Suppose that $\mathcal{L}\varphi =w\in C\left( W\right) $ in the sense
of Definition \ref{LadjDef}. Then $D_{x}\varphi ,D_{v}\varphi
,D_{v}^{2}\varphi \in L_{loc}^{p}\left( W\right) $ and these norms can be
estimated by $\left\Vert w\right\Vert _{\infty }.$

(ii) Suppose that $\mathcal{L}\varphi =\nu \in \mathcal{M}\left( W\right) .$
Then the $L^{p}$ norm of $\varphi $ and $D_{v}\varphi $ in each open set, or
in any horizontal curve can be estimated by the sum of the $L^{1}$ norm of $%
\varphi $ and the $\mathcal{M}\left( W\right) $ norm of $\nu .$
\end{theorem}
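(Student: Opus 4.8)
The plan is to reduce Theorem~\ref{Hypoell} to the interior hypoellipticity estimates for the Kolmogorov operator $\mathcal{L}=D_v^2+vD_x$ together with classical parabolic regularity near the good part of the boundary. Since $(0,0)\notin W$, the distance from $W$ to the singular point is positive, and locally $W$ is either an interior piece of $\mathcal{U}$ or a piece of the flat boundary $\{x=0\}$. On an interior subdomain, $\mathcal{L}$ is a H\"ormander operator whose brackets span the tangent space (the commutator of $D_v$ with $vD_x$ produces $D_x$), so the standard hypoelliptic a priori estimates (cf. \cite{HBook}, \cite{RS}) give, for $\varphi\in C(W)$ with $\mathcal{L}\varphi=w\in C(W)$, that $D_x\varphi,\ D_v\varphi,\ D_v^2\varphi\in L^p_{loc}$ with norms controlled by $\|w\|_\infty+\|\varphi\|_\infty$; combined with the fact that $W\subset X$ is bounded away from $\infty$ as well, so $\varphi$ is bounded, one gets control purely by $\|w\|_\infty$. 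This proves (i) at interior points.

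For part (i) near $\{x=0,\ v>0\}\cap W$, the key observation is that along the incoming part of the wall, where $v>0$, the equation $D_v^2\varphi - v D_x\varphi = w$ can be read as a (backward or forward, depending on sign conventions) parabolic equation in which $x$ plays the role of the time variable and $v$ the role of the space variable, with a strictly positive ``diffusion'' normalization $1/v$. Interior-in-$v$, up-to-the-time-boundary $x=0$ parabolic estimates then give $L^p_{loc}$ bounds on $D_x\varphi=\tfrac1v(D_v^2\varphi-w)$ and on $D_v\varphi,\ D_v^2\varphi$, again estimated by $\|w\|_\infty$. Along $\{x=0,\ v<0\}$ one uses the compatibility identity \eqref{comp1} to transfer the bounds already obtained on the $v>0$ side of the wall; since $\varphi(0,-v)=\varphi(0,rv)$ and the same relation is inherited by the relevant derivatives (as in the proof of Theorem~\ref{Regul}), continuity and the $L^p$ bounds propagate to the $v<0$ portion. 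Patching the interior and boundary estimates with a partition of unity subordinate to a finite cover of $W$ (possible since $\overline W$ is compact and avoids both $(0,0)$ and $\infty$) yields the global-in-$W$ local estimates claimed.

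For part (ii), the difference is that the right-hand side is only a Radon measure $\nu\in\mathcal{M}(W)$ rather than a continuous function. Here I would invoke the $L^p$--$\mathcal{M}$ hypoellipticity estimates for Kolmogorov-type operators (the measure-data analogues of the estimates used in part (i); see \cite{BCLP}, \cite{Lu}, \cite{Pa}), which state that a distributional solution of $\mathcal{L}\varphi=\nu$ enjoys $\varphi,\ D_v\varphi\in L^p_{loc}$ for $p$ below a threshold dictated by the anisotropic scaling $x\sim\lambda^3,\ v\sim\lambda$ (the homogeneous dimension being $4$ for a single pair $(x,v)$, so that $\mathcal{M}$ embeds into a negative-order anisotropic Sobolev space and the gain of one ``$v$-derivative and one-third of an $x$-derivative'' lands $\varphi$ and $D_v\varphi$ in $L^p_{loc}$). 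The estimate on horizontal curves $\{v=v_0\}$ follows from the same anisotropic trace/embedding theorems, using that $D_v^2\varphi$ is controlled so $\varphi(\cdot,v_0)$ has one derivative more than a generic slice, hence lies in $L^p$ with norm bounded by $\|\varphi\|_{L^1}+\|\nu\|_{\mathcal{M}(W)}$; near the wall $\{x=0,\ v>0\}$ one again reinterprets the equation as parabolic with measure data and uses \eqref{comp1} to cross to $v<0$, exactly as in part (i).

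The main obstacle, and the only part requiring genuine care rather than citation, is the behaviour along the boundary line $\{x=0\}$: one must check that the parabolic reinterpretation is legitimate uniformly as $v\to 0^+$ (where the coefficient $1/v$ degenerates) and that the gluing at the wall through \eqref{comp1} is compatible with the $L^p$ (resp. $L^p$--$\mathcal{M}$) framework, i.e.\ that the reflection $v\mapsto -rv$ together with the weight $r^2$ preserves the relevant function spaces. Because $W$ is an admissible domain avoiding $(0,0)$, one can always choose the local cover so that each boundary patch stays in a region $\{|v|\geq c\}$ for some $c>0$, which sidesteps the degeneracy; the wall-crossing then reduces to an affine change of variables and is harmless. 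With this choice the argument is a routine combination of the cited hypoellipticity results with classical parabolic theory and a compactness/partition-of-unity patching.
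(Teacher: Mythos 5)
Your argument for part (i) is essentially the paper's: interior $L^p_{loc}$ control comes from citing H\"ormander/Rothschild--Stein hypoellipticity for $D_v^2+vD_x$, and the boundary strip $\{x=0\}$ is handled by reading the equation as a parabolic problem in $x$ with smoothing on the $v>0$ side (up to $x=0$) and the compatibility relation \eqref{comp1} to transfer bounds to $v<0$ — this is exactly the decomposition used in the paper's Theorem~\ref{Regul}. (Small slip: you wrote $D_v^2\varphi - v D_x\varphi = w$, but $\mathcal{L}=D_v^2+vD_x$; the qualitative conclusion about the direction of parabolic smoothing is nonetheless right — for $v>0$ the smoothing direction is $x\to 0^+$.) Your observation that one can choose a cover of $\overline W\cap\{x=0\}$ inside $\{|v|\geq c\}$, because $(0,0)\notin W$, is the correct way to avoid the $1/v$ degeneracy near the wall.

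For part (ii) you take a genuinely different route from the paper. The paper proves (ii) by duality: since $\mathcal{L}^*=D_v^2-vD_x$ satisfies the same hypoelliptic estimates as in (i), one tests $\mathcal{L}\varphi=\nu$ against solutions of $\mathcal{L}^*u=g$ with $g\in L^{p'}$, using the gain of regularity for $u$ to produce an $L^\infty$ bound on $u$ and hence bound $\int\varphi g = \int u\, d\nu$ by $\|u\|_\infty\,\|\nu\|_{\mathcal{M}}$. That argument is self-contained (it only needs part~(i) for the adjoint). You instead invoke direct measure-data hypoellipticity estimates with the anisotropic scaling $x\sim\lambda^3,\ v\sim\lambda$, together with an embedding of $\mathcal{M}$ into a negative-order anisotropic Sobolev space. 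This is a reasonable route and gives a cleaner statement of what regularity gain is being used, but it relies on the cited references actually containing the measure-data versions (which they state only for $L^p$ data), so it is a slightly larger literature borrow than the paper's duality argument. Both treatments of the horizontal-curve trace estimate are somewhat heuristic — indeed the paper itself flags this as a step to double-check — so here the two arguments are on an equal footing.
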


\begin{proof}
The result (i) follows from \cite{RS}. The result (ii) can be proved by
duality, using (i) for the adjoint operator of $\mathcal{L}$.
\end{proof}


\subsection{Definition of the operators $\Omega _{\protect\sigma }.$ Domains $%
\mathcal{D}(\Omega _{\protect\sigma })$. \label{Operators}}

We now define some operators $\Omega _{\sigma }$ for the different boundary
conditions described in Section \ref{BVP}, 
where $%
\sigma $ is the subindex labelling each set of boundary conditions. In all
the cases the operator $\Omega _{\sigma }$ acts over continuous functions
defined on a compact topological space $X$ in Definition \ref{DefX}.

We will assume that the functions $\varphi \in \mathcal{D}(\Omega _{\sigma
}) $ have the following asymptotic behaviour near the singular set:%
\begin{equation}
\varphi \left( x,v\right) =\varphi \left( 0,0\right) +\mathcal{A}\left(
\varphi \right) F_{\beta }\left( x,v\right) +\psi \left( x,v\right) ,\ \
\lim_{R\rightarrow 0}\frac{\sup_{x+\left\vert v\right\vert ^{3}=R}\left\vert
\psi \left( x,v\right) \right\vert }{R^{\beta }}=0  \label{phi_decom}
\end{equation}%
where $\mathcal{A}\left( \varphi \right) \in \mathbb{R}$ and 
\begin{equation}
F_{\beta }\left( x,v\right) =x^{\beta }\Phi _{\beta }\left( y\right) \text{
with\ }\Phi _{\beta }\left( y\right) =U(-\beta ,\frac{2}{3};y)\ \ ,\ \ y=%
\frac{v^{3}}{9x}  \label{Fbeta}
\end{equation}
with the asymptotics of $\Phi _{\beta }\left( y\right) $ given by:\
\begin{eqnarray}
\Phi _{\beta }\left( y\right) &\sim &\left\vert y\right\vert ^{\beta }\text{
\ \ \ \ as }y\rightarrow \infty ,\   \label{S8E4} \\
\Phi _{\beta }\left( y\right) &\sim &K\left\vert y\right\vert ^{\beta }\text{
\ \ as }y\rightarrow -\infty  \label{S8E5}
\end{eqnarray}%
where $K=r^{3\beta }.$

\subsubsection{The case $r<r_{c}.$ Trapping boundary conditions.\label%
{Absorbing}}

In this case we will define $\Omega _{t,sub}\varphi $ as follows. We
consider the domain:%
\begin{equation}
\mathcal{D}(\Omega _{t,sub})=\{\varphi ,\mathcal{L}\varphi \in C\left(
X\right) :\varphi \text{ satisfies (\ref{phi_decom}),}\lim_{\left(
x,v\right) \rightarrow \left( 0,0\right) }\left( \mathcal{L}\varphi \right)
\left( x,v\right) =0\}  \label{S7E1}
\end{equation}

We then define:

\begin{eqnarray}
\left( \Omega _{t,sub}\varphi \right) \left( x,v\right) &=&\left( \mathcal{L}%
\varphi \right) \left( x,v\right) \ ,\ \ \text{if}\ \ \left( x,v\right) \neq
\left( 0,0\right) ,\infty  \label{S7E1a} \\
\left( \Omega _{t,sub}\varphi \right) \left( 0,0\right) &=&\lim_{\left(
x,v\right) \rightarrow \left( 0,0\right) }\left( \mathcal{L}\varphi \right)
\left( x,v\right) =0\ \ ,\ \ \left( \Omega _{t,sub}\varphi \right) \left(
\infty \right) =\left( \mathcal{L}\varphi \right) \left( \infty \right)
\notag
\end{eqnarray}%
with $\mathcal{L}\varphi $ as in Definition \ref{LadjDef}. We remark that in
this case as well as in the following three cases, we have that $\mathcal{L}%
\varphi \in C\left( X\right) $ for the functions $\varphi $ in the domains
and then the limit $\lim_{\left( x,v\right) \rightarrow \left( 0,0\right)
}\left( \mathcal{L}\varphi \right) \left( x,v\right) $ exists.

\subsubsection{The case $r<r_{c}.$ Nontrapping boundary conditions.\label%
{Reflecting}}

We define $\Omega _{nt,sub}\varphi $ by means of (\ref{diffOp}) in the
domain:%
\begin{equation}
\mathcal{D}(\Omega _{nt,sub})=\left\{ \varphi ,\mathcal{L}\varphi \in
C\left( X\right) :\varphi \text{ satisfies (\ref{phi_decom})\ and\ }\mathcal{%
A}\left( \varphi \right) =0\right\}  \label{S7E2}
\end{equation}

We then define:

\begin{eqnarray}
\left( \Omega _{nt,sub}\varphi \right) \left( x,v\right) &=&\left( \mathcal{L%
}\varphi \right) \left( x,v\right) \ ,\ \ \text{if}\ \ \left( x,v\right)
\neq \left( 0,0\right) ,\ \infty  \label{S7E2a} \\
\left( \Omega _{nt,sub}\varphi \right) \left( 0,0\right) &=&\lim_{\left(
x,v\right) \rightarrow \left( 0,0\right) }\left( \mathcal{L}\varphi \right)
\left( x,v\right) \ \ ,\ \ \left( \Omega _{nt,sub}\varphi \right) \left(
\infty \right) =\left( \mathcal{L}\varphi \right) \left( \infty \right) \
\notag
\end{eqnarray}%
with $\mathcal{L}\varphi $ as in Definition \ref{LadjDef}.

\subsubsection{The case $r<r_{c}.$ Partially trapping boundary conditions.
\label{Mixed}}

Given any $\mu _{\ast }>0,$ we define $\Omega _{pt,sub}\varphi $ by means of
(\ref{diffOp}) in the domain:\
\begin{equation}
\mathcal{D}(\Omega _{pt,sub})=\left\{ \varphi ,\mathcal{L}\varphi \in
C\left( X\right) :\varphi \text{ satisfies (\ref{phi_decom}), there exists\ }%
\lim_{\left( x,v\right) \rightarrow \left( 0,0\right) }\left( \mathcal{L}%
\varphi \right) \left( x,v\right) =\mu _{\ast }\left\vert C_{\ast
}\right\vert \mathcal{A}\left( \varphi \right) \ \right\}  \label{S7E3}
\end{equation}%
Here
\begin{equation}
C_{\ast }=\lim_{\delta \rightarrow 0}\int_{\partial \mathcal{R}_{\delta }}%
\left[ G_{\alpha }\left( n_{v}D_{v}F_{\beta }+n_{x}vF_{\beta }\right)
-F_{\beta }D_{v}G_{\alpha }n_{v}\right] ds  \label{W1E5}
\end{equation}%
where $\mathcal{R}_{\delta }$ is as in (\ref{domainR}), $F_{\beta}$ is as in (\ref{Fbeta}),
and $n=\left(
n_{x},n_{v}\right) $ is the normal vector to $\partial \mathcal{R}_{\delta }$
pointing towards $\mathcal{R}_{\delta }.$  
\begin{equation}
G_{\alpha }\left( x,v\right) =x^{\alpha }\Lambda _{\alpha }\left( \zeta
\right) \ ,\ \ \zeta =\frac{v}{\left( 9x\right) ^{\frac{1}{3}}}\ \ 
\label{Galpha}
\end{equation}%
with%
\begin{equation}
\Lambda _{\alpha }\left( \zeta \right) =U\left( -\alpha ,\frac{2}{3};-\zeta
^{3}\right) >0\ \ \text{for }\zeta \in \mathbb{R\ }  \label{LambdaU}
\end{equation}%
where we denote as $U(a,b;z)$ the classical Tricomi confluent hypergeometric
functions (cf. \cite{Ab}).

Note that from Proposition 4.3 in \cite{HVJ2}
\begin{equation}
\frac{C_{\ast }}{9^{\frac{2}{3}}}=\frac{\pi }{3}\left( \sin \left( \pi
\alpha \right) +\sqrt{3}\cos \left( \pi \alpha \right) \right) -2\cos \left(
\pi \left( \beta +\frac{1}{3}\right) \right) \log \left( r\right)
\label{W1E5a}
\end{equation}.

We will denote from now on $\lim_{\left( x,v\right) \rightarrow \left(
0,0\right) }\left( \mathcal{L}\varphi \right) \left( x,v\right) =\left(
\mathcal{L}\varphi \right) \left( 0,0\right) .$ We then define:%
\begin{eqnarray}
\left( \Omega _{pt,sub}\varphi \right) \left( x,v\right) &=&\left( \mathcal{L%
}\varphi \right) \left( x,v\right) \ ,\ \ \text{if}\ \ \left( x,v\right)
\neq \left( 0,0\right) ,\ \infty  \label{S7E3a} \\
\left( \Omega _{pt,sub}\varphi \right) \left( 0,0\right) &=&\left( \mathcal{L%
}\varphi \right) \left( 0,0\right) =\mu _{\ast }\left\vert C_{\ast
}\right\vert \mathcal{A}\left( \varphi \right) \ \ ,\ \ \left( \Omega
_{pt,sub}\varphi \right) \left( \infty \right) =\left( \mathcal{L}\varphi
\right) \left( \infty \right)  \notag
\end{eqnarray}

We will not make explicit the dependence of the operators $\Omega _{pt,sub}$
in $\mu _{\ast }$ for the sake of simplicity. Notice that trapping boundary
conditions reduce formally to the case $\mu _{\ast }=0$ and nontrapping
boundary conditions to the case $\mu _{\ast }=\infty .$

\subsubsection{The case $r>r_{c}$\label{Supercrit}}

In this case we only consider the case of nontrapping boundary conditions.
We then define $\Omega _{\sup }\varphi $ by means of (\ref{diffOp}) in the
domain:%
\begin{equation}
\mathcal{D}(\Omega _{sup})=\left\{ \varphi ,\mathcal{L}\varphi \in C\left(
X\right) :\text{ there exists\ }\lim_{\left( x,v\right) \rightarrow \left(
0,0\right) }\left( \mathcal{L}\varphi \right) \left( x,v\right) \right\}
\label{S7E4}
\end{equation}

Notice that in this case the condition (\ref{phi_decom}) does not make sense
if\ $\varphi \in C\left( X\right) $ because $\beta <0.$

We then define

\begin{eqnarray}
\left( \Omega _{sup}\varphi \right) \left( x,v\right) &=&\left( \mathcal{L}%
\varphi \right) \left( x,v\right) \ ,\ \ \text{if}\ \ \left( x,v\right) \neq
\left( 0,0\right) ,\ \infty  \label{S7E4a} \\
\left( \Omega _{sup}\varphi \right) \left( 0,0\right) &=&\lim_{\left(
x,v\right) \rightarrow \left( 0,0\right) }\left( \mathcal{L}\varphi \right)
\left( x,v\right) ,\ \left( \Omega _{sup}\varphi \right) \left( \infty
\right) =\left( \mathcal{L}\varphi \right) \left( \infty \right)  \notag
\end{eqnarray}

\subsection{Adjoint problems to be considered.\label{BVP}}

Given the problem (\ref{S0E1}), (\ref{S0E2}) with any of the
trapping, non-trapping, and mixed boundary conditions we define
the adjoint problem as:%
\begin{equation}
\varphi _{t}+\mathbb{A}\left( \varphi \right) =0\ ,\ \ t\in \left(
0,T\right) ,\ \varphi \left( \cdot ,T\right) =\varphi _{0}\left( \cdot
\right) \   \label{T2E1}
\end{equation}%
where $\varphi \left( \cdot, t\right) \in \mathcal{D}\left( \mathbb{A}\right)
$ for any $t\in \left( 0,T\right) .$ We change the time as $t\rightarrow
\left( T-t\right) $ in order to obtain a forward parabolic problem.
Therefore (\ref{T2E1}) becomes:%
\begin{equation}
\varphi _{t}-\mathbb{A}\left( \varphi \right) =0\ ,\ \ t\in \left(
0,T\right) ,\ \varphi \left( \cdot ,0\right) =\varphi _{0}\left( \cdot
\right)  \label{T2E2}
\end{equation}%
with $\varphi \left( \cdot, t\right) \in \mathcal{D}\left( \mathbb{A}\right) $
for any $t\in \left( 0,T\right) .$ We will say that (\ref{T2E2}) is the
adjoint problem of (\ref{S0E1}), (\ref{S0E2}) with the corresponding
boundary condition-trapping, nontrapping, partially trapping.
Notice that the characterizations of the adjoint operators $\mathbb{A}$
make it possible to reformulate the problem (\ref{T2E2}) as a PDE
problem with suitable boundary conditions along the line $\partial \mathcal{U}=\left\{
\left( x,v\right) =\left( 0,v\right) :v\in \mathbb{R}\text{ }\right\} $ as
well as near the singular point $\left( x,v\right) =\left( 0,0\right) .$ We
now describe in detail this set of PDE problems for the different cases for
further reference.

\subsubsection{The case $r<r_{c}$}

In this case we need to distinguish the cases of the three boundary
conditions-trapping, nontrapping, partially trapping. Using the definitions
of the operators $\Omega _{t,sub},\ \Omega _{nt,sub},\ \Omega _{pt,sub}$ in
Sections \ref{Absorbing}, \ref{Reflecting}, \ref{Mixed} we obtain the
following:

\bigskip

The adjoint problem of (\ref{S0E1}), (\ref{S0E2}) with the nontrapping boundary condition 
\begin{align}
\partial _{t}\varphi -v\partial _{x}\varphi -\partial _{vv}\varphi &
=0,\;\;x>0,\;\;v\in \mathbb{R\ }  \label{T2E8} \\
\varphi (0,-v,t)& =\varphi (0,rv,t),\;\;v>0  \label{T2E9}
\end{align}%
\begin{equation}
\varphi \left( x,v,t\right) =\varphi \left( 0,0,t\right) +\psi \left(
x,v,t\right) ,\ \ \lim_{R\rightarrow 0}\frac{\sup_{x+\left\vert v\right\vert
^{3}=R}\left\vert \psi \left( x,v,t\right) \right\vert }{R^{\beta }}=0,\
t\in \left( 0,T\right)  \label{T2E7}
\end{equation}%
which must be complemented with the initial condition:
\begin{equation}
\varphi \left( x,v,0\right) =\varphi _{0}\left( x,v\right) \   \label{T2E10}
\end{equation}

The adjoint problem of (\ref{S0E1}), (\ref{S0E2}) with the trapping boundary condition is:
\begin{align}
\partial _{t}\varphi -v\partial _{x}\varphi -\partial _{vv}\varphi &
=0,\;\;x>0,\;\;v\in \mathbb{R\ \ }  \label{T3E1} \\
\varphi (0,-v,t)& =\varphi (0,rv,t),\;\;v>0\   \label{T3E2}
\end{align}%
\begin{equation}
\lim_{\left( x,v\right) \rightarrow \left( 0,0\right) }\left( \mathcal{L}%
\varphi \right) \left( x,v,t\right) =0,\ t\in \left( 0,T\right)  \label{T3E3}
\end{equation}%
with $\mathcal{L}$ as in (\ref{diffOp}). A natural initial condition for
this problem is:
\begin{equation}
\varphi \left( x,v,0\right) =\varphi _{0}\left( x,v\right)  \label{T3E4}
\end{equation}

The adjoint problem of (\ref{S0E1}), (\ref{S0E2}) with the partially trapping boundary condition is:%
\begin{align}
\partial _{t}\varphi -v\partial _{x}\varphi -\partial _{vv}\varphi &
=0,\;\;x>0\ ,\;\;v\in \mathbb{R\ \ }  \label{T3E5} \\
\varphi (0,-v,t)& =\varphi (0,rv,t)\ ,\;\;v>0  \label{T3E6}
\end{align}%
\begin{equation}
\lim_{\left( x,v\right) \rightarrow \left( 0,0\right) }\left( \mathcal{L}%
\varphi \right) \left(x,v,t\right) =\mu _{\ast }\left\vert C_{\ast
}\right\vert \mathcal{A}\left( \varphi \right) ,\ t\in \left( 0,T\right) \
\label{T3E7}
\end{equation}%
with $\mathcal{L}$ as in (\ref{diffOp}) and $\mathcal{A}\left( \varphi
\right) $ defined by means of:%
\begin{equation}
\varphi \left( x,v,t\right) =\varphi \left( 0,0,t\right) +\mathcal{A}\left(
\varphi \right) F_{\beta }\left( x,v\right) +\psi \left( x,v,t\right) ,\ \
\lim_{R\rightarrow 0}\frac{\sup_{x+\left\vert v\right\vert ^{3}=R}\left\vert
\psi \left( x,v\right) \right\vert }{R^{\beta }}=0  \label{T3E8}
\end{equation}

We will obtain a well defined initial-boundary value problem with:
\begin{equation}
\varphi \left( x,v,0\right) =\varphi _{0}\left( x,v\right) \   \label{T3E9}
\end{equation}

\subsubsection{The case $r>r_{c}$}

In the case $r>r_{c}$ we only need to consider the case in which the
boundary condition is the nontrapping. Due to the definition of $\Omega _{\sup
} $ in Section \ref{Supercrit} we obtain
that the adjoint of the problem (\ref{S0E1}), (\ref{S0E2}) with the nontrapping boundary
condition can be formulated as:%
\begin{align}
\partial _{t}\varphi -v\partial _{x}\varphi -\partial _{vv}\varphi & =0\
,\;\;x>0,\;\;v\in \mathbb{R\ }  \label{T2E3} \\
\varphi (0,-v,t)& =\varphi (0,rv,t),\;\;v>0  \label{T2E4}
\end{align}%
\begin{equation}
\varphi \left( x,v,t\right) =\varphi \left( 0,0,t\right) +\psi \left(
x,v,t\right) ,\ \ ~\lim_{R\rightarrow 0}\sup_{x+\left\vert v\right\vert
^{3}=R}\left\vert \psi \left( x,v,t\right) \right\vert =0,\ t\in \left(
0,T\right)    \label{T2E5}
\end{equation}%
complemented with:%
\begin{equation}
\varphi \left( x,v,0\right) =\varphi _{0}\left( x,v\right)  \label{T2E6}
\end{equation}

As a next step we will prove that the problems (\ref{T2E3})-(\ref{T2E6}), (%
\ref{T2E8})-(\ref{T2E10}), (\ref{T3E1})-(\ref{T3E4}), (\ref{T3E5})-(\ref%
{T3E9}) can be solved for suitable choices of initial data $\varphi _{0}.$
These solvability results will be used to define a suitable concept of
solution for the boundary value problems (\ref{S0E1}), (\ref{S0E2}) with one
of the boundary conditions-trapping, nontrapping, partially trapping.


%
%
%

\section{Well-posedness of the adjoint problems. \label{WellPosed}}

In this section we will prove the solvability of the adjoint problems in Subsection 2.4. 
Using the Hille-Yoshida theorem will reduce the problem of solving the adjoint problems to the solvability 
of some suitable elliptic equations with some boundary conditions imposed at the singular set. 
These equations have good maximum principle properties and 
they will be solved using a generalization of the well known Perron's method for the solution 
of Laplace's equation in general bounded domains.

We state in the following Theorem the solvability of the adjoint problems described in
Section \ref{BVP}. 

\begin{theorem}
\label{AAW}Suppose that $\Omega _{\sigma }$ is one of the operators $\Omega
_{t,sub},\ \Omega _{nt,sub},\ \Omega _{pt,sub},$ $\Omega _{sup}$ defined in (%
\ref{S7E1})-(\ref{S7E1a}), (\ref{S7E2})-(\ref{S7E2a}), (\ref{S7E3})-(\ref%
{S7E3a}), (\ref{S7E4})-(\ref{S7E4a}) respectively. We can define Markov
semigroups $S_{\sigma }\left( t\right) $ having as generator the operator $%
\Omega _{\sigma }.$ For any $\varphi \in \mathcal{D}\left( \Omega _{\sigma
}\right) $ we can define a function $u\in C^{1}\left( \left[ 0,\infty
\right) :C\left( X\right) \right) $ such that:%
\begin{equation*}
\partial _{t}u=\Omega _{\sigma }u\ \ ,\ \ t\in \left[ 0,\infty \right) \ \
,\ \ u\left( t,\cdot \right) \in \mathcal{D}\left( \Omega _{\sigma }\right)
\ \ \text{if\ \ }t\geq 0\ \ ,\ \ u\left( 0,\cdot \right) =\varphi
\end{equation*}

Moreover, $u$\ is a classical solution of the equation $\partial _{t}u\left(
x,v,t\right) =\mathcal{L}u\left( x,v,t\right) $ for $\left( x,v\right) \neq
\left( 0,0\right) ,\ t>0.$
\end{theorem}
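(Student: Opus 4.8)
The plan is to deduce Theorem \ref{AAW} almost entirely by assembling the pieces already established in the excerpt. The central fact is that each operator $\Omega_\sigma \in \{\Omega_{t,sub}, \Omega_{nt,sub}, \Omega_{pt,sub}, \Omega_{sup}\}$ is a Markov pregenerator (Proposition \ref{PropPG}), is closed (Proposition \ref{Closed}), and satisfies $\mathcal{R}(I - \lambda \Omega_\sigma) = C(X)$ for all sufficiently small $\lambda > 0$ (Propositions \ref{Case_a_sub}, \ref{Case_r}, \ref{Case_m}, \ref{Case_super}, via the equivalence in Proposition \ref{Equivalence}). Therefore $\Omega_\sigma$ is a Markov generator in the sense of Definition \ref{MarGen}, and the Hille--Yosida Theorem (Theorem \ref{HY}) produces a unique Markov semigroup $S_\sigma(t)$ whose generator is $\Omega_\sigma$. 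First I would state this chain explicitly and invoke Theorem \ref{HY} to obtain, for any $\varphi \in \mathcal{D}(\Omega_\sigma)$, the function $u(t) := S_\sigma(t)\varphi$ with the properties $u(t,\cdot) \in \mathcal{D}(\Omega_\sigma)$ for $t \ge 0$, $\frac{d}{dt}(S_\sigma(t)\varphi) = \Omega_\sigma S_\sigma(t)\varphi$, and $u(0,\cdot) = \varphi$.

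Next I would upgrade the differentiability statement to $u \in C^1([0,\infty) : C(X))$. Theorem \ref{HY} gives $\frac{d}{dt}(S_\sigma(t)\varphi) = \Omega_\sigma S_\sigma(t)\varphi$; I would argue that $t \mapsto \Omega_\sigma S_\sigma(t)\varphi$ is continuous in $C(X)$. For this one combines the strong continuity of the semigroup on $C(X)$ (which follows from the Markov semigroup axioms and the Hille--Yosida correspondence — right-continuity plus the semigroup property upgrade to continuity in the standard way) with the closedness of $\Omega_\sigma$: since $S_\sigma(t_n)\varphi \to S_\sigma(t)\varphi$ and, writing $w_n = \Omega_\sigma S_\sigma(t_n)\varphi$, the $w_n$ form a convergent sequence (by the semigroup estimate $\|S_\sigma(t)\psi\| \le \|\psi\|$ applied to $\psi = \Omega_\sigma \varphi$, using $\Omega_\sigma S_\sigma(t)\varphi = S_\sigma(t)\Omega_\sigma \varphi$), Proposition \ref{Closed} forces the limit to equal $\Omega_\sigma S_\sigma(t)\varphi$. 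Hence $u' \in C([0,\infty):C(X))$, i.e. $u \in C^1([0,\infty):C(X))$. This is the step I expect to be the main obstacle, because it requires the identity $\Omega_\sigma S_\sigma(t) = S_\sigma(t)\Omega_\sigma$ on $\mathcal{D}(\Omega_\sigma)$ and a careful handling of strong continuity; both are classical consequences of Hille--Yosida theory but must be cited or spelled out carefully since the excerpt only states the bare correspondence of Theorem \ref{HY}.

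Finally I would establish that $u$ is a classical solution of $\partial_t u(x,v,t) = \mathcal{L}u(x,v,t)$ away from $(0,0)$. Since $u(t,\cdot) \in \mathcal{D}(\Omega_\sigma)$ and, by the definitions in Section \ref{Operators} (see \eqref{S7E1a}, \eqref{S7E2a}, \eqref{S7E3a}, \eqref{S7E4a}), $(\Omega_\sigma u)(x,v,t) = (\mathcal{L}u)(x,v,t)$ for $(x,v) \ne (0,0), \infty$, the semigroup equation $\partial_t u = \Omega_\sigma u$ reads $\partial_t u(x,v,t) = \mathcal{L}u(x,v,t)$ in the distributional sense of Definition \ref{LadjDef} for $(x,v) \ne (0,0)$. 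The hypoellipticity results of Theorem \ref{Hypoell} and Theorem \ref{Regul}, together with classical parabolic regularity in the region $\{v \ne 0\}$ (using $x$ as the time variable and the compatibility condition \eqref{comp1} to transfer regularity across $\{x=0\}$), then imply that $u$ is $C^\infty$ in $(x,v)$ on $\mathcal{U} \times (0,\infty)$ and that $\partial_x u, \partial_v u, \partial_{vv} u, \partial_t u$ are continuous there; hence the equation holds classically. I would remark that the regularity in $t$ is then also classical because $\partial_t u = \mathcal{L}u$ with the right-hand side continuous. This completes the proof.
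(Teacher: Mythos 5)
Your proposal is correct and takes the same approach as the paper, which proves Theorem \ref{AAW} in a single sentence by citing Theorem \ref{HY}, Proposition \ref{PropPG}, Proposition \ref{Closed}, and Propositions \ref{Case_a_sub}, \ref{Case_r}, \ref{Case_m}, \ref{Case_super}; you have simply spelled out the standard Hille--Yosida details (strong continuity, the commutation $\Omega_\sigma S_\sigma(t)=S_\sigma(t)\Omega_\sigma$ on $\mathcal{D}(\Omega_\sigma)$, and the hypoelliptic upgrade to a classical solution) that the paper leaves implicit. One small observation: your closedness step is redundant, since once you have $\Omega_\sigma S_\sigma(t)\varphi=S_\sigma(t)\Omega_\sigma\varphi$ for $\varphi\in\mathcal{D}(\Omega_\sigma)$, the continuity of $t\mapsto\partial_t u(t)=S_\sigma(t)\Omega_\sigma\varphi$ follows directly from strong continuity of the semigroup without any appeal to Proposition \ref{Closed}.
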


We will prove the theorem above at the end of the section.

\begin{remark}
Notice that the boundary condition (\ref{comp1}) is a consequence of
the fact that $u\left( \cdot ,t\right) \in C\left( X\right) $ for any $t\geq
0.$
\end{remark}

In order to prove the well-posedness of the problems (\ref{T2E3})-(\ref{T2E6}), (%
\ref{T2E8})-(\ref{T2E10}), (\ref{T3E1})-(\ref{T3E4}), (\ref{T3E5})-(\ref%
{T3E9}) we will use the version of Hille-Yosida Theorem that we recall in
the next Subsection.

\subsection{Hille-Yosida Theorem.}

We will follow closely the formulation of the Hille-Yosida Theorem in \cite%
{L}, which is particularly well suited for the study of the adjoint problems
summarized in Section \ref{BVP} of Section \ref{AdjProblems}.

The following results are Definition 2.1 and Proposition 2.2, Section 1 of
\cite{L}.

\begin{definition}
\label{MarPre}Let $\Omega $ be a linear operator on the Banach space $%
C\left( X\right) $ and $\mathcal{D}\left( \Omega \right) $ its domain. We
will say that $\Omega $ is a Markov pregenerator if:

\begin{description}
\item[(i)] $1\in \mathcal{D}\left( \Omega \right) ,\ \Omega 1=0.$

\item[(ii)] $\mathcal{D}\left( \Omega \right) $ is dense in $C\left(
X\right) .$

\item[(iii)] If $f\in \mathcal{D}\left( \Omega \right) ,\lambda \geq 0,$ and
$f-\lambda \Omega f=g,$ then $\min_{\zeta \in X}\ f\left( \zeta \right) \geq
\min_{\zeta \in X}\ g\left( \zeta \right) .$
\end{description}
\end{definition}

\begin{lemma}
Suppose that the linear operator $\Omega $ on $C\left( X\right) $ satisfies
that for any $f\in \mathcal{D}\left( \Omega \right) $ such that $f\left(
\eta \right) =\min_{\zeta \in C\left( X\right) }f\left( \zeta \right) ,$ we
have $\Omega f\left( \eta \right) \geq 0.$ Then $\Omega $ satisfies the
property (iii) in the Definition \ref{MarPre}.
\end{lemma}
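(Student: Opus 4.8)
The plan is to prove the contrapositive of property (iii): assuming the hypothesis on $\Omega$ (that $\Omega f$ is nonnegative at any global minimizer of $f$), I will show directly that $f - \lambda \Omega f = g$ forces $\min_X f \geq \min_X g$. The key observation is that $C(X)$ consists of continuous functions on the compact topological space $X$ (cf. Definition \ref{DefX}), so every $f \in \mathcal{D}(\Omega) \subset C(X)$ attains its minimum at some point $\eta \in X$.

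First I would fix $f \in \mathcal{D}(\Omega)$, $\lambda \geq 0$, and $g = f - \lambda \Omega f$, and pick $\eta \in X$ with $f(\eta) = \min_{\zeta \in X} f(\zeta)$ (such $\eta$ exists by compactness of $X$ and continuity of $f$). Then I would evaluate the identity $g = f - \lambda \Omega f$ at the point $\eta$:
\begin{equation*}
g(\eta) = f(\eta) - \lambda (\Omega f)(\eta).
\end{equation*}
By the assumed property, since $f$ attains its global minimum over $X$ at $\eta$, we have $(\Omega f)(\eta) \geq 0$. Combined with $\lambda \geq 0$, this gives $\lambda (\Omega f)(\eta) \geq 0$, hence $g(\eta) \leq f(\eta)$. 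Therefore
\begin{equation*}
\min_{\zeta \in X} g(\zeta) \leq g(\eta) \leq f(\eta) = \min_{\zeta \in X} f(\zeta),
\end{equation*}
which is exactly property (iii) in Definition \ref{MarPre}. This verifies that $\Omega$ satisfies (iii).

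There is no real obstacle here — the argument is a one-line evaluation at the minimizing point — but the one subtlety worth being careful about is the existence of the minimizer $\eta$, which relies precisely on the fact (recorded in the excerpt) that $X$ is a compact topological space and $C(X)$ is its space of continuous functions. I would state this explicitly so the proof is self-contained. (I note in passing that the statement as written uses $\min_{\zeta \in C(X)} f(\zeta)$ where $\min_{\zeta \in X} f(\zeta)$ is surely intended; I would interpret it that way, consistent with how the minimum is written in Definition \ref{MarPre}(iii).)
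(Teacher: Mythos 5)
Your proof is correct and complete: evaluating $g = f - \lambda\Omega f$ at a global minimizer $\eta$ of $f$ (which exists since $X$ is compact and $f\in C(X)$), the hypothesis $\Omega f(\eta)\geq 0$ together with $\lambda\geq 0$ gives $g(\eta)\leq f(\eta)$, hence $\min_X g \leq \min_X f$. The paper itself does not prove this Lemma (it cites Proposition~2.2 of Liggett's book), but your argument is precisely the standard one-line proof used there, including the correct reading of the typo $\min_{\zeta\in C(X)}$ as $\min_{\zeta\in X}$.
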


We recall that an operator $\Omega $ in a Banach space $E$ is closed if its
graph is closed in $E\times E$ (cf. \cite{Brezis}). In our specific setting
this just means the following:

\begin{definition}
$\Omega $ is a closed operator if $\left\{ \left( \varphi ,\Omega \varphi
\right) :\varphi \in \mathcal{D}\left( \Omega \right) \right\} $ is a closed
set in $C\left( X\right) \times C\left( X\right) .$ That is, if $\varphi
_{n}\rightarrow \varphi ^{\ast }$ and $\Omega \varphi _{n}\rightarrow \psi $
in $C\left( X\right) ,$ then $\varphi ^{\ast }\in \mathcal{D}\left( \Omega
\right) $ and $\psi =\Omega \varphi ^{\ast }.$
\end{definition}

The following results are just a reformulation of Proposition 2.6,
Definition 2.7 and Proposition 1.3 of \cite{L}.

\begin{lemma}
Suppose $\Omega $ is a closed Markov pregenerator. Then $\mathcal{R}\left(
I-\lambda \Omega \right) $ is a closed subset of $C\left( X\right) $ for $%
\lambda >0.$
\end{lemma}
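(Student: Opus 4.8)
The plan is the classical argument: first extract from Property (iii) in Definition \ref{MarPre} a contraction estimate for $I-\lambda \Omega$, and then combine that estimate with the closedness of $\Omega$ to conclude that the range is closed. There is no real obstacle here; the only point requiring a little care is deriving the two-sided sup-norm bound from the one-sided minimum principle in Property (iii), using linearity of $\Omega$.

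\textbf{Step 1: the contraction estimate.} I would first show that whenever $\varphi\in\mathcal{D}(\Omega)$, $\lambda\geq 0$ and $g=\varphi-\lambda\Omega\varphi$, one has $\|\varphi\|_{L^\infty(X)}\leq\|g\|_{L^\infty(X)}$. Indeed, Property (iii) applied directly gives $\min_{X}\varphi\geq\min_{X}g\geq-\|g\|_{\infty}$. Applying Property (iii) to $-\varphi\in\mathcal{D}(\Omega)$, which satisfies $(-\varphi)-\lambda\Omega(-\varphi)=-g$ by linearity of $\Omega$, gives $\min_{X}(-\varphi)\geq\min_{X}(-g)$, i.e. $\max_{X}\varphi\leq\max_{X}g\leq\|g\|_{\infty}$. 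Hence $-\|g\|_{\infty}\leq\varphi(\zeta)\leq\|g\|_{\infty}$ for every $\zeta\in X$, which is the claimed estimate. In particular $I-\lambda\Omega$ is injective on $\mathcal{D}(\Omega)$.

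\textbf{Step 2: closedness of the range.} Now let $g_{n}\in\mathcal{R}(I-\lambda\Omega)$ with $g_{n}\to g$ in $C(X)$, and pick $\varphi_{n}\in\mathcal{D}(\Omega)$ with $g_{n}=\varphi_{n}-\lambda\Omega\varphi_{n}$. Since $\Omega$ is linear, $\varphi_{n}-\varphi_{m}\in\mathcal{D}(\Omega)$ and $(\varphi_{n}-\varphi_{m})-\lambda\Omega(\varphi_{n}-\varphi_{m})=g_{n}-g_{m}$, so Step 1 yields $\|\varphi_{n}-\varphi_{m}\|_{\infty}\leq\|g_{n}-g_{m}\|_{\infty}\to 0$. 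Thus $(\varphi_{n})$ is Cauchy in the Banach space $C(X)$ and converges to some $\varphi^{\ast}\in C(X)$. Consequently $\lambda\Omega\varphi_{n}=\varphi_{n}-g_{n}\to\varphi^{\ast}-g$ in $C(X)$, and since $\lambda>0$ we get $\Omega\varphi_{n}\to\lambda^{-1}(\varphi^{\ast}-g)$. Because $\Omega$ is closed, $\varphi^{\ast}\in\mathcal{D}(\Omega)$ and $\lambda\Omega\varphi^{\ast}=\varphi^{\ast}-g$, i.e. $g=\varphi^{\ast}-\lambda\Omega\varphi^{\ast}\in\mathcal{R}(I-\lambda\Omega)$. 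This shows $\mathcal{R}(I-\lambda\Omega)$ is closed in $C(X)$, completing the proof.

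\textbf{Remark on the main point.} The step I expect to need the most attention is Step 1, specifically the observation that one must apply Property (iii) both to $\varphi$ and to $-\varphi$ in order to pass from the one-sided statement $\min\varphi\ge\min g$ to the full contraction bound; everything after that is a routine Cauchy-sequence argument using density only implicitly (we do not need density of $\mathcal{D}(\Omega)$ for this particular statement) and the closedness hypothesis.
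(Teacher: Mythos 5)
Your proof is correct, and it is precisely the standard argument for this statement (Liggett, \emph{Interacting particle systems}, Proposition 2.6 of Section I.2), to which the paper defers without reproducing a proof: apply Property (iii) of Definition \ref{MarPre} to $\varphi$ and to $-\varphi$ to obtain the contraction bound $\|\varphi\|_{\infty}\le\|g\|_{\infty}$ for $g=\varphi-\lambda\Omega\varphi$, then run the Cauchy-sequence argument and close with the closed-graph hypothesis. Your side remarks are also accurate: the two-sided bound genuinely requires the application to $-\varphi$ (the one-sided minimum principle alone is insufficient), and density of $\mathcal{D}(\Omega)$ plays no role in this particular lemma.
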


\begin{definition}
\label{MarGen}$\Omega $ is a Markov generator if it is a closed Markov
pregenerator such that $\mathcal{R}\left( I-\lambda \Omega \right) =C\left(
X\right) $ for all sufficiently small positive $\lambda $.
\end{definition}

\begin{definition}
$\left\{ S\left( t\right) ~|~t\geq 0\right\} $ is a Markov semigroup if it
is a family of operators on $C\left( X\right) $ satisfying the following:

\begin{description}
\item[(i)] $S\left( 0\right) =I.$

\item[(ii)] $\pi :[0,\infty )\rightarrow C\left( X\right) $ defined by $\pi \left(
t\right) =S\left( t\right) f$ is right-continuous for every $f\in C\left(
X\right) .$

\item[(iii)] $S\left( t+s\right) f=S\left( t\right) S\left( s\right) f$ for
all $f\in C\left( X\right) ,s,t\geq 0.$

\item[(iv)] $S\left( t\right) 1\geq 0,t\geq 0.$

\item[(v)] $S\left( t\right) f\geq 0$ for all nonnegative $f\in C\left(
X\right) .$
\end{description}
\end{definition}

The Hille-Yosida Theorem provides a connection between Markov generators and
Markov semigroups. The following version of this Theorem is the one in
Theorem 2.9 of \cite{L}.

\begin{theorem}
\label{HY}(Hille-Yosida Theorem) There is a one-to-one correspondence
between Markov generators on $C\left( X\right) $ and Markov semigroups on $%
C\left( X\right) .$ The correspondence is given by the following:%
\begin{equation*}
\mathcal{D}\left( \Omega \right) =\left\{ f\in C\left( X\right) \ |\
\lim_{t\rightarrow 0^{+}}\frac{S\left( t\right) f-f}{t}\text{ exists}\right\}
\end{equation*}%
and
\begin{equation*}
\Omega f=\lim_{t\rightarrow 0^{+}}\frac{S\left( t\right) f-f}{t},~f\in
\mathcal{D}\left( \Omega \right)
\end{equation*}

If $f\in \mathcal{D}\left( \Omega \right) ,$ then $S\left( t\right) f\in
\mathcal{D}\left( \Omega \right) $ and%
\begin{equation*}
\frac{d}{dt}\left( S\left( t\right) \right) f = \Omega S\left( t\right) f
\end{equation*}
\end{theorem}

\subsection{Comparison Principles and Trace Properties.}

In this Subsection we collect several Maximum Principle properties of the
operator $\mathcal{L}$.

\subsubsection{Basic Definitions.}

Due to the directionality of the transport terms in the operator $\mathcal{L}
$ we do not need to impose conditions in all the boundary of the domain
where the problem is satisfied in order to obtain comparison results. In
order to keep the statements simple we will formulate the Maximum Principle
only for the particular class of domains in Definition \ref{admissible}.

The purpose of the following Definition is to have a suitable unified
notation for the spaces which will be needed to formulate comparison
principles both for the whole space $X$ and the admissible domains $\Xi .$

\begin{definition}
\label{SpaceB}We will define as $L_{b}^{\infty }\left( X\right) $ the space
of functions $\varphi $ such that, for any compact set $K\subset \lbrack
0,\infty )\times (-\infty ,\infty )$ there exists a constant $C=C\left(
K\right) $ such that $\left\Vert \varphi \right\Vert _{L^{\infty }\left(
K\right) }\leq C\left( K\right) .$ In the case of the domains $\Xi $ in
Definition \ref{admissible} we will define as $L_{b}^{\infty }\left( \Xi
\right) $ just the space $L^{\infty }\left( \Xi \right) .$
\end{definition}

We now define a suitable concept of sub/supersolutions in the domains $\Xi $
and $X.$ We recall that the class of functions $\mathcal{F}\left( Y\right) $
is as in (\ref{defF}).

\begin{definition}
\label{supersolDef}Suppose that $Y$ is either the space $X,$ or one of the
admissible domains $\Xi $ defined in Definition \ref{admissible}. We will
say that $\varphi \in L_{b}^{\infty }\left( Y\right) $ is a supersolution
for the operator $\mathcal{L}\left( \cdot \right) +\kappa ,$ with $\kappa
\in \mathbb{R}$, if, for any $\psi \in \mathcal{F}\left( Y\right) ,\ \psi
\geq 0$ the following inequality holds:%
\begin{equation}
-\int_{X}\varphi \left( \mathcal{L}^{\ast }\psi +\kappa \psi \right)
dxdv\geq 0\   \label{Sineq}
\end{equation}%
where in the case of domains containing all or part of the line $\left\{
x=0\right\} $ we assume in addition that $\psi \left( 0,-v\right) =r^{2}\psi
\left( 0,rv\right) ,v>0.$

We will say that $\varphi \in L_{b}^{\infty }\left( Y\right) $ is a
subsolution for the operator $\mathcal{L}+\kappa $ if $\left(
-\varphi \right) $ is a supersolution for the operator $\mathcal{L}+\kappa .$
\end{definition}

\subsubsection{Traces and their properties.}

We will need to use sub and supersolutions having discontinuities in some
vertical lines. This is the main reason to assume only $L_{b}^{\infty }$
regularity in Definition \ref{supersolDef}. Notice that this Definition does
not make any reference to boundary conditions. This is just due to the fact
that it is not possible to define boundary values for a function $\varphi $
which is only in $L^{\infty }.$ However, it turns out to be possible to
define a suitable concept of trace for the supersolutions and subsolutions
in Definition \ref{supersolDef}. This fact will play a crucial role in the
rest of the paper.

\begin{proposition}
\label{Traces}Suppose that $Y$ is as in Definition \ref{supersolDef} and $%
\varphi \in L_{b}^{\infty }\left( Y\right) $ is a supersolution for the
operator $\mathcal{L}\left( \cdot \right) +\kappa I$ for some $\kappa \in
\mathbb{R}$. Let $\omega \in \mathcal{F}\left( Y\right) .$ It is possible to
modify the function $\varphi $ in a set of zero measure in $Y$ to obtain a
function $\varphi $ (also denoted as $\varphi $)\ with the following
properties:

(i) Let $L_{\ast }\subset \bar{Y}$ be any horizontal segment in the sense of
Definition \ref{LineLeft}. Then there exists $\ell _{L_{\ast }}\left(
\varphi ,\omega \right) \in \mathbb{R}$ such that\
\begin{eqnarray}
&&\lim_{\varepsilon \rightarrow 0^{+}}\left( \sup \left\{ \left\vert
\int_{L}\omega \varphi ds-\ell _{L_{\ast }}\left( \varphi ,\omega \right)
\right\vert :L\subset Y\text{ horizontal segment with }\operatorname{dist}{}%
_{H}\left( L,L_{\ast }\right) \leq \varepsilon \right\} \right)  \label{U1E5}
\\
&=&0  \notag
\end{eqnarray}%
with $\operatorname{dist}{}_{H}\left( L,L_{\ast }\right) $ as in (\ref{distH}).
There exists a function $\psi _{L_{\ast }}\in L^{\infty }\left( L_{\ast
}\right) $ such that $\ell _{L_{\ast }}\left( \varphi ,\omega \right)
=\int_{L_{\ast }}\psi _{L_{\ast }}\omega ds.$ Moreover, (\ref{U1E5}) holds
also for any function $\omega $ with the form $\omega \left( x,v\right)
=\zeta \left( x\right) \chi _{L_{\ast }}\left( v\right) ,$ with $\zeta \in
L^{1}\left( L_{\ast }\right) $ and where $\chi _{L_{\ast }}$ is the
characteristic function of the set $L_{\ast }.$

Moreover, there exist also $m_{L_{\ast }}^{+}\left( \varphi ,\omega \right)
,m_{L_{\ast }}^{-}\left( \varphi ,\omega \right) \in \mathbb{R}$ such that,
for any $v_{0}\in L_{\ast }$ we have:%
\begin{eqnarray}
\lim_{\varepsilon \rightarrow 0^{+}}\frac{1}{\varepsilon }\int_{L_{\ast
}\times \left( v_{0},v_{0}+\varepsilon \right) }\omega \partial _{v}\varphi
dxdv &=&m_{L_{\ast }}^{+}\left( \varphi ,\omega \right) \   \label{U2E1} \\
\lim_{\varepsilon \rightarrow 0^{+}}\frac{1}{\varepsilon }\int_{L_{\ast
}\times \left( -\varepsilon +v_{0},v_{0}\right) }\omega \partial _{v}\varphi
dxdv &=&m_{L_{\ast }}^{-}\left( \varphi ,\omega \right) \   \label{U2E2}
\end{eqnarray}%
There exist Radon measures $Q_{L_{\ast }}^{+},Q_{L_{\ast }}^{-}\in \mathcal{M%
}\left( L_{\ast }\right) $ such that $m_{L_{\ast }}^{\pm }\left( \varphi
,\omega \right) =\int_{L_{\ast }}Q_{L_{\ast }}^{\pm }\omega ds.$

(ii) Let $L_{\ast }\subset \bar{Y}$ be any vertical segment in the sense of
Definition \ref{LineLeft} with $x_{0}\in L_{\ast }$. Then there exists $\ell
_{L_{\ast }}^{+}\left( \varphi ,\omega \right) ,\ell _{L_{\ast }}^{-}\left(
\varphi ,\omega \right) \in \mathbb{R}$ such that%
\begin{equation}
\lim_{\varepsilon \rightarrow 0^{+}}\frac{1}{\varepsilon }\int_{L_{\ast
}\times \left( x_{0},x_{0}+\varepsilon \right) }\omega \varphi dxdv=\ell
_{L_{\ast }}^{+}\left( \varphi ,\omega \right) \   \label{U1E6}
\end{equation}%
\begin{equation}
\lim_{\varepsilon \rightarrow 0^{+}}\frac{1}{\varepsilon }\int_{L_{\ast
}\times \left( -\varepsilon +x_{0},x_{0}\right) }\omega \varphi dxdv=\ell
_{L_{\ast }}^{-}\left( \varphi ,\omega \right)  \label{U1E7}
\end{equation}

There exist functions $\varphi _{L_{\ast }}^{+},\varphi _{L_{\ast }}^{-}\in
L^{\infty }\left( L_{\ast }\right) $ such that $\ell _{L_{\ast }}^{\pm
}\left( \varphi ,\omega \right) =\int_{L_{\ast }}\varphi _{L_{\ast }}^{\pm
}\omega ds.$ Moreover (\ref{U1E6}), (\ref{U1E7}) hold for any function $%
\omega $ with the form $\omega \left( x,v\right) =\chi _{L_{\ast }}\left(
x\right) \zeta \left( v\right) ,$ with $\zeta \in L^{1}\left( L_{\ast
}\right) $ and where $\chi _{L_{\ast }}$ is the characteristic function of
the set $L_{\ast }.$\

The same results hold for subsolutions.
\end{proposition}

\begin{remark}
Notice that in principle, for a function in $L^\infty$, the integrals in  (\ref{U1E5})
cannot be expected to be defined. One of the results obtained in Proposition \ref{Traces} 
is that the functions 
$\phi$ satisfying the assumptions of Proposition \ref{Traces}, have enough regularity, 
modifying them if needed in a measure zero set, to make the integrals computed in 
(\ref{U1E5}) well defined.
\end{remark}

\begin{proof}
Suppose that $L_{\ast }$ is a vertical line as in the statement of the
Proposition and $\omega \in \mathcal{F}\left( Y\right) .$ Let us denote as $%
\zeta =\zeta \left( v\right) $ the restriction of $\omega $ to the line $%
L_{\ast }.$ Due to the continuity of $\omega ,$ to prove (\ref{U1E6}) is
equivalent to prove:%
\begin{equation}
\lim_{\varepsilon \rightarrow 0^{+}}\frac{1}{\varepsilon }\int_{L_{\ast
}\times \left( x_{0},x_{0}+\varepsilon \right) }\zeta \varphi dxdv=\ell
_{L_{\ast }}^{+}\left( \varphi ,\omega \right) \   \label{U1E8}
\end{equation}%
where we assume that $\zeta $ is extended from $L_{\ast }$ to $L_{\ast
}\times \left( 0,\varepsilon \right) $ assuming that $\zeta _{x}=0.$ Due to
the smoothness of $\zeta $ we can write $\zeta =\zeta _{+}-\zeta _{-}+R$
where $\zeta _{+}$ and $\zeta _{-}$ are nonnegative and smooth and $R$ is
small in $L^{1}\left( L_{\ast }\right) .$ Due to the boundedness of $\varphi
$ the contribution of $R$ to the left-hand side of (\ref{U1E8}) can be
bounded by $\left\Vert R\right\Vert _{L^{1}\left( L_{\ast }\right) }.$
Therefore,%
\begin{equation*}
\left\vert \lim_{\varepsilon \rightarrow 0^{+}}\frac{1}{\varepsilon }%
\int_{L_{\ast }\times \left( x_{0},x_{0}+\varepsilon \right) }\zeta \varphi
dxdv-\lim_{\varepsilon \rightarrow 0^{+}}\frac{1}{\varepsilon }\int_{L_{\ast
}\times \left( x_{0},x_{0}+\varepsilon \right) }\left( \zeta _{+}-\zeta
_{-}\right) \varphi dxdv\right\vert \leq C\left\Vert R\right\Vert
_{L^{1}\left( L_{\ast }\right) }
\end{equation*}%
Then, proving formulas like (\ref{U1E8}) for $\zeta _{+}$ and $\zeta _{-}$
for a functional $\ell _{L_{\ast }}^{+}\left( \varphi ,\omega \right)
=\int_{L_{\ast }}\psi _{L_{\ast }}\omega ds$ with $\psi _{L_{\ast }}$ as in
the statement of the Proposition, we would obtain (\ref{U1E8}) with a little
remainder $C\left\Vert R\right\Vert _{L^{1}\left( L_{\ast }\right) }$ which
can be made arbitrarily small and the result would follow for $\zeta .$ We
can then restrict ourselves to the case of $\zeta \geq 0.$

We define an auxiliary test function $\bar{\omega}$ by means of:%
\begin{equation*}
\bar{\omega}\left( x,v\right) =\left\{
\begin{array}{c}
\frac{2}{\varepsilon }\left( x-x_0-\frac{\varepsilon }{2}\right) \frac{\zeta
\left( v\right) }{v}\ \ \text{if\ \ }\frac{\varepsilon }{2}\leq x-x_{0}\leq
\varepsilon \\
\frac{1}{\varepsilon }\left( 2\varepsilon -x+x_0\right) \frac{\zeta \left(
v\right) }{v}\ \ \text{if\ \ }\varepsilon \leq x-x_{0}\leq 2\varepsilon \\
0\text{ otherwise}%
\end{array}%
\right.
\end{equation*}%
A density argument shows that (\ref{Sineq}) holds for the test function $%
\bar{\omega}$ in spite of the fact that this function is not in $\mathcal{F}%
\left( Y\right) .$ Therefore, using (\ref{Sineq}), we obtain that%
\begin{equation*}
\frac{2}{\varepsilon }\int_{L_{\ast }\times \left( x_{0}+\frac{\varepsilon }{%
2},x_{0}+\varepsilon \right) }\varphi \zeta -\frac{1}{\varepsilon }%
\int_{\left( x_{0}+\varepsilon ,x_{0}+2\varepsilon \right) }\varphi \zeta
\geq -C\varepsilon
\end{equation*}%
where $C$ is a constant depending of $\left\Vert \varphi \right\Vert
_{L^{\infty }},\ \left\Vert \zeta _{v}\right\Vert _{L^{\infty }},\left\Vert \zeta _{vv}\right\Vert _{L^{\infty }},$ $%
\operatorname{supp}\left( \zeta \right) ,\ L_{\ast }$ and $\kappa .$ It then
follows that for any $\varepsilon _{0}>0$ small the sequence $\left\{ \frac{%
2^{n}}{\varepsilon _{0}}\int_{L_{\ast }\times \left( x_{0}+2^{-n}\varepsilon
_{0},x_{0}+2^{-n+1}\varepsilon _{0}\right) }\varphi \zeta
-C2^{-n+1}\varepsilon _{0}\right\} $ is an increasing bounded sequence.
Therefore, the limit
\begin{equation*}
\lim_{n\rightarrow \infty }\frac{2^{n}}{\varepsilon _{0}}\int_{L_{\ast
}\times \left( x_{0}+2^{-n}\varepsilon _{0},x_{0}+2^{-n+1}\varepsilon
_{0}\right) }\varphi \zeta =\ell _{0}
\end{equation*}%
exists. This implies the existence of the limit $\lim_{n\rightarrow \infty }%
\frac{2^{n}}{\varepsilon _{0}}\int_{L_{\ast }\times \left(
x_{0},x_{0}+2^{-n}\varepsilon _{0}\right) }\varphi \zeta ,$ since we can
write:%
\begin{equation*}
\frac{2^{n}}{\varepsilon _{0}}\int_{L_{\ast }\times \left(
x_{0},x_{0}+2^{-n}\varepsilon _{0}\right) }\varphi \zeta =\frac{2^{n}}{%
\varepsilon _{0}}\sum_{k=0}^{\infty }\int_{L_{\ast }\times \left(
x_{0}+2^{-n-k-1}\varepsilon _{0},x_{0}+2^{-n-k}\varepsilon _{0}\right)
}\varphi \zeta
\end{equation*}%
Using then the approximation
\begin{equation*}
\int_{L_{\ast }\times \left( x_{0}+2^{-n-k-1}\varepsilon
_{0},x_{0}+2^{-n-k}\varepsilon _{0}\right) }\varphi \zeta
=2^{-n-k-1}\varepsilon _{0}\left( \ell _{0}+o\left( 1\right) \right) \text{
as }n\rightarrow \infty
\end{equation*}
we obtain $\lim_{n\rightarrow \infty }\left( \frac{2^{n}}{\varepsilon _{0}}%
\int_{L_{\ast }\times \left( x_{0},x_{0}+2^{-n}\varepsilon _{0}\right)
}\varphi \zeta \right) =\ell _{0}.$ It only remains to show that the limit
is independent of $\varepsilon _{0}$ for any $\varepsilon _{0}\in \left(
0,1\right) .$ Let us take $\varepsilon _{1}\neq \varepsilon _{0}.$ Let $\ell
_{1}$ be $\lim_{n\rightarrow \infty }\frac{2^{n}}{\varepsilon _{1}}%
\int_{L_{\ast }\times \left( x_{0},x_{0}+2^{-n}\varepsilon _{1}\right)
}\varphi \zeta =\ell _{1}.$ Suppose that $n$ is any large integer. For any
such $n,$ we select another integer $m$\ such that $\varepsilon
_{0}2^{-m}<\varepsilon _{1}2^{-n-1}.$ We consider a new test function $\bar{%
\omega}$ given by:%
\begin{equation*}
\bar{\omega}\left( x,v\right) =\left\{
\begin{array}{c}
\frac{2^{n}}{\varepsilon _{0}}\left(x-x_0\right)\frac{\zeta \left( v\right) }{v}\ \ \ \ \ \ \
\ \text{if\ \ }0\leq x-x_{0}\leq \varepsilon _{0}2^{-n} \\
\frac{\zeta \left( v\right) }{v}\text{\ if }\varepsilon _{0}2^{-m}\leq
x-x_{0}\leq \varepsilon _{1}2^{-n-1} \\
\frac{2^{n+1}}{\varepsilon _{1}}\left( \varepsilon _{1}2^{-n}-x+x_0\right) \frac{%
\zeta \left( v\right) }{v}\text{ if }\varepsilon _{1}2^{-n-1}\leq
x-x_{0}\leq \varepsilon _{1}2^{-n} \\
0\text{ otherwise}%
\end{array}%
\right.
\end{equation*}

Using (\ref{Sineq}) we obtain:
\begin{equation*}
\frac{2^{m}}{\varepsilon _{0}}\int_{L_{\ast }\times \left(
x_{0},x_{0}+\varepsilon _{0}2^{-m}\right) }\varphi \zeta -\frac{2^{n+1}}{%
\varepsilon _{1}}\int_{L_{\ast }\times \left( x_{0}+\varepsilon
_{1}2^{-n-1},x_{0}+\varepsilon _{1}2^{-n}\right) }\varphi \zeta \geq
-C\varepsilon _{1}2^{-n}
\end{equation*}%
for some constant $C$ independent of $n,m.$ Taking the limit $n\rightarrow
\infty $ we obtain $\ell _{0}\geq \ell _{1}.$ Reversing the role of $%
\varepsilon _{0},\varepsilon _{1}$ we would obtain $\ell _{1}\geq \ell _{0}$
and the independence of the limit on the choice of $\varepsilon _{0}$
follows.

Notice that the functional which assigns the function $\zeta $ to the limit
\begin{equation*}
\lim_{n\rightarrow \infty }\frac{2^{n}}{\varepsilon _{0}}\int_{L_{\ast
}\times \left( x_{0},x_{0}+2^{-n}\varepsilon _{0}\right) }\varphi \zeta
\end{equation*}
can be extended to linear functional in $L^{1}\left( \rho ,R\right) $ for
any $0<\rho <R<\infty .$ Since the dual of $L^{1}\left( \rho ,R\right) $ is $%
L^{\infty }\left( \rho ,R\right) $ we obtain the representation formula $%
\ell _{L_{\ast }}^{+}\left( \varphi ,\omega \right) =\int_{L_{\ast }}\varphi
_{L_{\ast }}^{+}\omega ds.$ The proof of (\ref{U1E7}) is similar.

Suppose now that $L_{\ast }\subset \bar{Y}$ is a horizontal line and $\omega
\in \mathcal{F}\left( Y\right) .$ Suppose that $v=v_{0}$ in $L_{\ast }.$ We
denote as $\zeta =\zeta \left( x\right) $ the restriction of $\omega $ to
the line $L_{\ast }$ The continuity of $\omega $ implies that (\ref{U1E5})
is equivalent to:%
\begin{equation*}
\lim_{\varepsilon \rightarrow 0^{+}}\int_{L}\zeta \varphi ds=\ell _{L_{\ast
}}\left( \varphi ,\omega \right)
\end{equation*}%
where $\operatorname{dist}{}_{H}\left( L,L_{\ast }\right) =\varepsilon .$ We can
assume, arguing as in the case of vertical lines, that $\zeta \geq 0.$ We
label the lines $L$ by means of the value of $v$ on them, i.e $L=L\left(
v\right) $. We can define a function $v\rightarrow \Phi \left( v\right)
=\int_{L\left( v\right) }\zeta \varphi ds.$ We then consider a test function
$\omega \left( x,v\right) =\zeta \left( x\right) \beta \left( v\right) $ for
some smooth $\beta \geq 0.$ Inequality (\ref{Sineq}) with $\psi =\omega $
yields:%
\begin{equation*}
\int \Phi \left( v\right) \beta _{vv}dv\leq C\int \beta dv
\end{equation*}%
where the constant $C$ depends on $\zeta $ and its derivatives and $\varphi $%
. This implies that the function $\Phi -Cv^{2}$ is concave and the limit $%
\lim_{v\rightarrow v_{0}}\Phi \left( v\right) $ exists. This limit defines a
linear functional in the set of functions $\zeta $ which can be continuously
extended to $L^{1}\left( L_{\ast }\right) $ due to the boundedness of $%
\varphi .$ Therefore $\ell _{L_{\ast }}\left( \varphi ,\omega \right)
=\int_{L_{\ast }}\varphi _{L_{\ast }}^{+}\omega ds$ and (\ref{U1E5}) follows.

Moreover, the concavity of $\Phi -Cv^{2}$ implies that $\Phi ^{\prime
}\left( v\right) $ exists except at a countable set. Then $\lim_{\varepsilon
\rightarrow 0^{+}}\frac{1}{\varepsilon }\int_{v_{0}}^{v_{0}+\varepsilon
}\Phi ^{\prime }\left( v\right) dv$ exists and we have:%
\begin{equation*}
\Phi ^{\prime }\left( v\right) =\beta _{v}\left( v\right) \int_{L\left(
v\right) }\zeta \left( x\right) \varphi ds+\beta \left( v\right)
\int_{L\left( v\right) }\zeta \left( x\right) \varphi _{v}ds\ \ \,a.e.\
\,v\in \left( v_{0}-\varepsilon ,v_{0}+\varepsilon \right)
\end{equation*}%
where we use the fact that $\int_{L\left( v\right) }\zeta \left( x\right)
\varphi \left( x,v\right) ds=\int_{I}\zeta \left( x\right) \beta \left(
v\right) \varphi \left( x,v\right) ds$ for a fixed interval $I$ for each $v.$
Using (\ref{U1E5}) we obtain the existence of the limit on the left-hand
side of (\ref{U2E1}). This gives (\ref{U2E1}). The proof of (\ref{U2E2}) is
similar. The existence of the measures $Q_{L_{\ast }}^{\pm }$ is a
consequence of the Riesz representation Theorem.

The proof of the results for subsolutions can be obtained in a similar way
with minor changes.
\end{proof}

\begin{remark}
\label{TraceDef}We will denote the function $\varphi _{L_{\ast }}$ in the
case (i) as the trace of $\varphi $ in the horizontal line $L_{\ast }.$ We
will denote the functions $\varphi _{L_{\ast }}^{+},\varphi _{L_{\ast }}^{-}$
in the case (ii)\ as the right-side trace and left-side traces of $\varphi $
respectively. Given a domain $\Xi $ we will use also the terminology\
"interior traces" to denote the trace from the domain $\Xi .$ We will denote
the functions $Q_{L_{\ast }}^{+},Q_{L_{\ast }}^{-}$ as the right-side trace
and left-side traces of $\partial _{v}\varphi $ respectively.
\end{remark}

We remark that Proposition \ref{Traces} allows us to obtain a suitable
definition of boundary data for arbitrary supersolutions of $\mathcal{L}%
\left( \cdot \right) +\kappa $ in the sense of Definition \ref{supersolDef}
defined in an admissible domain $\Xi .$

\begin{definition}
\label{TraceValue}Suppose that $\Xi $ is one admissible domain in the sense
of Definition \ref{admissible} and $\varphi $ is a supersolution of $%
\mathcal{L}\left( \cdot \right) +\kappa $ in the sense of Definition \ref%
{supersolDef}. The admissible boundary $\partial _{a}\Xi $ consists in the
union of some horizontal and vertical segments contained in $\bar{Y}$. The
function $\bar{\varphi}\in L^{\infty }\left( \partial _{a}\Xi \right) $
obtained patching the horizontal and vertical traces will be termed the
boundary value of $\varphi $ in $\partial _{a}\Xi .$
\end{definition}

We will use the following notation. Given any open set $\mathcal{Z}\subset
\left\{ \left( x,v\right) :x>0\text{, }v\in \mathbb{R}\right\} $

\begin{eqnarray}
\mathcal{Z}^{+} &=&\left\{ \left( x,v\right) \in \mathcal{Z}:v>-x\right\}
\notag \\
\mathcal{Z}^{-} &=&\left\{ \left( x,v\right) \in \mathcal{Z}:v<x\right\}
\label{U2E6}
\end{eqnarray}

We will denote as $\mathcal{Z}^{\pm }$ one of the sets $\mathcal{Z}^{+}$ or $%
\mathcal{Z}^{-}.$ Given that one of the main ideas used later is an
adaptation of the classical Perron's method for harmonic functions, we need
to prove that the maximum of subsolutions is a subsolution. Since the
subsolutions considered here might have discontinuities, this requires to
understand the traces of functions that are obtained by means of maxima of
subsolutions. This will be achieved in the following Lemmas.

The following technical Lemma yields a general property of $L^{\infty }$
functions with traces. They are not required to be sub or supersolutions.

\begin{lemma}
\label{TraceInequalities}Suppose that $\varphi _{1},\varphi _{2}$ are
respectively two functions in $L^{\infty }$ defined respectively in the open
sets $\mathcal{W}_{1}^{\pm },\ \mathcal{W}_{2}^{\pm }$\ with $\mathcal{W}%
_{1}^{\pm }\cap \mathcal{W}_{2}^{\pm }\cap \left\{ x=0\right\} \neq
\varnothing .$ Let us assume that the boundary values of $\varphi
_{1},\varphi _{2},\max \left\{ \varphi _{1},\varphi _{2}\right\} $ at $%
\mathcal{W}_{1}^{\pm }\cap \mathcal{W}_{2}^{\pm }\cap \left\{ x=0\right\} $
can be defined in the sense of traces, i.e. there exist functions $\varphi
_{1}^{\pm },\varphi _{2}^{\pm },\left( \max \left\{ \varphi _{1},\varphi
_{2}\right\} \right) ^{\pm }\in L^{\infty }\left( \mathcal{W}_{1}^{\pm }\cap
\mathcal{W}_{2}^{\pm }\cap \left\{ x=0\right\} \right) $ such that%
\begin{eqnarray}
\lim_{\varepsilon \rightarrow 0^{+}}\frac{1}{\varepsilon }\int_{\left[
\mathcal{W}_{1}^{\pm }\cap \mathcal{W}_{2}^{\pm }\cap \left\{ x=0\right\} %
\right] \times \left( 0,\varepsilon \right) }\varphi _{k}\zeta &=&\int_{%
\mathcal{W}_{1}^{\pm }\cap \mathcal{W}_{2}^{\pm }\cap \left\{ x=0\right\}
}\varphi _{k}^{\pm }\zeta \ \ ,\ \ k=1,2  \label{U1E4} \\
\lim_{\varepsilon \rightarrow 0^{+}}\frac{1}{\varepsilon }\int_{\left[
\mathcal{W}_{1}^{\pm }\cap \mathcal{W}_{2}^{\pm }\cap \left\{ x=0\right\} %
\right] \times \left( 0,\varepsilon \right) }\max \left\{ \varphi
_{1},\varphi _{2}\right\} \zeta &=&\int_{\mathcal{W}_{1}^{\pm }\cap \mathcal{%
W}_{2}^{\pm }\cap \left\{ x=0\right\} }\left( \max \left\{ \varphi
_{1},\varphi _{2}\right\} \right) ^{\pm }\zeta  \label{U1E4a}
\end{eqnarray}%
for any function $\zeta \in L_{loc}^{1}\left( \mathcal{W}_{1}^{\pm }\cap
\mathcal{W}_{2}^{\pm }\cap \left\{ x=0\right\} \right) $. Then:%
\begin{equation}
\left( \max \left\{ \varphi _{1},\varphi _{2}\right\} \right) ^{\pm }\geq
\max \left\{ \varphi _{1}^{\pm },\varphi _{2}^{\pm }\right\}  \label{U2E5}
\end{equation}
\end{lemma}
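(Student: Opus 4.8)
The plan is to reduce \eqref{U2E5} to the elementary pointwise inequality $\max\{\varphi_1,\varphi_2\}\geq\varphi_k$ for $k=1,2$, tested against nonnegative functions, and then to pass to the limit using the trace identities \eqref{U1E4}, \eqref{U1E4a} which are assumed to hold. Nothing about sub/supersolutions is used here; only the existence of the three traces, which is part of the hypothesis. So this is really a statement about how the $L^{\infty}$ trace operator interacts with the pointwise maximum, and it should follow from monotonicity plus a density/duality argument.

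First I would fix an arbitrary nonnegative $\zeta\in L^{1}_{loc}\left(\mathcal{W}_1^{\pm}\cap\mathcal{W}_2^{\pm}\cap\{x=0\}\right)$, extended to a one-sided neighbourhood of $\{x=0\}$ by $\zeta_{x}=0$, exactly as in the statement of Proposition \ref{Traces}. Since $\max\{\varphi_1,\varphi_2\}(x,v)\geq\varphi_k(x,v)$ for every $(x,v)$ and $\zeta\geq0$, we have, for each $\varepsilon>0$ and each $k=1,2$,
\begin{equation*}
\frac{1}{\varepsilon}\int_{\left[\mathcal{W}_1^{\pm}\cap\mathcal{W}_2^{\pm}\cap\{x=0\}\right]\times(0,\varepsilon)}\max\{\varphi_1,\varphi_2\}\,\zeta\;\geq\;\frac{1}{\varepsilon}\int_{\left[\mathcal{W}_1^{\pm}\cap\mathcal{W}_2^{\pm}\cap\{x=0\}\right]\times(0,\varepsilon)}\varphi_k\,\zeta .
\end{equation*}
Letting $\varepsilon\rightarrow0^{+}$ and using \eqref{U1E4a} on the left-hand side and \eqref{U1E4} on the right-hand side, we obtain
\begin{equation*}
\int_{\mathcal{W}_1^{\pm}\cap\mathcal{W}_2^{\pm}\cap\{x=0\}}\left(\max\{\varphi_1,\varphi_2\}\right)^{\pm}\zeta\;\geq\;\int_{\mathcal{W}_1^{\pm}\cap\mathcal{W}_2^{\pm}\cap\{x=0\}}\varphi_k^{\pm}\,\zeta ,\qquad k=1,2 .
\end{equation*}

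Since this holds for every nonnegative $\zeta\in L^{1}$ and the traces $\left(\max\{\varphi_1,\varphi_2\}\right)^{\pm},\varphi_k^{\pm}$ are bounded functions on the one-dimensional set $\mathcal{W}_1^{\pm}\cap\mathcal{W}_2^{\pm}\cap\{x=0\}$, the $L^{1}$--$L^{\infty}$ duality yields $\left(\max\{\varphi_1,\varphi_2\}\right)^{\pm}\geq\varphi_k^{\pm}$ almost everywhere on that set, for $k=1,2$; taking the maximum over $k$ gives \eqref{U2E5}. The only point that requires some care is that the trace identities \eqref{U1E4}, \eqref{U1E4a} be available for the full class of nonnegative test functions used above, but this is precisely what is being assumed in the statement of the Lemma (the limits are required to hold for any $\zeta\in L^{1}_{loc}$), so no genuine obstacle arises; the same argument, with the roles of $x$ and $v$ interchanged, also proves the analogous inequality at horizontal segments.
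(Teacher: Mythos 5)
Your proof is correct and rests on the same mechanism as the paper's: test the pointwise inequality $\max\{\varphi_1,\varphi_2\}\geq\varphi_k$ against nonnegative $\zeta$, pass to the limit via \eqref{U1E4}, \eqref{U1E4a}, and use $L^1$--$L^{\infty}$ duality to upgrade the resulting integral inequality to the pointwise inequality $(\max\{\varphi_1,\varphi_2\})^{\pm}\geq\varphi_k^{\pm}$ before taking the maximum over $k$. The paper's version reaches the same conclusion slightly less directly, first normalizing to $F=\varphi_1-\varphi_2$, proving $(\max\{F,0\})^{\pm}\geq\max\{F^{\pm},0\}$ by exactly this duality argument applied to $F$ and to $0$, and then reassembling via linearity of the trace through $\max\{\varphi_1,\varphi_2\}=\varphi_2+\max\{\varphi_1-\varphi_2,0\}$; your direct version skips the normalization and is a little cleaner, but the essential step is identical.
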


\begin{remark}
Due to the nonlinearity of the operator $\max \left( \cdot \right) ,$ we
cannot hope to replace the sign $\geq $ by $=$ in (\ref{U2E5}), because the
convergence in (\ref{U1E4}) is only a weak convergence. Indeed, if we define
the functions $\varphi _{1}\left( x,v\right) =\operatorname{sgn}\left( \cos
\left( 2^{n}v\right) \right) $ for $2^{-n-1}\leq x<2^{-n}$ and $\varphi
_{2}=0$ and we denote as $\left( \cdot \right) ^{+}$ the traces at $x=0,$ we
have $\varphi _{1}^{+}=\varphi _{2}^{+}=0,$ but $\left( \max \left\{ \varphi
_{1},\varphi _{2}\right\} \right) ^{+}=\frac{1}{2}.$
\end{remark}

\begin{remark}
Proposition \ref{Traces} implies (\ref{U1E4}) for sub and supersolutions for
any function $\zeta \in L^{1}\left( \left( \pm \rho ,\pm R\right) \cap
\mathcal{W}_{1}^{\pm }\cap \mathcal{W}_{2}^{\pm }\right) $ and $k=1,2$ and $%
0<\rho <R<\infty $ arbitrary.
\end{remark}

\begin{proof}
Our goal is to show that $$\lim_{\varepsilon \rightarrow 0^{+}}\frac{1}{%
\varepsilon }\int_{\left[ \mathcal{W}_{1}^{\pm }\cap \mathcal{W}_{2}^{\pm
}\cap \left\{ x=0\right\} \right] \times \left( 0,\varepsilon \right) }\max
\left\{ \varphi _{1},\varphi _{2}\right\} \zeta \geq \int_{\mathcal{W}%
_{1}^{\pm }\cap \mathcal{W}_{2}^{\pm }\cap \left\{ x=0\right\} }\max \left\{
\varphi _{1}^{\pm },\varphi _{2}^{\pm }\right\} \zeta $$ for any test
function $\zeta \geq 0.$ In order to simplify the argument we define $%
F=\varphi _{1}-\varphi _{2}.$ Then (\ref{U1E4}) becomes:%
\begin{equation}
\lim_{\varepsilon \rightarrow 0^{+}}\frac{1}{\varepsilon }\int_{\left[
\mathcal{W}_{1}^{\pm }\cap \mathcal{W}_{2}^{\pm }\cap \left\{ x=0\right\} %
\right] \times \left( 0,\varepsilon \right) }F\zeta =\int_{\mathcal{W}%
_{1}^{\pm }\cap \mathcal{W}_{2}^{\pm }\cap \left\{ x=0\right\} }F^{\pm
}\zeta \ \   \label{U1E9}
\end{equation}%
for any $\zeta \in L^{1}\left( \left( \pm \rho ,\pm R\right) \cap \mathcal{W}%
_{1}^{\pm }\cap \mathcal{W}_{2}^{\pm }\cap \left\{ x=0\right\} \right) ,\
0<\rho <R<\infty $ with $F^{\pm }\in L^{\infty }\left( v\gtrless 0\right) .$
We then need to prove:%
\begin{equation*}
\lim_{\varepsilon \rightarrow 0^{+}}\frac{1}{\varepsilon }\int_{\left[
\mathcal{W}_{1}^{\pm }\cap \mathcal{W}_{2}^{\pm }\cap \left\{ x=0\right\} %
\right] \times \left( 0,\varepsilon \right) }\max \left\{ F,0\right\} \zeta
\geq \int_{\mathcal{W}_{1}^{\pm }\cap \mathcal{W}_{2}^{\pm }\cap \left\{
x=0\right\} }\max \left\{ F^{\pm },0\right\} \zeta
\end{equation*}%
or equivalently $\max \left\{ F^{\pm },0\right\} \leq \left( \max \left\{
F,0\right\} \right) ^{\pm }.$ To this end we argue as follows. We will
restrict all the analysis to arbitrary compact sets of $\left\{ v\gtrless
0\right\} ,$ say $\left[ \pm \rho ,\pm R\right] ,$ with $0<\rho <R<\infty ,$
although this restriction will not be made explicit in the formulas for
simplicity. Then, using (\ref{U1E9}):%
\begin{eqnarray*}
\int_{\mathcal{W}_{1}^{\pm }\cap \mathcal{W}_{2}^{\pm }\cap \left\{
x=0\right\} }F^{\pm }\zeta &=&\lim_{\varepsilon \rightarrow 0^{+}}\frac{1}{%
\varepsilon }\int_{\left[ \mathcal{W}_{1}^{\pm }\cap \mathcal{W}_{2}^{\pm
}\cap \left\{ x=0\right\} \right] \times \left( 0,\varepsilon \right) }F\zeta
\\
&\leq &\lim_{\varepsilon \rightarrow 0^{+}}\frac{1}{\varepsilon }\int_{\left[
\mathcal{W}_{1}^{\pm }\cap \mathcal{W}_{2}^{\pm }\cap \left\{ x=0\right\} %
\right] \times \left( 0,\varepsilon \right) }\max \left\{ F,0\right\} \zeta
\\
&=&\int_{\mathcal{W}_{1}^{\pm }\cap \mathcal{W}_{2}^{\pm }\cap \left\{
x=0\right\} }\left( \max \left\{ F,0\right\} \right) ^{\pm }\zeta
\end{eqnarray*}%
and similarly:%
\begin{equation*}
0\leq \lim_{\varepsilon \rightarrow 0^{+}}\frac{1}{\varepsilon }\int_{\left[
\mathcal{W}_{1}^{\pm }\cap \mathcal{W}_{2}^{\pm }\cap \left\{ x=0\right\} %
\right] \times \left( 0,\varepsilon \right) }\max \left\{ F,0\right\} \zeta
=\int_{\mathcal{W}_{1}^{\pm }\cap \mathcal{W}_{2}^{\pm }\cap \left\{
x=0\right\} }\left( \max \left\{ F,0\right\} \right) ^{\pm }\zeta
\end{equation*}%
for any test function $\zeta \geq 0,$ whence $\left( \max \left\{
F,0\right\} \right) ^{\pm }\geq \max \left\{ F^{\pm },0\right\} .$ Then
\begin{equation*}
\left( \max \left\{ \varphi _{1}-\varphi _{2},0\right\} \right) ^{\pm }\geq
\max \left\{ \varphi _{1}^{\pm }-\varphi _{2}^{\pm },0\right\} .
\end{equation*}
Therefore
\begin{eqnarray*}
\left( \max \left\{ \varphi _{1},\varphi _{2}\right\} \right) ^{\pm }
&=&\left( \varphi _{2}+\max \left\{ \varphi _{1}-\varphi _{2},0\right\}
\right) ^{\pm } \\
&=&\varphi _{2}^{\pm }+\left( \max \left\{ \varphi _{1}-\varphi
_{2},0\right\} \right) ^{\pm } \\
&\geq &\varphi _{2}^{\pm }+\max \left\{ \varphi _{1}^{\pm }-\varphi
_{2}^{\pm },0\right\} =\max \left\{ \varphi _{1}^{\pm },\varphi _{2}^{\pm
}\right\}
\end{eqnarray*}
whence the result follows.
\end{proof}

In order to prove that the maximum of subsolutions is a subsolution we need
to be able to replace the inequality in (\ref{U2E5}) by an equality sign in
the case of the trace operator $\left( \cdot \right) ^{-}$ defined in the
domains $\mathcal{W}^{\pm }.$ It turns out that this is possible for the
subsolutions defined in Definition \ref{SubSuper1}. The key point is the
fact that it is possible to derive some regularity estimates for
subsolutions of (\ref{T6E6}), (\ref{T6E7}). More precisely, the following
Lemma shows that for domains contained in $\left\{ v<0\right\} $ the traces
in some vertical lines can be defined not just in the weak topology but in $%
L^{1}.$

\begin{lemma}
\label{L1traces}Suppose that $\Xi $ is an admissible domain as in Definition %
\ref{admissible} with $\Xi \subset \left\{ v<0\right\} .$ Suppose that $%
\varphi \in L^{\infty }\left( \Xi \right) $ satisfies in the sense of
distributions:\
\begin{equation}
\mathcal{L}\varphi =\nu -F\   \label{B6}
\end{equation}%
where $\nu \geq 0$ is a Radon measure in $\Xi $ and $F\in L^{\infty }\left(
\Xi \right) .$ We assume that $\int_{\Xi }\nu <\infty .$ Suppose that the
admissible boundary of $\Xi $\ is:%
\begin{equation*}
\partial _{a}\Xi =\left( \left\{ x_{1}\right\} \times \left[ -\beta ,-\alpha %
\right] \right) \cup \left( \left\{ -\beta \right\} \times \left[ x_{1},x_{2}%
\right] \right) \cup \left( \left\{ -\alpha \right\} \times \left[
x_{1},x_{2}\right] \right)
\end{equation*}%
for some $0<\alpha <\beta ,$ $0\leq x_{1}<x_{2}$. Then, it is possible to
define $\varphi \left( x_{1},\cdot \right) $ in the sense of trace as in\
Proposition \ref{Traces} and we have:%
\begin{equation}
\varphi \left( x,\cdot \right) \rightarrow \varphi \left( x_{1},\cdot
\right) \text{ as }x\rightarrow \left( x_{1}\right) ^{+}\text{ in }%
L^{1}\left( -\beta ,-\alpha \right)  \label{B8}
\end{equation}
\end{lemma}

\begin{proof}
Equation (\ref{B6}) is a forward parabolic equation for increasing $x,$ due
to the fact that $v$ is negative. Moreover, the variable $\Phi =v\varphi $
satisfies a parabolic equation with the form of conservation law, more
precisely we have:%
\begin{equation}
\partial _{x}\Phi +\partial _{v}^{2}\left( \frac{\Phi }{v}\right) =-F+\nu \
\label{U2E8}
\end{equation}%
where $F$ is bounded and $\int_{\left( x_{1},x_{1}+\delta _{0}\right) \times
K}\nu \left( dxdv\right) \leq C_{0}.$ Let us denote as $G\left(
v,w,x,y\right) $ the fundamental solution associated to the left-hand side
of (\ref{U2E8}) in the domain $\left( x_{1},x_{1}+\delta _{0}\right) \times
K,$ where $K=\left[ -\beta ,-\alpha \right] $ \ Notice that the function $%
\Phi $ is well defined in $\left( x_{1},x_{1}+\delta _{0}\right) \times
\partial K$ and in $\left\{ x_{1}\right\} \times K$ in the sense of traces
due to Proposition \ref{Traces}. We can derive a representation formula for $%
\Phi $ using the fundamental solution $G$ (cf. \cite{Fr}):%
\begin{equation}
\Phi \left( x,v\right) =\int_{\partial _{a}\left( \left( x_{1},x\right)
\times K\right) }G\left( v,w,x,y\right) \Phi \left( y,w\right) +\int_{\left(
x_{1},x\right) \times K}G\left( v,w,x,y\right) \left[ -F+\nu \right]
\label{B7}
\end{equation}

Due to the boundedness of $\Phi ,$ it is enough to prove that $\varphi
\left( x,\cdot \right) \rightarrow \varphi \left( x_{1},\cdot \right) $ as $%
x\rightarrow \left( x_{1}\right) ^{+}$ in $L^{1}\left( \bar{K}\right) $ for
any $\tilde{K}\subset \subset K.$ The convergence of the first term on the
right-hand side of (\ref{B7}) follows from classical results for parabolic
equations, as well as the convergence of $\int_{\left( x_{1},x\right) \times
K}G\left( v,w,x,y\right) F$ to zero as $x\rightarrow \left( x_{1}\right)
^{+}.$ It only remains to prove the convergence of $\int_{\left(
x_{1},x_{1}+\delta _{0}\right) \times K}G\left( v,w,x,y\right) \nu \left(
w,y\right) $ to zero as $x\rightarrow \left( x_{1}\right) ^{+}.$ The
contribution to the trace of this term is given by the limit of:%
\begin{equation}
\frac{1}{\varepsilon }\int_{\left( x_{1},x_{1}+\varepsilon \right]
}dx\int_{\left( x_{1},x\right] }dy\int_{K}dw\nu \left( w,y\right) \int_{\bar{%
K}}dvG\left( v,w,x,y\right)  \label{B9}
\end{equation}%
as $\varepsilon \rightarrow 0^{+}.$ Using Fubini's Theorem it is possible to
rewrite this integral as:%
\begin{equation}
\int_{\left( x_{1},x_{1}+\varepsilon \right] }dy\int_{K}dw\nu \left(
w,y\right) \frac{1}{\varepsilon }\int_{\left( y,x_{1}+\varepsilon \right]
}dx\int_{\tilde{K}}dvG\left( v,w,x,y\right)  \label{B9a}
\end{equation}

Standard regularity properties for Radon measures yield%
\begin{equation}
\int_{\left( x_{1},x_{1}+\varepsilon \right] }dy\int_{\tilde{K}}dw\nu \left(
w,y\right) \rightarrow 0\ \ \text{as\ }\varepsilon \rightarrow 0^{+}
\label{X1}
\end{equation}%
whence the integral in (\ref{B9a}) converges to zero as $\varepsilon
\rightarrow 0$ due to the boundedness of $\int_{\tilde{K}}dvG\left(
v,w,x,y\right) .$ Moreover, using the representation formula (\ref{B7}) with
$K$ replaced by $\tilde{K}$ as well as the fact that $\int G\left(
v,w,x,y\right) dv$ is uniformly bounded for $x,y\in \left(
x_{1},x_{1}+\varepsilon \right) $ and $w\in \tilde{K}$ we then obtain, taking
into account also (\ref{X1}) that
\begin{equation*}
\int_{\tilde{K}}dv\int_{\left( x_{1},x\right) \times \bar{K}}G\left(
v,w,x,y\right) F\rightarrow 0
\end{equation*}%
and
\begin{equation*}
\int_{\bar{K}}dv\int_{\left( x_{1},x\right) \times \bar{K}}G\left(
v,w,x,y\right) \nu \left( w,y\right) \rightarrow 0\text{ as }x\rightarrow
x_{1}^{+}.
\end{equation*}%
Then the result follows.
\end{proof}

\subsection{Well-posedness of the Dirichlet problem in admissible domains.}

We now prove the solvability of the Dirichlet problem:%
\begin{eqnarray}
\left( \varphi -\lambda \mathcal{L}\varphi \right) &=&g\ \ \text{in\ }\Xi \
\ ,\varphi \in C\left( \Xi \right) ,\varphi \left( 0,rv\right) =\varphi
\left( 0,-v\right) \ ,v>0,\ \left( 0,v\right) \in \bar{\Xi}  \label{solv_D1}
\\
\varphi &=&h\ \ \text{on }\partial _{a}\Xi  \label{solv_D2}
\end{eqnarray}%
where $\lambda >0,$ $\Xi $ is any of the admissible domains given in
Definition \ref{admissible}, $\partial _{a}\Xi $ is the corresponding
admissible boundary and $h\in L^{\infty }\left( \partial _{a}\Xi \right) $.
Notice that, if $\bar{\Xi}\cap \left\{ x=0\right\} =\varnothing $ the
condition on the boundary values $\left( 0,v\right) \in \bar{\Xi}$ is empty.
We will derive a representation formula for the solutions of (\ref{solv_D1}%
), (\ref{solv_D2}) by means of the stochastic process associated to the
operator $\mathcal{L}$. To this end, we first introduce some notation for
the solutions of some stochastic differential equations. We will write in
the following $\xi =\left( X,V\right) $ for brevity.

\begin{definition}
\label{StocPr}We will denote as $\xi _{t}=\left( X,V\right) $ the unique
strong solution of the Stochastic Differential Equation:%
\begin{equation}
d\xi =d\left(
\begin{array}{c}
X \\
V%
\end{array}%
\right) =\left(
\begin{array}{c}
-V \\
0%
\end{array}%
\right) dt+\sqrt{2}\left(
\begin{array}{c}
0 \\
dB_{t}%
\end{array}%
\right)  \label{difE}
\end{equation}%
where $B_{t}$ is the Brownian motion. We will denote as $\mathbb{P}^{\left(
x,v\right) },\ \mathbb{E}^{\left( x,v\right) }$ the probability and the mean
value associated to the solution of (\ref{difE}) with initial value $\xi
\left( 0\right) =\left( X,V\right) \left( 0\right) =\left( x,v\right) .$
\end{definition}

The existence of the stochastic process $\xi $ in Definition \ref{StocPr} is
a standard consequence of the theory of Stochastic Differential Equations
(cf. \cite{Oks}). Given an admissible domain $\Xi $ as in Definition \ref%
{admissible}, we will denote as $\tau \left( \Xi \right) $ the stopping time
associated to the process $\xi $ in Definition \ref{StocPr} defined as:%
\begin{equation}
\tau \left( \Xi \right) =\inf \left\{ t\geq 0:\xi \left( t\right) \notin \Xi
\right\}  \label{stopptime}
\end{equation}

We will prove the following result:

\begin{proposition}
\label{DirSolv}Suppose that $g\in C\left( \bar{\Xi}\right) ,$ $\Xi $ is one
of the admissible domains in Definition \ref{admissible} and $h\in L^{\infty
}\left( \partial _{a}\Xi \right) $ where $\partial _{a}\Xi $ is the
corresponding admissible boundary of $\Xi .$ Then, there exists a unique
solution of the problem (\ref{solv_D1}), (\ref{solv_D2}) where the boundary
condition (\ref{solv_D2}) is achieved in the sense of trace defined in
Remark \ref{TraceDef}. Moreover, the solution is given by the following
representation formula:%
\begin{equation}
\varphi \left( x,v\right) =\mathbb{E}^{\left( x,v\right) }\left( e^{-\frac{%
\tau \left( \Xi \right) }{\lambda }}h\left( \xi _{\tau \left( \Xi \right)
}^{\left( x,v\right) }\right) \right) +\frac{1}{\lambda }\mathbb{E}^{\left(
x,v\right) }\left( \int_{0}^{\tau \left( \Xi \right) }e^{-\frac{t}{\lambda }%
}g\left( \xi _{t}^{\left( x,v\right) }\right) dt\right) ,  \label{X8}
\end{equation}%
where $\tau \left( \Xi \right) $ is as in (\ref{stopptime}).
\end{proposition}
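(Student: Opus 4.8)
The plan is to prove Proposition \ref{DirSolv} by combining the probabilistic representation formula with the a priori comparison estimates developed earlier in the paper. First I would verify that the right-hand side of \eqref{X8} defines a function $\varphi$ that solves \eqref{solv_D1}; the standard Itô/Dynkin calculus applies to the diffusion $\xi_t$ of Definition \ref{StocPr} since the generator of $\xi_t$ is precisely $\mathcal{L} = D_v^2 + v D_x$ (the drift $-v$ in the $X$-equation produces the $vD_x$ term under the convention used here, with the sign matching $\mathcal{L}^\ast = D_v^2 - vD_x$ appearing in the weak formulation). Applying Itô's formula to $e^{-t/\lambda}\varphi(\xi_t)$ and taking expectations, one obtains that $\varphi$ as given by \eqref{X8} satisfies $\varphi - \lambda\mathcal{L}\varphi = g$ in $\Xi$ in the distributional sense of Definition \ref{LadjDef}; interior continuity and the regularity $D_v\varphi \in C$ then follow from the hypoellipticity statements in Theorem \ref{Regul} and Theorem \ref{Hypoell}. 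The symmetry condition $\varphi(0,rv) = \varphi(0,-v)$ for $v>0$ reflects the fact that when the process hits $\{x=0\}$ with $V<0$ it is continued via the inelastic reflection law $V \mapsto -rV$ in \eqref{S1E2}; this must be built into the definition of the stopping time $\tau(\Xi)$ on the portions of $\partial\Xi$ that lie on $\{x=0, v>0\}$ versus $\{x=0,v<0\}$, consistent with the adjoint/admissible boundary distinction $\partial_a\Xi$ versus $\partial_a^\ast\Xi$ in Definition \ref{admissible}.

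Next I would establish that the boundary condition \eqref{solv_D2} is attained in the trace sense of Remark \ref{TraceDef}. The admissible boundary $\partial_a\Xi$ consists of the pieces of $\partial\Xi$ from which characteristics enter $\Xi$ — the horizontal segments where $v$ crosses transversally and the vertical segments where the sign of $v$ forces entry. On $\partial_{a,h}\Xi$ this is classical parabolic boundary regularity (treating $x$ as a time-like variable on $\{v>0\}$ or reversed time on $\{v<0\}$), so $\mathbb{E}^{(x,v)}(e^{-\tau/\lambda}h(\xi_\tau))$ approaches $h$ continuously as $(x,v)$ approaches a regular boundary point. On the vertical segments, the relevant continuity from inside is exactly the $L^1$-trace statement of Lemma \ref{L1traces} applied with $\nu = 0$, $F$ bounded; combined with the weak-trace existence of Proposition \ref{Traces}, this pins down the boundary value. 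The only subtlety is that $h$ is merely in $L^\infty(\partial_a\Xi)$, so I would approximate $h$ by continuous boundary data $h_n$, solve with $h_n$, use the comparison principle (Definition \ref{supersolDef} and the associated maximum principle) to control $\|\varphi_n - \varphi_m\|_\infty \le \|h_n - h_m\|_{L^\infty}$ since $0 \le e^{-\tau/\lambda} \le 1$, and pass to the limit, invoking the trace-continuity Lemma \ref{TraceConvergence} to ensure the limiting traces agree with $h$ a.e.

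For uniqueness, suppose $\varphi_1, \varphi_2$ are two solutions; then $w = \varphi_1 - \varphi_2$ satisfies $w - \lambda\mathcal{L}w = 0$ in $\Xi$ with zero trace on $\partial_a\Xi$ and the symmetry condition on $\{x=0\}$. I would apply the maximum principle for the operator $\mathcal{L} - \lambda^{-1}I$ — which is encoded in property (iii) of the Markov pregenerator framework of Definition \ref{MarPre} and the comparison setup of Definition \ref{supersolDef} — to conclude $w \le 0$ and $w \ge 0$, hence $w \equiv 0$. Here the fact that the admissible boundary is exactly the "inflow" part of $\partial\Xi$ is what makes the comparison principle hold without boundary data on the remaining (outflow) portion: the directionality of the transport term $vD_x$ means no condition is needed there, which is precisely why Definition \ref{admissible} singles out $\partial_a\Xi$. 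The main obstacle I anticipate is the careful bookkeeping near the points where $\{x=0\}$ meets the boundary of $\Xi$ and where $v$ changes sign (the segments meeting the line $v=0$), since there the stochastic process must be understood with the reflection rule and the trace operators from both sides must be reconciled via the identification built into the topological space $X$ of Definition \ref{DefX}; the formula \eqref{X8} has to be interpreted with $\tau(\Xi)$ accounting for the reflected trajectory, and verifying that this produces exactly the symmetric boundary condition $\varphi(0,rv)=\varphi(0,-v)$ rather than something inconsistent is the delicate step. The interior well-posedness and the outflow-boundary issues are, by contrast, routine given Theorems \ref{Regul}, \ref{Hypoell} and classical parabolic theory.
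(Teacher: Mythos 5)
Your existence and trace arguments track the paper's approach in outline: invoke the probabilistic representation formula (the paper defers to Stroock--Varadhan), extend $h$ by zero off $\partial_a\Xi$ while noting the process exits through $\partial_a\Xi$ almost surely, and recover the $L^\infty$ boundary condition by approximating $h$ with continuous boundary data and passing to the limit in $L^1(\partial_a\Xi)$. A couple of the ingredients you cite are misapplied, though: Lemma \ref{L1traces} is stated only for admissible domains contained in $\{v<0\}$, and your parenthetical about the generator is garbled as written — for $dX=-V\,dt$, $dV=\sqrt{2}\,dB$ the generator is $D_v^2 - vD_x = \mathcal{L}^\ast$, not $\mathcal{L}$ (the paper appears to share this sign ambiguity between Definition \ref{StocPr} and the $\mathcal{L}^\ast$-process in Lemma \ref{comp_nu}, so you may simply be inheriting it). You also propose to modify the stopping time at $\{x=0\}$ so as to encode the reflection law there; the paper does not do this — it keeps $\tau(\Xi)$ exactly as in (\ref{stopptime}) and instead argues that $\mathbb{P}^{(x,v)}\bigl(\xi_{\tau(\Xi)}\in\partial\Xi\setminus\partial_a\Xi\bigr)=0$ from the differentiability and sign of $X'(t)$, so no data is needed on the non-admissible portion. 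Those are two different readings of how (\ref{X8}) produces the condition $\varphi(0,rv)=\varphi(0,-v)$, and only the paper's is compatible with the fixed definition of $\tau$.

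The genuine gap is in your uniqueness argument. You appeal to a maximum principle ``encoded in property (iii) of the Markov pregenerator framework of Definition \ref{MarPre} and the comparison setup of Definition \ref{supersolDef}.'' But Definition \ref{MarPre}(iii) is a property to be \emph{verified}, not a ready-made theorem, and the comparison results the paper actually proves for this operator in admissible domains — Lemma \ref{comp_nu} and Proposition \ref{weak_max}, the latter proved ``along the same lines as the proof of Lemma \ref{comp_nu}'' — take the solvability and representation formula of Proposition \ref{DirSolv} as input. Invoking them here would be circular. The paper's uniqueness proof is self-contained and follows a different route: set $\psi=e^{t/\lambda}\varphi$, a bounded solution of $\psi_t=\mathcal{L}\psi$ with $\psi=0$ on $\partial_a\Xi$; use classical parabolic regularity away from the exceptional boundary points; show $|\psi|\le\varepsilon$ in small neighbourhoods of the singular points $\partial\Xi\cap\{v=0\}$ by the argument of Lemma 3.13 of \cite{HJV}, and near the corner points by classical parabolic theory; then run the classical maximum-principle argument, regularizing with $\pm\delta v^2\pm\delta x$ if the derivatives do not satisfy strict inequalities where extrema are attained. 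Your proposal never addresses the singular points $\partial\Xi\cap\{v=0\}$ or the corners of $\partial\Xi$ — exactly the places where the classical maximum principle for $\mathcal{L}$ fails — so the step ``$w\le 0$ and $w\ge 0$, hence $w\equiv 0$'' does not follow from what you have written.
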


\begin{proof}
The representation formula (\ref{X8}) is standard in the Theory of Diffusion
Processes. It might be found, for instance in \cite{SV72}, Theorem 5.1. We
assume that $h$ has been extended as zero at the set $\partial \Xi \diagdown
\partial _{a}\Xi .$ Notice that the stochastic differential equation (\ref%
{difE}) implies that $\mathbb{P}^{\left( x,v\right) }\left( \left\{ \xi
_{\tau \left( \Xi \right) }^{\left( x,v\right) }\in \partial \Xi \diagdown
\partial _{a}\Xi \right\} \right) =0.$ Indeed, this follows from the
continuity of the paths associated to the process $\xi _{t}$ as well as the
fact that (\ref{difE}) implies that with probability one $X\left( \cdot
\right) $ is differentiable and $X^{\prime }\left( t\right) <0$ if $V\left(
t\right) >0$ and $X^{\prime }\left( t\right) >0$ if $V\left( t\right) <0$.
Notice that in order to obtain (\ref{solv_D2}) in the sense of traces it is
enough to prove that the boundary values are taken in the $L^{1}$ sense. On
the other hand, the formula (\ref{X8}) yields a solution of the boundary
value problem (\ref{solv_D1}), (\ref{solv_D2}) for $h\in C\left( \partial
_{a}\Xi \right) .$ Therefore, the fact that (\ref{X8}) yields the solution
of (\ref{solv_D1}), (\ref{solv_D2}) in the sense of traces follows from the
density of the continuous functions in $L^{\infty }\left( \partial _{a}\Xi
\right) $ using the norm $L^{1}\left( \partial _{a}\Xi \right) .$

Uniqueness of solutions follows from the maximum principle. Indeed, the
difference of two solutions of (\ref{solv_D1}), (\ref{solv_D2}) satisfies $%
\varphi -\lambda \mathcal{L}\varphi =0$ in $\Xi ,$ $\varphi =0\ $on $%
\partial _{a}\Xi .$ Using classical regularity theory for parabolic
equations we can prove that $\varphi \in C^{\infty }$ at all the points of
the boundary of $\Xi $, except at the singular points (i.e. the points of
the boundary where $v=0,$ namely $\partial \Xi \cap \left\{ v=0\right\} $)
and at the points of the boundary of $\partial \Xi $ where the boundary is
not $C^{\infty }$ (i.e. the corner-like points of $\partial \Xi $). Notice
that $\psi =e^{\frac{t}{\lambda }}\varphi $ is a bounded solution of $\psi
_{t}=\mathcal{L}\psi $ satisfying $\psi =0$ on $\partial _{a}\Xi .$ We can
then argue exactly as in the proof of Lemma 3.13 of \cite{HVJ} to show that
for any $\varepsilon >0$ we have $\left\vert \psi \right\vert \leq
\varepsilon $ in a small neighbourhood of the singular points $\partial \Xi
\cap \left\{ v=0\right\} $ and $0\leq t\leq 1.$ Similarly, classical
regularity theory for parabolic equations yields also the inequality $%
\left\vert \psi \right\vert \leq \varepsilon $ in a neighbourhood small
enough of the corner-like points of $\partial \Xi .$ Therefore $\left\vert
\varphi \right\vert \leq \varepsilon $ in a neighbourhood of the singular
point and the corner like points of $\partial \Xi .$ Using the regularity
properties of the function $\varphi $ it turns out that we apply the
classical argument yielding the maximum principle (including a regularizing
factor $\pm \delta v^{2}\pm \delta x$ with $\delta >0$ small if the
derivatives do not satisfy strict inequalities at the points where they
reach maximum or minimum values. It then follows that $\left\vert \varphi
\right\vert $ is bounded by its values in $\partial _{a}\Xi $ or at the
neighbourhoods indicated above of the singular point or the corner-like
points. Since we have $\varphi =0\ $on $\partial _{a}\Xi $ it then follows
that $\left\vert \varphi \right\vert \leq \varepsilon $ in $\Xi .$ Since $%
\varepsilon $ is arbitrarily small it then follows that $\varphi =0$ and the
uniqueness of (\ref{solv_D1}), (\ref{solv_D2}) follows.
\end{proof}

\begin{remark}
In the case of admissible domains contained in one of the half-planes $%
\left\{ v>0\right\} $ or $\left\{ v<0\right\} $ we can prove the existence and
uniqueness of solutions using the classical theory of parabolic equations.
\end{remark}

We will use a similar representation formula to the one obtained in
Proposition \ref{DirSolv} for the adjoint problem $\psi -\lambda \mathcal{L}%
^{\ast }\left( \psi \right) =g.$ Notice that in the case of the adjoint
problem the admissible boundary of the domains $\Xi $ is not $\partial
_{a} \Xi $ but the adjoint admissible boundary $\partial _{a}^{\ast
}\Xi $ defined in Definition \ref{admissible}.

\begin{lemma}
\label{comp_nu}Suppose%
\begin{equation}
\varphi -\lambda \mathcal{L}\left( \varphi \right) =\nu ~\ \text{in }\Xi \
,\ \ \varphi =0~\ \text{on }\partial _{a}\Xi ,\   \label{D2}
\end{equation}%
where $\lambda >0,$ $\Xi $ is any of the admissible domains given in
Definition \ref{admissible}, $\partial _{a}\Xi $ is the corresponding
admissible boundary and $\nu \geq 0$ is a Radon measure, and satisfies $%
\int_{S}\nu >0$ for some compact set $S\subset \Xi $ with\ $\operatorname{dist}%
\left( S,\partial \Xi \right) \geq \frac{d\left( \Xi \right) }{4},$ where $%
d\left( \Xi \right) $ is defined as%
\begin{equation}
d\left( \Xi \right) =\min \left\{ \operatorname{dist}\left( L_{1},L_{2}\right)
:L_{1},L_{2}\text{ are parallel lines and }\Xi \text{ is contained between
them}\right\}  \label{T7E3}
\end{equation}%
Then there exists a constant $C_{\ast }>0$ depending only on $\operatorname{diam}%
\left( \Xi \right) $, $d\left( \Xi \right) ,$ and $\max_{\left( x,v\right)
\in \Xi }\left\vert v\right\vert $ such that:
\begin{equation*}
\varphi \geq C_{\ast }\int_{S}\nu >0
\end{equation*}%
in the set $D=\left\{ \left( x,v\right) \in \Xi :\operatorname{dist}\left( \left(
x,v\right) ,\partial _{a}\Xi \right) \geq \frac{d\left( \Xi \right) }{4}%
\right\} .$
\end{lemma}

\begin{remark}
Notice that we assume $\operatorname{dist}\left( S,\partial \Xi \right) \geq
\frac{d\left( \Xi \right) }{4}$ where $\partial \Xi $ is the whole
topological boundary of $\Xi $ and we obtain estimates for $\varphi $ in the
set $D,$ which is separated from the admissible boundary of $\Xi $ at least
the distance $\frac{d\left( \Xi \right) }{4}.$
\end{remark}

\begin{proof}
We prove the result by duality. To this end we define a test function $\psi $
by means of the solution of the problem:%
\begin{equation}
\psi -\lambda \mathcal{L}^{\ast }\left( \psi \right) =\zeta \ \ \text{in\ }%
\Xi \ \ ,\ \ \psi =0\ \ \text{on\ }\partial _{a}^{\ast }\Xi  \label{D6}
\end{equation}%
where $\zeta \in C^{\infty },$ $\zeta =0$ if $\left( x,v\right) \notin \Xi
\diagdown D$, $0\leq \zeta \leq 1,\ \zeta =1$ in an admissible domain $\hat{D%
}\subset \Xi $ with $\operatorname{dist}\left( \hat{D},\partial _{a}\Xi \right) =%
\frac{3d\left( \Xi \right) }{8}$ and $\hat{D}\subset \subset D.$ We can
obtain a representation formula for $\psi $ analogous to (\ref{X8}) in which
we just replace the stochastic process $\xi _{t}=\left( X,V\right) $ by one
solving the Stochastic Differential Equation:%
\begin{equation*}
d\xi =d\left(
\begin{array}{c}
X \\
V%
\end{array}%
\right) =\left(
\begin{array}{c}
V \\
0%
\end{array}%
\right) dt+\sqrt{2}\left(
\begin{array}{c}
0 \\
dB_{t}%
\end{array}%
\right)
\end{equation*}

Notice that in this case we have $\mathbb{P}^{\left( x,v\right) }\left(
\left\{ \xi _{\tau \left( \Xi \right) }^{\left( x,v\right) }\in \partial \Xi
\diagdown \partial _{a}^{\ast }\Xi \right\} \right) =0.$ Therefore, since $%
\psi =0$ at $\partial _{a}^{\ast }\Xi $ we have:%
\begin{equation}
\psi \left( x,v\right) =\frac{1}{\lambda }\mathbb{E}^{\left( x,v\right)
}\left( \int_{0}^{\tau \left( \Xi \right) }e^{-\frac{t}{\lambda }}\zeta
\left( \xi _{t}^{\left( x,v\right) }\right) dt\right)  \label{D7}
\end{equation}

On the other hand, we denote by $\chi _{S}$ the characteristic function
of $S$ and we define $\bar{\varphi}$ as the unique solution of:%
\begin{equation}
\bar{\varphi}-\lambda \mathcal{L}\left( \bar{\varphi}\right) =\nu \chi
_{S}~\ \text{in }\Xi \ ,\ \ \bar{\varphi}=0~\ \text{on }\partial _{a}\Xi
\label{D3}
\end{equation}

Existence of solutions of this problem can be obtained approximating the
nonnegative measure $\nu $ by a sequence of $C^{\infty }$ functions $\nu
_{n} $ such that $\nu _{n}\rightharpoonup \nu $ as $n\rightarrow \infty .$
We denote the corresponding solutions of (\ref{D3}) with source term $\nu
_{n}\chi _{S}$ as $\bar{\varphi}_{n}.$ We have $\bar{\varphi}_{n}\geq 0,$ $%
\bar{\varphi}_{n}$ is a $C^{\infty }$ function at the interior of the domain
$\Xi \diagdown \bar{S}.$ Integrating the corresponding equations (\ref{D3})
in $\Xi $ we obtain:%
\begin{equation}
\int_{\Xi }\bar{\varphi}_{n}dxdv=\int_{S}\nu _{n}dxdv+\lambda \int_{\partial
_{h}\Xi }\partial _{n}\bar{\varphi}_{n}ds+\lambda \int_{\partial _{v}\Xi
\diagdown \partial _{a}\Xi }v\bar{\varphi}_{n}n_{x}ds  \label{D4}
\end{equation}%
where we denote as $\partial _{h}\Xi $,\ $\partial _{v}\Xi $ the horizontal
and vertical parts of the boundary of $\Xi $ respectively.

We remark that deriving this formula we need some careful estimates near the
singular points. More precisely, the results of \cite{HVJ} imply that the
functions $\bar{\varphi}_{n}$ are H\"{o}lder continuous. A rescaling
argument analogous to the one used in \cite{HVJ} implies an estimate with
the form:
\begin{equation}
\left\vert \bar{\varphi}_{n}\right\vert +\left( \left\vert
x-x_{0}\right\vert +\left\vert v\right\vert ^{3}\right) ^{\frac{1}{3}%
}\left\vert \partial _{v}\bar{\varphi}_{n}\right\vert \leq C\left(
\left\vert x-x_{0}\right\vert +\left\vert v\right\vert ^{3}\right) ^{a}
\label{D5}
\end{equation}%
for some $a>0,$ where $\left( x_{0},0\right) $ is one of the singular
points. In order to compute $\int_{\Xi }\bar{\varphi}_{n}dxdv$ we then
approximate the integrals by domains with the form $\int_{\Xi \diagdown
\mathcal{\bar{R}}_{\delta }}\bar{\varphi}_{n}dxdv,$ where the domains $%
\mathcal{\bar{R}}_{\delta }$ are just translated or reflected versions of
the domains $\mathcal{R}_{\delta }$ (cf. (\ref{domainsR})) and they are
choosen in order to exclude the integral \ near the singular points. Using
Gauss's Theorem we obtain boundary terms with the form $\int_{\partial
\mathcal{\bar{R}}_{\delta }}\left[ n_{v}\partial _{v}\bar{\varphi}_{n}+n_{x}v%
\bar{\varphi}_{n}\right] ds$ which converge to zero as $\delta \rightarrow 0$
due to (\ref{D5}). On the other hand $\partial _{n}\bar{\varphi}_{n}\leq 0$
on $\partial _{h}\Xi ,$ and the definition of $\partial _{a}\Xi $ implies
also that $v\bar{\varphi}_{n}n_{x}\leq 0$ on $\partial _{v}\Xi \diagdown
\partial _{a}\Xi .$ Therefore, (\ref{D4}) yields:
\begin{equation*}
\int_{\Xi }\bar{\varphi}_{n}dxdv\leq \int_{S}\nu _{n}dxdv
\end{equation*}

A standard compactness argument then yields that, for a suitable subsequence
$\bar{\varphi}_{n}\rightharpoonup \bar{\varphi}$ where $\bar{\varphi}$ is a
weak solution of (\ref{D3}). Moreover, $\bar{\varphi}$ is smooth in a
neighbourhood of $\partial \Xi ,$ except near the singular points where we
have that $\bar{\varphi}$ is H\"{o}lder due to the results of \cite{HVJ}.
Multiplying (\ref{D3}) by $\psi $ we obtain, using again Gauss's Theorem:%
\begin{equation*}
\int_{\Xi }\left( \psi -\lambda \mathcal{L}^{\ast }\psi \right) \bar{\varphi}%
=\int_{S}\nu \psi
\end{equation*}%
where we use the fact that the contribution near the singular points is zero
due to the h\"{o}lderianity of $\psi ,\ \bar{\varphi}$ and arguing as in the
derivation of (\ref{D4}). Using (\ref{D6}) we obtain $\int_{\Xi }\zeta \bar{%
\varphi}=\int_{S}\nu \psi ,$ whence $\int_{D}\bar{\varphi}\geq \int_{S}\nu
\psi .$ On the other hand, we can estimate a lower estimate of $\psi $ using
(\ref{D7}). Indeed, we have:
\begin{equation}
\mathbb{P}^{\left( x,v\right) }\left( \left\{ \xi _{t}^{\left( x,v\right)
}\in \hat{D},\ \text{for }T\subset \left[ 0,1\right] ,\ t\in T,\ \tau \left(
\Xi \right) \geq 1\right\} \right) \geq b_{\ast }>0\ \ ,\ \ \left(
x,v\right) \in S  \label{Y3}
\end{equation}

This inequality can be proved as follows. Suppose that we denote as $G\left(
x,v,t\right) $ the solution of the problem:%
\begin{eqnarray}
\partial _{t}G+\bar{v}\partial _{\bar{x}}G &=&G_{\bar{v}\bar{v}}\ \text{\
for \ }t\geq 0\ ,\ \ \left( \bar{x},\bar{v}\right) \in \Xi \ \ ,\ \ G=0\
\text{on\ }\partial _{a}^{\ast }\Xi  \label{Y1} \\
G\left( \bar{x},\bar{v},0\right) &=&\delta _{\left( x,v\right) }  \label{Y2}
\end{eqnarray}

Then:%
\begin{equation*}
\mathbb{P}^{\left( x,v\right) }\left( \left\{ \xi _{t^{\ast }}^{\left(
x,v\right) }\in A,\ \xi _{t}^{\left( x,v\right) }\in \Xi \text{ for }t\in %
\left[ 0,t^{\ast }\right] \right\} \right) =\int_{A}G\left( \bar{x},\bar{v}%
,t^{\ast }\right) d\bar{x}d\bar{v}
\end{equation*}%
for any measurable $A\subset \Xi $ and $t^{\ast }>0.$ The solvability of (%
\ref{Y1}), (\ref{Y2}) can be proved with some suitable modification of the
proof in \cite{HVJ} of the existence of solutions for the Kolmogorov
equation in bounded domains with absorbing boundary conditions, or,
alternatively, adapting the representation formula (\ref{D7}) to this case.

We can obtain a subsolution for (\ref{Y1}), (\ref{Y2}) as follows. Suppose
that we denote the fundamental solution of the Kolmogorov operator in the
plane as $Q\left( \bar{x},\bar{v},t\right) .$ We then obtain a family of
subsolutions for (\ref{Y1}), (\ref{Y2}) as
\begin{equation*}
G_{-}\left( \bar{x},\bar{v},t;M\right) =\max \left\{ Q\left( \bar{x},\bar{v}%
,t\right) -M,0\right\} ,\text{ for any }M>0
\end{equation*}
if $0\leq t\leq \bar{t}_{1}$ for a suitable $\bar{t}_{1}>0.$ More precisely,
we will choose $M$ and $\bar{t}_{1}$ as follows. We fix $\bar{t}_{1}>0$
arbitrarily small. We then choose $M>0$ small enough to obtain $Q\left( \bar{%
x},\bar{v},t\right) <M$ in $\partial \Xi $ for $0\leq t\leq \bar{t}_{1}.$
Notice that, since the distance between $\left( x,v\right) $ and $\partial
\Xi $ is positive and fixed and $Q\left( \bar{x},\bar{v},t\right) =\frac{C}{%
t^{2}}\exp \left( -\frac{3\left\vert \bar{x}-x-\left( \frac{v+\bar{v}}{2}%
\right) t\right\vert ^{2}}{t^{3}}-\frac{\left( \bar{v}-v\right) ^{2}}{4t}%
\right) ,$ we have that, if we assume that $\bar{t}_{1}$ is sufficiently
small, there exists a ball $B_{\rho }\left( x,v\right) \subset \Xi $ such
that $G_{-}\left( \bar{x},\bar{v},t_{1};M\right) \geq \delta _{M}>0$ if $%
\left( \bar{x},\bar{v}\right) \in B_{\rho }\left( x,v\right) .$ Moreover,
the ball can be chosen independent of $\bar{t}_{1}$ (even if $\bar{t}_{1}$
is arbitrarily small), reducing the value of $M,\ \delta _{M}$ if needed. We
can construct now a new subsolution with the form $\lambda _{1}G_{-}\left(
\bar{x},\bar{v},t+\bar{t}_{1};M_{1}\right) $ with $\lambda _{1}>0,\ M_{1}>0$
and satisfying $\lambda _{1}G_{-}\left( \bar{x},\bar{v},t+\bar{t}%
_{1};M_{1}\right) \leq G_{-}\left( \bar{x},\bar{v},t_{1};M\right) $ in a
suitable ball $B_{\rho _{1}}\left( x_{1},v_{1}\right) \subset \Xi $ with $%
\left( x_{1},v_{1}\right) $ closer to the set $\hat{D}$ than $\left(
x,v\right) .$ Iterating the argument, and using the fact that the size of
the balls where the subsolutions are positive can be chosen independent on
the times $\bar{t}_{1},\ \bar{t}_{2},...$ we obtain that there exists $%
t_{\ast }\in \left[ 0,\frac{1}{2}\right] $ and a ball $B_{\rho }\subset \hat{%
D}>0$ such that\ $G\left( \bar{x},\bar{v},t_{\ast }\right) >\delta >0.$
Therefore $\mathbb{P}\left( \left\{ \xi _{t_{\ast }}^{\left( x,v\right) }\in
B_{\rho }\right\} \right) >\delta >0.$ We can now estimate $\mathbb{P}\left(
\left\{ \xi _{t}^{\left( x,v\right) }\in \hat{D}\text{ for }t\in \left[
t_{\ast },t_{\ast }+\varepsilon _{0}\right] \right\} \left\vert \xi
_{t_{\ast }}^{\left( x,v\right) }\in B_{\rho }\right. \right) $ for some $%
\varepsilon _{0}>0.$ To this end we define a function $\tilde{G}\left(
x,v,t\right) $ solving:%
\begin{eqnarray}
\partial _{t}\tilde{G}+\bar{v}\partial _{\bar{x}}\tilde{G} &=&\tilde{G}_{%
\bar{v}\bar{v}}\ \text{\ for \ }t\geq 0\ ,\ \ \left( \bar{x},\bar{v}\right)
\in \Xi \ \ ,\ \ \tilde{G}=0\ \text{on\ }\partial _{a}^{\ast }\hat{D}
\label{Y4} \\
\tilde{G}\left( \bar{x},\bar{v},t_{\ast }\right) &=&\frac{1}{\left\vert
B_{\rho }\right\vert }\chi _{B_{\rho }}  \label{Y5}
\end{eqnarray}

Therefore $\mathbb{P}\left( \left\{ \xi _{t}^{\left( x,v\right) }\in \hat{D}%
\text{ for }t\in \left[ t_{\ast },t_{\ast }+\varepsilon _{0}\right] \right\}
\left\vert \xi _{t_{\ast }}^{\left( x,v\right) }\in B_{\rho }\right. \right)
=\tilde{G}\left( \bar{x},\bar{v},t_{\ast }+\varepsilon _{0}\right) .$ Using
subsolutions with the form $G_{-}\left( \bar{x},\bar{v},t;M\right) $ it then
follows that
\begin{equation*}
\mathbb{P}\left( \left\{ \xi _{t}^{\left( x,v\right) }\in \hat{D}\text{ for }%
t\in \left[ t_{\ast },t_{\ast }+\varepsilon _{0}\right] \right\} \left\vert
\xi _{t_{\ast }}^{\left( x,v\right) }\in B_{\rho }\right. \right) \geq
\delta _{\ast }>0
\end{equation*}%
Then (\ref{Y3}) follows.

Then $\psi \left( x,v\right) \geq \frac{e^{-\frac{1}{\lambda }}}{\lambda }%
b_{\ast }$ for $\left( x,v\right) \in S.$ Therefore $\int_{D}\bar{\varphi}%
\geq c_{\ast }\int_{S}\nu .$ Finally the pointwise estimate $\varphi \geq
c_{\ast }\int_{S}\nu $ in the domain $\hat{D}\subset \subset D$ follows from
Harnack inequality for Kolmogorov operators (cf. for instance the results in
\cite{CNP}, \cite{KL}).
\end{proof}

\subsubsection{Comparison Theorems.}

We will use the following version of the weak Maximum Principle.

\begin{proposition}
\label{weak_max}Suppose that $\Xi $ is one of the admissible domains in
Definition \ref{admissible} and that $\varphi \in L_{b}^{\infty }\left( \Xi
\right) $ is a supersolution for the operator $\mathcal{L}\left( \cdot
\right) +\kappa $ for some $\kappa \geq 0.$ Suppose also that $\varphi \geq
0 $ in $\partial _{a}\Xi ,$ where $\varphi $ is defined in $\partial _{a}\Xi
$ in the sense of Definition \ref{TraceValue}.

Then, we have $\varphi \left( x,v\right) \geq 0$ for $a.e.\ \left(
x,v\right) \in \Xi .$ Moreover, there exists $C_{\ast }=C_{\ast }\left( \Xi
\right) ,$ such that
\begin{equation*}
\varphi \left( x,v\right) \geq C_{\ast }\kappa \ \ \text{for any }\left(
x,v\right) \in D
\end{equation*}%
where $d\left( \Xi \right) $ is as in (\ref{T7E3}) and $D=\left\{ \left(
x,v\right) :\operatorname{dist}\left( \left( x,v\right) ,\partial _{a}\Xi \right)
\geq \frac{d\left( \Xi \right) }{4}\right\} $

Moreover, the constant $C_{\ast }$ can be chosen uniformly for sets $\Xi $
contained in a compact set of $[0,\infty )\times (-\infty ,\infty )$ and
have $0<c_{1}\leq d\left( \Xi \right) \leq c_{2}<\infty ,\ 0<c_{1}\leq
\operatorname{diam}\left( \Xi \right) \leq c_{2}<\infty .$
\end{proposition}

\begin{proof}
The proof can be made along the same lines as the proof of Lemma \ref%
{comp_nu}. The problem is linear in $\kappa .$ Then, we can assume that $%
\kappa =1.$ We then assume that $\varphi \in L_{b}^{\infty }\left( \Xi
\right) $ is a supersolution for the operator $\mathcal{L}\left( \cdot
\right) +1.$ We consider any test function $\zeta \geq 0,$ $\zeta \in
C^{\infty }$ supported in the set $D.$ We then define $\psi $ as the
solution of $-\mathcal{L}^{\ast }\left( \psi \right) =\zeta $ in $\Xi ,\
\psi =0$ on $\partial _{a}^{\ast }\Xi .$ We have the representation formula:%
\begin{equation*}
\psi \left( x,v\right) =\mathbb{E}^{\left( x,v\right) }\left( \psi \left(
\xi _{\tau \left( \Xi \right) }^{\left( x,v\right) }\right) dt\right) +%
\mathbb{E}^{\left( x,v\right) }\left( \int_{0}^{\tau \left( \Xi \right)
}\zeta \left( \xi _{t}^{\left( x,v\right) }\right) dt\right) \ ,\ \left(
x,v\right) \in \Xi
\end{equation*}%
with $\xi _{t}^{\left( x,v\right) }$ as in the Proof of Lemma \ref{comp_nu}.
Since the trajectories of the process $\xi _{t}^{\left( x,v\right) }$ leave
the domain across $\partial _{a}^{\ast }\Xi $ we obtain the first term
vanishes. Therefore:
\begin{equation*}
\psi \left( x,v\right) =\mathbb{E}^{\left( x,v\right) }\left( \int_{0}^{\tau
\left( \Xi \right) }\zeta \left( \xi _{t}^{\left( x,v\right) }\right)
dt\right) \geq 0
\end{equation*}

Using $\psi $ as a test function in the definition of supersolution in
Definition \ref{supersolDef} we obtain $-\int \mathcal{L}^{\ast }\left( \psi
\right) \varphi \geq \int \psi ,$ whence $\int \mathcal{\zeta }\varphi \geq
\int \psi .$ We now claim that $\int \psi \geq c_{\ast }\int \zeta .$ To
this end we first notice that%
\begin{equation*}
\psi \left( x,v\right) \geq \mathbb{E}^{\left( x,v\right) }\left( {\large 1}%
_{A_{\left( x,v\right) }}\int_{0}^{1}\zeta \left( \xi _{t}^{\left(
x,v\right) }\right) dt\right)
\end{equation*}%
where $A_{\left( x,v\right) }$ is the set of realizations of the Markov
process $\xi _{t}^{\left( x,v\right) }$ satisfying $\tau \left( \Xi \right)
>1$ and ${\large 1}_{A_{\left( x,v\right) }}$ is the characteristic function
of such a set. Then:%
\begin{equation*}
\psi \left( x,v\right) \geq \int_{0}^{1}\mathbb{E}^{\left( x,v\right)
}\left( {\large 1}_{A_{\left( x,v\right) }}\zeta \left( \xi _{t}^{\left(
x,v\right) }\right) \right) dt
\end{equation*}

Notice that we can then obtain the lower estimate
\begin{equation*}
\mathbb{E}^{\left( x,v\right) }\left( {\large 1}_{A_{\left( x,v\right)
}}\zeta \left( \xi _{t}^{\left( x,v\right) }\right) \right) \geq G\left(
x,v,t\right) ,0\leq t\leq 1
\end{equation*}
and where $G$ is the solution of the problem:%
\begin{eqnarray*}
\partial _{t}G+\bar{v}\partial _{\bar{x}}G &=&G_{\bar{v}\bar{v}}\ \text{\
for \ }t\geq 0\ ,\ \ \left( \bar{x},\bar{v}\right) \in \Xi \ \ ,\ \ G=0\
\text{on\ }\partial _{a}^{\ast }\Xi \\
G\left( \bar{x},\bar{v},0\right) &=&\zeta \left( \bar{x},\bar{v}\right)
\end{eqnarray*}%
Using then that $\zeta $ is supported in $D$ we can argue as in the Proof of
Lemma \ref{comp_nu}, using suitable subsolutions for the fundamental
solution of the problem above to obtain $\psi \left( x,v\right) \geq G\left(
x,v,0\right) \geq c_{0}\int \zeta .$ Then $\int \mathcal{\zeta }\varphi \geq
c_{\ast }\int \zeta $ and since $\zeta $ is arbitrary it follows that $%
\varphi \geq c_{\ast }.$ Using the linearity of the problem the result would
follow.
\end{proof}

We will need also the following version of the strong maximum principle.

\begin{proposition}
\label{strong_max}(Strong maximum principle) Suppose that $\Xi $ is one
admissible domain in the sense of Definition \ref{admissible}. Let $\Phi \in
C\left( \Xi \right) ,$ $\Phi \geq 0$ and and let $\mathcal{L}\Phi =0$ on $%
\Phi \in C\left( \Xi \right) $ with $\Phi $ satisfying (\ref{comp1}) if $%
\overline{\Xi }\cap \left\{ \left( x,v\right) =\left( 0,v\right) ,\ v\in
\mathbb{R}\right\} \neq \varnothing .$ Suppose that there exists a point $%
\left( \hat{x},\hat{v}\right) \in \bar{\Xi}\diagdown \partial _{a}\Xi $ such
that $\Phi \left( \hat{x},\hat{v}\right) =0.$ Then $\Phi \equiv 0.$
\end{proposition}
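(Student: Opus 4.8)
The plan is to reduce the statement to the strong maximum principle for the hypoelliptic Kolmogorov operator $\mathcal{L}=D_{v}^{2}+vD_{x}$: one propagates the zero of $\Phi $ along the characteristic directions of $\mathcal{L}$ and then uses the specific geometry of the admissible domains of Definition \ref{admissible} to conclude that the zero spreads over all of $\bar{\Xi }$. First I would record the routine preliminaries. Since $\mathcal{L}\Phi =0$ in $\Xi $ in the sense of Definition \ref{LadjDef}, the hypoellipticity results collected in Theorems \ref{Regul} and \ref{Hypoell} (which rest on \cite{HBook}, \cite{RS}) show that $\Phi \in C^{\infty }$ in the interior of $\Xi $, with $D_{v}\Phi ,D_{v}^{2}\Phi ,D_{x}\Phi $ continuous there; together with the assumed continuity up to $\bar{\Xi }$ this makes $\Phi $ a classical nonnegative solution, continuous up to the boundary, which attains its global minimum value $0$ at $\left( \hat{x},\hat{v}\right) $.

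The second ingredient is the strong maximum principle for $\mathcal{L}$. The operator satisfies H\"{o}rmander's condition everywhere, since $\partial _{v}$ and $[v\partial _{x},\partial _{v}]=-\partial _{x}$ span $\mathbb{R}^{2}$, including along $\left\{ v=0\right\} $; hence, by the strong maximum principle for hypoelliptic operators of Kolmogorov type (equivalently, in the form already used in the proof of Lemma \ref{comp_nu}, by the Harnack inequality of \cite{CNP}, \cite{KL} for nonnegative solutions of $\mathcal{L}\Phi =0$, applied at the reference point $\left( \hat{x},\hat{v}\right) $ where $\Phi $ vanishes), one obtains that $\Phi \equiv 0$ on the $\mathcal{L}$-propagation set $\mathcal{P}\left( \hat{x},\hat{v}\right) $: the closure in $\bar{\Xi }$ of the set of endpoints of absolutely continuous curves $\gamma \subset \bar{\Xi }$ starting at $\left( \hat{x},\hat{v}\right) $ whose velocity lies at each time in the cone generated by $\pm \partial _{v}$ and the drift $+v\partial _{x}$, the curves being allowed to reflect at $\left\{ x=0\right\} $ in accordance with (\ref{comp1}) whenever $\bar{\Xi }$ meets $\left\{ x=0\right\} $. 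Since $\Phi $ is continuous on $\bar{\Xi }$, it then suffices to show that $\mathcal{P}\left( \hat{x},\hat{v}\right) $ contains the interior of $\Xi $.

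This last claim is the heart of the matter, and I expect it to be the main obstacle. The role of the hypothesis $\left( \hat{x},\hat{v}\right) \in \bar{\Xi }\diagdown \partial _{a}\Xi $ is precisely that $\partial _{a}\Xi $ is, by the construction in Definition \ref{admissible}, the part of $\partial \Xi $ carrying the Cauchy data of the first-order flow of $\mathcal{L}$; thus $\bar{\Xi }\diagdown \partial _{a}\Xi $ consists of interior points together with the outflow and reflection parts of $\partial \Xi $, from which admissible curves genuinely enter $\Xi $. I would then argue, case by case according to Definition \ref{admissible}, that from such a starting point $\mathcal{P}\left( \hat{x},\hat{v}\right) \supseteq \operatorname{int}\Xi $. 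The basic mechanism is that a $\partial _{v}$-move reaches the entire vertical fibre of $\Xi $ through the current point, while a drift move displaces $x$ in the direction $\operatorname{sgn}\left( v\right) $; hence, when $\Xi $ meets $\left\{ v=0\right\} $ (type (a) with $v_{1}<0<v_{2}$, and type (b)), one may reverse the $x$-motion by first diffusing to the opposite sign of $v$, and iterating these two operations sweeps out all of $\operatorname{int}\Xi $ — for type (b) one routes the curves around the rectangular notch, which is possible because $v_{1}<\bar{v}_{1}$ and $\bar{v}_{2}<v_{2}$, and near $\left\{ x=0\right\} $ one continues past the wall using the reflection (\ref{comp1}), which is exactly why such wall points are excluded from $\partial _{a}\Xi $. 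For an admissible domain contained in a half-plane $\left\{ v>0\right\} $ or $\left\{ v<0\right\} $ the drift has a fixed sign and the equation becomes uniformly parabolic after dividing by $\left\vert v\right\vert $; there one uses instead that $\left( \hat{x},\hat{v}\right) \in \bar{\Xi }\diagdown \partial _{a}\Xi $ places the point either in the interior or on the outflow face, so that the classical strong maximum principle for parabolic equations, applied up to and including the final-time level, again forces $\Phi \equiv 0$. Collecting the cases gives $\mathcal{P}\left( \hat{x},\hat{v}\right) \supseteq \operatorname{int}\Xi $, whence $\Phi \equiv 0$ in $\operatorname{int}\Xi $ and, by continuity, on $\bar{\Xi }$. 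The delicate part of the argument is the bookkeeping in this paragraph: producing, in each admissible geometry and for each admissible position of $\left( \hat{x},\hat{v}\right) $ (interior point, outflow edge, the line $v=0$, the wall $x=0$, the notch corners), an explicit chain of $\partial _{v}$- and drift-moves that reaches an open subset of the interior.
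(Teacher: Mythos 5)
Your route is genuinely different from the paper's. The paper does not invoke an abstract hypoelliptic strong maximum principle or a propagation-set theorem at all. Instead it argues concretely: it first propagates strict positivity in the region $\{v>0\}$ using the \emph{classical} parabolic strong maximum principle (with $x$ playing the role of a reversed time variable); it then crosses the degenerate line $\{v=0\}$ by an explicit dyadic rescaling argument that invokes the Harnack-type Lemma \ref{Max1}, producing the quantitative lower bound $\Phi(x_{0},v)\geq Cv^{\gamma }$ for small $v>0$; it then compares $\Phi $ with an explicit homogeneous subsolution built from a Kummer-type ODE to conclude $\Phi (x,0)>0$; and only then does it transfer positivity to $\{v<0\}$ via the wall condition (\ref{comp1}) and a forward-parabolic argument. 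Your proposal replaces this entire construction with a black-box appeal to Bony's theorem (or to the Harnack inequality for Kolmogorov operators). That can be made to work — the operator $D_{v}^{2}+vD_{x}$ does satisfy H\"{o}rmander's condition and the propagation set from a point in a domain spanning $\{v=0\}$ is indeed the whole interior — but the references you cite (\cite{CNP}, \cite{KL}) are stated for the \emph{evolutionary} Kolmogorov operator $\partial _{t}-\partial_{v}^{2}-v\partial _{x}$, not the stationary one, and the translation plus the verification of the Bony hypotheses (including the behaviour of the propagation cone at $\{v=0\}$, and the sign of the drift direction) is exactly the technical content the paper chooses to make explicit via Lemma \ref{Max1} and the subsolution. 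So the proposal buys a conceptually slicker argument at the price of outsourcing the paper's hardest step to a theorem whose applicability here must itself be checked.

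There is also one step in your write-up that is incorrect as stated. For an admissible type (a) domain contained entirely in $\{v>0\}$ (or $\{v<0\}$) you claim that ``the classical strong maximum principle for parabolic equations, applied up to and including the final-time level, again forces $\Phi \equiv 0$''. This is not what the parabolic strong maximum principle gives: a nonnegative caloric function that vanishes at an interior point of a space-time cylinder vanishes only on the past parabolic cone of that point, and can be strictly positive at later times (positivity injected from the lateral boundary). Translated to the present setting, if $\Xi =(x_{1},x_{2})\times (v_{1},v_{2})$ with $v_{1}>0$ and $(\hat{x},\hat{v})$ is an interior point with $x_{1}<\hat{x}$, the zero of $\Phi $ propagates to $\{x\geq \hat{x}\}$ but need not extend to $\{x<\hat{x}\}$. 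So the case-by-case propagation claim fails precisely where the domain does not span $\{v=0\}$, and the argument there would have to be replaced (e.g.\ by restricting the Proposition to domains meeting $\{v=0\}$, which is what all the applications in the paper actually need, or by using the inflow-face location of $(\hat{x},\hat{v})$ more carefully). The paper's own proof sidesteps this issue because it argues directly on the full half-plane, propagating positivity from a point where $\Phi >0$ across $\{v=0\}$ and back, rather than within a one-sided rectangle.
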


In order to prove Proposition \ref{strong_max} we will use the following
Lemma.

\begin{lemma}
\label{Max1}Let the function $G\left( v,x\right) \in C\left( \left[ 1,2%
\right] \times \lbrack 0,\infty )\right) ~$satisfy the equation%
\begin{eqnarray*}
vG_{x} &=&G_{vv},\ 1\leq v\leq 2,\ \ x>0, \\
G\left( v,0\right) &\geq &1\text{\ for }7/4\leq v\leq 2. \\
G\left( 7/4,x\right) &\geq &0\ \text{for }x\geq 0
\end{eqnarray*}

Then there exists $\beta >0$ not depending on $G$, such that $G\left(
v,x\right) \geq \beta >0$ for $5/4\leq v\leq 7/4,\ 1\leq x\leq 2.$
\end{lemma}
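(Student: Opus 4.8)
The plan is to prove Lemma~\ref{Max1} by a quantitative strong maximum principle argument for the Kolmogorov-type operator $vG_x - G_{vv}$ on the strip $[1,2]\times[0,\infty)$, read as a forward parabolic equation in ``time'' $x$ (since $v\geq 1>0$ the drift term $vG_x$ makes $x$ the evolution variable). First I would set up a suitable comparison subsolution $\underline{G}$ supported in the region $5/4\leq v\leq 7/4$, $1\leq x\leq 2$, whose boundary data are dominated by the hypotheses on $G$. The natural candidate is built from the fundamental solution of the constant-coefficient operator (or of $vG_x-G_{vv}$ itself via the results in \cite{HBook}, \cite{RS}): one takes $\underline{G}(v,x) = \max\{\, c_0\, Q(v,v_0,x-x_0) - M,\,0\,\}$ for a heat-type kernel $Q$ centered at a point $(v_0,x_0)$ with $v_0\in(7/4,2)$, $x_0$ slightly negative, and constants $c_0>0$, $M>0$ chosen so that $\underline{G}$ vanishes on the two vertical sides $v=5/4$ and $v=7/4$ and on $x=1$ of a slightly enlarged box, while $\underline{G}(v,0)\leq 1\leq G(v,0)$ on $7/4\leq v\leq 2$. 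Since $Q-M$ is a subsolution of the operator wherever it is positive and $\max\{\cdot,0\}$ preserves subsolutions, $\underline{G}$ is a subsolution; by the weak maximum principle (Proposition~\ref{weak_max}, applied on the admissible box $(5/4,7/4)\times(1,2)$, or directly by classical parabolic comparison since the domain sits in $\{v>0\}$), $G\geq \underline{G}$ on the box. Then one reads off $\beta := \inf_{[5/4,7/4]\times[1,2]}\underline{G}>0$, a quantity depending only on the fixed geometry and not on $G$.

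The key steps, in order: (1) rescale/translate so that the problem is the Kolmogorov operator $vG_x=G_{vv}$ on a fixed strip with $v$ bounded away from $0$, hence uniformly parabolic in the relevant region; (2) quote the existence and Gaussian-type bounds for the fundamental solution $Q(v,w,x)$ of this operator (available from \cite{HBook}, \cite{RS}, or from the explicit Kolmogorov kernel used elsewhere in the paper), together with the fact that $\max\{Q-M,0\}$ is a subsolution; (3) fix the center $(v_0,x_0)$ with $v_0$ near the ``good'' part of the initial line $\{x=0\}$ where $G\geq 1$, choose $M$ and $c_0$ so that the truncated kernel is supported well inside the strip and is $\leq 1$ on the initial segment $\{x=0,\ 7/4\leq v\leq 2\}$ while still being bounded below by a positive constant on $[5/4,7/4]\times[1,2]$ --- this uses only that $Q$ is continuous and strictly positive there and that the support can be controlled by the size of $M$; (4) invoke comparison: on the rectangle $R=(5/4,7/4)\times(1,2)$ the difference $G-\underline{G}$ is a supersolution which is nonnegative on the admissible (parabolic) part of $\partial R$ — namely on $\{x=1\}$ (where $\underline{G}=0$) and on the two vertical lines $v=5/4,\ v=7/4$ (where $\underline{G}=0$ by construction and $G\geq 0$ is given at $v=7/4$; at $v=5/4$ one needs $G\geq 0$, which follows from applying the maximum principle on the larger strip $[1,2]\times[0,\infty)$ using $G(7/4,x)\geq 0$, $G(\cdot,0)\geq 0$). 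Hence $G\geq\underline{G}$ on $R$, and (5) conclude $\beta = \min_{[5/4,7/4]\times[1,2]}\underline{G}>0$ is the desired constant, independent of $G$.

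The main obstacle I anticipate is step (3)–(4): making sure the boundary data for the comparison argument genuinely dominate, in particular verifying $G\geq 0$ on the full topological boundary pieces needed (especially the line $v=5/4$, which is not directly hypothesized) and controlling the support of the truncated kernel so that it never ``leaks'' out through a side where we have no sign information. This requires first running a preliminary maximum-principle argument on $[1,2]\times[0,\infty)$ with the given data $G(v,0)\geq 0$ (extending the pointwise hypothesis $\geq 1$ on $[7/4,2]$ to merely $\geq 0$ everywhere is not assumed, so one must instead keep the subsolution supported strictly inside $[5/4,7/4]$ in $v$ and rely only on $G(7/4,x)\geq 0$ and $G(v,0)\geq 1$ on $[7/4,2]$). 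A clean way around this is to center $\underline{G}$ so that its $v$-support at $x=1$ already lies inside $(5/4,7/4)$, i.e. use the smoothing of the kernel between $x=0$ and $x=1$: at $x=0$ it is supported near $v_0\in(7/4,2)$ where $G\geq 1$, and by $x=1$ it has spread into $(5/4,7/4)$; choosing $M$ large enough (reducing the positive mass but keeping a fixed lower bound on the interior box by a compactness argument identical to the iterated-subsolution construction in the proof of Lemma~\ref{comp_nu}) guarantees the truncation kills the kernel before it reaches $v=7/4$ from the right or $v=5/4$ from the left. Once the support is confined this way, the comparison needs sign information on $G$ only where $\underline{G}$ does not vanish on $\partial R$, namely on the segment of $\{x=0\}$ in $[7/4,2]$ and on $\{v=7/4\}$, both of which are given. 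The rest is routine parabolic regularity and the weak maximum principle already proved in Proposition~\ref{weak_max}.
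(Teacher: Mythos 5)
Your plan takes a genuinely different route from the paper, and as written it contains a gap that the paper's approach is specifically designed to avoid. The paper's proof is a clean two--step argument: first it builds the explicit subsolution $\bar G(v,x)=e^{-\theta x}\sin\bigl(4\pi(v-7/4)\bigr)$ on the sub-strip $[7/4,2]\times[0,\infty)$, vanishing at both lateral ends $v=7/4$ and $v=2$, which gives $G\geq\bar G$ there and hence $\sup_{[1,2]\times[0,1]}G\geq e^{-\theta}$; second, it invokes the \emph{classical parabolic Harnack inequality} to transfer this $\sup$-bound over $[1,2]\times[0,1]$ into an $\inf$-bound over the later box $[5/4,7/4]\times[1,2]$. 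The Harnack step is precisely what carries positivity from the region $\{v\geq 7/4\}$, where the hypotheses give information, across to $\{5/4\leq v\leq 7/4\}$, where they do not.

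Your single truncated-kernel subsolution $\underline G=\max\{c_0Q(v,v_0,x-x_0)-M,0\}$ cannot do this crossing on its own. A heat-type kernel centered at $v_0\in(7/4,2)$ spreads roughly symmetrically in $v$; since $v_0$ is closer to $v=2$ than to $5/4$, the level set $\{c_0Q>M\}$ reaches $v=2$ strictly before it can cover $[5/4,7/4]$. Thus one cannot choose $M$ so that simultaneously $\underline G$ vanishes on $\{v=2\}$ for all $x$ (needed because the lemma gives no sign information on $G(2,x)$, the same implicit hypothesis the paper also uses) and $\underline G>0$ throughout $[5/4,7/4]\times[1,2]$. Your own description reflects this tension: you ask for $\underline G$ to vanish on $v=7/4$ and $v=5/4$ and $x=1$ of an enlarged box, yet also to ``spread into $(5/4,7/4)$'' from $(7/4,2)$; a single non-drifting kernel cannot do both. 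You do gesture at the fix --- chaining the subsolution through a sequence of overlapping balls, as in the proof of Lemma~\ref{comp_nu} --- and that route is indeed viable, but it is not a ``routine'' deferral: it amounts to re-deriving by hand the parabolic Harnack inequality that the paper simply quotes, and it is where all the quantitative content of the lemma resides. If you pursue your route you should carry out the chaining explicitly; if you want a short proof, the paper's structure (explicit sine subsolution confined to $[7/4,2]$, then classical Harnack) is the way to go.
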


\begin{proof}
We prove this lemma by constructing a sub-solution $\bar{G}.$ Let $\bar{G}%
\left( v,x\right) =e^{-\theta x}\sin \left( 4\pi \left( v-7/4\right) \right)
.$ Then for $7/4\leq v\leq 2,$ $\bar{G}$ is a sub-solution of $G$ if we
choose $\theta >0$ in such a way that $\theta \geq 64\pi ^{2}/7.$ Since $%
G\left( v,0\right) \geq 1\geq \bar{G}\left( v,0\right) ,~$for $7/4\leq v\leq
2$ and $G\left( v,x\right) \geq 0=\bar{G}\left( v,x\right) $ at $v=7/4,2$
for all $x>0.$ Thus we have \ $G\left( v,x\right) \geq \bar{G}\left(
v,x\right) $ for $7/4\leq v\leq 2.$ Let $Q=\left[ 1,2\right] \times \left[
0,1\right] $ and $\tilde{Q}=\left[ 5/4,7/4\right] \times \lbrack 1,2],$ then
we get%
\begin{equation*}
\sup_{Q}G\left( v,x\right) \geq e^{-\theta }>0.
\end{equation*}%
We then apply the classical Harnack inequality for parabolic equations (cf.
for instance \cite{Fr}) to obtain%
\begin{equation*}
\inf_{\tilde{Q}}G\left( v,x\right) \geq C\sup_{Q}G\left( v,x\right) \geq
Ce^{-\theta }\equiv \beta >0.
\end{equation*}%
This completes the proof.
\end{proof}

\begin{proof}[Proof of Proposition \protect\ref{strong_max}]
We prove by contradiction that if there exists a point at which $\Phi $ is
positive, then $\Phi $ is positive everywhere. This implies the Proposition.
Suppose that there exists $\left( x^{\ast },v^{\ast }\right) \in \lbrack
0,\infty )\times (-\infty ,\infty )\smallsetminus \left( 0,0\right) $ with $%
\Phi \left( x^{\ast },v^{\ast }\right) >0$. $\mathcal{L}\Phi =0$ reads%
\begin{equation}
-v\Phi _{x}=\Phi _{vv}  \label{Kol}
\end{equation}

We may assume that $v^{\ast }>0$ since the cases $v^{\ast }<0$ can be
treated similarly and if $v^{\ast }=0,$ then we can find $v^{\ast \ast }>0$
such that $\Phi \left( x^{\ast },v^{\ast \ast }\right) >0$ due to the
continuity of $\Phi .$ Our goal is to show that $\Phi \left( x,v\right) >0$ for
all $\left( x,v\right) \in \lbrack 0,\infty )\times (-\infty ,\infty
)\smallsetminus \left( 0,0\right) .$

First we prove the positivity of $\Phi $ for $0\leq x<x^{\ast },v>0.$ The
equation (\ref{Kol}) is a parabolic equation for $v\geq \rho >0,$ $\rho $
arbitrary and thus $\Phi \left( x,v\right) >0$ for $0\leq x<x^{\ast },v>0$
due to the strong maximum principle for non-degenerate parabolic equations
(cf. \cite{Fr}).

Next we prove the positivity for $0\leq x<x^{\ast },v=0.$ We rescale as
follows. For $x\in \left[ x_{0},x_{0}+2^{-n}\right] ,v\in \lbrack 2^{-\left(
n+1\right) /3},2^{\left( 2-n\right) /3}],\ $define for some $x_{0}\in \left(
0,x^{\ast }\right) ,$
\begin{equation*}
\bar{x}=-2^{\left( n+1\right) }\left[ x-\left( x_{0}+2^{-n}\right) \right]
,\ \bar{v}=2^{\left( n+1\right) /3}v,
\end{equation*}%
\begin{equation*}
G_{n}\left( \bar{v},\bar{x}\right) =\frac{\Phi \left( x,v\right) }{M_{n}},\
M_{n}=\inf_{v\in I_{n}}\Phi \left( x_{0}+2^{-n},v\right) ,\ I_{n}=\left[
\frac{7}{4}2^{-\left( n+1\right) /3},2\cdot 2^{-\left( n+1\right) /3}\right]
\end{equation*}

Then the following holds%
\begin{equation*}
\bar{v}\partial _{\bar{x}}G_{n}=\partial _{\bar{v}}^{2}G_{n},\ \bar{x}\in %
\left[ 0,2\right] ,\ \bar{v}\in \left[ 1,2\right] ,
\end{equation*}%
\begin{equation*}
G_{n}\left( \bar{v},0\right) \geq 1\text{ for }7/4\leq \bar{v}\leq 2.
\end{equation*}

If we let $J_{n}=\left[ \frac{5}{4}2^{-\left( n+1\right) /3},\frac{7}{4}%
2^{-\left( n+1\right) /3}\right] ,$ then $I_{n+1}\subset J_{n}$ for each $n.$
We now apply Lemma \ref{Max1} to get%
\begin{equation*}
G_{n}\left( \bar{v},\bar{x}\right) \geq \beta >0,\text{ for }\bar{v}\in %
\left[ 5/4,7/5\right] ,\ \ \bar{x}\in \left[ 1,2\right] ,
\end{equation*}

or equivalently,%
\begin{equation}
\Phi \left( x,v\right) \geq \beta M_{n},\text{ for }\left( x,v\right) \in %
\left[ x_{0},x_{0}+2^{-\left( n+1\right) }\right] \times J_{n}.
\label{Phi_b}
\end{equation}

In particular, at $\bar{x}=1,$%
\begin{equation*}
\frac{\Phi \left( x_{0}+2^{-\left( n+1\right) },v\right) }{M_{n}}\geq \beta
,~\ \text{for }v\in J_{n}.
\end{equation*}

Thus we get, since $I_{n+1}\subset J_{n},$%
\begin{equation*}
M_{n+1}=\inf_{v\in I_{n+1}}\Phi \left( x_{0}+2^{-\left( n+1\right)
},v\right) \geq \beta M_{n},
\end{equation*}
which yields%
\begin{equation*}
M_{n}\geq C\beta ^{n},\ 0<\beta <1
\end{equation*}%
Then from (\ref{Phi_b}), we have for each $n,$%
\begin{equation}
\Phi \left( x_{0},v\right) \geq C\beta ^{n+1},\text{ for }v\in J_{n+1}.
\label{Phi_b1}
\end{equation}

We can rewrite (\ref{Phi_b1}) as%
\begin{eqnarray*}
\Phi \left( x_{0},v\right) &\geq &C\beta ^{n+1}=C\exp \left( -\frac{\log
\left( 2\right) }{3}\left( n+1\right) \right) ^{-\frac{3\log \left( \beta
\right) }{\log \left( 2\right) }} \\
&\geq &Cv^{\gamma },\ \gamma =-\frac{3\log \left( \beta \right) }{\log
\left( 2\right) }>0,\text{ for } v\in J_{n+1}.
\end{eqnarray*}

Now let $0<\tilde{x}<x^{\ast },$ then the argument above yields, for $%
x_{0}\in \left( \tilde{x},\tilde{x}+\delta \right) $ with $\delta >0$ small,
\begin{equation*}
\Phi \left( x_{0},v\right) \geq Cv^{\gamma },\text{ for }\gamma >0.
\end{equation*}

We first construct a sub-solution $G\left( \bar{v},\bar{x}\right) =\left(
\bar{x}\right) ^{\gamma /3}H\left( \frac{\bar{v}}{\left( \bar{x}\right)
^{1/3}}\right) $ satisfying%
\begin{equation}
\bar{v}G_{x}=G_{\bar{v}\bar{v}},\ G\left( \bar{v},0\right) =\left( \bar{v}%
\right) ^{\gamma }  \label{G_eqn}
\end{equation}%
where $H\left( \xi \right) ,\ \xi =\frac{\bar{v}}{\left( \bar{x}\right)
^{1/3}}$ satisfy:%
\begin{equation}
\frac{\gamma }{3}\xi H-\frac{\xi ^{2}}{3}H_{\xi }=H_{\xi \xi },\ \ H\left(
0\right) =0,H_{\xi }\left( 0\right) >0,\ \ H\left( \xi \right) \sim \xi
^{\gamma },\text{ as }\xi \rightarrow \infty .  \label{H_eqn}
\end{equation}%
The existence of a solution $\bar{H}$ of the differential equation in (\ref%
{H_eqn}) with $\bar{H}\left( 0\right) =0,$ $\bar{H}_{\xi }\left( 0\right) =1$
follows from standard ODE theory. Then we can show that $\bar{H}_{\xi
}\left( \xi \right) >0,$ for all $\xi \geq 0,$ because otherwise, let $\xi
^{\ast }$ be the first value where $\bar{H}_{\xi }\left( \xi ^{\ast }\right)
=0,$ then $\bar{H}\left( \xi ^{\ast }\right) >0$ and $\bar{H}_{\xi \xi
}\left( \xi ^{\ast }\right) =\frac{\gamma }{3}\xi ^{\ast }\bar{H}\left( \xi
^{\ast }\right) >0,$ which leads to a contradiction. The asymptotic
behaviors of the solutions of the differential equation in (\ref{H_eqn}) can
be computed using the results in Chapter 13 of \cite{Ab}, whence:%
\begin{equation*}
\bar{H}\left( \xi \right) \sim C\xi ^{\gamma },\ \ \xi \rightarrow \infty
\text{ with }C>0
\end{equation*}%
Then $G\left( \bar{v},\bar{x}\right) =\left( \bar{x}\right) ^{\gamma
/3}H\left( \frac{\bar{v}}{\left( \bar{x}\right) ^{1/3}}\right) $ satisfies (%
\ref{G_eqn}) and for all $\bar{x}>0,$%
\begin{equation*}
G_{\bar{v}}\left( 0^{+},\bar{x}\right) =\left( \bar{x}\right) ^{\gamma
/3-1/3}H_{\xi }\left( 0\right) >0.
\end{equation*}

By a comparison principle and Taylor expansion, there exist constant $C_{0}$
and $C_{1}$ satisfying, for $x\in \lbrack \tilde{x},\tilde{x}+\delta ),$%
\begin{equation*}
\Phi \left( x,v\right) \geq C_{0}G\left( v,x-\left( \tilde{x}+\delta \right)
\right) \geq C_{1}\left( \tilde{x}+\delta -x\right) ^{\frac{\gamma -1}{3}}v,%
\text{ for }v\text{ small.}
\end{equation*}

This implies $\Phi \left( x,0\right) >0$ for $x\in \lbrack \tilde{x},\tilde{x%
}+\delta ),$ since otherwise, $\Phi \left( x,v\right) <0$ for some $v<0$, a
contradiction. In particular, we obtain $\Phi \left( \tilde{x},0\right) >0$
for any $0<\tilde{x}<x^{\ast }.$ Finally, the boundary condition (\ref{comp1}%
) for $\Phi $ and similar arguments applying the forward parabolic equation $%
-v\Phi _{x}=\Phi _{vv},\ v<0$ yield that $\Phi \left( x,v\right) >0$ for all
$\left( x,v\right) \neq \left( 0,0\right) .$ This contradicts the assumption
$\Phi \left( \hat{x},\hat{v}\right) =0$ and the result follows.
\end{proof}

We will use also comparison arguments in a class of domains whose boundary
contains the singular point. We define the following class of domains:%
\begin{equation}
\Lambda _{R}=\left\{ \left( x,v\right) :0<x<R\ ,\ -R^{\frac{1}{3}}<v<rR^{%
\frac{1}{3}}\right\} \ ,\ \ R>0  \label{LambdaSing}
\end{equation}%
as well as the admissible boundary:%
\begin{equation*}
\partial _{a}\Lambda _{R}=\left( \left( 0,R\right) \times \left\{ -R^{\frac{1%
}{3}}\right\} \right) \cup \left( \left( 0,R\right) \times \left\{ rR^{\frac{%
1}{3}}\right\} \right) \cup \left( \left\{ R\right\} \times \left( 0,rR^{%
\frac{1}{3}}\right) \right)
\end{equation*}

We have the following result.

\begin{proposition}
\label{ComparisonSingP}Suppose that $\varphi \in C\left( \overline{\Lambda
_{R}}\right) $ satisfies $\mathcal{L}\varphi =0$ in $Y$ in the sense of
Definition \ref{LadjDef} as well as the boundary condition (\ref{comp1}%
), $\varphi \geq 0$ on $\partial _{a}\Lambda _{R}$. If $r<r_{c}$ we will
assume in addition that $\varphi \left( 0,0\right) =0.$ Then $\varphi \geq 0$
in $\overline{\Lambda _{R}}.$
\end{proposition}

\begin{proof}
We can apply the comparison result Proposition \ref{weak_max} in the domain $%
\Lambda _{R}\diagdown \mathcal{R}_{\delta }$. Applying then Proposition \ref%
{weak_max} to the function $\tilde{\varphi}+\varepsilon $ we obtain $\varphi
\geq -\varepsilon $ in $\Lambda _{R}\diagdown \mathcal{R}_{\delta }.$ Taking
the limit $\varepsilon \rightarrow 0$ and $\delta \rightarrow 0$ we obtain
the result.
\end{proof}

\subsubsection{Asymptotic behaviour of the solutions of some PDEs near the singular point.}

In several of the arguments we will need detailed information about the
asymptotics of the solutions of the problem $\mathcal{L}\varphi =h,$ where $%
\varphi ,\ h\in C\left( X\right) $ and $\mathcal{L}\varphi $ is as in
Definition \ref{LadjDef}. We now prove several results which will be used
later in order to derive such information.

\begin{lemma}
\label{super_sol}Suppose that $0<r\leq 1.$ There exists a function $S\in
C^{2}\left( \mathcal{U}\right) \cap C\left( \overline{\mathcal{U}}\right) $
such that%
\begin{eqnarray}
\mathcal{L}S &=&1\text{ in }\mathcal{U}\   \label{T4E3} \\
S\left( 0,-v\right) &=&S\left( 0,rv\right) \ \ \text{for all }v>0
\label{T4E4} \\
S &=&O\left( x^{\frac{2}{3}}+\left\vert v\right\vert ^{2}\right) \ \ \text{%
as\ }\left( x,v\right) \rightarrow 0\text{\ }  \label{T4E5}
\end{eqnarray}

Moreover, if $r=1$ we have $S\left( x,v\right) =\frac{v^{2}}{2}.$ If $r<1,$
the function $S\left( x,v\right) $ takes positive and negative values in $%
\mathbb{R}^{+}\times \mathbb{R}$.
\end{lemma}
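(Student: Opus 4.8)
The goal is to construct a "corrector" $S$ solving $\mathcal{L}S=1$ in $\mathcal{U}$ with the reflection compatibility condition and controlled growth $O(x^{2/3}+|v|^2)$ near the origin. My plan is to build $S$ as a small perturbation of a particular solution plus a homogeneous solution that fixes the boundary condition. First observe that $\mathcal{L}(\tfrac{v^2}{2}) = \partial_{vv}(\tfrac{v^2}{2}) + v\partial_x(\tfrac{v^2}{2}) = 1$, so $S_0(x,v) = \tfrac{v^2}{2}$ is an explicit particular solution of \eqref{T4E3} with the required growth \eqref{T4E5}. When $r=1$, $S_0(0,-v) = \tfrac{v^2}{2} = \tfrac{(rv)^2}{2} = S_0(0,rv)$, so \eqref{T4E4} holds automatically and we are done with $S=S_0$; this gives the last sentence of the Lemma for $r=1$.

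For $r<1$, the function $S_0$ fails \eqref{T4E4}: on $\{x=0\}$ we have $S_0(0,-v) - S_0(0,rv) = \tfrac{1}{2}(1-r^2)v^2 > 0$ for $v>0$. To correct this I would add a homogeneous solution $H$ of $\mathcal{L}H = 0$ in $\mathcal{U}$ chosen so that $H(0,-v) - H(0,rv) = -\tfrac{1}{2}(1-r^2)v^2$ for $v>0$, giving $S = S_0 + H$ with all three properties. The natural candidate for $H$ is built from the self-similar solutions $G_\gamma(x,v) = x^\gamma \Lambda_\gamma(\zeta)$ of Proposition \ref{Fasympt} (extended as in Remark \ref{AsBeyond} to values of $\gamma$ outside $(-\tfrac56,\tfrac16)$), or more precisely an appropriate solution with the homogeneity $H(\lambda^3 x, \lambda v) = \lambda^2 H(x,v)$, i.e. $H(x,v) = x^{2/3}\Theta(\zeta)$ with $\zeta = v/(9x)^{1/3}$. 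Plugging into $\mathcal{L}H=0$ yields the ODE \eqref{LambdEq} with $\gamma = 2/3$, whose solutions behave like $|\zeta|^2$ as $|\zeta|\to\pm\infty$ with prefactors $K_{2/3} = 2\cos(\pi) = -2$ on the $\zeta\to+\infty$ side and $1$ on the $\zeta\to-\infty$ side. Since $H(0^+,v) \sim \tfrac{1}{9^{2/3}}|v|^2$ for $v>0$ (using $\zeta\to+\infty$, up to the factor $K_{2/3}$) and $\sim \tfrac{1}{9^{2/3}}|v|^2$ for $v<0$, a linear combination of two independent solutions of this ODE can be tuned so that the jump $H(0,-v)-H(0,rv)$ matches $-\tfrac12(1-r^2)v^2$; concretely one needs the coefficients of $v^2$ from the two boundary limits to differ by the right amount, and since the ratio of the $\zeta\to+\infty$ and $\zeta\to-\infty$ prefactors is $K_{2/3}=-2\neq r^{-2}$ for $r<1$ (note $r^{-2}>1>-2$ is false but $r^{-2}\neq -2$ trivially since $r^{-2}>0$), the required linear system is solvable. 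I would set this up carefully and solve for the two constants.

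The main obstacle is regularity and the precise behaviour at the singular point: one must check that the constructed $H = x^{2/3}\Theta(\zeta)$ is genuinely $C^2$ in $\mathcal{U}$ (which follows from analyticity of the confluent hypergeometric functions away from $x=0$, as in Lemma \ref{claim}), that it lies in $C(\overline{\mathcal{U}})$ including up to the singular point (which follows from the $O(x^{2/3}+|v|^2)$ bound, using $\Theta(\zeta) = O(\zeta^2)$ for large $|\zeta|$, exactly as in the Remark after Proposition \ref{Fasympt}), and that the boundary condition \eqref{T4E4} holds pointwise and not merely in an asymptotic sense — this last point requires that the boundary traces $H(0^+,v)$ are actually attained and equal the predicted quadratic expressions, which is where the asymptotics \eqref{asymptotics}, \eqref{asyDer} of $\Lambda_\gamma$ and $\Lambda'_\gamma$ with $\gamma=2/3$ are used to pass to the limit $x\to 0^+$ with $v$ fixed. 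Finally, for the last claim that $S$ changes sign when $r<1$: since $S(0,-v) - S(0,rv) = -\tfrac12(1-r^2)v^2 \neq 0$, $S$ cannot be identically of one sign near $\{x=0\}$ in any neighbourhood of the origin (it must take both the values $S(0,-v)$ and $S(0,rv)$ which differ), and a short argument comparing $S$ with $S_0 = \tfrac{v^2}{2}\geq 0$ plus the growth estimate shows $H$ must be negative somewhere (to cancel part of $S_0$) while $S = S_0+H$ is positive for $|v|$ bounded away from $0$ and $x$ small; spelling this out gives the sign-change assertion.
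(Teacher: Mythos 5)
Your decomposition $S = S_0 + H$ with $S_0 = \tfrac{v^2}{2}$ and $H$ a homogeneous solution of the right scaling degree is exactly the paper's ansatz $S = x^{2/3}Q(v/x^{1/3})$ with $Q = \tfrac{z^2}{2} + \Psi$, and the reduction to the ODE \eqref{kummer} with $\gamma=2/3$ and the matching of the boundary constants via the asymptotics of the Tricomi function are the same. Two points need repair, however.

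First, a small one: you invoke ``a linear combination of two independent solutions'' of the homogeneous ODE, but one of those two solutions is the Kummer function $M(-\tfrac23,\tfrac23;z^3/9)$, which grows exponentially as $z\to-\infty$ and therefore destroys the growth bound \eqref{T4E5}. You must discard it (set its coefficient to zero, as the paper does), and then check that the single remaining constant $c_2$ in $c_2\,U(-\tfrac23,\tfrac23;z^3/9)$ suffices to satisfy the one scalar boundary relation $\bigl(\tfrac12-2c_2\bigr) = \bigl(\tfrac12+c_2\bigr)r^2$, which gives $c_2 = \tfrac{1-r^2}{2(2+r^2)}$ and is always solvable; your remark about $K_{2/3}\neq r^{-2}$ is a red herring.

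Second, the sign-change argument has a genuine gap. You assert ``$S(0,-v)-S(0,rv) = -\tfrac12(1-r^2)v^2\neq 0$'', but this is false for the corrected $S$: the whole point of adding $H$ is that $S$ satisfies $S(0,-v)=S(0,rv)$, so the jump is zero. (The nonzero jump belongs to $S_0$, not to $S$.) The subsequent claim that ``$H$ must be negative somewhere'' does not establish that $S=S_0+H$ is negative somewhere, since $S_0\geq 0$ could dominate. The paper closes this by an explicit evaluation at $v=0$ (equivalently $z=0$), where $S_0$ vanishes: $Q(0) = c_2\,U(-\tfrac23,\tfrac23;0) = \tfrac{(1-r^2)\Gamma(1/3)}{2(2+r^2)\Gamma(-1/3)}$, which is strictly negative when $r<1$ because $\Gamma(-1/3)<0$. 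Thus $S(x,0)=x^{2/3}Q(0)<0$ for $x>0$, while $Q(z)>0$ for $|z|$ large, which gives the sign change. Your outline does not contain the idea of evaluating at $v=0$ and using the known value of the confluent hypergeometric function at the origin, which is the crux.
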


\begin{proof}
We look for $S$ of the following form
\begin{equation}
S=x^{2/3}Q(\frac{v}{x^{1/3}})\   \label{T4E7}
\end{equation}%
Since $\partial _{x}S=\frac{2}{3}x^{-1/3}Q-\frac{1}{3}vx^{-2/3}Q^{\prime }$
and $\partial _{vv}S=Q^{\prime \prime }$, $\partial _{v}^{2}S+v\partial
_{x}S=1,$ the equation (\ref{T4E3}) yields:
\begin{equation}
Q^{\prime \prime }-\frac{1}{3}z^{2}Q^{\prime }+\frac{2}{3}zQ=1.  \label{Q}
\end{equation}

This equation has an explicit solution:%
\begin{equation*}
Q_{p}\left( z\right) =\frac{z^{2}}{2}
\end{equation*}

On the other hand we recall that the homogeneous solution of (\ref{Q}) can
be obtained by means of Kummer functions. We write $\Psi \left( z\right)
=Q\left( z\right) -Q_{p}\left( z\right) .$ Then:%
\begin{equation}
\Psi ^{\prime \prime }-\frac{1}{3}z^{2}\Psi ^{\prime }+\frac{2}{3}z\Psi =0\
\label{T4E6}
\end{equation}

It is readily seen that the solution of (\ref{T4E6}) is given by:%
\begin{equation*}
\Psi \left( z\right) =c_{1}M\left( -\frac{2}{3},\frac{2}{3};\frac{z^{3}}{9}%
\right) +c_{2}U\left( -\frac{2}{3},\frac{2}{3};\frac{z^{3}}{9}\right)
\end{equation*}%
with $c_{1},c_{2}\in \mathbb{R}$.\ In order to obtain (\ref{T4E5}) we must
choose $c_{1}=0,$ in order to avoid the exponential growth of $\Psi \left(
z\right) .$ We notice that, since $U\left( -\frac{2}{3},\frac{2}{3};\frac{%
z^{3}}{9}\right) $ solves (\ref{T4E6}), it is analytic in the complex plane $%
z\in \mathbb{C}$. Using Proposition 3.1, and more precisely Remark %
3.3 in \cite{HVJ2}, we obtain:\
\begin{eqnarray*}
S\left( 0,v\right) &=&\left( \frac{1}{2}+c_{2}\right) v^{2}\ \ ,\ \ v>0 \\
S\left( 0,v\right) &=&\left( \frac{1}{2}-2c_{2}\right) v^{2}\ \ ,\ \ v<0
\end{eqnarray*}%
where we use that $K_{\frac{2}{3}}=2\cos \left( \pi \right) =-2.$ We then
impose the boundary condition $S(0,-v)=S(0,rv)$ for $v>0$ (cf. (\ref{T4E4}%
)), whence:%
\begin{equation*}
\left( \frac{1}{2}-2c_{2}\right) =\left( \frac{1}{2}+c_{2}\right) r^{2}
\end{equation*}

Therefore $c_{2}=\frac{\left( 1-r^{2}\right) }{2\left( 2+r^{2}\right) }.$ It
then follows that\
\begin{equation*}
Q\left( z\right) =\frac{z^{2}}{2}+\frac{\left( 1-r^{2}\right) }{2\left(
2+r^{2}\right) }U\left( -\frac{2}{3},\frac{2}{3};\frac{z^{3}}{9}\right)
\end{equation*}

Notice that, if $r=1$ we obtain $Q\left( z\right) =\frac{z^{2}}{2}.$ If $%
r\neq 1$ we obtain, using 13.5.10 in \cite{Ab} that:
\begin{equation}
Q\left( 0\right) =\frac{\left( 1-r^{2}\right) \Gamma \left( \frac{1}{3}%
\right) }{2\left( 2+r^{2}\right) \Gamma \left( -\frac{1}{3}\right) }
\label{P1E1}
\end{equation}

Then $Q\left( 0\right) <0$ if $r<1.$ On the other hand, using that $U\left( -%
\frac{2}{3},\frac{2}{3};\xi \right) \sim \xi ^{\frac{2}{3}}$ as $\xi
\rightarrow \infty $ it follows that $Q\left( z\right) \sim \left( \frac{1}{2%
}+\frac{\left( 1-r^{2}\right) }{2\left( 2+r^{2}\right) 9^{\frac{2}{3}}}%
\right) z^{2}$ as $z\rightarrow \infty .$ Therefore $Q\left( z\right) >0$
for large values of $\left\vert z\right\vert $ and there exists a range of
values for which $Q\left( z\right) <0.$
\end{proof}

We now prove some Liouville's type Theorems which will be useful deriving
asymptotic formulas near the singular point of the solutions of equations
like $\lambda \psi -\mathcal{L}\psi =g$ with boundary conditions $\psi
\left( 0,rv\right) =\psi \left( 0,-v\right) \ \ ,\ \ v>0$.

\begin{theorem}
\label{Liouville}(i) Let $0<r<r_{c}.$ Suppose that $\psi $ is a classical
solution of $\mathcal{L}\psi =0$ in $\left\{ x>0,\ v\in \mathbb{R}\right\} $
satisfying $\psi \left( 0,rv\right) =\psi \left( 0,-v\right) ,\ \ v>0$ as
well as the estimate $\left\vert \psi \left( x,v\right) \right\vert \leq
C\left( \left\vert x\right\vert +\left\vert v\right\vert ^{3}\right) ^{b}$
for some $b\in \left( 0,\beta \right) \cup \left( \beta ,\frac{2}{3}\right] $
and $\left( x,v\right) \in \left\{ x>0,\ v\in \mathbb{R}\right\} .$ Then $%
\psi =0.$

(ii) There is not any bounded function $\psi $ defined in $\mathbb{R}^{2}$
which satisfies also one equation with the form $\mathcal{\bar{L}}\psi =1,$
where $\mathcal{\bar{L}}$ is one of the operators in the set
\begin{equation*}
\left\{ D_{v}^{2}+vD_{x},\ D_{v}^{2}+D_{x},\ D_{v}^{2}-D_{x}\right\} .
\end{equation*}

(iii) Let $r>r_{c}.$ Suppose that we have a solution of $\mathcal{L}\psi =0$
in $\left\{ x>0,\ v\in \mathbb{R}\right\} $ satisfying $\psi \left(
0,rv\right) =\psi \left( 0,-v\right) \ \ ,\ \ v>0$ as well as the estimate $%
\left\vert \psi \left( x,v\right) \right\vert \leq C\left( \left\vert
x\right\vert +\left\vert v\right\vert ^{3}\right) ^{b}$ with $0<b\leq \frac{2%
}{3}.$ Then $\psi =0.$

(iv) Let $0<r<r_{c}.$ Suppose that $\psi $ is a classical solution of $%
\mathcal{L}\psi =0$ in $\left\{ x>0,\ v\in \mathbb{R}\right\} $ satisfying $%
\psi \left( 0,rv\right) =\psi \left( 0,-v\right) ,\ \ v>0$ as well as the
estimate $\left\vert \psi \left( x,v\right) \right\vert \leq C$ for $\left(
x,v\right) \in \left\{ x>0,\ v\in \mathbb{R}\right\} .$ Then $\psi =K_{0}$
for some $K_{0}\in \mathbb{R}$.
\end{theorem}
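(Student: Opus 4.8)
Plan of proof. We treat the four statements in the order (ii), (i), (iii), (iv). The common mechanism is the invariance of the system $\mathcal{L}\psi=0$, $\psi(0,rv)=\psi(0,-v)$ $(v>0)$ under the parabolic dilations $(x,v)\mapsto(\lambda^{3}x,\lambda v)$: a solution homogeneous of weight $w$ (i.e.\ $\psi(\lambda^{3}x,\lambda v)=\lambda^{w}\psi(x,v)$) necessarily has the form $x^{w/3}\Phi(v^{3}/(9x))$, and by Proposition \ref{FbetaSt} together with Lemma \ref{LemmaBeta} the only homogeneous solutions of (\ref{steadyFPa})--(\ref{BCa}) of weight $w$ with $0\le w\le 2$ are the constants ($w=0$) and the multiples of $F_{\beta}$ ($w=3\beta(r)$, which lies in $(0,\tfrac12)$ for $r<r_{c}$ and is negative for $r>r_{c}$); every other root $\gamma$ of the characteristic equation (\ref{S8E2}) has $|\gamma|>\tfrac76$, hence gives weight exceeding $\tfrac72$ in absolute value, while the only weighted polynomials of weight $\le 3$ annihilated by $\mathcal{L}$, namely $v$ and $v^{3}-6x$, both violate (\ref{BCa}). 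Since $F_{\beta}$ is comparable to $(|x|+|v|^{3})^{\beta(r)}$ on $\{x>0\}$, a bound $|\psi(x,v)|\le C(|x|+|v|^{3})^{b}$ with $0<b\le\tfrac23$ rules out any nonzero constant component (as $b>0$ forces $\psi(0,0)=0$, and locally more) and, when $b\ne\beta(r)$, any nonzero $F_{\beta}$ component as well. Part (ii) is immediate: each of $D_{v}^{2}+vD_{x}$, $D_{v}^{2}+D_{x}$, $D_{v}^{2}-D_{x}$ generates a diffusion $\xi_{t}=(X_{t},V_{t})$ on $\mathbb{R}^{2}$, globally defined for all $t\ge0$ (cf.\ \cite{Oks}); if $\psi$ were bounded with $\bar{\mathcal{L}}\psi=1$, Dynkin's formula would give $\mathbb{E}^{(x,v)}[\psi(\xi_{t})]=\psi(x,v)+t$ for all $t>0$, with bounded left side and unbounded right side, a contradiction.

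For parts (i) and (iii) I would argue by blow-up at the origin combined with the classification above. The rescalings $\psi_{\rho}(x,v):=\rho^{-3b}\psi(\rho^{3}x,\rho v)$, $\rho\to0^{+}$, preserve the equation, the boundary condition, and the bound $|\psi_{\rho}|\le C(|x|+|v|^{3})^{b}$; the interior hypoellipticity of Theorem \ref{Hypoell} and the up-to-$L^{\ast}$ regularity of Theorem \ref{Regul} give uniform estimates, hence compactness, on admissible domains bounded away from $(0,0)$. A De Giorgi type oscillation-decay argument — at each dyadic scale subtract the best constant and, when $b>\beta(r)$, the best $F_{\beta}$-multiple; were the oscillation not to contract, the compactness would produce a nonzero homogeneous solution of weight in $(0,\tfrac72)$ other than $3\beta(r)$, which by the first paragraph does not exist — then shows $\psi$ decays near the origin faster than any power $(|x|+|v|^{3})^{b'}$, and in particular carries no constant or $F_{\beta}$ component. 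To conclude $\psi\equiv0$ when $r<r_{c}$ and $b<\beta(r)$: since $F_{\beta}$ has weight $3\beta(r)>3b$ it dominates $(|x|+|v|^{3})^{b}$ at infinity while $1$ dominates it near $0$, so for every $\varepsilon>0$ the function $\varepsilon+\varepsilon F_{\beta}$, which solves $\mathcal{L}(\cdot)=0$ and (\ref{BCa}), lies above $|\psi|$ on the admissible boundary of $\Lambda_{R}\setminus\mathcal{R}_{\delta}$ once $R$ is large and $\delta$ small; Proposition \ref{weak_max} (or Proposition \ref{ComparisonSingP}) gives $|\psi|\le\varepsilon+\varepsilon F_{\beta}$ on $\Lambda_{R}\setminus\mathcal{R}_{\delta}$, and letting $\delta\to0$, $R\to\infty$, then $\varepsilon\to0$ yields $\psi\equiv0$.

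In the remaining ranges — $b\in(\beta(r),\tfrac23]$ for $r<r_{c}$, and all of $0<b\le\tfrac23$ for $r>r_{c}$ — a blow-up at infinity together with the classification shows that the only surviving mode at infinity is a constant, so $\psi$ is in fact bounded; the scale-invariant Harnack inequality for Kolmogorov operators (cf.\ \cite{CNP}, \cite{KL}), applied across $L^{\ast}$ and away from the singular point and then to a blow-down limit, forces $\psi$ to be constant, while $|\psi|\to0$ at the origin forces that constant to vanish. Part (iv) is the converse reduction: if $\psi$ is bounded, the Hölder regularity up to the singular point (rescaled bounds of the form (\ref{D5}), as in \cite{HJV}) gives a well-defined value $K_{0}:=\psi(0,0)$, so $\tilde{\psi}:=\psi-K_{0}$ is a bounded solution of $\mathcal{L}\tilde{\psi}=0$ with the same boundary condition and $\tilde{\psi}(0,0)=0$; the oscillation-decay step of the previous paragraph, now with the weight-$0$ mode already removed, upgrades $\tilde{\psi}(0,0)=0$ to $|\tilde{\psi}(x,v)|\le C(|x|+|v|^{3})^{b_{0}}$ near the origin for some $b_{0}\in(0,\beta(r))$, this holds globally since $\tilde{\psi}$ is bounded, and part (i) with $b=b_{0}$ gives $\tilde{\psi}\equiv0$, i.e.\ $\psi\equiv K_{0}$.

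The principal obstacle, shared by the rigidity step of (i)/(iii) and by (iv), is carrying out the oscillation-decay and compactness iteration in the domains $\mathcal{R}_{\delta}$, whose shape abuts the singular point $(0,0)$ where $\mathcal{L}$ is not hypoelliptic and where only Hölder control of solutions is available (\cite{HJV}): one must rule out that a ``forbidden'' homogeneous blow-up arises there, and verify that the barrier comparisons in $\Lambda_{R}\setminus\mathcal{R}_{\delta}$ are legitimate (Propositions \ref{weak_max}, \ref{ComparisonSingP}, and the auxiliary function $S$ of Lemma \ref{super_sol} when a signed correction is needed). A minor but necessary ingredient is the completeness of the list of admissible homogeneous solutions of weight $\le 2$, which follows from the analysis of (\ref{S8E2}) in Lemma \ref{LemmaBeta} together with the elementary failure of (\ref{BCa}) for $v$ and $v^{3}-6x$.
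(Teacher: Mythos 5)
Your overall architecture is genuinely different from the paper's: you propose a blow-up/oscillation-decay (De Giorgi-type) scheme anchored by a classification of homogeneous solutions, whereas the paper proves (i) and (iii) through a Mellin-transform reformulation (\ref{EqResc}), a Fredholm-alternative argument (Proposition \ref{FredAl}), and an explicit root count for the characteristic equation in the strip $0\le \operatorname{Re}(z)\le\tfrac23$ (Lemma \ref{LiouvHom}), wrapped up in Propositions \ref{homogen} and \ref{classif}. Your probabilistic treatment of (ii) via Dynkin's formula is correct and in fact more unified than the paper's case-by-case arguments with the heat kernel and the Kolmogorov fundamental solution; both deliver the same conclusion.

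There are, however, two genuine gaps in the argument for (i), (iii), and (iv). First, your classification step is misattributed: Lemma \ref{LemmaBeta} and Proposition \ref{FbetaSt} only identify the real root $\beta(r)\in(-\tfrac56,\tfrac16)$ and construct $F_\beta$; they say nothing about other roots. The assertion that ``every other root $\gamma$ of (\ref{S8E2}) has $|\gamma|>\tfrac76$'' is not established anywhere, and is not even the relevant claim. What one actually needs — and what the paper's Lemma \ref{LiouvHom} proves by a careful analysis of $z=A+iB$ (checking small $r$, then tracking zero crossings at $\operatorname{Re}(z)=0$ and $\operatorname{Re}(z)=\tfrac23$) — is that there are no roots, real or complex, in the closed strip $0\le\operatorname{Re}(z)\le\tfrac23$ other than $z=0$ and $z=\beta(r)$. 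Without excluding complex roots in this strip, a ``forbidden'' homogeneous blow-up limit (homogeneous of complex degree, i.e.\ logarithmically oscillating) is not ruled out.

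Second, and more seriously, the central step ``were the oscillation not to contract, compactness would produce a nonzero \emph{homogeneous} solution'' is asserted but not justified, and this is precisely the difficulty the paper's functional-analytic approach is designed to sidestep. A limit of the rescaled family $\psi_\rho(x,v)=\rho^{-3b}\psi(\rho^3 x,\rho v)$ along some sequence satisfies $\mathcal{L}\psi_\infty=0$, the boundary condition, and the bound $|\psi_\infty|\le C\rho^b$, but it is not a priori homogeneous: to extract homogeneity one typically needs a monotonicity/doubling estimate (an Almgren-type frequency, or a quantitative oscillation-decay lemma proved independently), none of which is supplied and none of which is obviously available for the Kolmogorov operator with a reflecting boundary at the singular point. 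The paper's unnamed Lemma after Proposition \ref{FredAl} obtains the Liouville conclusion directly — by testing against functions of the form $\int_{\mathbb{R}}|X|^{\alpha+i\omega}G(X,\omega)\,d\omega$ and invoking density of the image of $\tilde{\mathcal{A}}^\ast(\omega)$ — without ever reducing to homogeneous solutions, which is what makes the root count in Lemma \ref{LiouvHom} immediately usable. Your barrier argument for $b<\beta(r)$ (using $\varepsilon+\varepsilon F_\beta$ on $\Lambda_R\setminus\mathcal{R}_\delta$) is fine, but for $b\in(\beta(r),\tfrac23]$ and for part (iv) you fall back on the unjustified oscillation-decay step, so those cases are not closed. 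Part (iv) additionally invokes Harnack ``across $L^\ast$,'' whereas the paper's Proposition \ref{strong_max}/Lemma \ref{Max1} give a Harnack-type estimate only away from $v=0$ and propagate positivity across the singular line by a separate rescaling argument; this too would need to be spelled out.
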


The proof of Theorem \ref{Liouville} will require to prove some
auxiliary results, which will be required in order to deal with the cases
(i), (iii).

We first prove the following result:

\begin{proposition}
\label{homogen}Suppose that $\varphi $ solves $\mathcal{L}\left( \varphi
\right) =0$ in $x\geq 0,\ v\in \mathbb{R}$ and satisfies the the boundary
conditions $\varphi \left( 0,rv\right) =\varphi \left( 0,-v\right) \ \ ,\ \
v>0.$ Suppose also that we have the estimates:%
\begin{eqnarray}
\left\vert \varphi \left( x,v\right) \right\vert &\leq &K\left( \left\vert
x\right\vert +\left\vert v\right\vert ^{3}\right) ^{b}\ \ \text{with\ }0\leq
b\leq \frac{2}{3}\ \ \text{in\ }x\geq 0\ \ ,\ \ v\in \mathbb{R}  \label{C1}
\\
\sup_{\left( \left\vert x\right\vert +\left\vert v\right\vert ^{3}\right)
=1}\left\vert \varphi \left( x,v\right) \right\vert &=&1  \label{C2}
\end{eqnarray}%
for some $K>1.$ Then $b=\beta $ or $b=0.$
\end{proposition}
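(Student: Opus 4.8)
The idea is a classical scaling-plus-compactness argument adapted to the hypoelliptic operator $\mathcal{L}=D_v^2+vD_x$ with the inelastic boundary condition. The estimate \eqref{C1} is exactly the statement that $\varphi$ grows no faster than $\left(|x|+|v|^3\right)^b$; combined with the equation, it will force $\varphi$ to be \emph{exactly} homogeneous of degree $3b$ (in the parabolic scaling $x\to\lambda^3 x,\ v\to\lambda v$), and then the boundary condition $\varphi(0,rv)=\varphi(0,-v)$ will pin $3b$ to be either $0$ or $3\beta$.

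\medskip

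\textbf{Step 1: pass to the blow-up limit.} For $\lambda>0$ set $\varphi_\lambda(x,v)=\lambda^{-3b}\varphi(\lambda^3 x,\lambda v)$. Each $\varphi_\lambda$ again solves $\mathcal{L}\varphi_\lambda=0$ (the operator is invariant under this scaling) and satisfies the same boundary condition $\varphi_\lambda(0,rv)=\varphi_\lambda(0,-v)$; by \eqref{C1} the family is uniformly bounded on each annulus $\tfrac12\le |x|+|v|^3\le 2$, and by \eqref{C2} some normalization persists (the sup over $|x|+|v|^3=1$ stays equal to $1$ for every $\lambda$). The hypoellipticity estimates of Theorem~\ref{Hypoell} give uniform local bounds on $D_x\varphi_\lambda,D_v\varphi_\lambda,D_v^2\varphi_\lambda$ in $L^p_{loc}$ of the open half-plane, and Theorem~\ref{Regul} gives uniform continuity of $D_v\varphi_\lambda$ up to $\{x=0\}$ (using the compatibility condition to cross from $v<0$ to $v>0$). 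Hence along a subsequence $\lambda_n\to\infty$ (or $\to 0^+$, either works) we extract $\varphi_{\lambda_n}\to\Phi$ locally uniformly on $\{x\ge 0,\ v\in\mathbb{R}\}\setminus\{(0,0)\}$, with $\Phi$ a classical solution of $\mathcal{L}\Phi=0$, satisfying $\Phi(0,rv)=\Phi(0,-v)$, $|\Phi(x,v)|\le K(|x|+|v|^3)^b$, and $\sup_{|x|+|v|^3=1}|\Phi|=1$ (so $\Phi\not\equiv 0$).

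\medskip

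\textbf{Step 2: upgrade to exact homogeneity.} The point is that the growth exponent $b$ saturates. Using the same scaling on $\Phi$ itself: the functions $\Phi_\mu(x,v)=\mu^{-3b}\Phi(\mu^3x,\mu v)$ form a precompact family (same bounds), and a monotonicity/extraction argument — comparing $\sup$ on concentric annuli $\mu^{-1}\le|x|+|v|^3\le\mu$ and using that the bound is $(|x|+|v|^3)^b$ on the nose — produces a scaling limit which is genuinely $3b$-homogeneous: $\Psi(\mu^3x,\mu v)=\mu^{3b}\Psi(x,v)$. (Alternatively one invokes a Liouville-type dichotomy: a nonzero $\mathcal{L}$-harmonic function with the symmetry and the two-sided growth bound, after averaging over the scaling group, must be homogeneous.) Thus without loss of generality $\Phi$ itself may be taken homogeneous of degree $3b$ in the parabolic scaling.

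\medskip

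\textbf{Step 3: solve the homogeneous ODE and read off $b$.} A homogeneous solution of degree $3b$ has the self-similar form $\Phi(x,v)=x^{b}\,\Theta\!\left(v/(9x)^{1/3}\right)$ for $x>0$, and $\mathcal{L}\Phi=0$ reduces $\Theta$ to the confluent hypergeometric ODE of \eqref{kummer} with parameter $-b$ (as in Proposition~\ref{Fasympt} / Proposition~\ref{FbetaSt}). The requirement that $\Phi$ satisfy the two-sided bound \eqref{C1} — i.e. algebraic, not exponential, behaviour as $v/x^{1/3}\to\pm\infty$ — forces $\Theta$ to be a multiple of the Tricomi function $U(-b,\tfrac23;\cdot)$, exactly as in the proof of Proposition~\ref{FbetaSt}. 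Its asymptotics \eqref{asymptotics} give $\Phi(0^+,v)=9^{-b}|v|^{3b}$ for $v>0$ and $\Phi(0^+,v)=9^{-b}K_b|v|^{3b}$ for $v<0$ with $K_b=2\cos(\pi(b+\tfrac13))$; imposing the boundary condition $\Phi(0,rv)=\Phi(0,-v)$ yields $r^{3b}=K_b$, i.e. $r^{2+3b}\cdot 2\cos(\pi(b+\tfrac13))=1$ after writing it as in \eqref{Kr}. By Lemma~\ref{alpha(r)} and Lemma~\ref{LemmaBeta}, for $0\le b\le \tfrac23$ the only solutions of this equation are $b=0$ and $b=\beta(r)$. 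Since $\Phi\not\equiv 0$ forces one of these to hold, and the exponent $3b$ was determined by $\varphi$ itself (Step 2 did not change it), we conclude $b=0$ or $b=\beta$.

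\medskip

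\textbf{Main obstacle.} The delicate point is Step 2 — showing that the growth exponent is actually attained, i.e. that the blow-up limit is a \emph{nontrivial homogeneous} function rather than a lower-order one. One must rule out the possibility that $\varphi$ decays strictly faster than $(|x|+|v|^3)^b$ along every ray, which would make every scaling limit vanish and contradict \eqref{C2}; this is where the normalization \eqref{C2} and careful use of Harnack-type estimates (Proposition~\ref{strong_max}, Lemma~\ref{Max1}) and the uniform regularity up to the boundary (Theorem~\ref{Regul}) must be combined. Everything else is either the invariance of $\mathcal{L}$ under scaling, standard compactness from the hypoellipticity results already quoted, or the hypergeometric computation that already appears in Propositions~\ref{Fasympt} and~\ref{FbetaSt}.
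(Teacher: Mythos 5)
Your Steps 1 and 3 are fine: the parabolic scaling $\varphi_\lambda(x,v)=\lambda^{-3b}\varphi(\lambda^3x,\lambda v)$ does preserve the equation and the boundary condition, the hypoellipticity results in Theorems \ref{Hypoell} and \ref{Regul} give the compactness needed to extract a limit $\Phi$, and \emph{if} one knows $\Phi$ is a nontrivial homogeneous solution of degree $3b$, the reduction to the Tricomi equation and the boundary condition $\Phi(0,rv)=\Phi(0,-v)$ pin down $b\in\{0,\beta\}$ exactly as in Lemma \ref{LiouvHom} (which is where the paper proves that the transcendental equation $r^{2+3b}\cdot 2\cos\bigl(\pi(b+\tfrac13)\bigr)=1$ has precisely those two roots in $[0,\tfrac23]$).

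The genuine gap is your Step 2, and I do not think the arguments you sketch close it. The estimate \eqref{C1} is a one-sided upper bound: it gives $|\varphi_\lambda(x,v)|\le K(|x|+|v|^3)^b$ uniformly, but there is no a priori lower bound on the size of $\varphi_\lambda$ on any fixed annulus, so the blow-up limit along a subsequence $\lambda\to 0$ (or $\to\infty$) can perfectly well be identically zero. The normalization \eqref{C2} fixes the sup at radius one for $\varphi$ only, not for the rescaled functions $\varphi_\lambda$. Moreover, even if the limit is nonzero, nothing in the compactness argument forces it to be homogeneous — the operator $\mathcal L=D_v^2+vD_x$ has no obvious variational structure or Almgren/Weiss-type monotonicity formula that would make the frequency stabilize under blow-up, and ``comparing sups on concentric annuli'' or ``averaging over the scaling group'' are not worked out and do not, as stated, yield a homogeneous limit. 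The ``averaging over the scaling group'' heuristic is actually close in spirit to what the paper does, but making it rigorous requires precisely the technology in the paper.

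The paper's proof takes a different route that avoids the need to produce a homogeneous blow-up. After the change of variable $x=\zeta^3$ it Mellin-transforms $\varphi$ in the radial variable, writing test functions as $F(X)=\int_{\mathbb{R}}|X|^{\alpha+i\omega}G(X,\omega)\,d\omega$ with $G$ degree-zero homogeneous in $X$ (Lemma \ref{Mellin}). It then shows, via the abstract Fredholm alternative (Proposition \ref{FredAl}) applied to the family of Mellin-fibre operators $\mathcal A(\omega)$ whose kernels are exactly the homogeneous solutions classified in Lemma \ref{LiouvHom}, that the set of admissible $\xi$ in the identity $\int_{\mathcal U}\xi g\,\frac{dX}{|X|^2}=0$ (with $g=|X|^{-3b-2+2}\varphi$ bounded by \eqref{C1}) is dense in $L^1$. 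This forces $\varphi\equiv 0$ whenever the exponent $b$ is not one of the roots $0,\beta$ of the boundary equation; combining with \eqref{C2} gives the dichotomy by contradiction, with no blow-up or homogenization step. If you want to salvage the scaling approach, the place to look is for a monotonicity quantity that controls the Mellin content of $\varphi$ across annuli — which is essentially a rediscovery of the paper's Mellin argument in disguise.
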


This proposition will be proved with the help of the following result.

\begin{proposition}
\label{classif}Suppose that $\varphi $ solves $\mathcal{L}\left( \varphi
\right) =0$ in $x\geq 0,\ v\in \mathbb{R}$ and satisfies the the boundary
conditions $\varphi \left( 0,rv\right) =\varphi \left( 0,-v\right) ,\ v>0.$
Suppose also that $\varphi $ satisfies
\begin{equation}
\varphi \left( \mu ^{3}x,\mu v\right) =\mu ^{3b}\varphi \left( x,v\right)
\label{C3}
\end{equation}%
with $0\leq b\leq \frac{2}{3}.$ Then, if $0<r<r_{c}$ we have that $\varphi
=0 $ unless $b=\beta $ or $b=0.$ If $r>r_{c}$ we have $\varphi =0$ unless $%
b=0$.
\end{proposition}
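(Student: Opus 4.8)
\textbf{Proof proposal for Proposition \ref{classif}.}

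The plan is to reduce the PDE to an ODE by exploiting the scaling invariance \eqref{C3}, and then to match the ODE solution against the boundary condition at $x=0$ to pin down the admissible exponents. First I would write $\varphi$ in the self-similar form suggested by \eqref{C3}: since $\varphi(\mu^3 x,\mu v)=\mu^{3b}\varphi(x,v)$, the function $\varphi$ must be of the form $\varphi(x,v)=x^{b}\Phi\!\left(\frac{v^3}{9x}\right)$ for $x>0$, exactly as in \eqref{Fa} with $\beta$ replaced by $b$. Plugging this ansatz into $\mathcal{L}\varphi=0$, i.e. $v\partial_x\varphi+\partial_{vv}\varphi=0$, gives the Kummer equation \eqref{kummer} with $\gamma=b$, namely $z\Phi_{zz}+(\tfrac23-z)\Phi_z+b\Phi=0$, whose solution space is spanned by $M(-b,\tfrac23;z)$ and $U(-b,\tfrac23;z)$. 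Here one must be careful that $\varphi$ is a \emph{classical} solution on all of $\{x>0,v\in\mathbb{R}\}$, hence $\Phi$ must be the entire-on-$\mathbb{R}$ combination; arguing as in the proof of Proposition \ref{Fasympt}, the requirement that $\varphi$ remains bounded by $K(|x|+|v|^3)^b$ (so that $\Phi$ grows only algebraically as $z\to\pm\infty$) forces $\Phi$ to be proportional to $U(-b,\tfrac23;z)=\Lambda_b(\zeta)$ with $\zeta=v/(9x)^{1/3}$, up to a multiplicative constant which we may take to be $1$ or $0$; the Kummer function $M$ is excluded by its exponential growth as $z\to+\infty$.

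Assuming $\varphi\not\equiv0$, so $\varphi=G_b=x^b\Lambda_b(\zeta)$, I would then read off the boundary behaviour as $x\to0^+$ exactly as in the proof of Proposition \ref{Fasympt}: the asymptotics \eqref{asymptotics} (valid for all $b\in(-\tfrac56,\tfrac16)$ by Remark \ref{AsBeyond}, and with the appropriate reading when $b$ lies outside this range) give $G_b(0^+,v)=9^{-b}|v|^{3b}$ for $v<0$ and $G_b(0^+,v)=K_b\,9^{-b}|v|^{3b}$ for $v>0$, where $K_b=2\cos(\pi(b+\tfrac13))$. The compatibility condition $\varphi(0,rv)=\varphi(0,-v)$ for $v>0$ then forces $K_b\,|rv|^{3b}=|v|^{3b}$, i.e.
\begin{equation*}
r^{3b}\cdot 2\cos\!\left(\pi\left(b+\tfrac13\right)\right)=1 .
\end{equation*}
This is precisely the equation \eqref{S8E2} defining $\beta(r)$ (after taking logarithms, using $-3\beta\log r+\log(2\sin(\pi(\tfrac16-\beta)))=0$ and the identity $\cos(\pi(b+\tfrac13))=\sin(\pi(\tfrac16-b))$). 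By Lemma \ref{LemmaBeta}, within the relevant range the solutions of this equation are $b=\beta(r)$ and $b=0$ (note $b=0$ works because $2\cos(\pi/3)=1$, corresponding to $F_0\equiv1$). When $r>r_c$ we have $\beta(r)<0$ by Lemma \ref{LemmaBeta}, so $\beta(r)$ is not in the admissible range $[0,\tfrac23]$ dictated by \eqref{C3}; hence only $b=0$ survives, which gives the claim in the supercritical case. When $0<r<r_c$ both $b=\beta(r)\in(0,\tfrac16)$ and $b=0$ are admissible, giving the first claim.

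The main obstacle I anticipate is the rigorous exclusion of the Kummer solution $M(-b,\tfrac23;z)$ and, more delicately, making sure no other entire solution of the Kummer ODE with merely algebraic growth is being overlooked — this is where one genuinely uses that $\varphi$ is classical on the whole half-plane together with the one-sided polynomial bound \eqref{C1}, rather than just on a sector. One should also handle the borderline value $b=\tfrac23$ carefully, since then $K_{2/3}=2\cos\pi=-2<0$ and the purported solution $G_{2/3}$ would change sign; but the argument above does not assume positivity of $\varphi$, only the functional equation $K_b r^{3b}=1$, which has no root at $b=\tfrac23$ for $0<r\le 1$, so $b=\tfrac23$ is automatically excluded. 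A secondary technical point is justifying the passage from the abstract scaling identity \eqref{C3} to the explicit separated form $x^b\Phi(v^3/9x)$: this follows by setting $\mu=x^{-1/3}$ in \eqref{C3}, which is legitimate for $x>0$, and then noting that continuity up to $v\in\mathbb{R}$ together with the ODE forces $\Phi$ to be the smooth solution on all of $\mathbb{R}$.
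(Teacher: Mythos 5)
Your strategy mirrors the paper's, which derives Proposition \ref{classif} from Lemma \ref{LiouvHom}: pass to the self-similar form, obtain the confluent hypergeometric ODE, select the Tricomi branch by its algebraic growth, and match at $x=0$. Restricting to real $b\in[0,\tfrac23]$ and citing Lemma \ref{LemmaBeta} directly is a clean shortcut around the paper's complex root-counting argument, and it even sidesteps the $r\le\sqrt2$ limitation flagged in the remark after Lemma \ref{LiouvHom}.

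However, there is a genuine error in the matching step. You set $\varphi=x^{b}\Phi(v^{3}/9x)$ and then identify $\Phi$ with $\Lambda_{b}(\zeta)=U(-b,\tfrac23;-\zeta^{3})$, quoting \eqref{asymptotics} to place the factor $K_{b}=2\cos(\pi(b+\tfrac13))$ on the $v>0$ side. But $\Lambda_{\gamma}$ is the profile of $G_{\gamma}$ in Proposition \ref{Fasympt}, which solves the \emph{forward} steady equation $v\partial_{x}G=\partial_{vv}G$ of \eqref{steady}. The $\varphi$ in Proposition \ref{classif} solves the \emph{adjoint} equation $\mathcal{L}\varphi=v\partial_{x}\varphi+\partial_{vv}\varphi=0$, whose self-similar profile — as in Proposition \ref{FbetaSt} — is $\Phi_{b}(y)=U(-b,\tfrac23;y)$ with $y=+v^{3}/(9x)$: the same Kummer ODE, but the argument has the opposite sign. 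That sign flip swaps the $y\to\pm\infty$ asymptotics, so for $\mathcal{L}\varphi=0$ the $K_{b}$ factor sits on the $v<0$ side (cf.\ \eqref{S8E4}--\eqref{S8E5}), not $v>0$. Carrying the boundary condition $\varphi(0,rv)=\varphi(0,-v)$, $v>0$, through the correct asymptotics gives $r^{3b}=K_{b}$, i.e.\ $3b\log r-\log(2\sin(\pi(\tfrac16-b)))=0$, which \emph{is} \eqref{S8E2}. Your derived equation $r^{3b}K_{b}=1$ has the opposite sign on the $b\log r$ term and is not equivalent to \eqref{S8E2}. This is not a harmless typo: for $0<r<1$ the log form of your equation, $3b\log r+\log(2\sin(\pi(\tfrac16-b)))$, is strictly negative on $(0,\tfrac16)$ (both summands are negative there) and the sine becomes nonpositive for $b\ge\tfrac16$, so $b=0$ is its \emph{only} real root in $[0,\tfrac23]$. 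Followed consistently, your equation would force $\varphi\equiv 0$ for every $b\neq 0$, including $b=\beta(r)$, contradicting the proposition in the subcritical case. The fix is simply to use the $F_{\beta}$ asymptotics from Proposition \ref{FbetaSt} rather than the $G_{\gamma}$ asymptotics from Proposition \ref{Fasympt}; after that the rest of your argument — exclusion of the $M$ branch by its exponential growth, exclusion of $b\in[\tfrac16,\tfrac23]$ because $K_{b}\le 0$ there, and the appeal to Lemma \ref{LemmaBeta} for the root count on $(-\tfrac56,\tfrac16)$ — goes through as you intended.
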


It is convenient to introduce a different group of variables in order to
simplify the homogeneity of the functions. We define $x=\zeta ^{3}.$ Then $%
\mathcal{L}\left( \varphi \right) =0$ if and only if:%
\begin{equation}
3\zeta ^{2}\varphi _{vv}+v\varphi _{\zeta }=0\ \ ,\ \ \varphi \left(
0,rv\right) =\varphi \left( 0,-v\right) \ \ ,\ \ v>0  \label{EqResc}
\end{equation}

In order to prove Proposition \ref{homogen} we will use the following
auxiliary result.

\begin{lemma}
\label{Mellin}Let us denote as $\mathcal{Y}$ the set of functions with the
form $G:\mathcal{U}\times \mathbb{R}\rightarrow \mathbb{C}$ satisfying the
following properties:

(i) $G\left( X,\omega \right) $ satisfies the homogeneity condition $G\left(
\mu X,\omega \right) =G\left( X,\omega \right) $ for any $\mu >0.$

(ii) The support of $G\left( X,\omega \right) $ is contained in the set $%
K\times \left[ -M,M\right] $ where $K\subset \mathcal{U}$ is a compact set
and $0<M<\infty .$

(iii) $G\in C^{\infty }\left( \mathcal{U}\times \mathbb{R}\right) .$

Then, the set of functions with the form:%
\begin{equation}
F\left( \zeta ,v\right) =F\left( X\right) =\int_{\mathbb{R}}\left\vert
X\right\vert ^{i\omega }G\left( X,\omega \right) d\omega \ \ ,\ \ X=\left(
\zeta ,v\right) \   \label{MeRep}
\end{equation}%
is dense in $L^{1}\left( \mathcal{U};\frac{dX}{\left\vert X\right\vert ^{2}}%
\right) $ where $dX=d\zeta dv.$
\end{lemma}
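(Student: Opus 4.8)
\textbf{Proof proposal for Lemma \ref{Mellin}.} The plan is to exploit the multiplicative (scaling) structure of the measure $\frac{dX}{|X|^{2}}$ on the cone $\mathcal{U}$ by passing to logarithmic polar-type coordinates, so that the homogeneity condition $G(\mu X,\omega)=G(X,\omega)$ becomes translation invariance in one variable, and the representation \eqref{MeRep} becomes an ordinary Fourier synthesis in that variable. Concretely, I would write a point $X=(\zeta,v)\in\mathcal{U}$ as $X = |X|\,\Theta$ with $|X|=(\zeta^{2}+v^{2})^{1/2}$ (or any equivalent homogeneous gauge) and $\Theta$ on the arc $\Sigma=\{\Theta\in\mathcal{U}:|\Theta|=1\}$, and then set $s=\log|X|\in\mathbb{R}$. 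Under this change of variables, $\frac{dX}{|X|^{2}}$ factors (up to a smooth positive Jacobian bounded above and below on compact $\Theta$-sets) as $ds\, d\mu_{\Sigma}(\Theta)$, so that $L^{1}\!\left(\mathcal{U};\frac{dX}{|X|^{2}}\right)$ is identified, after this bi-Lipschitz change of coordinates, with a space of the form $L^{1}(\mathbb{R}\times\Sigma)$ with an equivalent norm. The key point is that $|X|^{i\omega}=e^{i\omega s}$ depends only on $s$, so a function of the form \eqref{MeRep} becomes
\begin{equation*}
F(s,\Theta)=\int_{\mathbb{R}}e^{i\omega s}\,G(s,\Theta,\omega)\,d\omega,
\end{equation*}
but using homogeneity (i) the integrand $G$ does not depend on $s$: $G=G(\Theta,\omega)$. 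Hence $F(s,\Theta)=\widehat{G(\Theta,\cdot)}(-s)$ is, for each fixed $\Theta$, the inverse Fourier transform in $\omega$ of a smooth compactly supported function (by (ii), (iii)); and $F$ is jointly smooth and compactly supported in $\Theta$.

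The next step is a straightforward density argument. I would show that finite linear combinations of products $a(s)\,b(\Theta)$ with $a\in C_{c}^{\infty}(\mathbb{R})$, $b\in C_{c}^{\infty}(\Sigma)$ (with the appropriate support restrictions away from the vertex and the point at infinity, consistent with (ii)) are dense in $L^{1}(\mathbb{R}\times\Sigma)$; this is classical (Stone–Weierstrass for the locally compact product, or a direct mollification/truncation argument). Then it suffices to approximate each such product by functions of the form $F(s,\Theta)=\int e^{i\omega s}G(\Theta,\omega)\,d\omega$. Given $a\in C_{c}^{\infty}(\mathbb{R})$, write $a(s)=\int_{\mathbb{R}}e^{i\omega s}\,\widehat a(\omega)\,d\omega$ with $\widehat a\in\mathcal{S}(\mathbb{R})$; truncate $\widehat a$ smoothly to a compact $\omega$-interval to get $\widehat a_{M}\in C_{c}^{\infty}$, so $a_{M}(s)=\int e^{i\omega s}\widehat a_{M}(\omega)\,d\omega\to a$ in, say, $L^{1}(\mathbb{R})\cap L^{\infty}(\mathbb{R})$ and uniformly on compacts, as $M\to\infty$. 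Setting $G(\Theta,\omega)=b(\Theta)\widehat a_{M}(\omega)$ gives a function in $\mathcal{Y}$ whose associated $F$ equals $a_{M}(s)b(\Theta)$, which converges to $a(s)b(\Theta)$ in $L^{1}(\mathbb{R}\times\Sigma)$ — note $b$ is supported in a fixed compact set, so convergence of $a_{M}\to a$ in $L^{1}(\mathbb{R})$ upgrades to convergence of the product in $L^{1}(\mathbb{R}\times\Sigma)$. Translating back through the coordinate change, this yields density of the family \eqref{MeRep} in $L^{1}\!\left(\mathcal{U};\frac{dX}{|X|^{2}}\right)$.

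The main technical nuisance — not a deep obstacle, but the point requiring care — is matching the regularity and support bookkeeping between the two pictures. In the $(X,\omega)$ picture one must produce $G\in C^{\infty}(\mathcal{U}\times\mathbb{R})$ with support in $K\times[-M,M]$ for compact $K\subset\mathcal{U}$; since $\mathcal{U}$ excludes the vertex $(0,0)$, "compact in $\mathcal{U}$" means bounded away from both the origin and infinity, which in the $(s,\Theta)$ picture means $s$ ranges over a compact interval and $\Theta$ over a compact subset of the (relatively open) arc $\Sigma$. One must therefore check that the dense product functions $a(s)b(\Theta)$ can be taken with $a$ supported in an arbitrarily large but finite $s$-interval and $b$ supported away from the endpoints of $\Sigma$ — this is fine because smooth compactly supported functions of that restricted type are still dense in $L^{1}(\mathbb{R}\times\Sigma)$. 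A secondary point is to verify the homogeneous measure $\frac{dX}{|X|^{2}}$ on the two-dimensional cone genuinely disintegrates with the Jacobian bounded on the relevant compacta; this is a direct computation with the polar change of variables $X=e^{s}\Theta$. Once these identifications are in place, the density statement is immediate from the one-dimensional Fourier inversion/truncation argument above, so no further estimates are needed.
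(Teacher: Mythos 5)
Your proof is correct and follows essentially the same route as the paper's: pass to polar and then logarithmic coordinates (where the measure $\frac{dX}{|X|^{2}}$ becomes exactly $dt\,d\theta$), so that the homogeneity of $G$ becomes independence of the radial/log variable and the representation \eqref{MeRep} becomes an ordinary Fourier synthesis, and then conclude by density of band-limited (compactly supported Fourier transform) functions in $L^{1}$. The paper compresses your final tensor-product and Fourier-truncation step into a one-line appeal to density of such functions in $L^{1}(J)$ for compact $J$, so you have simply made explicit what the paper leaves implicit.
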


\begin{proof}
Since $G$ is homogeneous, the representation formula (\ref{MeRep}) is
equivalent to the representation of the function $f\left( r,\theta \right)
=F\left( X\right) $ with $r=\left\vert X\right\vert $ and $\theta \in \left[
-\frac{\pi }{2},\frac{\pi }{2}\right] $ as angular coordinate, in terms of
the Mellin transform. Moreover, using the change of variables $\tilde{f}%
\left( t,\theta \right) =f\left( r,\theta \right) ,$ $t=\log \left( r\right)
$ and writing $G\left( X,\omega \right) =g\left( \omega ,\theta \right) $
the problem becomes equivalent to proving that the class of functions:%
\begin{equation}
\tilde{f}\left( t,\theta \right) =\int_{\mathbb{R}}e^{i\omega t}g\left(
\omega ,\theta \right) d\omega \   \label{SMShape}
\end{equation}%
with $g\in C_{c}^{\infty }\left( \mathbb{R}\times \left[ -\frac{\pi }{2},%
\frac{\pi }{2}\right] \right) $ is dense in $L^{1}\left( \mathbb{R}\times %
\left[ -\frac{\pi }{2},\frac{\pi }{2}\right] \right) .$ This follows using
first that the compactly supported functions are dense in $L^{1}\left(
\mathbb{R}\times \left[ -\frac{\pi }{2},\frac{\pi }{2}\right] \right) $ and
using then that the functions with the form (\ref{SMShape}) are dense in $%
L^{1}\left( J\right) $ for any compact set $J\subset \mathbb{R}\times \left[
-\frac{\pi }{2},\frac{\pi }{2}\right] .$
\end{proof}

The next Lemma shows, by means of an explicit computation, that there are
no homogeneous solutions of (\ref{EqResc}) for a set of homogeneities and
values of $r.$

\begin{lemma}
\label{LiouvHom}Suppose that $\varphi $ satisfies (\ref{EqResc}) and that $%
\varphi $ satisfies the homogeneity condition $\varphi \left( \mu X\right)
=\mu ^{3b}\varphi \left( X\right) ,\ X=\left( \zeta ,v\right) \in \mathcal{U}%
,$ for any $\mu >0$ where $\operatorname{Re}\left( b\right) \in \left[ 0,\beta
\right) \cup \left( \beta ,\frac{2}{3}\right] $ if $0<r<r_{c}$ and $0\leq
\operatorname{Re}\left( b\right) \leq \frac{2}{3}$ if $r_{c}<r\leq 1.$ Then $\varphi
\left( \zeta ,v\right) \equiv 0$ unlesss $b=0.$
\end{lemma}

\begin{remark}
In the case $r_{c}<r$ it is possible to prove the Lemma for $r_{c}<r\leq
\sqrt{2}.$ Notice that, if $\operatorname{Im}\left( b\right) \neq 0$ we must
necessarily have $\varphi $ complex.
\end{remark}

\begin{proof}
Using the change of variables $x=\zeta ^{3},$ it follows that the solutions
with the homogeneity indicated in Lemma \ref{LiouvHom} have the form 
(4.9) in \cite{HVJ2}. Replacing $\beta $ by $z$ to avoid ambiguities, we would obtain that $%
\varphi =\varphi \left( x,v\right) $ has the form $\varphi \left( x,v\right)
=x^{z}F\left( \frac{v^{3}}{9x}\right) $ for some $z\in \mathbb{C}$ where $%
\operatorname{Re}\left( z\right) \in \left[ 0,\frac{2}{3}\right] .$ Our goal is to
show that the only possible values of $z$ are $z=\beta \left( r\right) $ and
$z=0$ if $0<r<r_{c}$ and $z=0$ if $r_{c}<r\leq 1.$ To this end we argue as
in the Proof of Proposition 4.2 in \cite{HVJ2} to prove that
\begin{equation}
r^{3z}=2\sin \left( \pi \left( \frac{1}{6}-z\right) \right)  \label{EqR}
\end{equation}

We then need to show that (\ref{EqR}) has only the roots indicated above. To
this end we argue as follows. If $r>0$ is sufficiently small the only roots
of (\ref{EqR}) in the strip $0\leq \operatorname{Re}\left( z\right) \leq \frac{2}{3}$
are at $z=0$ and $z_{\ast }\left( r\right) $ in a neighbourhood of $z=\frac{1%
}{6}.$ Indeed, it follows from the Implicit Functions Theorem that for any $%
\delta >0$ there is only one solution of (\ref{EqR}) in the region $\operatorname{Re}%
\left( z\right) \geq \delta >0$ if $r$ is small, and such root converges to $%
z=\frac{1}{6}$ as $r\rightarrow 0.$ In order to study the possible roots
with $\operatorname{Re}\left( z\right) <\delta $ we write $z=A+iB,\ A,B\in \mathbb{R}
$ with $0\leq A<\delta .$ Then (\ref{EqR}) becomes:%
\begin{eqnarray}
&&e^{3A\log \left( r\right) }\left( \cos \left( 3B\log \left( r\right)
\right) +i\sin \left( 3B\log \left( r\right) \right) \right)  \label{ReP} \\
&=&2\sin \left( \pi \left( \frac{1}{6}-A\right) \right) \cosh \left( \pi
B\right) -2i\cos \left( \pi \left( \frac{1}{6}-A\right) \right) \sinh \left(
\pi B\right)  \notag
\end{eqnarray}

Estimating $\cos \left( 3B\log \left( r\right) \right) $ and $\cosh \left(
\pi B\right) $ it follows that the real part of the left-hand side of (\ref%
{ReP}) is smaller than $e^{3A\log \left( r\right) }$ and the real part of
the left hand side of (\ref{ReP}) is larger than $2\sin \left( \pi \left(
\frac{1}{6}-A\right) \right) .$ It follows, using Taylor's Theorem to
approximate this functions for $A>0$ small that $2\sin \left( \pi \left(
\frac{1}{6}-A\right) \right) \geq 1-C_{0}A$ for some constant $C_{0}>0$
independent of $r.$ Since $e^{3A\log \left( r\right) }<1-C_{0}A$ for $r$
small enough it follows that the only solution of (\ref{ReP}) with $A<\delta
$ has $A=0$ if $r$ is small. Then (\ref{ReP}) becomes:%
\begin{equation*}
\cos \left( 3B\log \left( r\right) \right) +i\sin \left( 3B\log \left(
r\right) \right) =\cosh \left( \pi B\right) -i\sqrt{3}\sinh \left( \pi
B\right)
\end{equation*}%
and using the fact that $\cos \left( 3B\log \left( r\right) \right) \leq
1<\cosh \left( \pi B\right) $ if $B>0$ we obtain that $B=0.$

Notice that for any fixed $r<1$ we have $\left\vert 2\sin \left( \pi \left(
\frac{1}{6}-z\right) \right) \right\vert >\left\vert r^{3z}\right\vert $ if $%
z$ is large enough. Standard properties of the zeros of analytic functions
then imply that the number of solutions of (\ref{EqR}) for each $r>0$ in $%
0\leq \operatorname{Re}\left( z\right) \leq \frac{2}{3}$ can change for the values
of $r$ for which there are solutions of (\ref{EqR}) in $\operatorname{Re}\left(
z\right) =0$ or $\operatorname{Re}\left( z\right) =\frac{2}{3}.$ Writing $z=iB,\
B\in \mathbb{R}$ we obtain that (\ref{EqR}) becomes (cf. (\ref{ReP})):%
\begin{equation*}
\cos \left( 3B\log \left( r\right) \right) +i\sin \left( 3B\log \left(
r\right) \right) =\cosh \left( \pi B\right) -i\sqrt{3}\sinh \left( \pi
B\right)
\end{equation*}%
and using again that $\cos \left( 3B\log \left( r\right) \right) \leq
1<\cosh \left( \pi B\right) $ it follows that there are not solutions of (%
\ref{EqR}) with $\operatorname{Re}\left( z\right) =0$ except $z=0.$ We now prove
that there are not solutions of (\ref{EqR}) neither with $\operatorname{Re}\left(
z\right) =\frac{2}{3}.$ To this end we write $z=\frac{2}{3}+iB,\ B\in
\mathbb{R}$ and (\ref{EqR}) becomes:%
\begin{equation*}
r^{2}\left( \cos \left( 3B\log \left( r\right) \right) +i\sin \left( 3B\log
\left( r\right) \right) \right) =-2\cosh \left( \pi B\right)
\end{equation*}

The solutions of this equation satisfy $\sin \left( 3B\log \left( r\right)
\right) =0,$ whence $3B\log \left( r\right) =n\pi ,$ $n\in \mathbb{Z}$. Then
$r^{2}\cos \left( n\pi \right) =-2\cosh \left( \frac{n\pi ^{2}}{3\log \left(
r\right) }\right) ,$ whence $n=2\ell +1,\ \ell \in \mathbb{Z}$ and $%
r^{2}=2\cosh \left( \frac{\left( 2\ell +1\right) \pi ^{2}}{3\log \left(
r\right) }\right) .$ This equation does not have any solution with $r\leq
\sqrt{2}.$ It then follows that the number of roots of (\ref{EqR}) in the
strip $0\leq \operatorname{Re}\left( z\right) \leq \frac{2}{3}$ can change only if a
multiple root is formed at $z=0$ for some value of $r.$ A simple computation
shows that this happens only for $r=r_{c},$ in which case there is a double
root (\ref{EqR}) at $z=0.$ Therefore there is one root of (\ref{EqR}) in $0<%
\operatorname{Re}\left( z\right) \leq \frac{2}{3}$ if $r<r_{c}$ and no roots if $%
r_{c}<r\leq 1,$ whence the result follows.
\end{proof}

Proposition \ref{classif} then follows from Lemma \ref{LiouvHom}.

We will use the following standard abstract result of Functional Analysis,
which is just an adaptation of Freldhom alternative adapted to our specific
situation.

\begin{proposition}
\label{FredAl}Suppose that $X$ is a reflexive Banach space. Let $A:D\left(
A\right) \subset X\rightarrow X$ be a linear closed operator with domain $%
D\left( A\right) $ dense in $X$. Let $A^{\ast }$ be the adjoint of $A,$ with
domain $D\left( A^{\ast }\right) \subset X^{\ast },$ with $D\left( A^{\ast
}\right) $ dense in $X^{\ast }.$ If $Ker\left( A\right) =0,$ then $R\left(
D\left( A^{\ast }\right) \right) $ is dense in $X^{\ast }$.
\end{proposition}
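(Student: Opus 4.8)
The plan is to deduce this from the standard closed-range–type theory of adjoint operators on reflexive Banach spaces. The statement we want is: if $\mathrm{Ker}(A)=\{0\}$, then $R(D(A^{\ast}))$ — by which we mean the range $\mathcal{R}(A^{\ast})=\{A^{\ast}\xi:\xi\in D(A^{\ast})\}$ — is dense in $X^{\ast}$. First I would recall the general duality relation between the kernel of an operator and the range of its adjoint, valid for densely defined closed operators between reflexive spaces: one has $\overline{\mathcal{R}(A^{\ast})}=(\mathrm{Ker}(A))^{\perp}$, where for a subset $S\subset X$ we write $S^{\perp}=\{\xi\in X^{\ast}:\langle\xi,s\rangle=0\ \text{for all }s\in S\}\subset X^{\ast}$. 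This identity is the ``half'' of the closed range theorem that does not require $A$ to have closed range; it follows from the elementary orthogonality relations $\mathcal{R}(A^{\ast})\subset(\mathrm{Ker}(A))^{\perp}$ together with a Hahn–Banach separation argument showing that any $\xi\in X^{\ast}$ annihilating $\mathcal{R}(A^{\ast})$ must lie in $\overline{\mathrm{Ker}(A)}^{\,\perp\perp}$, which by reflexivity is $\mathrm{Ker}(A)$ itself (here one uses that $\mathrm{Ker}(A)$ is a closed subspace because $A$ is closed, and that in a reflexive space the canonical embedding $X\to X^{\ast\ast}$ is onto, so bipolars behave symmetrically).

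Granting that identity, the conclusion is immediate: by hypothesis $\mathrm{Ker}(A)=\{0\}$, hence $(\mathrm{Ker}(A))^{\perp}=\{0\}^{\perp}=X^{\ast}$, and therefore $\overline{\mathcal{R}(A^{\ast})}=X^{\ast}$, i.e. $\mathcal{R}(A^{\ast})$ is dense in $X^{\ast}$. So the whole content of the proposition is the annihilator identity, and the job is to justify that cleanly. I would present it in two directions. The inclusion $\mathcal{R}(A^{\ast})\subset(\mathrm{Ker}(A))^{\perp}$ is a one-line computation: for $x\in\mathrm{Ker}(A)$ and $\xi\in D(A^{\ast})$, $\langle A^{\ast}\xi,x\rangle=\langle\xi,Ax\rangle=\langle\xi,0\rangle=0$. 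Taking closures, $\overline{\mathcal{R}(A^{\ast})}\subset(\mathrm{Ker}(A))^{\perp}$ (the annihilator is weak-$\ast$ closed, a fortiori norm closed). For the reverse inclusion, suppose $\xi_{0}\in(\mathrm{Ker}(A))^{\perp}\setminus\overline{\mathcal{R}(A^{\ast})}$. By Hahn–Banach in $X^{\ast}$ there is a continuous linear functional on $X^{\ast}$ vanishing on $\overline{\mathcal{R}(A^{\ast})}$ but not at $\xi_{0}$; by reflexivity this functional is evaluation at some $x_{0}\in X$, so $\langle A^{\ast}\xi,x_{0}\rangle=0$ for all $\xi\in D(A^{\ast})$ while $\langle\xi_{0},x_{0}\rangle\neq0$. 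The condition $\langle\xi,Ax_{0}\rangle=\langle A^{\ast}\xi,x_{0}\rangle=0$ for all $\xi\in D(A^{\ast})$, together with $D(A^{\ast})$ dense in $X^{\ast}$ and $A$ closed, forces $x_{0}\in D(A)$ and $Ax_{0}=0$ — this is exactly the standard characterization of $D(A)$ via the adjoint for closed densely defined operators on reflexive spaces. Thus $x_{0}\in\mathrm{Ker}(A)$, contradicting $\xi_{0}\in(\mathrm{Ker}(A))^{\perp}$ and $\langle\xi_{0},x_{0}\rangle\neq0$. Hence no such $\xi_{0}$ exists and $(\mathrm{Ker}(A))^{\perp}\subset\overline{\mathcal{R}(A^{\ast})}$.

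The main obstacle — really the only subtle point — is the step asserting that if $x_{0}\in X$ satisfies $\langle\xi,Ax_{0}\rangle=0$... wait, more precisely that $\langle A^{\ast}\xi,x_{0}\rangle=0$ for all $\xi\in D(A^{\ast})$ implies $x_{0}\in D(A)$ with $Ax_{0}=0$. One cannot write $\langle\xi,Ax_{0}\rangle$ a priori since it is not yet known that $x_{0}\in D(A)$; the correct argument is that the functional $\xi\mapsto\langle A^{\ast}\xi,x_{0}\rangle$ is the zero functional, and by the very definition of $D(A^{\ast\ast})$ this says $x_{0}\in D(A^{\ast\ast})$ with $A^{\ast\ast}x_{0}=0$, and then one invokes $A^{\ast\ast}=A$ (valid because $A$ is closed, densely defined, and $X$ is reflexive). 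For our application $A=I-\lambda\Omega_{\sigma}$ on $C(X)$ is not reflexive, so strictly this abstract proposition must be applied with care — but the statement as given stipulates $X$ reflexive, so I would simply prove it in that generality and defer to the later sections the matter of how the reflexivity hypothesis is arranged (e.g. by working on an $L^{p}$-type space rather than $C(X)$ directly). I would also remark that closedness of $\mathrm{Ker}(A)$ and the equality $A^{\ast\ast}=A$ are the two places where the closedness of $A$ is used, and reflexivity is used only to identify $X^{\ast\ast}$ with $X$; both are standard (cf. \cite{Brezis}).
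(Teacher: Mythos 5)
Your proof is correct and follows essentially the same route as the paper's: a contradiction argument via Hahn--Banach, reflexivity to identify the separating functional with an element $x_0\in X$, and the identity $A^{\ast\ast}=A$ (Brezis, Theorem~3.24) to conclude $x_0\in\mathrm{Ker}(A)$. The only cosmetic difference is that you package the argument as the general annihilator identity $\overline{\mathcal{R}(A^{\ast})}=(\mathrm{Ker}(A))^{\perp}$ before specializing, and you explicitly flag the subtlety --- which the paper's proof leaves implicit --- that one must pass through $D(A^{\ast\ast})$ rather than write $\langle\xi,Ax_{0}\rangle$ before knowing $x_{0}\in D(A)$.
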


\begin{proof}
Suppose that $R\left( D\left( A^{\ast }\right) \right) $ is not dense in $%
X^{\ast }.$ Then $\overline{R\left( D\left( A^{\ast }\right) \right) }\neq
X^{\ast }.$ Therefore, Hahn-Banach implies the existence of an element $\ell
\in X^{\ast \ast }$ such that $\left\langle \ell ,\eta \right\rangle =0$ for
any $\eta \in \overline{R\left( D\left( A^{\ast }\right) \right) }$ and with
$\ell \neq 0.$ Since $X$ is reflexive we have $X^{\ast \ast }=X.$ Then,
there exist $x_{0}\in X,\ x_{0}\neq 0$ such that $\left\langle \eta
,x_{0}\right\rangle =0$ for any $\eta \in \overline{R\left( D\left( A^{\ast
}\right) \right) }.$ In particular, given any $z\in D\left( A^{\ast }\right)
$ we have $0=\left\langle A^{\ast }z,x_{0}\right\rangle .$ Therefore, using
the definition of adjoint operator, it follows that $x_{0}\in D\left(
A^{\ast \ast }\right) $ and $A^{\ast \ast }x_{0}=0.$ Theorem 3.24 in 
\cite{Brezis} implies $A^{\ast \ast }=A$. Then $Ax_{0}=0.$ Since $Ker\left(
A\right) =0$ we have $x_{0}=0$ and this gives a contradiction, whence $%
R\left( D\left( A^{\ast }\right) \right) $ is dense in $X^{\ast }.$
\end{proof}

We can use now Lemma \ref{Mellin} to show that the solutions of (\ref{EqResc}%
) satisfying a suitable boundedness condition are homogeneous. It then
follows, using Lemma\ \ref{LiouvHom} that they vanish for suitable ranges of
parameters. More precisely, we have the following result.

\begin{lemma}
Suppose that $\varphi =\varphi \left( \zeta ,v\right) $ satisfies (\ref%
{EqResc}) as well as the inequality:%
\begin{equation*}
\left\vert \varphi \left( \zeta ,v\right) \right\vert \leq K\left(
\left\vert \zeta \right\vert +\left\vert v\right\vert \right) ^{3b}\ \text{%
in\ }x\geq 0\ \ ,\ \ v\in \mathbb{R}\text{ ,\ }K<\infty
\end{equation*}%
with $\operatorname{Re}\left( b\right) \in \left[ 0,\beta \right) \cup \left( \beta ,%
\frac{2}{3}\right] $ if $0<r<r_{c}$ and $0\leq b\leq \frac{2}{3}$ if $%
r_{c}<r\leq 1.$ Then $\varphi \left( \zeta ,v\right) \equiv 0$ unless $b=0.$
\end{lemma}

\begin{proof}
We multiply (\ref{EqResc}) by a $C^{\infty }$ test function $F\left( \zeta
,v\right) $ with compact support in $\mathcal{U}$ and satisfying
\begin{equation}
F\left( 0,v\right) =r^{-2}F\left( 0,-\frac{v}{r}\right) \ \ ,\ \ v>0
\label{compCond}
\end{equation}%
Integrating by parts we obtain, with the help of (\ref{compCond}) and using
also that $\varphi \left( 0,rv\right) =\varphi \left( 0,-v\right) \ $for$\
v>0$ we obtain:\
\begin{equation}
\int_{\mathcal{U}}\mathcal{B}^{\ast }F\varphi d\zeta dv=0  \label{BF=0}
\end{equation}%
where from now on, we write, by shortness:%
\begin{eqnarray*}
\mathcal{B} &=&3\zeta ^{2}\partial _{vv}+v\partial _{\zeta } \\
\mathcal{B}^{\ast } &=&3\zeta ^{2}\partial _{vv}-v\partial _{\zeta }
\end{eqnarray*}%
We take $F\left( \zeta ,v\right) $ with the following form
\begin{equation*}
F\left( \zeta ,v\right) =F(X)=\int_{\mathbb{R}}\left\vert X\right\vert
^{\alpha +i\omega }G\left( X,\omega \right) d\omega \ \ ,\ \ X=\left( \zeta
,v\right)
\end{equation*}%
with $G\left( X,\omega \right) $ to be specified. Then \eqref{BF=0} becomes
\begin{equation}
\int_{\mathbb{R}}\int_{\mathcal{U}}\mathcal{B}^{\ast }\left( \left\vert
X\right\vert ^{\alpha +i\omega }G\left( X,\omega \right) \right) \varphi
d\zeta dvd\omega =0  \label{BF=01}
\end{equation}%
Notice that if $G$ satisfies:%
\begin{equation}
G\left( 0,v,\omega \right) =\frac{1}{\left\vert r\right\vert ^{i\omega
+2+\alpha }}G\left( 0,-\frac{v}{r},\omega \right) \ \ ,\ \ v>0\   \label{D8}
\end{equation}%
we obtain that $F$ satisfies (\ref{compCond}).

We choose $\alpha =-3b-2.$ Then $g\left( X\right) =\left\vert X\right\vert
^{\alpha +2}\varphi \left( X\right) $ is globally bounded by the assumption
on $\varphi$. We write:%
\begin{eqnarray}
\eta \left( X,\omega \right) &=& \frac{1}{\left\vert X\right\vert ^{\alpha+i
\omega}} \mathcal{B}^{\ast } \left( \left\vert X\right\vert ^{\alpha+i
\omega} G\left( X,\omega \right) \right)  \notag \\
\xi \left( X\right) &=&\int_{\mathbb{R}}\left\vert X\right\vert ^{i\omega
}\eta \left( X,\omega \right) d\omega  \label{D9}
\end{eqnarray}

Then, (\ref{BF=01}) implies:%
\begin{equation}
\int_{\mathcal{U}}\xi \left( X\right) g\left( X\right) \frac{dX}{\left\vert
X\right\vert ^{2}}=0\   \label{D9a}
\end{equation}%
for any function $\xi \left( \cdot \right) $ with the form (\ref{D9}). Our
goal is to show that if $b\leq 0,$ the functions $\xi $ in \eqref{D9} can be
chosen in a dense set in $L^{1}\left( \mathcal{U};\frac{dX}{\left\vert
X\right\vert ^{2}}\right) $, which will imply $g\left( X\right) =0$ and
hence $\varphi =0$. To this end, we define the following operators for each $%
\omega \in \mathbb{R}$:%
\begin{equation}
\mathcal{A}^{\ast }\left( \omega \right) =\frac{1}{\left\vert X\right\vert
^{\alpha +2}}\mathcal{B}^{\ast }\left( \left\vert X\right\vert ^{\alpha
+i\omega }G\left( X,\omega \right) \right)  \label{E1}
\end{equation}%
To make precise the definition of the operators $\mathcal{A}^{\ast }$ we
need to specify their domain. To this end we define a curve in parametric
coordinates by means of $\left\{ r=\lambda \left( \theta \right) :\theta \in %
\left[ -\frac{\pi }{2},\frac{\pi }{2}\right] \right\} ,$ with $\lambda
\left( -\frac{\pi }{2}\right) =\frac{1}{r},\lambda \left( \frac{\pi }{2}%
\right) =1,\lambda ^{\prime }\left( \theta \right) \leq 0.$ We then define
domains%
\begin{equation*}
\mathcal{D}=\left\{ \lambda \left( \theta \right) <\left\vert X\right\vert
<2\lambda \left( \theta \right) :\theta \in \left[ -\frac{\pi }{2},\frac{\pi
}{2}\right] \right\} .
\end{equation*}%
The operator $\mathcal{A}^{\ast }\left( \omega \right) $ in (\ref{E1}) will
be defined in the space of functions:%
\begin{equation*}
Y=\left\{ G\in L^{2}\left( \mathcal{D}\right) :G\left( \mu X\right) =G\left(
X\right) \text{ \ for }\mu >0,~\ X,\mu X\in \mathcal{D}\right\}
\end{equation*}%
with domain:%
\begin{equation*}
D\left( \mathcal{A}^{\ast }\left( \omega \right) \right) =\left\{ G\in
Y,~~\partial _{v}G,\partial _{v}^{2}G,\partial _{\zeta }G\in L^{2}\left(
\mathcal{D}\right) ,~~\text{(\ref{D8}) holds}\right\}
\end{equation*}%
Note that the boundary condition (\ref{D8}) in the sense of traces is
meaningful due to the regularity of the functions in $D\left( \mathcal{A}%
^{\ast }\left( \omega \right) \right) .$ The adjoint of $\mathcal{A}^{\ast
}\left( \omega \right) $ is given by the operator:%
\begin{equation}
\mathcal{A}\left( \omega \right) =\left\vert X\right\vert ^{\alpha +i\omega }%
\mathcal{B}\left( \frac{\tilde{G}\left( X,\omega \right) }{\left\vert
X\right\vert ^{\alpha +2}}\right) \   \label{E2}
\end{equation}%
with the boundary condition%
\begin{equation}
\tilde{G}\left( 0,v,\omega \right) =r^{\alpha +2}\tilde{G}\left( 0,-\frac{v}{%
r},\omega \right) \ ,\ v>0\   \label{E3}
\end{equation}%
and the domain of $\mathcal{A}\left( \omega \right) $ is given by:%
\begin{equation*}
D\left( \mathcal{A}\left( \omega \right) \right) =\left\{ \tilde{G}\in Y,\
\partial _{v}\tilde{G},\partial _{v}^{2}\tilde{G},\partial _{\zeta }\tilde{G}%
\in L^{2}\left( \mathcal{D}\right) ,\ \text{(\ref{E3}) holds}\right\}
\end{equation*}

We can then apply Proposition \ref{FredAl} which implies that $R\left(D\left(
\mathcal{A}^{\ast }\left( \omega \right) \right) \right) $ is dense in $L^{2}\left(
\mathcal{D}\right) $ for any $\omega \in \mathbb{R}$ such that $Ker\left(
\mathcal{A}\left( \omega \right) \right) =\left\{ 0\right\} .$ Notice that $%
\mathcal{A}\left( \omega \right) \tilde{G}\left( \zeta ,v\right) =0$ if and
only if $\varphi \left( \zeta ,v\right) =\varphi \left( X\right) =\left\vert
X\right\vert ^{-\left( \alpha +2\right) }\tilde{G}\left( X,\omega \right) $
solves $\mathcal{L}\left( \varphi \right) =0$ in $x\geq 0,\ v\in \mathbb{R}$
and $\varphi \left( 0,rv\right) =\varphi \left( 0,-v\right) ,\ \ v>0,$ as
well as (\ref{C3}). Since $b\neq 0,~$Proposition \ref{classif} then implies
that $\tilde{G}\left( \zeta ,v\right) =0,$ whence $R \left(D \left( \mathcal{A}^{\ast
}\left( \omega \right) \right) \right) $ is dense in $L^{2}\left( \mathcal{D}\right)
.$\ Now let
\begin{equation*}
\tilde{\mathcal{A}}^{\ast }\left( \omega \right) =\left\vert X\right\vert
^{2-i\omega }\mathcal{A}^{\ast }\left( \omega \right) =\frac{1}{\left\vert
X\right\vert ^{\alpha +i\omega }}\mathcal{B}^{\ast }\left( \left\vert
X\right\vert ^{\alpha +i\omega }G\left( X,\omega \right) \right)
\end{equation*}%
Notice that $\tilde{\mathcal{A}}^{\ast }G(\mu X)=\tilde{\mathcal{A}}^{\ast
}G(X)$. Then $R\left( \tilde{\mathcal{A}}^{\ast }\left( \omega \right)
\right) $ is also dense in $L^{2}\left( \mathcal{D}\right) .$ Therefore, for
any $\varepsilon >0$ and any function $\eta \in C^{\infty }\left( \mathbb{R}%
:L^{2}\left( \mathcal{D}\right) \right) ,$ supported in $\left\vert \omega
\right\vert \leq M,$ we can find a family of functions $G\left( X,\omega
\right) ,\ \omega \in \mathbb{R}$, such that $\left\Vert \tilde{\mathcal{A}}%
^{\ast }\left( \omega \right) G\left( \cdot ,\omega \right) -\eta \left(
\cdot ,\omega \right) \right\Vert _{L^{2}\left( \mathcal{D}\right)
}<\varepsilon $ for any $\left\vert \omega \right\vert \leq M.$ Therefore,
Lemma \ref{Mellin} implies that the functions $\xi \left( X\right) $ in (\ref%
{D9}) can be chosen in a dense set in $L^{1}\left( \mathcal{U};\frac{dX}{%
\left\vert X\right\vert ^{2}}\right) .$ Therefore (\ref{D9a}) implies that $%
g\left( X\right) =0$, whence $\varphi =0$ and the result follows.
\end{proof}

\begin{proof}[Proof of Theorem \protect\ref{Liouville}]
We consider first the cases (i) and (iii). The result in this case follows
from Proposition \ref{homogen}.

In the case (ii) we suppose first that $\psi $ is a bounded function
satisfying $\left( D_{v}^{2}-D_{x}\right) \psi =1$ in $\left( x,v\right) \in
\mathbb{R}^{2}.$ Therefore, using standard parabolic theory we can write:%
\begin{equation*}
\psi \left( x,v\right) =-x+w\left( x,v\right) \ \ \text{for \ }x\geq 0
\end{equation*}%
where $\left( D_{v}^{2}-D_{x}\right) w=0$ and $w\left( 0,v\right) =\psi
\left( 0,v\right) ,\ \psi \left( 0,\cdot \right) $ bounded. The classical
theory of the heat equation establishes that $w$ is uniquely determined and $%
\left\vert w\left( x,v\right) \right\vert \leq \sup_{v\in \mathbb{R}%
}\left\vert \psi \left( 0,v\right) \right\vert .$ But $\lim_{x\rightarrow
\infty }\psi \left( x,v\right) $ $=-\infty ,$ and this contradicts the
boundedness of $\psi .$ The case $\mathcal{\bar{L}=}%
D_{v}^{2}+vD_{x} $ can be studied similarly. Suppose then that $\mathcal{%
\bar{L}=}D_{v}^{2}+vD_{x}.$ In this case we use the fact that $\psi $ is a
bounded solution of the Kolmogorov equation:%
\begin{equation*}
\psi _{t}=\psi _{vv}+v\psi _{x}-1\ \ ,\ \ t\in \mathbb{R}\text{ , }\left(
x,v\right) \in \mathbb{R}^{2}
\end{equation*}

Therefore, we have the following representation formula:\
\begin{equation}
\psi \left( t,x,v\right) =-t+\int_{\mathbb{R}^{2}}G\left( t,x-\xi ,v-\eta
\right) \psi \left( 0,\xi ,\eta \right) d\xi d\eta \ \text{for }t>0
\label{A8}
\end{equation}%
where $G$ is the fundamental solution for the Kolmogorov equation (cf. \cite%
{K}).\ Then, the integral term in (\ref{A8}) is globally bounded, whence $%
\psi \left( t,x,v\right) $ is unbounded for $t\rightarrow \infty .$ The
contradiction concludes the Proof of (ii).

In order to prove (iv) we argue as follows. By assumption $\psi $ is
bounded. Let $m=\inf \left( \psi \right) ,$ $M=\sup \left( \psi \right) .$
If either of them is reached as an interior point we would obtain that $\psi
$ is constant by the strong maximum principle (cf. Proposition \ref%
{strong_max}). Let $L=\lim \sup_{\left\vert \left( x,v\right) \right\vert
\rightarrow \infty }\psi \left( x,v\right) $ and $\lim \inf_{\left(
x,v\right) \rightarrow \left( 0,0\right) }\psi \left( x,v\right) =\kappa
_{0}.$ By assumption $\kappa _{0},L\in \left[ m,M\right] .$ We can then,
arguing by comparison, use as barrier function $\bar{\psi}=\kappa
_{0}+\sigma F_{\beta }$ in the admissible domain $\mathcal{R}_{R}\diagdown
\mathcal{R}_{\varepsilon }$ with $R$ large and $\varepsilon $ small, with $%
\sigma \rightarrow 0$ as $R\rightarrow \infty ,$ due to the fact that $\psi $
is bounded. It then follows that $\kappa _{0}-\delta \leq \psi \leq \kappa
_{0}+C\sigma $ in bounded sets of $\mathcal{U}$ with $\delta >0$ arbitrarily
small. Taking the limit $R\rightarrow \infty ,$ $\sigma \rightarrow 0,\
\delta \rightarrow 0$ we obtain $\psi =\kappa _{0}$ whence the result
follows.
\end{proof}

We will need the following auxiliary result.

\begin{lemma}
\label{LemSeq}Suppose that $\lambda \left( R\right) $ is a positive
continuous function such that
\begin{equation*}
\lim \inf_{R\rightarrow 0}\frac{\log \left( \lambda \left( R\right) \right)
}{\log \left( R\right) }=\zeta >0,\ \zeta <\infty \text{ \ for some }\zeta
>0.
\end{equation*}%
Suppose that $K>1$ is given$.$ Then there exist sequences $\left\{
R_{n}\right\} ,\ \left\{ \mu _{n}\right\} $ with $R_{n}\rightarrow 0,$ $\mu
_{n}\rightarrow \infty $ as $n\rightarrow \infty $ such that $\frac{\lambda
\left( R\right) }{\lambda \left( R_{n}\right) }\leq K\left( \frac{R}{R_{n}}%
\right) ^{\zeta }$ for $\frac{1}{\mu _{n}}\leq \frac{R}{R_{n}}\leq \mu _{n}.$
\end{lemma}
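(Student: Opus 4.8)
The plan is to extract, from the hypothesis on the logarithmic lower density of $\lambda$, a sequence of scales $R_n \to 0$ at which $\lambda$ behaves \emph{as if} it were exactly comparable to $R^\zeta$ on a window of multiplicative width $\mu_n \to \infty$ around $R_n$. The natural device is to pass to logarithmic variables. Set $t = -\log R$ (so $t \to +\infty$ as $R \to 0^+$) and define $g(t) = \log \lambda(R) = \log \lambda(e^{-t})$, which is continuous for large $t$. The hypothesis $\liminf_{R\to 0} \frac{\log \lambda(R)}{\log R} = \zeta$ translates into $\liminf_{t\to\infty} \frac{g(t)}{-t} = \zeta$, i.e. $\limsup_{t\to\infty} \frac{g(t)}{t} = -\zeta$. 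The desired conclusion, in these variables, becomes: there exist $t_n \to \infty$ and $\mu_n \to \infty$ with
\begin{equation*}
g(t) - g(t_n) \le \log K - \zeta (t - t_n) \quad \text{whenever } |t - t_n| \le \log \mu_n .
\end{equation*}
Writing $h(t) = g(t) + \zeta t$, this is exactly: $h(t) \le h(t_n) + \log K$ for all $t$ with $|t-t_n| \le \log\mu_n$, i.e. $t_n$ is an ``approximate local maximum of $h$ up to additive error $\log K$ over a window growing to infinity''. The condition $\limsup_{t\to\infty} \frac{g(t)}{t} = -\zeta$ says precisely that $\limsup_{t\to\infty} \frac{h(t)}{t} = 0$, and since $\zeta < \infty$ the function $h$ is real-valued and continuous for large $t$.

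The key step is therefore the following elementary real-variable lemma: if $h$ is continuous on $[T_0,\infty)$ with $\limsup_{t\to\infty} h(t)/t = 0$, then for every $c > 0$ (here $c = \log K > 0$) and every $L > 0$ there exists $t^\ast$ arbitrarily large with $h(t) \le h(t^\ast) + c$ for all $t \in [t^\ast - L, t^\ast + L]$. To see this, fix $L$ and suppose for contradiction that for all large $t^\ast$ there is a point in $[t^\ast - L, t^\ast + L]$ where $h$ exceeds $h(t^\ast) + c$. Starting from any large $s_0$ one then builds inductively $s_{k+1} \in [s_k - L, s_k + L]$ with $h(s_{k+1}) > h(s_k) + c$; but one must be careful that the $s_k$ march to $+\infty$ rather than oscillating. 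This is handled by first choosing, at each stage, $t^\ast$ to be a point where $h$ is near its supremum over a long interval $[s_k, s_k + N]$ with $N \gg L$: then the violating point lies to the right, or else $h$ would have a value exceeding its near-supremum. Iterating gives $h(s_k) \ge h(s_0) + kc$ while $s_k \le s_0 + $ (bounded drift per step), forcing $h(s_k)/s_k$ to stay bounded below by a positive constant along a sequence $s_k \to \infty$, contradicting $\limsup h(t)/t = 0$. Taking $L = L_n \to \infty$ and $t^\ast = t_n$ the corresponding point (which we may take $\ge n$), and then setting $\mu_n = e^{L_n}$ and $R_n = e^{-t_n}$, unwinds the change of variables and yields the statement.

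The main obstacle is purely the bookkeeping in the iteration: making sure the constructed sequence $s_k$ actually tends to $+\infty$ (not merely stays in a bounded set) and that the per-step horizontal displacement is controlled, so that the contradiction with the sublinear $\limsup$ really bites. The trick of picking $t^\ast$ near the supremum of $h$ over a rightward interval $[s_k, s_k+N]$ forces each violating point to lie strictly to the right of $t^\ast$, hence strictly to the right of $s_k$, which both guarantees monotone escape to infinity and keeps the horizontal drift per step bounded by $N$; since $h$ gains at least $c$ per step while $t$ advances by at most $N$ per step, one gets $h(s_k) \ge h(s_0) + c\,(s_k - s_0)/N$, i.e. a positive linear lower bound along $s_k \to \infty$, which is the contradiction. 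Everything else — continuity used only to make suprema over compact intervals attained, and the elementary conversion between $R$ and $t$ variables — is routine.
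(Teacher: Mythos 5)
Your change of variables $t=-\log R$, $h(t)=\log\lambda(e^{-t})+\zeta t$, and the resulting reduction to producing arbitrarily large approximate local maxima of a continuous $h$ with $\limsup_{t\to\infty}h(t)/t=0$, are both correct, and this is a genuinely different framing from the paper's (which works with $R$ and $\lambda$ directly). The gap is in the step that forces the iterates rightward. Taking $t^{\ast}$ near the supremum of $h$ on $[s_k,s_k+N]$ only rules out a violating point lying \emph{inside} $[s_k,s_k+N]$; the clause ``or else $h$ would have a value exceeding its near-supremum'' says nothing about $[s_k-L,s_k)$. If the maximum of $h$ on $[s_k,s_k+N]$ sits within distance $L$ of the left endpoint --- for instance if $h$ has a spike just to the left of $s_k$ and decreases on $[s_k,s_k+N]$ --- then $t^{\ast}<s_k+L$ and the violating point $s'$ can lie in $[s_k-L,s_k)$. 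In that case $s_{k+1}<s_k$, the sequence need not escape to $+\infty$, and the closing contradiction (a positive lower bound on $h(s_k)/s_k$ along a sequence $s_k\to\infty$) is no longer available.

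The hole is fillable, but only by a case analysis that is morally the same trichotomy the paper runs. One still has $h(s_{k+1})>h(s_k)+c$ and $|s_{k+1}-s_k|\le N+L$, so either the iterates remain in a compact set (impossible: $h$ would be unbounded there, contradicting continuity), or they drift leftward so quickly that $h(s_0)$ is forced linearly negative in $s_0$ --- and, applied to every large starting point, this yields $\limsup_{t\to\infty}h(t)/t\le -c/L<0$, contradicting that the $\limsup$ is exactly zero --- or they escape rightward, where your estimate closes the argument. The paper handles exactly this trichotomy in the original variables: it iterates the violating map $R_{n+1}=\rho(R_n)$ and distinguishes $\liminf R_n=0$, $\liminf R_n\in(0,R_0]$ (ruled out via continuity of $\lambda$), and escape above $R_0$. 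So the log-variable formulation is clean and the overall strategy is sound, but the shortcut you propose in the middle has a genuine gap, and filling it reintroduces the case split you were trying to avoid.
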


\begin{proof}
We claim the following. Suppose that we fix $L>1.$ We then claim that for
any $\delta >0$ (small), there exists $\bar{R}\leq \delta $ such that $\frac{%
\lambda \left( \rho \right) }{\lambda \left( \bar{R}\right) }\leq K\left(
\frac{\rho }{\bar{R}}\right) ^{\zeta }$ for all $\rho \in \left[ \frac{\bar{R%
}}{L},L\bar{R}\right] .$ The existence of the sequences $\left\{
R_{n}\right\} ,\ \left\{ \mu _{n}\right\} $ then follows inductively by
choosing $L=n=\mu _{n},$ $\delta =1$ if $n=1$ and $\delta =\frac{R_{n-1}}{2}$
if $n>1$ and denoting the corresponding $\bar{R}$ as $R_{n}.$

We prove the claim by contradiction. Suppose that it is false. Then, for any
$L>1$ fixed, there exists $R_{0}>0$ such that, for $\bar{R}\leq R_{0}$ there
exists $\rho =\rho \left( \bar{R}\right) \in \left[ \frac{\bar{R}}{L},L\bar{R%
}\right] $ such that $\frac{\lambda \left( \rho \left( \bar{R}\right)
\right) }{\lambda \left( \bar{R}\right) }>K\left( \frac{\rho \left( \bar{R}%
\right) }{\bar{R}}\right) ^{\zeta }.$ We then define, for each $R_{\ast
}\leq R_{0}$ a sequence inductively as follows. We define $R_{1}=\rho \left(
R_{\ast }\right) $ and then $R_{n+1}=\rho \left( R_{n}\right) $ for $n\geq 1$
as long as $R_{n}\leq R_{0}.$ If $\rho \left( R_{n}\right) >R_{0}$ for some $%
n\geq 1$ the iteration finishes.

We now have three possibilities. (i) Given $R_{\ast }\leq R_{0}$, for the
corresponding sequence $\left\{ R_{n}\right\} $ we have $\lim
\inf_{n\rightarrow \infty }R_{n}=0.$ (ii) Given $R_{\ast }\leq R_{0},$ for
the corresponding sequence $\left\{ R_{n}\right\} $ we have $0<\lim
\inf_{n\rightarrow \infty }R_{n}\leq R_{0}.$ (iii) Given $R_{\ast }\leq
R_{0} $ there exists $N$ such that $R_{N}>R_{0}.$ We define $\lambda
_{n}=\lambda \left( R_{n}\right) .$ We will denote the subsets of $\left(
0,R_{0}\right] $ where each of the three possibilities takes place as $%
A_{1},\ A_{2},\ A_{3}$ respectively.

Suppose first that we are in the case (i), or more precisely that $A_{1}$ is
nonempty. Then, for the corresponding $R_{\ast }$ we have the sequence $%
\left\{ R_{n}\right\} $ as defined above$.$ We then have:%
\begin{equation*}
\lambda _{n}\geq K\lambda _{n-1}\left( \frac{\rho \left( R_{n-1}\right) }{%
R_{n-1}}\right) ^{\zeta }=K\lambda _{n-1}\left( \frac{R_{n}}{R_{n-1}}\right)
^{\zeta }\geq ....\geq K^{n}\lambda \left( R_{\ast }\right) \left( \frac{%
R_{n}}{R_{\ast }}\right) ^{\zeta }
\end{equation*}

Then%
\begin{eqnarray*}
\lim \inf_{R\rightarrow 0}\frac{\log \left( \lambda \left( R\right) \right)
}{\log \left( R\right) } &\leq &\lim \inf_{n\rightarrow \infty }\frac{\log
\left( \lambda _{n}\right) }{\log \left( R_{n}\right) } \\
&\leq &\lim \inf_{n\rightarrow \infty }\frac{n\log \left( K\right) +\zeta
\log \left( R_{n}\right) +\log \left( \lambda \left( R_{\ast }\right)
\right) -\zeta \log \left( R_{\ast }\right) }{\log \left( R_{n}\right) } \\
&=&\lim \inf_{n\rightarrow \infty }\frac{n\log \left( K\right) +\zeta \log
\left( R_{n}\right) }{\log \left( R_{n}\right) } \\
&=&\zeta +\log \left( K\right) \lim \inf_{n\rightarrow \infty }\left( \frac{n%
}{\log \left( R_{n}\right) }\right)
\end{eqnarray*}

Notice that $\rho \left( R\right) \geq \frac{R}{L}.$ Then $R_{n}=\rho \left(
R_{n-1}\right) \geq \frac{R_{n-1}}{L}\geq ...\geq \frac{R_{\ast }}{L^{n}},$
whence $\log \left( R_{n}\right) \geq \log \left( R_{\ast }\right) -n\log
\left( L\right) \geq -\left( n+1\right) \log \left( L\right) $ for $n$ large
enough. Then%
\begin{equation*}
\left\vert \log \left( R_{n}\right) \right\vert \leq \left( n+1\right) \log
\left( L\right) ,
\end{equation*}
whence $\frac{1}{\left\vert \log \left( R_{n}\right) \right\vert }\geq \frac{%
1}{\left( n+1\right) \log \left( L\right) }$ and $-\frac{1}{\left\vert \log
\left( R_{n}\right) \right\vert }\leq -\frac{1}{\left( n+1\right) \log
\left( L\right) },$ i.e. $\frac{1}{\log \left( R_{n}\right) }\leq -\frac{1}{%
\left( n+1\right) \log \left( L\right) }.$ This yields:%
\begin{eqnarray*}
\lim \inf_{R\rightarrow 0}\frac{\log \left( \lambda \left( R\right) \right)
}{\log \left( R\right) } &\leq &\zeta -\log \left( K\right) \lim
\inf_{n\rightarrow \infty }\left( \frac{n}{\left( n+1\right) \log \left(
L\right) }\right) \\
\lim \inf_{R\rightarrow 0}\frac{\log \left( \lambda \left( R\right) \right)
}{\log \left( R\right) } &\leq &\zeta -\frac{\log \left( K\right) }{\log
\left( L\right) }<\zeta
\end{eqnarray*}

This contradicts the fact that $\lim \inf_{R\rightarrow 0}\frac{\log \left(
\lambda \left( R\right) \right) }{\log \left( R\right) }=\zeta $ and then,
it implies that the set $A_{1}$ is empty.

Suppose then that $A_{2}$ is nonempty. We also have the inequality:%
\begin{equation*}
\lambda _{n}\geq K^{n}\lambda \left( R_{\ast }\right) \left( \frac{R_{n}}{%
R_{\ast }}\right) ^{\zeta }
\end{equation*}%
whence:%
\begin{equation*}
\lambda \left( R_{\ast }\right) \leq \frac{\lambda _{n}}{\left( R_{n}\right)
^{\zeta }}\frac{\left( R_{\ast }\right) ^{\zeta }}{K^{n}}
\end{equation*}%
where $n$ is arbitrarily large. Then, since the limit $\lim_{n\rightarrow
\infty }\left( \frac{\lambda _{n}}{\left( R_{n}\right) ^{\zeta }}\right)
=\ell =\ell _{R_{\ast }}>0$ exists, we obtain%
\begin{equation*}
\lambda \left( R_{\ast }\right) \leq \ell _{R_{\ast }}\cdot \left( R_{\ast
}\right) ^{\zeta }\lim_{n\rightarrow \infty }\frac{1}{K^{n}}=0
\end{equation*}

Therefore $\lambda \left( R_{\ast }\right) =0$ if $R_{\ast }\in A_{2},$
which is a contradiction.

If $R_{\ast }\in A_{3}$ we can apply the same argument to obtain:%
\begin{equation*}
\lambda \left( R_{\ast }\right) \leq \frac{\lambda _{N}}{\left( R_{N}\right)
^{\zeta }}\frac{\left( R_{\ast }\right) ^{\zeta }}{K^{N}}
\end{equation*}

We now have the inequality $R_{N}=\rho \left( R_{N-1}\right) \leq
LR_{N-1}\leq ...\leq L^{N}R_{\ast }$ which implies $N\log \left( L\right)
\geq \log \left( \frac{R_{N}}{R_{\ast }}\right) .$ Then $K^{N}\geq \left(
\frac{R_{N}}{R_{\ast }}\right) ^{\frac{\log \left( K\right) }{\log \left(
L\right) }},$ whence:%
\begin{equation*}
\lambda \left( R_{\ast }\right) \leq \frac{\lambda _{N}}{\left( R_{N}\right)
^{\zeta +\frac{\log \left( K\right) }{\log \left( L\right) }}}\left( R_{\ast
}\right) ^{\zeta +\frac{\log \left( K\right) }{\log \left( L\right) }}
\end{equation*}

Given that $R_{N}\geq R_{0}$ we obtain:%
\begin{equation*}
\lambda \left( R_{\ast }\right) \leq C\left( R_{\ast }\right) ^{\zeta +\frac{%
\log \left( K\right) }{\log \left( L\right) }}
\end{equation*}%
with $C$ independent of $R_{\ast }.$ Since $A_{1}=A_{2}=\varnothing $ we
obtain this estimate for $R_{\ast }\leq R_{0}$ arbitrary. Then:%
\begin{equation*}
\lim \inf_{R\rightarrow 0}\frac{\log \left( \lambda \left( R\right) \right)
}{\log \left( R\right) }\geq \zeta +\frac{\log \left( K\right) }{\log \left(
L\right) }>\zeta
\end{equation*}%
and this would give a contradiction.
\end{proof}

The following asymptotic result is interesting for itself, and it will be
used repeatedly in the following.

\begin{theorem}
\label{AsSingPoint}Let $0<r<r_{c}.$ Suppose that $h\in C\left( X\right) ,$ $%
\varphi \in C\left( X\right) $ satisfies (\ref{comp1}) and that we have $%
\mathcal{L}\varphi =h,$ with $\mathcal{L}$ as in Definition \ref{LadjDef}.
Then
\begin{equation}
\left\vert \varphi \left( x,v\right) -\varphi \left( 0,0\right) -\mathcal{A}%
\left( \varphi \right) F_{\beta }\left( x,v\right) -h\left( 0,0\right)
S\left( x,v\right) \right\vert =o\left( x^{\frac{2}{3}}+\left\vert
v\right\vert ^{2}\right) \   \label{P1E2}
\end{equation}%
as $x^{\frac{2}{3}}+\left\vert v\right\vert ^{2}\rightarrow 0$ for some
suitable constant $\mathcal{A}\left( \varphi \right) \in \mathbb{R}$.
\end{theorem}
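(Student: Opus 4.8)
The plan is to obtain the expansion \eqref{P1E2} by combining a blow-up (rescaling) argument with the Liouville-type classification of Theorem \ref{Liouville} and the particular supersolution $S$ from Lemma \ref{super_sol}. First I would reduce to the case $h(0,0)=0$ and $\varphi(0,0)=0$: replacing $\varphi$ by $\varphi-\varphi(0,0)-h(0,0)S$ we obtain a function, still denoted $\varphi$, with $\mathcal{L}\varphi = \tilde h$ where $\tilde h\in C(X)$ and $\tilde h(0,0)=0$, and with $\varphi(0,0)=0$; here I use that $\mathcal{L}S=1$ in $\mathcal{U}$, that $S$ satisfies the boundary compatibility \eqref{T4E4}, and that $S=O(x^{2/3}+|v|^2)$ near the origin. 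The goal then becomes: there is a constant $\mathcal{A}(\varphi)$ with $\varphi(x,v)=\mathcal{A}(\varphi)F_\beta(x,v)+o(x^{2/3}+|v|^2)$.

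The core of the argument is a rescaling analysis of $\varphi$ near $(0,0)$. For $R>0$ set $\varphi_R(x,v)=\varphi(R^3x,Rv)/\lambda(R)$, where $\lambda(R)=\sup_{x+|v|^3=R^3}|\varphi(x,v)|$ (assuming $\varphi\not\equiv 0$ near the origin; otherwise there is nothing to prove). Then $\mathcal{L}\varphi_R = R^3\lambda(R)^{-1}\tilde h(R^3x,Rv)$ on the rescaled domain, and since $\tilde h$ is continuous with $\tilde h(0,0)=0$ the right side tends to $0$ locally uniformly provided $\lambda(R)$ does not decay too fast; this is where Lemma \ref{LemSeq} enters, to extract sequences $R_n\to 0$, $\mu_n\to\infty$ along which $\lambda(R)/\lambda(R_n)\le K(R/R_n)^{b}$ for $R/R_n\in[\mu_n^{-1},\mu_n]$, where $b$ is the relevant growth exponent $b=\liminf_{R\to0}\log\lambda(R)/\log R$. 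The uniform growth bound $|\varphi_{R_n}(x,v)|\le K(|x|+|v|^3)^{b}$ on expanding domains, together with the hypoellipticity and boundary regularity (Theorem \ref{Regul}, Theorem \ref{Hypoell}) which give interior $C^\alpha$ and $D_v$ bounds, allows passing to a locally uniform limit $\varphi_{R_n}\to\Phi$ along a subsequence. The limit $\Phi$ solves $\mathcal{L}\Phi=0$ in $\{x>0,v\in\mathbb R\}$, satisfies $\Phi(0,rv)=\Phi(0,-v)$ for $v>0$, obeys $|\Phi(x,v)|\le K(|x|+|v|^3)^b$, and is not identically zero (because $\sup_{x+|v|^3=1}|\Phi|=1$ by construction of $\lambda$). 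Theorem \ref{Liouville}(i),(iv) (using $0<r<r_c$) then forces $b\in\{0,\beta\}$: if $b\notin\{0,\beta\}$ the Liouville theorem gives $\Phi=0$, a contradiction. Moreover in each admissible case the nonzero limit must be a multiple of the homogeneous solution $F_\beta$ (if $b=\beta$) or a nonzero constant (if $b=0$); since $\varphi(0,0)=0$ and $\Phi$ inherits $\Phi(0,0)=0$ from a uniform estimate as $(x,v)\to0$, the case $b=0$ is excluded and $\Phi=cF_\beta$ for some $c\ne0$.

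Once $b=\beta$ and the limit along one sequence is $cF_\beta$ with $\beta>0$, I would upgrade this to the full asymptotics by a standard iteration/uniqueness argument: define $\mathcal{A}(\varphi)$ as the appropriate limit of $\varphi(x,v)/F_\beta(x,v)$ (or equivalently $\lim \lambda(R)R^{-3\beta}$ times the normalization of $\Phi$), subtract $\mathcal{A}(\varphi)F_\beta$, and show the remainder $\psi=\varphi-\mathcal{A}(\varphi)F_\beta$ satisfies $\mathcal{L}\psi=\tilde h$ with $\psi(0,0)=0$ and growth strictly slower than $R^{3\beta}$; rerunning the blow-up on $\psi$, its limit would have to be a $cF_\beta$ or a constant with a subcritical growth exponent $b'<\beta$ or $b'=0$ — but the only admissible exponents are $0$ and $\beta$, and the constant case is again excluded by $\psi(0,0)=0$, forcing $\psi=o(R^{3\beta})$. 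To reach the sharper $o(x^{2/3}+|v|^2)$ in \eqref{P1E2} one repeats the argument taking into account that the next possible homogeneous contribution and the particular-solution term $h(0,0)S$ (of order $x^{2/3}+|v|^2$, i.e. homogeneity $3\cdot\frac23=2$) have already been absorbed, and that there is no admissible homogeneous solution with exponent in $(\beta,\frac23]\cup(0,\beta)$ other than those handled; since $\beta<\frac23$, after removing the $F_\beta$ term and the $S$ term the remainder is genuinely $o(x^{2/3}+|v|^2)$. The main obstacle I anticipate is making the blow-up limit rigorous when $\lambda(R)$ oscillates: one must use Lemma \ref{LemSeq} to select good scales, control the error term $R^3\lambda(R)^{-1}\tilde h(R^3\cdot,R\cdot)$ uniformly on the expanding domains, and invoke the compactness from Theorems \ref{Regul} and \ref{Hypoell} carefully up to the boundary line $\{x=0\}$ where the compatibility condition \eqref{comp1} must be preserved in the limit; the bookkeeping to conclude that the only surviving terms are $\varphi(0,0)$, $\mathcal{A}(\varphi)F_\beta$ and $h(0,0)S$, with everything else $o(x^{2/3}+|v|^2)$, is the delicate part and relies essentially on the exponent classification in Lemma \ref{LiouvHom}.
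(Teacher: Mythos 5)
Your outline correctly identifies the key tools the paper uses in this proof (the particular supersolution $S$ from Lemma \ref{super_sol}, the good-scale selection Lemma \ref{LemSeq}, the Liouville Theorem \ref{Liouville}, and compactness via hypoellipticity), and the reduction to $\varphi(0,0)=0$, $\tilde h(0,0)=0$ by subtracting $\varphi(0,0)+h(0,0)S$ is a clean variant of the paper's route. However, the argument has a genuine gap at the step where you extract the coefficient $\mathcal{A}(\varphi)$: you assert that a nonzero blow-up limit $\Phi$ satisfying $\mathcal{L}\Phi=0$, the boundary compatibility, and the growth bound $|\Phi|\leq K(|x|+|v|^3)^{\beta}$ "must be a multiple of $F_\beta$". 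Theorem \ref{Liouville}(i) excludes $b\in(0,\beta)\cup(\beta,\tfrac23]$ and part (iv) classifies the \emph{bounded} solutions, but neither part gives a classification at the critical exponent $b=\beta$; nothing in the paper establishes that every solution with growth rate exactly $\beta$ is $c F_\beta$, and your proposal does not prove it either. Without that, different blow-up subsequences could, a priori, produce different multiples of $F_\beta$ and no single $\mathcal{A}(\varphi)$ would emerge, so the iteration step "subtract $\mathcal{A}(\varphi)F_\beta$ and rerun" does not get off the ground.

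The paper avoids this by not trying to classify the $b=\beta$ solutions at all. Instead it first gets the a priori bound $|\varphi-\varphi(0,0)|\leq C F_\beta$ by barrier comparison (sub/supersolutions built from $F_\beta$ and $S$, Proposition \ref{weak_max}), then proves the existence of the limit $\lim_{(x,v)\to 0}(\varphi-\varphi(0,0))/F_\beta$ directly through the $C_-=C_+$ argument: it defines $C_\pm$ as the lim inf/sup of the ratio over the shells $\partial\mathcal{R}_R$, blows up along suitable sequences, and invokes the \emph{strong} maximum principle (Proposition \ref{strong_max}) together with the comparison principle in annular domains to show the oscillation decays, forcing $C_-=C_+$. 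This step is essential and is missing from your outline. Only after $\mathcal{A}(\varphi)$ is established does the paper use Lemma \ref{LemSeq} plus Theorem \ref{Liouville}(i) (now in the range $b\in(\beta,\tfrac23]$, where the theorem genuinely applies) to bound the remainder by $CR^{2/3}$, and then a final rescaling plus subtraction of $h(0,0)S$ and one more application of Liouville at $b=\tfrac23$ identifies the limit. If you want to salvage your version, you would need to either prove the $b=\beta$ classification (essentially re-deriving the $C_-=C_+$ step in disguise) or incorporate the strong maximum principle argument explicitly before attempting the iteration.
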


\begin{proof}
We construct sub and supersolutions with the form:%
\begin{equation*}
\varphi _{\pm }\left( x,v\right) =\varphi \left( 0,0\right) \pm \varepsilon
\pm \bar{C}F_{\beta }\left( x,v\right) \pm AS\left( x,v\right) ,
\end{equation*}%
where $A=2\left\vert h\left( 0,0\right) \right\vert $, $\varepsilon >0,\
\bar{C}>0.$ We can choose $0<\delta _{1}<\delta _{2}$ small and then $\bar{C}
$ large enough to obtain $\varphi _{-}\leq \varphi \leq \varphi _{+}$ in $%
\partial \mathcal{R}_{\delta _{1}}\cup \partial \mathcal{R}_{\delta _{2}}.$
Proposition \ref{weak_max} yields then $\varphi _{-}\leq \varphi \leq
\varphi _{+}$ in $\mathcal{R}_{\delta _{2}}\diagdown \mathcal{R}_{\delta
_{1}}.$ Taking the limit $\delta _{1}\rightarrow 0$ and then $\varepsilon
\rightarrow 0$ we obtain:%
\begin{equation*}
\left\vert \varphi \left( x,v\right) -\varphi \left( 0,0\right) \right\vert
\leq 2\bar{C}F_{\beta }\left( x,v\right) \ \ \text{in\ }\mathcal{R}_{\delta }
\end{equation*}%
for some $\delta >0.$ We can now prove that the limit $\frac{\varphi \left(
x,v\right) -\varphi \left( 0,0\right) }{F_{\beta }\left( x,v\right) }$
exists. To this end we define $C_{-}=\lim \inf_{R\rightarrow 0}\left[
\inf_{\partial \mathcal{R}_{R}}\left( \frac{\varphi \left( x,v\right)
-\varphi \left( 0,0\right) }{F_{\beta }\left( x,v\right) }\right) \right] $
and $C_{+}=\lim \sup_{R\rightarrow 0}\left[ \sup_{\partial \mathcal{R}%
_{R}}\left( \frac{\varphi \left( x,v\right) -\varphi \left( 0,0\right) }{%
F_{\beta }\left( x,v\right) }\right) \right] .$ If $C_{-}=C_{+}$ the result
would follow. Suppose then that $C_{-}<C_{+}$. Suppose first that there
exist sequences $\left\{ R_{n}\right\} ,\ \left\{ \tilde{R}_{n}\right\} $
and positive constants $c_{1},\ c_{2}$ such that
\begin{equation*}
\lim_{n\rightarrow \infty }\left[ \sup_{\partial \mathcal{R}_{R_{n}}}\left(
\frac{\varphi \left( x,v\right) -\varphi \left( 0,0\right) }{F_{\beta
}\left( x,v\right) }\right) \right] =C_{+}
\end{equation*}
and
\begin{equation*}
\lim_{n\rightarrow \infty }\left[ \inf_{\partial \mathcal{R}_{\tilde{R}%
_{n}}}\left( \frac{\varphi \left( x,v\right) -\varphi \left( 0,0\right) }{%
F_{\beta }\left( x,v\right) }\right) \right] =C_{-}\text{ with }%
c_{1}R_{n}\leq \tilde{R}_{n}\leq c_{2}R_{n}.
\end{equation*}%
Notice that we might have in particular cases $R_{n}=\tilde{R}_{n}$. We then
define $\psi _{n}\left( x,v\right) =\frac{\varphi \left(
R_{n}^{3}x,R_{n}v\right) -\varphi \left( 0,0\right) }{\left( R_{n}\right)
^{3\beta }}.$ A standard compactness argument implies that for a suitable
convergence subsequence $\psi _{n}\rightarrow \psi $ in compact sets which
do not contain the singular point $\left( x,v\right) =\left( 0,0\right) $
and where $\mathcal{L}\psi =0.$ Moreover, our assumptions in the sequences $%
\left\{ R_{n}\right\} ,\ \left\{ \tilde{R}_{n}\right\} ,$ as well as the
hypoellipticity properties of the operator $\mathcal{L}$ imply the existence
of points different from the singular point such that $\psi \left(
x_{1},v_{1}\right) =C_{+}F_{\beta }\left( x_{1},v_{1}\right) ,$ $\psi \left(
x_{2},v_{2}\right) =C_{-}F_{\beta }\left( x_{2},v_{2}\right) .$ Moreover $%
C_{-}F_{\beta }\leq \psi \leq C_{+}F_{\beta },\ $Since $C_{-}<C_{+},$ the
strong maximum principle then implies then that $\psi \leq \left(
C_{+}-\delta \right) F_{\beta }$ for some $\delta >0$ in a set $\partial
\mathcal{R}_{\rho }$ for some $0<\rho <\infty .$ A comparison argument then
yields $\psi \leq \left( C_{+}-\delta \right) F_{\beta }$ in $\mathcal{R}%
_{\rho }.$ We remark that the comparison argument is made in a family of
domains $\mathcal{R}_{\rho }\diagdown \mathcal{R}_{\delta }$, adding $%
\varepsilon $ to the barrier functions and taking then the limit $\delta
\rightarrow 0$ and $\varepsilon \rightarrow 0.$ However, using the
definition of $\psi _{n}$ we then obtain that $\lim \sup_{R\rightarrow 0}%
\left[ \sup_{\partial \mathcal{R}_{R}}\left( \frac{\varphi \left( x,v\right)
-\varphi \left( 0,0\right) }{F_{\beta }\left( x,v\right) }\right) \right]
\leq \left( C_{+}-\delta \right) $, a contradiction.

Suppose now that the solutions $\left\{ R_{n}\right\} ,\ \left\{ \tilde{R}%
_{n}\right\} $ with the property stated above do not exist. This implies the
existence of a sequence $\left\{ R_{n}\right\} $ such that%
\begin{equation*}
\lim_{n\rightarrow \infty }\left[ \sup_{\partial \mathcal{R}%
_{R_{n}}}\left\vert \frac{\varphi \left( x,v\right) -\varphi \left(
0,0\right) }{F_{\beta }\left( x,v\right) }-C_{+}\right\vert \right] =0.
\end{equation*}%
A comparison argument then yields $\varphi \left( x,v\right) -\varphi \left(
0,0\right) \geq \left( C_{-}+\delta \right) F_{\beta }$, for some $\delta
>0, $ whence $\lim_{n\rightarrow \infty }\left[ \inf_{\partial \mathcal{R}_{%
\tilde{R}_{n}}}\left( \frac{\varphi \left( x,v\right) -\varphi \left(
0,0\right) }{F_{\beta }\left( x,v\right) }\right) \right] \geq \left(
C_{-}+\delta \right) .$ The contradiction then implies that $C_{-}=C_{+}.$
Therefore we have:%
\begin{equation}
\left\vert \varphi \left( x,v\right) -\varphi \left( 0,0\right) -\mathcal{A}%
\left( \varphi \right) F_{\beta }\left( x,v\right) \right\vert =o\left(
x^{\beta }+\left\vert v\right\vert ^{3\beta }\right)  \label{P1E5}
\end{equation}%
as $\left( x,v\right) \rightarrow \left( 0,0\right) ,$ where $\mathcal{A}%
\left( \varphi \right) =C_{+}.$

As a next step we need to prove that:%
\begin{equation}
\left\vert \varphi \left( x,v\right) -\varphi \left( 0,0\right) -\mathcal{A}%
\left( \varphi \right) F_{\beta }\left( x,v\right) \right\vert \leq C\left(
x^{\frac{2}{3}}+\left\vert v\right\vert ^{2}\right) \   \label{P1E3}
\end{equation}%
for $\left( x,v\right) \in \partial \mathcal{R}_{\delta }$ for suitable $%
\delta >0,\ C>0.$ We define the auxiliary function:%
\begin{equation}
\lambda \left( R\right) =\sup_{x^{\frac{2}{3}}+\left\vert v\right\vert
^{2}=R^{\frac{2}{3}}}\left\vert \varphi \left( x,v\right) -\varphi \left(
0,0\right) -\mathcal{A}\left( \varphi \right) F_{\beta }\left( x,v\right)
\right\vert  \label{P1E4}
\end{equation}

Notice that (\ref{P1E5}) yields $\lambda \left( R\right) =o\left( R^{\beta
}\right) $ as $R\rightarrow 0.$

Suppose now that $\lim \sup_{R\rightarrow 0}\frac{\lambda \left( R\right) }{%
R^{\frac{2}{3}}}=\infty .$ We then have, using also the estimate $\lambda
\left( R\right) =o\left( R^{\beta }\right) $ as $R\rightarrow 0,$ that $\lim
\inf_{R\rightarrow 0}\frac{\log \left( \lambda \left( R\right) \right) }{%
\log \left( R\right) }=\zeta \in \left[ \beta ,\frac{2}{3}\right] .$
Applying Lemma \ref{LemSeq}\ we obtain the existence of a sequence $\left\{
R_{n}\right\} $ such that $\frac{\lambda \left( R\right) }{\lambda \left(
R_{n}\right) }\leq K\left( \frac{R}{R_{n}}\right) ^{\zeta }$ for $R\in \left[
\frac{R_{n}}{\mu _{n}},\mu _{n}R_{n}\right] .$ Rescaling the solutions and
defining $\psi _{n}$ as before we obtain in the limit that $\psi _{n}$
converges to $\psi ,$ solution of $\mathcal{L}\psi =0$ with $\left\vert \psi
\right\vert \leq K\left( x^{\zeta }+\left\vert v\right\vert ^{3\zeta
}\right) $ and $\psi \neq 0.$ We now apply Theorem \ref{Liouville}, which
implies that $\psi =0.$ The contradiction implies that $\lim
\sup_{R\rightarrow 0}\frac{\lambda \left( R\right) }{R^{\frac{2}{3}}}<\infty
.$ We then rescale the solutions again, in order to obtain a sequence of
functions $\psi _{n}$ of order one. The limit of this sequence satisfies $%
\mathcal{L}\psi =h\left( 0,0\right) $ as well as $\left\vert \psi \left(
x,v\right) \right\vert \leq C\left( x^{\frac{2}{3}}+\left\vert v\right\vert
^{2}\right) $ and the boundary conditions $\psi \left( 0,rv\right) =\psi
\left( 0,-v\right) \ $for$\ v>0.$ After substracting from $\psi $ the
function $h\left( 0,0\right) S$ we obtain a new function $R=\psi -h\left(
0,0\right) S$ satisfying $\mathcal{L}R=0$ and $\left\vert \psi \left(
x,v\right) \right\vert \leq C\left( x^{\frac{2}{3}}+\left\vert v\right\vert
^{2}\right) $ as well as similar boundary conditions. Using Theorem \ref%
{Liouville} we obtain $R=0,$ whence the asymptotics (\ref{P1E2}) follows.
\end{proof}

We have a similar result in the supercritical case.

\begin{theorem}
\label{AsSingSuper}Let $r>r_{c}.$ Suppose that $h\in C\left( X\right) ,$ $%
\varphi \in C\left( X\right) $ satisfies (\ref{comp1}) and that we have $%
\mathcal{L}\varphi =h,$ with $\mathcal{L}$ as in Definition \ref{LadjDef}.
Then
\begin{equation}
\left\vert \varphi \left( x,v\right) -\varphi \left( 0,0\right) -h\left(
0,0\right) S\left( x,v\right) \right\vert =o\left( x^{\frac{2}{3}%
}+\left\vert v\right\vert ^{2}\right) \
\end{equation}%
as $x^{\frac{2}{3}}+\left\vert v\right\vert ^{2}\rightarrow 0$ for some
suitable constant $\mathcal{A}\left( \varphi \right) \in \mathbb{R}$.
\end{theorem}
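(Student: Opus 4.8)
The strategy is to mimic the proof of Theorem~\ref{AsSingPoint}, but the situation is considerably simpler because in the supercritical range $r>r_c$ we have $\beta=\beta(r)<0$, so the homogeneous solution $F_\beta$ is unbounded near the origin and cannot appear in the expansion of a function $\varphi\in C(X)$. Concretely, I would begin exactly as in the proof of Theorem~\ref{AsSingPoint}: construct sub- and supersolutions
\begin{equation*}
\varphi_\pm(x,v)=\varphi(0,0)\pm\varepsilon\pm A\,S(x,v),\qquad A=2|h(0,0)|,\ \varepsilon>0,
\end{equation*}
where $S$ is the function provided by Lemma~\ref{super_sol} (which exists since $0<r\le 1$ is not assumed here — but note for $r>r_c$ we still have $r$ in the relevant range for Lemma~\ref{super_sol} only if $r\le 1$; for $r>1$ one uses instead the explicit barrier coming from $S(x,v)=v^2/2$ plus a multiple of $U(-2/3,2/3;z^3/9)$, which solves $\mathcal{L}S=1$ and is $O(x^{2/3}+|v|^2)$ by Remark~\ref{AsBeyond}). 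Since $\mathcal{L}\varphi=h$ with $h$ continuous, one has $\mathcal{L}(\varphi_+-\varphi)=A-h\ge 0$ near the origin after choosing the neighbourhood small, and similarly $\mathcal{L}(\varphi-\varphi_-)\ge 0$; matching boundary data on $\partial\mathcal{R}_{\delta_1}\cup\partial\mathcal{R}_{\delta_2}$ and applying the weak maximum principle Proposition~\ref{weak_max} in $\mathcal{R}_{\delta_2}\setminus\mathcal{R}_{\delta_1}$, then letting $\delta_1\to 0$ and $\varepsilon\to 0$, gives the preliminary bound
\begin{equation*}
|\varphi(x,v)-\varphi(0,0)|\le C\,S_+(x,v)=o(1)\quad\text{as }(x,v)\to(0,0),
\end{equation*}
where $S_+$ denotes a fixed bound $\le C(x^{2/3}+|v|^2)$ for $|S|$; in particular $\varphi$ is continuous at the singular point.

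\textbf{Blow-up argument.} The main step is to upgrade the $o(1)$ decay to the sharp rate $o(x^{2/3}+|v|^2)$ with the correct leading term $h(0,0)S(x,v)$. As in Theorem~\ref{AsSingPoint}, I introduce the auxiliary function
\begin{equation*}
\lambda(R)=\sup_{x^{2/3}+|v|^2=R^{2/3}}\bigl|\varphi(x,v)-\varphi(0,0)-h(0,0)S(x,v)\bigr|,
\end{equation*}
which satisfies $\lambda(R)=o(1)$, and argue by contradiction that $\limsup_{R\to 0}\lambda(R)/R^{2/3}<\infty$. If not, then $\lambda(R)=o(1)$ forces $\liminf_{R\to 0}\log\lambda(R)/\log R=\zeta$ for some $\zeta\in(0,2/3]$; Lemma~\ref{LemSeq} produces sequences $R_n\to 0$, $\mu_n\to\infty$ with $\lambda(R)/\lambda(R_n)\le K(R/R_n)^\zeta$ on $[R_n/\mu_n,\mu_n R_n]$. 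Rescaling $\psi_n(x,v)=\bigl(\varphi(R_n^3x,R_nv)-\varphi(0,0)-h(0,0)S(R_n^3x,R_nv)\bigr)/\lambda(R_n)$ and passing to the limit by the compactness/hypoellipticity estimates (Theorem~\ref{Hypoell}, Theorem~\ref{Regul}), the limit $\psi\not\equiv 0$ solves $\mathcal{L}\psi=0$ on $\{x>0\}$ with the boundary condition $\psi(0,rv)=\psi(0,-v)$ for $v>0$ and satisfies $|\psi(x,v)|\le K(|x|+|v|^3)^\zeta$ with $0<\zeta\le 2/3$. But for $r>r_c$, part (iii) of Theorem~\ref{Liouville} forces $\psi=0$ — a contradiction. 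Hence $\lambda(R)\le CR^{2/3}$ near $0$.

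\textbf{Conclusion.} With the quadratic bound in hand, rescale once more at unit scale: choosing a sequence $R_n\to 0$ and setting $\psi_n(x,v)=\bigl(\varphi(R_n^3x,R_nv)-\varphi(0,0)\bigr)/R_n^{2/3}$, a convergent subsequence has a limit $\psi$ solving $\mathcal{L}\psi=h(0,0)$ in $\{x>0\}$, with $|\psi(x,v)|\le C(x^{2/3}+|v|^2)$ and $\psi(0,rv)=\psi(0,-v)$ for $v>0$. Subtracting $h(0,0)S$, the function $R:=\psi-h(0,0)S$ satisfies $\mathcal{L}R=0$, the same symmetry boundary condition, and the same quadratic bound; applying Theorem~\ref{Liouville} again — here part (iii) with $b=2/3$ — yields $R\equiv 0$, i.e. $\psi=h(0,0)S$. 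Since every subsequential limit agrees, this gives
\begin{equation*}
\bigl|\varphi(x,v)-\varphi(0,0)-h(0,0)S(x,v)\bigr|=o\bigl(x^{2/3}+|v|^2\bigr),
\end{equation*}
as $x^{2/3}+|v|^2\to 0$, which is the assertion. (The reference in the statement to a constant $\mathcal{A}(\varphi)$ is vestigial: in the supercritical case $\mathcal{A}(\varphi)$ does not enter the expansion, precisely because $\beta<0$ rules out the $F_\beta$ term.) The only genuinely delicate point is justifying the passage to the limit in the rescaled sequences — ensuring uniform interior and boundary estimates for $\psi_n$ on annular regions away from the origin and the convergence of traces — but this is handled verbatim by the hypoellipticity results of Section~\ref{HypSection} (Theorems~\ref{Regul}, \ref{Hypoell}) together with Lemma~\ref{TraceConvergence}, exactly as in the proof of Theorem~\ref{AsSingPoint}.
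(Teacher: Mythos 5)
Your proposal is correct and follows essentially the same route as the paper: the paper's own proof of Theorem~\ref{AsSingSuper} is a two-sentence deferral to the proof of Theorem~\ref{AsSingPoint}, asking the reader to rerun the blow-up/Liouville scheme with the supercritical Liouville theorem (Theorem~\ref{Liouville}~(iii)), and your write-up fills in exactly those details. You also correctly flag two small oddities that the paper passes over in silence: the appearance of the phrase ``for some suitable constant $\mathcal{A}(\varphi)$'' in the statement is a vestige of the subcritical version and plays no role once $\beta<0$ kills the $F_\beta$ term, and Lemma~\ref{super_sol} is only formally stated for $0<r\le1$, though the same explicit formula $S=x^{2/3}\bigl(\tfrac{z^2}{2}+c_2\,U(-\tfrac23,\tfrac23;\tfrac{z^3}{9})\bigr)$ with $c_2=(1-r^2)/(2(2+r^2))$ continues to solve $\mathcal{L}S=1$ with the required boundary condition and quadratic growth when $r>1$.

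One minor caveat, inherited from the paper rather than introduced by you: in the subcritical case one gets $\lambda(R)=o(R^\beta)$ with $\beta>0$, which forces $\liminf \log\lambda(R)/\log R\ge\beta>0$ and legitimises the hypothesis of Lemma~\ref{LemSeq}. In the supercritical case the preliminary step only yields $\lambda(R)=o(1)$, which does not by itself exclude $\zeta=0$ (e.g. $\lambda(R)\sim1/|\log R|$). Your phrase ``$\lambda(R)=o(1)$ forces $\zeta\in(0,2/3]$'' therefore overstates the matter slightly; to be watertight one should either supply a preliminary power-law bound from a barrier argument or treat the $\zeta=0$ case by a separate bounded Liouville argument. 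Since the paper's own proof glosses over the same point, this is best regarded as a shared subtlety rather than a flaw specific to your proposal.
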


\begin{proof}
The proof is similar to the previous Theorem. We first prove, using the
corresponding Liouville's Theorem for this case, that $\lambda \left(
R\right) \leq CR^{\frac{2}{3}}$ where in this case we define $\lambda \left(
R\right) =\sup_{x^{\frac{2}{3}}+\left\vert v\right\vert ^{2}=R^{\frac{2}{3}%
}}\left\vert \varphi \left( x,v\right) -\varphi \left( 0,0\right)
\right\vert .$ We then obtain, using similar arguments, that the asymptotics
is given by the function $S.$
\end{proof}

We conclude this Section with a technical result, which provides some
uniformicity in the asymptotics of the solutions near the singular point.
Suppose that $\xi =\xi \left( x,v\right) $ is a $C^{\infty }$ cutoff
function, supported in the region $x+\left\vert v\right\vert ^{3}\leq \max
\left\{ \frac{2}{r},2\right\} ,$ and satisfying $\xi \left( x,v\right) =1$
for $x+\left\vert v\right\vert ^{3}\leq 1$ as well as $\xi \left(
0,-v\right) =\xi \left( 0,rv\right) ,$ for $v>0,$ $\xi _{x}\left( x,v\right)
=0$ for $x\leq \frac{1}{4}.$~Let\
\begin{eqnarray}
\mathcal{M} &=&\{\psi :\psi \xi \in C\left( X\right) ~|\ \mathcal{L}\left(
\psi \xi \right) \in C\left( X\right) ,\ \left\vert \mathcal{L}\left( \psi
\xi \right) \right\vert \leq 1,  \notag \\
&&\left\vert \psi \left( x,v\right) \right\vert \leq F_{\beta }\left(
x,v\right) \text{ in }\left\vert x\right\vert +\left\vert v\right\vert
^{3}\leq 1\},\psi \left( x,v\right) =o\left( x^{\beta }+\left\vert
v\right\vert ^{3\beta }\right) ~\text{as }\left\vert x\right\vert
+\left\vert v\right\vert ^{3}\rightarrow 0,  \label{T5E1}
\end{eqnarray}%
where $F_{\beta },$ $\beta $ are as in (\ref{Fbeta}). Notice that
since $r<r_{c}$\ we have $\beta >0.$ The cutoff $\xi $ has been introduced
because in later arguments we will need to use unbounded functions $\psi $
which do not belong to $C\left( X\right) .$

Then we derive the following estimate which shows that the rate of decay of $%
\psi \left( x,v\right) $ as $\left( x,v\right) \rightarrow \left( 0,0\right)
$ is uniform for the class of functions $\mathcal{M}$.

\begin{proposition}
\label{mu_R}\ Let%
\begin{equation}
\mu \left( R\right) =\sup_{\psi \in \mathcal{M}}\sup_{x^{\beta }+\left\vert
v\right\vert ^{3\beta }=R^{\beta }}\left\vert \psi \left( x,v\right)
\right\vert .\   \label{T5E3}
\end{equation}%
with $\mathcal{M}$ as in (\ref{T5E1}). Then we have
\begin{equation*}
\lim_{R\rightarrow 0}\frac{\mu \left( R\right) }{R^{\beta }}=0.
\end{equation*}
\end{proposition}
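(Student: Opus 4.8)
The proof of Proposition \ref{mu_R} follows the same contradiction-and-rescaling strategy used in Theorem \ref{AsSingPoint}, but now carried out uniformly over the whole family $\mathcal{M}$. Suppose the conclusion fails. Then there exist $\varepsilon_0>0$, a sequence $R_n\to 0$, and functions $\psi_n\in\mathcal{M}$ with $\sup_{x^{\beta}+|v|^{3\beta}=R_n^{\beta}}|\psi_n(x,v)|\geq \varepsilon_0 R_n^{\beta}$. By the very definition of $\mu(R)$ and since each $\psi_n\in\mathcal{M}$ satisfies $\psi_n(x,v)=o(x^{\beta}+|v|^{3\beta})$ individually, the function $R\mapsto\mu(R)/R^{\beta}$ need not go to zero a priori, but we can extract quantitative information about its growth: setting $\lambda(R)=\mu(R)$ we have $\lambda(R)\leq \sup_{\mathcal{M}}F_{\beta}\leq CR^{\beta}$ for $R\leq 1$ (using the pointwise bound $|\psi|\leq F_{\beta}$ in the definition of $\mathcal{M}$ together with the homogeneity estimate $F_{\beta}(x,v)\leq C(x^{\beta}+|v|^{3\beta})$ of Proposition \ref{FbetaSt}), so $\lambda$ is positive and $\liminf_{R\to 0}\log\lambda(R)/\log R=\zeta$ for some $\zeta\in[\beta,\infty)$; if $\zeta>\beta$ we are already done, so assume $\zeta\in[\beta,\tfrac23]$ (the case $\zeta>\tfrac23$ being handled by the trivial bound).

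Next I would apply Lemma \ref{LemSeq} to $\lambda(R)$ with this exponent $\zeta$ and a fixed $K>1$: this produces sequences $\{R_n\}$, $\{\mu_n\}$ with $R_n\to 0$, $\mu_n\to\infty$, and $\lambda(R)/\lambda(R_n)\leq K(R/R_n)^{\zeta}$ for $R_n/\mu_n\leq R\leq \mu_n R_n$. For each $n$ pick $\psi_n\in\mathcal{M}$ essentially realizing $\lambda(R_n)$ on the circle $x^{\beta}+|v|^{3\beta}=R_n^{\beta}$, and define the rescaled functions
\begin{equation*}
\Psi_n(x,v)=\frac{\psi_n(R_n^{3}x,R_n v)}{\lambda(R_n)}.
\end{equation*}
By construction $\sup_{x^{\beta}+|v|^{3\beta}=1}|\Psi_n|$ is bounded below by a positive constant, while the bound from Lemma \ref{LemSeq} gives $|\Psi_n(x,v)|\leq K(x^{\beta}+|v|^{3\beta})^{\zeta/\beta}$... — more precisely, rewriting in terms of $R=R_n(x+|v|^3)^{1/3}$ one gets $|\Psi_n(x,v)|\leq K\bigl(x+|v|^3\bigr)^{\zeta}$ uniformly on the annuli $1/\mu_n\leq x+|v|^3\leq \mu_n$. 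Moreover $\mathcal{L}(\psi_n\xi)$ is bounded by $1$ and, after rescaling, $\mathcal{L}\Psi_n=R_n^{2}\lambda(R_n)^{-1}(\mathcal{L}(\psi_n\xi))(R_n^3\cdot,R_n\cdot)$ on the region where $\xi\equiv 1$; since $R_n^{2}/\lambda(R_n)\to 0$ (because $\lambda(R_n)\geq cR_n^{\zeta}$ with $\zeta\leq\tfrac23<1$, wait — one checks $R_n^2/R_n^{\zeta}=R_n^{2-\zeta}\to 0$ as $\zeta<2$), the right-hand side tends to zero locally uniformly. Hypoellipticity (Theorem \ref{Regul}, Theorem \ref{Hypoell}) together with these uniform bounds gives, along a subsequence, $\Psi_n\to\Psi$ in $C_{loc}$ away from the origin, where $\Psi$ solves $\mathcal{L}\Psi=0$ in $\{x>0,v\in\mathbb{R}\}$, satisfies the compatibility condition $\Psi(0,rv)=\Psi(0,-v)$ for $v>0$ (this passes to the limit since it holds for every $\psi_n$), obeys $|\Psi(x,v)|\leq K(x+|v|^3)^{\zeta}$, and is not identically zero because the lower bound on the unit circle survives the limit (here the uniform one-derivative hypoelliptic estimate is what prevents the supremum from escaping to the boundary of the annulus).

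Now I would invoke the Liouville theorem, Theorem \ref{Liouville}(i), with exponent $b=\zeta$: since $r<r_{c}$ and $\zeta\in[\beta,\tfrac23]$, either $\zeta\in(\beta,\tfrac23]$ — in which case $b\in(\beta,\tfrac23]$ forces $\Psi\equiv 0$ — or $\zeta=\beta$. The case $\zeta=\beta$ requires a slightly more careful argument: the limit $\Psi$ then satisfies $|\Psi|\leq K(x+|v|^3)^{\beta}$ and $\mathcal{L}\Psi=0$ with the compatibility condition, so by Theorem \ref{AsSingPoint} (applied with $h=0$) $\Psi(x,v)=\Psi(0,0)+\mathcal{A}(\Psi)F_{\beta}(x,v)+o(x^{2/3}+|v|^2)$; but the homogeneity bound $|\Psi|\leq K(x+|v|^3)^{\beta}$ with $0<\beta<\tfrac23$ forces $\Psi(0,0)=0$, and the additional information that each $\psi_n$ satisfies $\psi_n=o(x^\beta+|v|^{3\beta})$ at the origin — which after rescaling says $\Psi_n(x,v)\to 0$ as $(x,v)\to 0$ faster than $(x+|v|^3)^\beta$, hence $\mathcal{A}(\Psi)=0$ as well — should be leveraged together with a further rescaling at scale $\mu_n$ to rule out the $F_\beta$ component; this yields $\Psi\equiv 0$. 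In both cases we contradict $\Psi\not\equiv 0$, so the assumption $\zeta\in[\beta,\tfrac23]$ is impossible and therefore $\lim_{R\to 0}\mu(R)/R^{\beta}=0$.

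The main obstacle I anticipate is precisely the borderline case $\zeta=\beta$: the naive Liouville statement does not kill solutions with exactly the critical homogeneity $F_{\beta}$, so one must exploit the \emph{uniform} vanishing built into the class $\mathcal{M}$ (every $\psi\in\mathcal{M}$ is $o(x^\beta+|v|^{3\beta})$) rather than just a pointwise bound, and propagate this through the rescaling. Concretely, one needs to show that the ``$o$'' in each individual $\psi_n$, combined with the Lemma \ref{LemSeq} control of $\lambda$ on long annuli, forces the limiting coefficient $\mathcal{A}(\Psi)$ of $F_\beta$ to vanish — this is the step where one must be careful that the two scales (the chosen radius $R_n$ and the width $\mu_n$ of the good annulus) interact correctly, and it may require iterating the compactness argument once more or arguing by a secondary contradiction on $\mathcal{A}(\Psi)\neq 0$. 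Everything else — the growth dichotomy for $\lambda$, the hypoelliptic compactness, and the application of Theorems \ref{Liouville} and \ref{AsSingPoint} — is routine given the machinery already developed.
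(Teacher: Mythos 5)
Your proposal is structurally similar to the paper's in spirit (rescale, extract a limit by hypoelliptic compactness, apply a Liouville-type rigidity), but it takes a genuinely different route by importing the $\lambda(R)$–machinery of Lemma~\ref{LemSeq}. That machinery is actually unnecessary here, and — more importantly — it does not get you past the one hard case.

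The unnecessary detour: because every $\psi\in\mathcal{M}$ is subject to the \emph{a priori} bound $|\psi|\le F_\beta$ in $\{x+|v|^3\le 1\}$, the natural normalization is $\Phi_m(x,v)=\psi_m(R_m x,R_m^{1/3}v)/R_m^\beta$; the homogeneity $F_\beta(\mu^3x,\mu v)=\mu^{3\beta}F_\beta(x,v)$ then gives at once $|\Phi_m|\le F_\beta$ uniformly in $m$, with no need to control the growth rate of $\mu(R)$ via $\zeta$ and Lemma~\ref{LemSeq}. Lemma~\ref{LemSeq} is a tool designed for Theorem~\ref{AsSingPoint}, where one has a single function with unknown rate of decay; here the class $\mathcal{M}$ was precisely built to make that bypass unnecessary. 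So your "easy case" $\zeta>\beta$ and your annuli $\{R_n/\mu_n\le R\le\mu_n R_n\}$ add complexity without buying anything.

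The genuine gap is the borderline case $\zeta=\beta$ — which, as you anticipate, is the only case that matters. Your chain of reasoning there is "$\psi_n=o(x^\beta+|v|^{3\beta})$, hence after rescaling $\Psi_n(x,v)\to 0$ faster than $(x+|v|^3)^\beta$, hence $\mathcal{A}(\Psi)=0$." The last implication is false: the $o(\cdot)$ estimate for each individual $\Psi_n$ has an $n$-dependent rate, the convergence $\Psi_n\to\Psi$ is only locally uniform on compacta away from the origin, and nothing prevents the rates from degenerating so that the limit $\Psi$ acquires a nonzero $F_\beta$ component. In other words, "each member of the family vanishes faster than $F_\beta$" is a statement about the $\Psi_n$; it does not pass to the limit, and you cannot read off $\mathcal{A}(\Psi)=0$. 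You yourself flag this ("may require iterating the compactness argument once more or arguing by a secondary contradiction on $\mathcal{A}(\Psi)\neq 0$"), but the proposal leaves it open, and it is exactly the heart of the matter.

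The paper's resolution turns this around: it does \emph{not} try to extract $\mathcal{A}(\Phi^*)=0$ from the limit. Instead it first proves the stronger rigidity $\Phi^*\equiv C^*F_\beta$ \emph{globally} (not just asymptotically), via a three-case strong-maximum-principle argument on $\sup\Phi^*/F_\beta$, and then goes back to a fixed large $m$: since $\Phi_m\to\Phi^*=C^*F_\beta$ uniformly on the unit shell and $|\mathcal{L}\Phi_m|\le R_m^{2/3-\beta}\to 0$, the subsolution $Z=(C^*-2\delta)F_\beta-R_m^{2/3-\beta}S$ lies below $\Phi_m$ on the shell, so by comparison $\Phi_m\ge(C^*-3\delta)F_\beta$ throughout the interior. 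This is now a statement about the finite-$m$ function $\Phi_m$, for which the $o(x^\beta+|v|^{3\beta})$ decay is available and immediately incompatible with $\Phi_m\ge(C^*-3\delta)F_\beta$ unless $C^*=0$. That yields $\Phi^*\equiv 0$, contradicting the lower bound at $(x^*,v^*)$. If you want to salvage your version, you would need to replicate this pattern: establish $\Psi=C F_\beta$ exactly (asymptotics alone near the origin do not suffice), then run the comparison on $\Psi_n$ for large fixed $n$ and use that individual $\Psi_n$'s $o(\cdot)$ decay to force $C=0$; simply asserting $\mathcal{A}(\Psi)=0$ in the limit is not a proof.
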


\begin{proof}
We prove the result by contradiction. Suppose the opposite holds, that is,
there exists $R_{m}\rightarrow 0$ such that $\frac{\mu \left( R_{m}\right) }{%
R_{m}^{\beta }}\geq \eta >0.$ Then there exist sequences $\left\{ \psi
_{m}\right\} \subset \mathcal{M},\ \left\{ x_{m}\right\} $ with $x_{m}>0,\
\left\{ v_{m}\right\} \subset \mathbb{R}$ with $\left\vert x_{m}\right\vert
^{\beta }+\left\vert v_{m}\right\vert ^{3\beta }=R_{m}^{\beta }$ such that
\begin{equation*}
\left\vert \psi _{m}\right\vert \geq \eta \left( \left\vert x_{m}\right\vert
^{\beta }+\left\vert v_{m}\right\vert ^{3\beta }\right) .
\end{equation*}%
Define
\begin{equation*}
\Phi _{m}\left( x,v\right) =\frac{\psi _{m}\left( R_{m}x,R_{m}^{1/3}v\right)
}{R_{m}^{\beta }}.
\end{equation*}

Then $\Phi _{m}$ satisfies the following.

\begin{itemize}
\item[\textbf{(i)}]
\begin{equation}
\left\vert \mathcal{L}\Phi _{m}\right\vert \leq R_{m}^{2/3-\beta
}\rightarrow 0~\text{as }m\rightarrow \infty ,\text{since }\beta <2/3.
\label{Phi_m1}
\end{equation}

\item[\textbf{(ii)}]
\begin{equation}
\Phi _{m}\text{ satisfies (\ref{comp1}) for any }v>0\text{ }  \label{Phi_m2}
\end{equation}

\item[\textbf{(iii)}]
\begin{equation}
\Phi _{m}\left( x,v\right) =o\left( \left\vert x\right\vert ^{\beta
}+\left\vert v\right\vert ^{3\beta }\right) ,\ \ \left\vert x\right\vert
+\left\vert v\right\vert ^{3}\rightarrow 0.  \label{Phi_m3}
\end{equation}

\item[\textbf{(iv)}]
\begin{equation}
\left\vert \Phi _{m}\left( x_{m}^{\ast },v_{m}^{\ast }\right) \right\vert
\geq \eta >0,  \label{Phi_m4}
\end{equation}%
where $\left( x_{m}^{\ast },v_{m}^{\ast }\right) =\left( \frac{x_{m}}{R_{m}},%
\frac{v_{m}}{R_{m}^{1/3}}\right) $ with $\left\vert x_{m}^{\ast }\right\vert
^{\beta }+\left\vert v_{m}^{\ast }\right\vert ^{3\beta }=1.$

\item[\textbf{(v)}]
\begin{equation}
\left\vert \Phi _{m}\left( x,v\right) \right\vert \leq F_{\beta }\left(
x,v\right) .  \label{Phi_m5}
\end{equation}
\end{itemize}

By the regularizing effect of the operator $\mathcal{L}$ and by (i), $\Phi
_{m}$ is uniformly bounded in $W_{loc}^{1,p}\left( \mathbb{R}^{+}\times
\mathbb{R}\right) $ for all $1\leq p<\infty $ (cf. Theorem \ref{Hypoell}).
Then by the Sobolev embedding, $\Phi _{m}$ converges to $\Phi ^{\ast }$
uniformly in $m$ on compact sets (up to subsequences) and $\Phi ^{\ast }$
satisfies the following.
\begin{equation*}
\mathcal{L}\Phi ^{\ast }=0\text{ in }\mathcal{U},~\Phi ^{\ast }\text{
satisfies (\ref{comp1}),}
\end{equation*}%
\begin{equation*}
\left\vert \Phi ^{\ast }\left( x,v\right) \right\vert \leq F_{\beta }\left(
x,v\right) ,\ \left\vert \Phi ^{\ast }\left( x^{\ast },v^{\ast }\right)
\right\vert \geq \eta >0,
\end{equation*}

where $\left( x_{m}^{\ast },v_{m}^{\ast }\right) \rightarrow \left( x^{\ast
},v^{\ast }\right) $ with $\left\vert x^{\ast }\right\vert ^{\beta
}+\left\vert v^{\ast }\right\vert ^{3\beta }=1.$

Let
\begin{equation*}
C^{\ast }=\inf \left\{ C\ |\ \Phi ^{\ast }\left( x,v\right) \leq CF_{\beta
}\left( x,v\right) \text{ for all }\left( x,v\right) \in \mathbb{R}%
^{+}\times \mathbb{R}\right\} .
\end{equation*}

Notice that $\sup_{\left( x,v\right) }\frac{\Phi ^{\ast }\left( x,v\right) }{%
F_{\beta }\left( x,v\right) }=C^{\ast }.$ Indeed, if $\sup_{\left(
x,v\right) }\frac{\Phi ^{\ast }\left( x,v\right) }{F_{\beta }\left(
x,v\right) }>C^{\ast },$ we would obtain that, for any $\delta >0$
sufficiently small there exists $\left( x_{\delta },v_{\delta }\right) \in
\overline{\mathbb{R}^{+}\times \mathbb{R}}$ such that $\Phi ^{\ast }\left(
x_{\delta },v_{\delta }\right) >\left( C^{\ast }+\delta \right) F_{\beta
}\left( x,v\right) ,$ whence $\sup_{\left( x,v\right) }\frac{\Phi ^{\ast
}\left( x,v\right) }{F_{\beta }\left( x,v\right) }\geq C^{\ast }+\delta ,$
and this contradicts the definition of $C^{\ast }.$ If $\sup_{\left(
x,v\right) }\frac{\Phi ^{\ast }\left( x,v\right) }{F_{\beta }\left(
x,v\right) }<C^{\ast }$ we would have $\sup_{\left( x,v\right) }\frac{\Phi
^{\ast }\left( x,v\right) }{F_{\beta }\left( x,v\right) }<C^{\ast }-\delta $
for some $\delta >0$ small, and therefore we would have $C^{\ast }\leq
C^{\ast }-\delta $ which gives a contradiction too.

We now claim that:
\begin{equation}
\Phi ^{\ast }\left( x,v\right) =C^{\ast }F_{\beta }\left( x,v\right)
\label{claimMP}
\end{equation}

To prove this we examine the different possible cases depending on the
values of $\left( x,v\right) $ where $\sup_{\left( x,v\right) }\frac{\Phi
^{\ast }\left( x,v\right) }{F_{\beta }\left( x,v\right) }$ is achieved. At
least one of the following alternatives holds:

\textbf{Case (a)} There exists $\left( \bar{x},\bar{v}\right) \neq \left(
0,0\right) $ with $\left\vert \left( \bar{x},\bar{v}\right) \right\vert
<\infty $ such that $\Phi ^{\ast }\left( \bar{x},\bar{v}\right) =C^{\ast
}F_{\beta }\left( \bar{x},\bar{v}\right) .$

\textbf{Case (b) }%
\begin{equation}
\lim \sup_{R\rightarrow 0}\sup_{\left\vert x\right\vert ^{\beta }+\left\vert
v\right\vert ^{3\beta }=R^{\beta }}\frac{\Phi ^{\ast }\left( x,v\right) }{%
F_{\beta }\left( x,v\right) }=C^{\ast }.  \label{C*_0}
\end{equation}

\textbf{Case (c)}%
\begin{equation*}
\lim \sup_{R\rightarrow \infty }\sup_{\left\vert x\right\vert ^{\beta
}+\left\vert v\right\vert ^{3\beta }=R^{\beta }}\frac{\Phi ^{\ast }\left(
x,v\right) }{F_{\beta }\left( x,v\right) }=C^{\ast }.
\end{equation*}

In the case (a) we notice that $\Phi ^{\ast }\left( x,v\right) \leq C^{\ast
}F_{\beta }\left( x,v\right) $ for all $\left( x,v\right) \in \mathbb{R}%
^{+}\times \mathbb{R}.$ We then obtain a contradiction using the Strong
Maximum Principle (cf. Proposition \ref{strong_max}) if $\Phi ^{\ast }\left(
x,v\right) \neq C^{\ast }F_{\beta }\left( x,v\right) ,$ since we would
obtain a minimum of the function $\left( C^{\ast }F_{\beta }\left(
x,v\right) -\Phi ^{\ast }\right) $ which satisfies $\mathcal{L}\left(
C^{\ast }F_{\beta }\left( x,v\right) -\Phi ^{\ast }\right) =0.$\

In the case (b), suppose $\Phi ^{\ast }\left( x,v\right) \neq C^{\ast
}F_{\beta }\left( x,v\right) $, then we may assume without loss of
generality that $\Phi ^{\ast }\left( x,v\right) <C^{\ast }F_{\beta }\left(
x,v\right) ,$ for $\left\vert x\right\vert ^{\beta }+\left\vert v\right\vert
^{3\beta }=1.$ Then, using Proposition \ref{weak_max} as in the proof of
Theorem \ref{AsSingPoint}, i.e. making the comparison in a region removing a
small set near the singular point, adding a small value $\varepsilon >0$ and
taking the limit as the domain converges to the whole $\mathbb{R}^{+}\times
\mathbb{R}$ we can show that $\Phi ^{\ast }\left( x,v\right) \leq \left(
C^{\ast }-\delta \right) F_{\beta }\left( x,v\right) +\varepsilon ,$ for
some $\delta >0$ small and for arbitrary $\varepsilon >0$ small. By letting $%
\varepsilon \rightarrow 0,$ we get $\Phi ^{\ast }\left( x,v\right) \leq
\left( C^{\ast }-\delta \right) F_{\beta }\left( x,v\right) ,$ which
contradicts to (\ref{C*_0}).

In the case (c), there exists a sequence $\left( \bar{x}_{l},\bar{v}%
_{l}\right) $ such that $\left\vert \left( \bar{x}_{l},\bar{v}_{l}\right)
\right\vert \rightarrow \infty $ and
\begin{equation*}
\lim_{l\rightarrow \infty }\frac{\Phi ^{\ast }\left( \bar{x}_{l},\bar{v}%
_{l}\right) }{F_{\beta }\left( \bar{x}_{l},\bar{v}_{l}\right) }=C^{\ast }.
\end{equation*}%
Define%
\begin{equation*}
\Phi _{l}^{\ast \ast }\left( x,v\right) =\frac{\Phi ^{\ast }\left( \rho
_{l}x,\rho _{l}^{1/3}v\right) }{\rho _{l}^{\beta }},\ \rho _{l}^{\beta
}=\left\vert \bar{x}_{l}\right\vert ^{\beta }+\left\vert \bar{v}%
_{l}\right\vert ^{3\beta }.
\end{equation*}

Then there exist $\Phi _{\infty }^{\ast \ast }\left( x,v\right) \in \mathcal{%
M}$ and $\left( \bar{x}_{\infty },\bar{v}_{\infty }\right) \in \mathbb{R}%
_{+}\times \mathbb{R}$ such that $\left( \frac{\bar{x}_{l}}{\rho _{l}},\frac{%
\bar{v}_{l}}{\rho _{l}^{3}}\right) \rightarrow \left( \bar{x}_{\infty },\bar{%
v}_{\infty }\right) $ with $\left\vert \bar{x}_{\infty }\right\vert ^{\beta
}+\left\vert \bar{v}_{\infty }\right\vert ^{3\beta }=1$ and $\Phi _{l}^{\ast
\ast }\left( x,v\right) \rightarrow \Phi _{\infty }^{\ast \ast }\left(
x,v\right) ,$ as $l\rightarrow \infty ,$ $\mathcal{L}\left( \Phi _{\infty
}^{\ast \ast }\right) =0,\Phi _{\infty }^{\ast \ast }\left( x,v\right) \leq
C^{\ast }F_{\beta }\left( x,v\right) $ and%
\begin{equation*}
\Phi _{\infty }^{\ast \ast }\left( \bar{x}_{\infty },\bar{v}_{\infty
}\right) =C^{\ast }F_{\beta }\left( \bar{x}_{\infty },\bar{v}_{\infty
}\right) .
\end{equation*}

Then by Proposition \ref{strong_max} we have%
\begin{equation*}
\Phi _{\infty }^{\ast \ast }\left( x,v\right) =C^{\ast }F_{\beta }\left(
x,v\right) .
\end{equation*}

Thus for $\delta >0$ given small and $l$ large and for $1/4\leq \left\vert
x\right\vert ^{\beta }+\left\vert v\right\vert ^{3\beta }\leq 4,$%
\begin{equation*}
\Phi _{l}^{\ast \ast }\left( x,v\right) \geq \left( C^{\ast }-\frac{\delta }{%
10^{6}}\right) F_{\beta }\left( x,v\right) ,
\end{equation*}

which in turn implies, for $\rho _{l}/4\leq \left\vert x\right\vert ^{\beta
}+\left\vert v\right\vert ^{3\beta }\leq 4\rho _{l},$
\begin{equation*}
\Phi ^{\ast }\left( x,v\right) \geq \left( C^{\ast }-\frac{\delta }{10^{6}}%
\right) F_{\beta }\left( x,v\right) .
\end{equation*}

Now suppose $\Phi ^{\ast }\left( x,v\right) \neq C^{\ast }F_{\beta }\left(
x,v\right) ,$ then, using the comparison argument above it would follow that
$\Phi ^{\ast }\left( x,v\right) \leq \left( C^{\ast }-\delta \right)
F_{\beta }\left( x,v\right) $ for $\rho _{l}=\left\vert x\right\vert ^{\beta
}+\left\vert v\right\vert ^{3\beta },$ which is a contradiction.

Thus the claim (\ref{claimMP}) is proved. Next we claim $C^{\ast }=0.$
Suppose $C^{\ast }\neq 0.$ Take $C^{\ast }>0$ without loss of generality.
Then for any given $\delta >0$ small, for $\left\vert x\right\vert ^{\beta
}+\left\vert v\right\vert ^{3\beta }=1,$ $\Phi _{m}\geq \left( C^{\ast
}-\delta \right) F_{\beta }$ for large $m$ since $\Phi _{m}\rightarrow \Phi
^{\ast }$ as $m\rightarrow \infty $ and $\Phi ^{\ast }\left( x,v\right)
=C^{\ast }F_{\beta }\left( x,v\right) .$ We recall a super-solution $S\left(
x,v\right) $ with $S\sim \left\vert x\right\vert ^{2/3}+\left\vert
v\right\vert ^{2}$ near $\left( 0,0\right) $ and $\mathcal{L}S=1$ as in
Lemma \ref{super_sol}$.$

Then we construct a sub-solution of the form%
\begin{equation*}
Z\left( x,v\right) =\left( C^{\ast }-2\delta \right) F_{\beta }\left(
x,v\right) -R_{m}^{2/3-\beta }S\left( x,v\right) ,
\end{equation*}

so that we have by comparison principle,%
\begin{equation*}
\Phi _{m}\left( x,v\right) \geq Z\left( x,v\right) ,\text{ for }\left\vert
x\right\vert ^{\beta }+\left\vert v\right\vert ^{3\beta }\leq 1.
\end{equation*}

Therefore, we have%
\begin{equation*}
\Phi _{m}\left( x,v\right) \geq \left( C^{\ast }-3\delta \right) F_{\beta
}\left( x,v\right) ,\text{ for }\left\vert x\right\vert ^{\beta }+\left\vert
v\right\vert ^{3\beta }\leq 1,
\end{equation*}

which contradicts to (\ref{Phi_m3}). Thus the claim is proved, that is, $%
C^{\ast }=0.$ Then $\Phi ^{\ast }\left( x,v\right) =0,$ which contradicts to
(\ref{Phi_m4}). Thus the proof is complete.
\end{proof}

\subsection{The operators $\Omega _{\protect\sigma }$ are Markov pregenerators.
\label{SuMP}}

We will denote in the following as $\Omega _{\sigma }$ any of the operators $%
\Omega _{t,sub},\ \Omega _{nt,sub},\ \Omega _{pt,sub},\ \Omega _{sup}$\
defined in Sections \ref{Absorbing}, \ref{Reflecting}, \ref{Mixed}, \ref%
{Supercrit}.\ In the rest of this Section we check that the four operators $%
\Omega _{\sigma }$ defined above are Markov generators in the sense of
Definition \ref{MarGen}, and therefore that they define Markov generators
due to Hille-Yosida Theorem (cf. Theorem \ref{HY}). We first check that the
operators $\Omega _{\sigma }$ are Markov pregenerators.

\begin{proposition}
\label{PropPG}The operators $\Omega _{t,sub},\ \Omega _{nt,sub},\ \Omega
_{pt,sub},\ \Omega _{sup}$\ defined in Sections \ref{Absorbing}, \ref%
{Reflecting}, \ref{Mixed}, \ref{Supercrit} are Markov pregenerators in the
sense of Definition \ref{MarPre}.
\end{proposition}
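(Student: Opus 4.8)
The plan is to check the three defining properties of a Markov pregenerator (Definition \ref{MarPre}) for all four operators at once; I write $\Omega _{\sigma }$ for any one of $\Omega _{t,sub},\ \Omega _{nt,sub},\ \Omega _{pt,sub},\ \Omega _{sup}$. Property (i) is immediate: the constant function $1$ satisfies the compatibility condition (\ref{comp1}); one has $\mathcal{L}1=0\in C\left( X\right) $ in the sense of Definition \ref{LadjDef} (integrating by parts against a test function $\zeta \in \mathcal{F}\left( \mathcal{U}\right) $, the boundary integral over $L^{\ast }$ vanishes thanks to (\ref{comp1t})); the decomposition (\ref{phi_decom}) holds with $\varphi \left( 0,0\right) =1,$ $\mathcal{A}\left( 1\right) =0,$ $\psi \equiv 0$; and $\lim_{\left( x,v\right) \rightarrow \left( 0,0\right) }\left( \mathcal{L}1\right) \left( x,v\right) =0$, which is exactly the singular-point relation in each of (\ref{S7E1}), (\ref{S7E2}), (\ref{S7E3}) (with $\mu _{\ast }\left\vert C_{\ast }\right\vert \mathcal{A}\left( 1\right) =0$) and (\ref{S7E4}). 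Hence $1\in \mathcal{D}\left( \Omega _{\sigma }\right) $ and $\Omega _{\sigma }1=0.$

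For density (property (ii)) the plan is to exhibit a subclass $\mathcal{C}\subset C\left( X\right) $ that is dense and contained in every $\mathcal{D}\left( \Omega _{\sigma }\right) .$ Let $\mathcal{C}$ consist of the functions $\varphi \in C^{\infty }\left( \overline{\mathcal{U}}\right) $ that satisfy (\ref{comp1}), are constant on a neighbourhood of $\left( 0,0\right) $ and on a neighbourhood of $\infty $ in $X$, and satisfy on $L^{\ast }$ the compatibility identity
\[ \varphi _{x}\left( 0,-v\right) +r\varphi _{x}\left( 0,rv\right) =\frac{r^{2}-1}{v}\,\varphi _{vv}\left( 0,rv\right) ,\qquad v>0 . \]
For such $\varphi $, integration by parts against $\zeta \in \mathcal{F}\left( \mathcal{U}\right) $, using (\ref{comp1}) for $\varphi $ and (\ref{comp1t}) for $\zeta $, shows the boundary term over $L^{\ast }$ vanishes, so $\mathcal{L}\varphi =\partial _{vv}\varphi +v\partial _{x}\varphi $ in the sense of Definition \ref{LadjDef}; differentiating (\ref{comp1}) twice in $v$ gives $\varphi _{vv}\left( 0,-v\right) =r^{2}\varphi _{vv}\left( 0,rv\right) $, and the displayed identity is precisely the condition making $\mathcal{L}\varphi $ satisfy (\ref{comp1}), so $\mathcal{L}\varphi \in C\left( X\right) $. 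Since $\varphi $ is constant near $\left( 0,0\right) $ one has $\mathcal{L}\varphi \equiv 0$ there, so (\ref{phi_decom}) holds with $\mathcal{A}\left( \varphi \right) =0$ and $\lim _{\left( x,v\right) \rightarrow \left( 0,0\right) }\mathcal{L}\varphi =0$; hence $\varphi \in \mathcal{D}\left( \Omega _{\sigma }\right) $ for every $\sigma $. Density of $\mathcal{C}$ is obtained in three routine steps: given $f\in C\left( X\right) $ and $\varepsilon >0$, (a) replace $f$ near the origin by $f_{1}=\left( 1-\xi \right) f+\xi f\left( 0,0\right) $ with $\xi $ the cutoff of Proposition \ref{mu_R} (which respects (\ref{comp1})), and flatten $f$ near $\infty $ similarly, obtaining $f_{1}\in C\left( X\right) $ constant near $\left( 0,0\right) $ and near $\infty $ with $\left\Vert f-f_{1}\right\Vert <\varepsilon $ by continuity of $f$ and compactness of $X$; (b) mollify $f_{1}$ to a $C^{\infty }$ function, restoring (\ref{comp1}) by subtracting an $O\left( h\right) $ correction supported in a thin neighbourhood of $L^{\ast }$, hence away from $\left( 0,0\right) $ where $f_{1}$ is already constant; (c) add a term $x\chi \left( x\right) g\left( v\right) $ with $\chi $ a cutoff supported in $\left[ 0,\delta \right] $ and $g\in C_{c}^{\infty }\left( \mathbb{R}\right) $ given explicitly ($g\left( w\right) =0$ for $w\geq 0$, and for $w<0$ by solving the compatibility identity against the current $x$-derivative) so that the identity above holds; this term vanishes at $x=0$, hence does not change the trace on $L^{\ast }$, preserves constancy near $\left( 0,0\right) $ and near $\infty $, and has sup norm $\leq \delta \left\Vert g\right\Vert _{\infty }$, made $<\varepsilon $ by choosing $\delta $ small after $g$ is fixed.

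For property (iii) the plan is to invoke the Lemma following Definition \ref{MarPre}, reducing to: if $\varphi \in \mathcal{D}\left( \Omega _{\sigma }\right) $ attains its minimum over the compact space $X$ at $\eta $, then $\left( \Omega _{\sigma }\varphi \right) \left( \eta \right) \geq 0$. If $\eta =\left( x_{0},v_{0}\right) $ is interior ($x_{0}>0$), by the hypoellipticity estimates (Theorems \ref{Hypoell}, \ref{Regul}) $\varphi $ is $C^{1}$ in $x$ and $C^{2}$ in $v$ near $\eta $, the first-order conditions give $\varphi _{x}\left( \eta \right) =0$, and $\varphi _{vv}\left( \eta \right) \geq 0$, so $\mathcal{L}\varphi \left( \eta \right) =\varphi _{vv}\left( \eta \right) \geq 0$. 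If $\eta \in L^{\ast }\smallsetminus \left\{ \left( 0,0\right) ,\infty \right\} $, use the representative $\left( 0,v_{0}\right) $ with $v_{0}>0$ (every boundary point other than $\left( 0,0\right) $ has one, by the identification in Definition \ref{DefX}): then $\varphi _{vv}\left( 0,v_{0}\right) \geq 0$ (minimum of $v\mapsto \varphi \left( 0,v\right) $) and $\varphi _{x}\left( 0,v_{0}\right) \geq 0$ (one-sided minimum of $x\mapsto \varphi \left( x,v_{0}\right) $ at $x=0$), so $\mathcal{L}\varphi \left( 0,v_{0}\right) =\varphi _{vv}+v_{0}\varphi _{x}\geq 0$. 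If $\eta =\left( 0,0\right) $: for $\Omega _{t,sub}$, $\left( \Omega _{t,sub}\varphi \right) \left( 0,0\right) =0$ by definition (\ref{S7E1}); for $\Omega _{nt,sub}$, $\mathcal{A}\left( \varphi \right) =0$, so by Theorem \ref{AsSingPoint} $\varphi =\varphi \left( 0,0\right) +h\left( 0,0\right) S+o\left( x^{2/3}+\left\vert v\right\vert ^{2}\right) $ near the origin with $h=\mathcal{L}\varphi $, and since $S$ changes sign there (Lemma \ref{super_sol}, as $r<r_{c}<1$), minimality forces $h\left( 0,0\right) =0$; for $\Omega _{pt,sub}$, Theorem \ref{AsSingPoint} gives leading behaviour $\mathcal{A}\left( \varphi \right) F_{\beta }$ (which dominates $h\left( 0,0\right) S$ since $F_{\beta }$ scales with degree $3\beta $ and $S$ with degree $2$ under $x\mapsto \lambda ^{3}x,\ v\mapsto \lambda v$, and $3\beta <2$, cf. Lemma \ref{LemmaBeta}, Proposition \ref{FbetaSt}), and positivity of $F_{\beta }$ forces $\mathcal{A}\left( \varphi \right) \geq 0$, so $\left( \Omega _{pt,sub}\varphi \right) \left( 0,0\right) =\mu _{\ast }\left\vert C_{\ast }\right\vert \mathcal{A}\left( \varphi \right) \geq 0$; for $\Omega _{sup}$, Theorem \ref{AsSingSuper} gives $\varphi =\varphi \left( 0,0\right) +h\left( 0,0\right) S+o\left( x^{2/3}+\left\vert v\right\vert ^{2}\right) $, and since $S$ is positive on $L^{\ast }\cap \left\{ v>0\right\} $ near $\left( 0,0\right) $ (Lemma \ref{super_sol}), minimality forces $h\left( 0,0\right) \geq 0$, i.e. $\left( \Omega _{sup}\varphi \right) \left( 0,0\right) \geq 0$. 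Finally, if $\eta =\infty $ and $\left( \Omega _{\sigma }\varphi \right) \left( \infty \right) =\left( \mathcal{L}\varphi \right) \left( \infty \right) =-2a<0$, put $\psi =\varphi -\min \varphi \geq 0$ (which vanishes at $\infty $); then $\mathcal{L}\psi \leq -a$ on some $\mathcal{O}_{M}$ and $\psi \leq \varepsilon _{0}$ on some $\mathcal{O}_{M^{\prime }}$ with $M^{\prime }\geq M$; taking $\Xi $ a fixed admissible box translated far out in the $x$-direction so that $\Xi \subset \mathcal{O}_{M^{\prime }}$, the $x$-translation invariance of $\mathcal{L}$ gives a constant $C_{\ast }$ in Proposition \ref{weak_max} independent of the translation, and $\psi $ being a supersolution of $\mathcal{L}\left( \cdot \right) +a$ on $\Xi $ with $\psi \geq 0$ on $\partial _{a}\Xi $ yields $\psi \geq C_{\ast }a$ on the interior region $D$ of $\Xi $; choosing $\varepsilon _{0}<C_{\ast }a$ contradicts $\psi \leq \varepsilon _{0}$. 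Hence $\left( \Omega _{\sigma }\varphi \right) \left( \infty \right) \geq 0$ in every case.

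The main obstacle is property (ii): one must produce functions that simultaneously respect the identification (\ref{comp1}), lie in the domain of $\mathcal{L}$ in the precise sense of Definition \ref{LadjDef} --- which, because $\mathcal{L}$ carries only a first-order $x$-derivative, forces the genuine compatibility relation between $\varphi _{x}$ and $\varphi _{vv}$ on $L^{\ast }$ written above --- and have the prescribed asymptotics at the singular point. What makes this tractable is that functions which are locally constant near $\left( 0,0\right) $ automatically meet the singular-point requirements of all four domains, so the only genuine work is imposing the $L^{\ast }$-compatibility by an explicit, uniformly small correction supported away from the origin. The verification of property (iii) at $\left( 0,0\right) $ and at $\infty $ is the other delicate point; it is handled respectively by the sharp expansions of Theorems \ref{AsSingPoint} and \ref{AsSingSuper} together with the sign properties of $F_{\beta }$ and $S$, and by the quantitative weak maximum principle of Proposition \ref{weak_max} combined with the $x$-translation invariance of the equation.
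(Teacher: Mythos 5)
Your overall plan coincides with the paper's: verify the three properties of Definition \ref{MarPre}, with (i) trivial, (ii) a density argument (the paper's Lemma \ref{density}), and (iii) a minimum-point argument (the paper's Lemma \ref{minProp}). Property (i) is fine, and your treatment of the singular point $\left(0,0\right)$ in property (iii) via Theorems \ref{AsSingPoint} and \ref{AsSingSuper} and the sign properties of $F_{\beta}$ and $S$ matches the paper, as does the comparison argument at $\infty$.

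For density you take a genuinely different route from the paper. You identify explicitly the compatibility relation between $\varphi_{x}$ and $\varphi_{vv}$ on $L^{\ast}$ that makes $\mathcal{L}\varphi$ continuous across the identification, and then repair it with an added term $x\chi(x)g(v)$ that vanishes on $L^{\ast}$ and has small sup norm. The paper instead solves an auxiliary parabolic problem in a thin strip $\left\{0<x<\delta/2,\ v<-\delta/2\right\}$ to simultaneously enforce $(\ref{comp1})$ for $\varphi$ and for $\mathcal{L}\varphi$, then patches with a cutoff. Your approach is more elementary and decouples the two compatibility conditions, which is a worthwhile simplification; but your step (b) (``restoring $(\ref{comp1})$ by subtracting an $O(h)$ correction supported in a thin neighbourhood of $L^{\ast}$'') would need to be spelled out --- say by replacing the mollified function near $\left\{x=0,\ v>0\right\}$ with $f_{1}^{h}\left(0,-v/r\right)$ times a smooth $x$-cutoff, which works because $f_{1}^{h}$ is exactly constant near $\left(0,0\right)$ for small $h$.

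The genuine gap is in property (iii) at interior points $x_{0}>0$ and at points of $L^{\ast}\setminus\left\{\left(0,0\right),\infty\right\}$. You argue pointwise: $\varphi_{x}(\eta)=0$ (respectively $\geq0$ on the boundary) and $\varphi_{vv}(\eta)\geq0$ at a minimum, so $\mathcal{L}\varphi(\eta)\geq0$. This presupposes that $D_{x}\varphi$ and $D_{v}^{2}\varphi$ exist classically near $\eta$, but neither Theorem \ref{Regul} nor Theorem \ref{Hypoell} delivers that: Theorem \ref{Regul} gives only $D_{v}\varphi\in C$, and Theorem \ref{Hypoell}(i) gives $D_{x}\varphi,\ D_{v}^{2}\varphi\in L^{p}_{loc}$ --- which by Sobolev embedding yields H\"older continuity of $\varphi$ in $x$ and of $D_{v}\varphi$ in $v$, but \emph{not} continuity of $D_{x}\varphi$ or $D_{v}^{2}\varphi$. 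Since $\mathcal{L}\varphi$ is merely continuous (not $C^{\alpha}$), the hypoelliptic gain is insufficient for the classical second-order test. The paper avoids this entirely: it argues by contradiction, using that if $\mathcal{L}\varphi<-\kappa$ in a small admissible box $\Xi$ centred at the minimum, then $\bar{\varphi}=\varphi-\varphi(\eta)$ is a nonnegative-on-$\partial_{a}\Xi$ supersolution of $\mathcal{L}(\cdot)+\kappa$, so Proposition \ref{weak_max} forces $\bar{\varphi}(\eta)\geq C_{\ast}\kappa>0$, contradicting $\bar{\varphi}(\eta)=0$. You should replace your pointwise argument at these two types of points by that comparison argument; everything else in your proof of (iii) can stand.
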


We will prove Proposition \ref{PropPG} with the help of several\ Lemmas.

\begin{lemma}
\label{density} Suppose that $\Omega _{\sigma }\in \left\{ \Omega _{t,sub},\
\Omega _{nt,sub},\ \Omega _{pt,sub},\ \Omega _{sup}\right\} .$ The domains $%
\mathcal{D}\left( \Omega _{\sigma }\right) $ defined in (\ref{S7E1}), (\ref%
{S7E2}), (\ref{S7E3}), (\ref{S7E4}) are dense in $C\left( X\right) $ endowed
with the uniform topology.
\end{lemma}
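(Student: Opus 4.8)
The plan is to cut $\varphi$ down to a function that is constant near the two singular points $\left(0,0\right)$ and $\infty$, and then to regularize the remaining piece by solving a resolvent equation on a large admissible domain. First I would reduce the statement: let $\mathcal{D}_{\ast}$ be the set of $\varphi\in C\left(X\right)$ such that $\mathcal{L}\varphi\in C\left(X\right)$ and $\varphi$ is constant in some neighbourhood of $\left(0,0\right)$ and in some neighbourhood of $\infty$. Any such $\varphi$ satisfies \eqref{phi_decom} with $\mathcal{A}\left(\varphi\right)=0$ and $\psi\equiv0$ near $\left(0,0\right)$ (when $r>r_{c}$ this requirement is vacuous), and $\mathcal{L}\varphi\equiv0$ near $\left(0,0\right)$, so that $\lim_{\left(x,v\right)\rightarrow\left(0,0\right)}\left(\mathcal{L}\varphi\right)\left(x,v\right)=0=\mu_{\ast}\left\vert C_{\ast}\right\vert\mathcal{A}\left(\varphi\right)$; hence $\varphi$ lies in each of $\mathcal{D}\left(\Omega_{t,sub}\right),\mathcal{D}\left(\Omega_{nt,sub}\right),\mathcal{D}\left(\Omega_{pt,sub}\right)$ when $r<r_{c}$ and in $\mathcal{D}\left(\Omega_{sup}\right)$ when $r>r_{c}$. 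Thus it suffices to prove that $\mathcal{D}_{\ast}$ is dense in $C\left(X\right)$.

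Given $\varphi\in C\left(X\right)$ and $\varepsilon>0$, uniform continuity on the compact space $X$ produces $\varphi_{1}\in C\left(X\right)$ with $\left\Vert\varphi-\varphi_{1}\right\Vert<\varepsilon/2$ which equals $\varphi\left(0,0\right)$ on a small neighbourhood of $\left(0,0\right)$ and $\varphi\left(\infty\right)$ on a neighbourhood of $\infty$. To preserve the compatibility \eqref{comp1} I would build the cut-offs out of $x+\left\vert v\right\vert$ near $\infty$ and out of the continuous, strictly-positive-off-the-origin function $g\left(x,v\right)=x+\left(\max\left\{v,0\right\}\right)^{3}+r^{3}\left(\max\left\{-v,0\right\}\right)^{3}$, which satisfies $g\left(0,-v\right)=g\left(0,rv\right)$ for $v>0$. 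In general $\varphi_{1}$ is only continuous, and it remains to approximate $\varphi_{1}$, within $\varepsilon/2$ in $C\left(X\right)$, by an element of $\mathcal{D}_{\ast}$.

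For this I would fix a small $\lambda>0$ and an admissible domain $\Xi$ of type (b) in Definition \ref{admissible} --- a large outer rectangle with a small rectangular notch excising $\left(0,0\right)$ --- chosen so that $\varphi_{1}$ is constant (equal to $\varphi\left(0,0\right)$) on the notch region and constant (equal to $\varphi\left(\infty\right)$) on a neighbourhood of $\partial\Xi\smallsetminus\left\{x=0\right\}$. By Proposition \ref{DirSolv} there is a unique $\psi_{\lambda}$ solving $\psi_{\lambda}-\lambda\mathcal{L}\psi_{\lambda}=\varphi_{1}$ in $\Xi$ with $\psi_{\lambda}=\varphi_{1}$ on $\partial_{a}\Xi$ and $\psi_{\lambda}\left(0,rv\right)=\psi_{\lambda}\left(0,-v\right)$; from the representation formula \eqref{X8}, the continuity of the paths of the process $\xi$ of Definition \ref{StocPr}, and the maximum principle of Proposition \ref{weak_max}, one gets $\left\Vert\psi_{\lambda}-\varphi_{1}\right\Vert_{L^{\infty}\left(\Xi\right)}\rightarrow0$ as $\lambda\rightarrow0$ and $\Xi\nearrow X$, so for a suitable choice this is $<\varepsilon/2$. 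I would then extend $\psi_{\lambda}$ to all of $X$ by the corresponding constants on the notch region and on the far region near $\infty$. On $\Xi$ one has $\mathcal{L}\psi_{\lambda}=\left(\psi_{\lambda}-\varphi_{1}\right)/\lambda$, which is continuous and, crucially, satisfies $\left(\mathcal{L}\psi_{\lambda}\right)\left(0,-v\right)=\left(\mathcal{L}\psi_{\lambda}\right)\left(0,rv\right)$ automatically, because both $\psi_{\lambda}$ and $\varphi_{1}$ satisfy \eqref{comp1}; so the only remaining point is that the extension is continuous and that $\mathcal{L}$ of the extension is continuous, i.e. that $\psi_{\lambda}$ matches the relevant constant, and $\mathcal{L}\psi_{\lambda}\rightarrow0$, across the interfaces.

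The step I expect to be the main obstacle is precisely this matching. On the portions of an interface that belong to $\partial_{a}\Xi$ it is immediate ($\psi_{\lambda}=\varphi_{1}$ equals the constant there and $\mathcal{L}\psi_{\lambda}=0$), but on the outflow portions --- which lie near $\left(0,0\right)$ and near $\infty$ --- $\psi_{\lambda}$ is determined by the equation rather than by the Dirichlet data, and need not coincide with the constant. To overcome this I would either arrange, by choosing $\Xi$ and the sets on which $\varphi_{1}$ is constant appropriately, that $\psi_{\lambda}$ is again constant on those outflow portions, or else replace the crude extension by the solutions of $\psi-\lambda\mathcal{L}\psi=\varphi_{1}$ on the two complementary regions with boundary data $\psi_{\lambda}$ restricted to the interfaces, using Theorems \ref{AsSingPoint} and \ref{AsSingSuper} to control the behaviour of those solutions at $\left(0,0\right)$ and to check that $\mathcal{L}$ of the patched global function extends continuously with value $0$ at $\left(0,0\right)$ and at $\infty$. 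By the maximum principle this patched function is again within $\varepsilon/2$ of $\varphi_{1}$, hence within $\varepsilon$ of $\varphi$, and it belongs to $\mathcal{D}_{\ast}$, which completes the argument.
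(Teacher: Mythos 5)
Your reduction to showing $\mathcal{D}_{\ast}$ (functions constant near $(0,0)$ and $\infty$ with $\mathcal{L}\varphi\in C(X)$) is dense, and your cutoff using $g(x,v)=x+(\max\{v,0\})^{3}+r^{3}(\max\{-v,0\})^{3}$, are both sound. You have also correctly identified where the argument becomes delicate: after solving the resolvent equation on the notched domain $\Xi$, the function $\psi_{\lambda}$ need not agree with the relevant constant on the outflow part of $\partial\Xi$, so the crude constant extension is not even continuous there. However, neither of the two repairs you offer closes this gap, and the first is not achievable at all.

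Your option (a), ``arrange that $\psi_{\lambda}$ is again constant on those outflow portions,'' cannot be achieved by a choice of $\Xi$ or of the constancy region of $\varphi_{1}$: on the outflow side of the notch, $\psi_{\lambda}$ is determined by the resolvent equation together with data on the far part of $\partial_{a}\Xi$, and there is no reason it should coincide with $\varphi(0,0)$ even if $\varphi_{1}\equiv\varphi(0,0)$ on a larger region. The constant is a solution only if it also satisfies the admissible boundary data on the whole of $\partial_{a}\Xi$, which it does not. Your option (b), solving $\psi-\lambda\mathcal{L}\psi=\varphi_{1}$ on the complementary notch region with data $\psi_{\lambda}\vert_{\text{interface}}$, re-introduces the singular point: this is now a boundary value problem on a domain with $(0,0)$ in its closure, which is exactly the hard content the paper postpones to Propositions \ref{Case_a_sub}--\ref{Case_super}. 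Theorems \ref{AsSingPoint} and \ref{AsSingSuper} describe the asymptotics of a solution once one has it, but they do not force the constraints needed for membership in $\mathcal{D}(\Omega_{\sigma})$: for $\Omega_{t,sub}$ one still needs $\mathcal{L}\psi(0,0)=0$, for $\Omega_{nt,sub}$ one still needs $\mathcal{A}(\psi)=0$, for $\Omega_{pt,sub}$ the partially trapping relation, and none of these holds automatically for the patched function. Since $\mathcal{L}\psi=(\psi-\varphi_{1})/\lambda$ on the notch region, $\mathcal{L}\psi(0,0)=0$ would require $\psi(0,0)=\varphi(0,0)$, which is not guaranteed. So the function you construct need not lie in $\mathcal{D}_{\ast}$, nor in the specific $\mathcal{D}(\Omega_{\sigma})$.

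The paper avoids this entirely by a construction that never touches the singular point. After the same cutoff reduction (write $\tilde{\xi}=\xi(0,0)+\xi_{2}$ with $\xi_{2}$ vanishing near $(0,0)$), the paper mollifies $\xi_{2}$ in the variables $(x,v)$; the mollified $\xi_{2}^{\delta}$ still vanishes near $(0,0)$, so the only compatibility to restore is the reflection condition on $\{x=0\}$ for both $\xi_{2}^{\delta}$ and $\mathcal{L}\xi_{2}^{\delta}$. This is done by solving a nondegenerate parabolic equation (in the time-like variable $x$) on the thin strip $\{0<x<\delta/2,\ v<-\delta/2\}$, which is bounded away from $v=0$ and hence from the singular point, and then patching with a cutoff supported in that strip. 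Existence, uniqueness, and regularity there are classical, and no resolvent or Perron construction is needed. This is why the density lemma can be established before the solvability of the singular boundary value problems, whereas your route would make the argument depend on those later propositions.
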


\begin{proof}
We need to check that the domains $\mathcal{D}\left( \Omega _{\sigma
}\right) $ are dense in $C\left( X\right) $ with the uniform norm $%
\left\Vert \cdot \right\Vert $ in (\ref{Norm}). To this end we argue as
follows. Let $\xi \in C\left( X\right) ,$ then we want to find an
approximate sequence in $\mathcal{D}(\Omega )$ converging uniformly to $\xi $
in $X.$ To this end, for given $\rho >0$, first we introduce the following
function $\lambda \in C^{\infty }[0,\infty )$ so that
\begin{equation*}
0\leq \lambda (s)\leq 1;\quad \lambda (s)=1\text{ for }0\leq s\leq \rho \
;\quad \lambda (s)=0\text{ for }s\geq 2\rho
\end{equation*}%
and we define $\eta (x,v)$ by
\begin{equation}
\eta (x,v)=%
\begin{cases}
\lambda (x+|v|^{3}), & v<0 \\
\lambda (x+|\frac{v}{r}|^{3}), & v\geq 0.%
\end{cases}
\label{eta}
\end{equation}

Note that $\eta $ is continuous in $X$, $\eta (0,-v)=\eta (0,rv)$ for $v>0$,
and $\psi =\mathcal{A}\left( \eta \right) =\mathcal{L}\left( \eta \right) =0$
in (\ref{phi_decom}) since $\eta $ is constant near the origin. Let $%
\varepsilon >0$ be given. Choose $\tilde{\xi}=\eta \xi \left( 0,0\right)
+\left( 1-\eta \right) \xi ,$ where $\eta $ is given in \eqref{eta}. Then $%
\eta =1$ on $\{x+\left\vert v\right\vert ^{3}\leq \rho \text{ and }v<0,\text{
or }x+\left\vert \frac{v}{r}\right\vert ^{3}\leq \rho \text{ and }v\geq 0\}$
and $\eta =0$ for $\{x+\left\vert v\right\vert ^{3}\geq 2\rho \text{ and }%
v<0,\text{ or }x+\left\vert \frac{v}{r}\right\vert ^{3}\geq 2\rho \text{ and
}v\geq 0\}$ and $0\leq \eta \leq 1$ otherwise. Then it is easy to see, since
$\xi $ is continuous up to the boundary,%
\begin{equation*}
\left\Vert \xi -\tilde{\xi}\right\Vert =\left\Vert \eta \left( \xi -\xi
\left( 0,0\right) \right) \right\Vert \leq \varepsilon /2\text{,}
\end{equation*}%
for sufficiently small $\rho $. Write
\begin{equation*}
\tilde{\xi}=\xi \left( 0,0\right) +\left( 1-\eta \right) (\xi -\xi \left(
0,0\right) )=:\xi _{1}+\xi _{2}.
\end{equation*}%
Then $\xi _{1}\in \mathcal{D}(\Omega )$ since it is a constant function. We
now consider $\xi _{2}=\left( 1-\eta \right) (\xi -\xi \left( 0,0\right) )$
and will find an approximation in $\mathcal{D}(\Omega )$ to it. Note that $%
\xi _{2}$ is $0$ near the origin. We may assume that $\xi _{2}$ is compactly
supported. Let us consider the standard mollifiers $\{\zeta ^{\delta
}\}_{\delta >0}$. By extending $\xi _{2}$ to zero outside of $X$, we
consider the following approximation of $\xi _{2}$ via convolution
\begin{equation*}
\xi _{2}^{\delta }(x,v)=\int_{|x-y|^{2}+|v-w|^{2}\leq \delta }\zeta ^{\delta
}(x-y,v-w)\xi _{2}(y,w)dydw
\end{equation*}%
Then for sufficiently small $\delta (<r\rho /2)$, we obtain $\xi
_{2}^{\delta }=0$ in a small neighborhood of the origin and that $\Vert \xi
_{2}^{\delta }-\xi _{2}\Vert \leq \varepsilon /2$. However, this $\xi
_{2}^{\delta }$ does not belong to $\mathcal{D}(\Omega )$ in general, since $%
\xi _{2}^{\delta }$ and $\mathcal{L}\xi _{2}^{\delta }$ do not satisfy the
boundary condition \eqref{comp1}. In order to find the approximation of $\xi
_{2}$ that lies in $\mathcal{D}(\Omega )$, we will construct a smooth
function $\varphi ^{\delta }$ in a thin strip $Y=\{0<x<\delta /2,\;v<-\delta
/2\}$ near the lower boundary but away from the singular set so that it is
close to $\xi _{2}$ in that strip $Y$ and moreover, $\varphi ^{\delta
}(0,v)=\xi _{2}^{\delta }(0,-rv)$ and $\mathcal{L}\varphi ^{\delta }(0,v)=%
\mathcal{L}\xi _{2}^{\delta }(0,-rv)$. This will be achieved by solving the
parabolic equation in $Y$:
\begin{equation*}
\begin{split}
\partial _{x}\varphi ^{\delta }(x,v)& =-\frac{1}{v}\partial _{v}^{2}\varphi
^{\delta }-r\partial _{x}\xi _{2}^{\delta }(x,-rv)+\frac{1}{v}\partial
_{v}^{2}\xi _{2}^{\delta }(x,-rv)\;\text{ for }0<x<\delta /2,\;v<-\delta /2
\\
\varphi ^{\delta }(0,v)& =\xi _{2}^{\delta }(0,-rv),\quad \varphi ^{\delta
}(x,-\frac{\delta }{2})=0\;\text{ for }0<x<\delta /2.
\end{split}%
\end{equation*}%
Here we view $x$ as a time-like variable. Since $-v$ is away from zero, the
existence and regularity of the solution follow from classical parabolic
theory. Note that the parabolic equation guarantees $\mathcal{L}\varphi
^{\delta }(0,v)=\mathcal{L}\xi _{2}^{\delta }(0,-rv)$ and the initial data
gives $\varphi ^{\delta }(0,v)=\xi _{2}^{\delta }(0,-rv)$. In addition,
since $\Vert \xi _{2}^{\delta }-\xi _{2}\Vert \leq \varepsilon /2$ and $%
\varphi ^{\delta }(0,v)=\xi _{2}^{\delta }(0,-rv)\sim \xi _{2}(0,v)$, and by
continuity of $\xi _{2}$, we deduce that $\sup_{Y}|\varphi ^{\delta }-\xi
_{2}|\leq \varepsilon /2$ for sufficiently small $\delta $.

By introducing another smooth cutoff function $\chi $ so that $\chi (x,v)=1$
in $Y^{\prime }=\{0<x<\delta /4,\;v<-3\delta /4\}$, $\chi (x,v)=0$ in $%
X\setminus Y$, and $0\leq \chi \leq 1$, we now let $\tilde{\xi _{2}^{\delta }%
}:=\chi \varphi ^{\delta }+(1-\chi )\xi _{2}^{\delta }$. Then by
construction, we deduce that $\tilde{\xi _{2}}\in \mathcal{D}(\Omega )$ and
moreover,
\begin{equation*}
\left\Vert \xi _{2}-\tilde{\xi _{2}^{\delta }}\right\Vert \leq \varepsilon /2%
\text{,}
\end{equation*}%
for sufficiently small $\delta >0$. Therefore,
\begin{equation*}
\left\Vert \xi -(\xi _{1}+\tilde{\xi _{2}^{\delta }})\right\Vert =\left\Vert
\xi -\tilde{\xi}+\xi _{2}-\tilde{\xi _{2}^{\delta }}\right\Vert \leq
\varepsilon .
\end{equation*}%
Since $\varepsilon $ is arbitrary, we are done.
\end{proof}

We now prove the following:

\begin{lemma}
\label{minProp}Suppose that $\Omega _{\sigma }\in \left\{ \Omega _{t,sub},\
\Omega _{nt,sub},\ \Omega _{pt,sub},\ \Omega _{sup}\right\} .$ Let $g\in
C\left( X\right) $ and suppose that there exists $\varphi \in \mathcal{D}%
\left( \Omega _{\sigma }\right) $ such that $\varphi -\lambda \Omega
_{\sigma }\varphi =g$ for some $\lambda \geq 0.$ Then $\min_{\zeta \in X}\
\varphi \left( \zeta \right) \geq \min_{\zeta \in X}\ g\left( \zeta \right)
. $
\end{lemma}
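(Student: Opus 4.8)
The plan is to prove Lemma \ref{minProp}, which is property (iii) in Definition \ref{MarPre}, by establishing that each operator $\Omega_\sigma$ satisfies the positivity-at-minimum criterion recorded in the Lemma just after Definition \ref{MarPre}: if $\varphi \in \mathcal{D}(\Omega_\sigma)$ attains its minimum over $X$ at a point $\eta$, then $(\Omega_\sigma \varphi)(\eta) \geq 0$. Once this is known, the implication $\varphi - \lambda \Omega_\sigma \varphi = g \Rightarrow \min_X \varphi \geq \min_X g$ is the elementary argument quoted in the excerpt: evaluating at a minimizer $\eta$ of $\varphi$ gives $g(\eta) = \varphi(\eta) - \lambda (\Omega_\sigma\varphi)(\eta) \leq \varphi(\eta) = \min_X \varphi \leq \min_X g$ is the wrong direction, so one instead notes $\min_X g \leq g(\eta) = \varphi(\eta) - \lambda(\Omega_\sigma\varphi)(\eta) \leq \varphi(\eta) = \min_X \varphi$; hence $\min_X \varphi \geq \min_X g$. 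So the real content is the positivity-at-minimum property.

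To establish that, I would split into cases according to where the minimizer $\eta$ lies. \textbf{Case 1: $\eta = \infty$.} Here $(\Omega_\sigma\varphi)(\infty) = (\mathcal{L}\varphi)(\infty)$ by definition. Since $\varphi \in C(X)$ extends to a bounded continuous function on $[0,\infty)\times\mathbb{R}$ with a limit at infinity, and $\eta=\infty$ is the minimum, one argues by the maximum principle for $\mathcal{L}$ near infinity (Proposition \ref{weak_max} applied in large admissible domains, together with the regularity from Theorem \ref{Regul}): the value $(\mathcal{L}\varphi)(\infty)$, which is the limit of $\mathcal{L}\varphi$ along the neighborhoods $\mathcal{O}_M$, must be $\geq 0$, else $\varphi$ would dip below its value at infinity in a neighborhood of infinity, contradicting minimality. \textbf{Case 2: $\eta = (0,0)$.} For $\Omega_{t,sub}$ the value is $0 \geq 0$ by definition (\ref{S7E1a}); for $\Omega_{nt,sub}$ and $\Omega_{sup}$ the value is $\lim_{(x,v)\to(0,0)}(\mathcal{L}\varphi)(x,v)$, and for $\Omega_{pt,sub}$ it is $\mu_\ast|C_\ast|\mathcal{A}(\varphi)$. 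In the latter case, since $F_\beta > 0$ near the origin (with $\beta > 0$ for $r<r_c$, so $F_\beta(x,v) \to 0$), and $\varphi$ has its minimum at $(0,0)$ with the expansion (\ref{phi_decom}), the fact that $\varphi(x,v) - \varphi(0,0) = \mathcal{A}(\varphi)F_\beta(x,v) + \psi(x,v) \geq 0$ with $\psi = o(F_\beta)$ forces $\mathcal{A}(\varphi) \geq 0$, hence $(\Omega_{pt,sub}\varphi)(0,0) = \mu_\ast|C_\ast|\mathcal{A}(\varphi) \geq 0$. For $\Omega_{nt,sub}$ we have $\mathcal{A}(\varphi)=0$ so Theorem \ref{AsSingPoint} gives $\varphi(x,v) - \varphi(0,0) = h(0,0)S(x,v) + o(x^{2/3}+|v|^2)$ where $h = \mathcal{L}\varphi$; since $S$ changes sign near the origin when $r<1$ (Lemma \ref{super_sol}), minimality at $(0,0)$ forces $h(0,0) = \lim (\mathcal{L}\varphi)(0,0) = 0 \geq 0$. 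The same sign-change argument handles $\Omega_{sup}$ via Theorem \ref{AsSingSuper}. \textbf{Case 3: $\eta \in X \setminus \{(0,0),\infty\}$}, an interior or lateral-boundary point where $\Omega_\sigma\varphi = \mathcal{L}\varphi$ in the classical/distributional sense. Here the claim $(\mathcal{L}\varphi)(\eta)\geq 0$ at an interior minimum is the classical statement of the minimum principle for $\mathcal{L} = D_v^2 + vD_x$; using Theorem \ref{Regul} for regularity and arguing exactly as in the proof of Proposition \ref{strong_max} / Lemma \ref{Max1} (with a regularizing perturbation $\pm\delta v^2 \pm \delta x$ if the derivatives do not satisfy strict inequalities), one gets $\mathcal{L}\varphi(\eta)\geq 0$. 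The boundary compatibility (\ref{comp1}) ensures that a minimum on the line $\{x=0\}$ can be treated by the same parabolic-interior argument after using the reflection identity, exactly as in Theorem \ref{Regul}.

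The main obstacle I expect is Case 2 for the partially trapping and nontrapping operators: one must convert the "minimum at $(0,0)$" information into the sign of $\mathcal{A}(\varphi)$ or of $h(0,0)$, and this genuinely requires the sharp asymptotic expansions of Theorems \ref{AsSingPoint} and \ref{AsSingSuper} together with the sign-change property of the supersolution $S$ from Lemma \ref{super_sol} — a crude barrier argument is not enough because $F_\beta$ and $S$ have comparable or competing orders near the origin. A secondary technical point is Case 1 (minimum at $\infty$): one needs that $\mathcal{L}\varphi$ genuinely has a limit at $\infty$ (guaranteed since $\mathcal{L}\varphi \in C(X)$ for $\varphi \in \mathcal{D}(\Omega_\sigma)$) and then a maximum-principle argument in shrinking neighborhoods $\mathcal{O}_M$, which must be phrased carefully because $\mathcal{O}_M$ is unbounded and one should truncate to admissible domains before invoking Proposition \ref{weak_max}. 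Everything else is routine application of the comparison principles and regularity results already established.
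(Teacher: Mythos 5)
Your overall strategy matches the paper's exactly: reduce to the positivity-at-minimum criterion and split into cases according to where the minimizer lies. Cases at $(0,0)$ and at $\infty$ are handled in the same spirit as the paper (for $(0,0)$, using the sign of $\mathcal{A}(\varphi)$ via positivity of $F_\beta$ in the partially-trapping case, and the sign-changing behaviour of $S$ together with Theorems \ref{AsSingPoint}, \ref{AsSingSuper} in the nontrapping and supercritical cases; for $\infty$, a barrier/comparison argument on far-away admissible rectangles).

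Where your argument has a genuine gap is your Case 3, the interior and lateral-boundary points. You propose to conclude $(\mathcal{L}\varphi)(\eta)\geq 0$ at a minimizer by ``the classical statement of the minimum principle'' and by arguing ``exactly as in the proof of Proposition \ref{strong_max} / Lemma \ref{Max1} (with a regularizing perturbation $\pm\delta v^2\pm\delta x$ …).'' Neither of those results proves the statement you need: Lemma \ref{Max1} and Proposition \ref{strong_max} propagate positivity of a nonnegative \emph{solution} of $\mathcal{L}\Phi=0$, not the sign of $\mathcal{L}\varphi$ at an interior minimum; and the classical pointwise argument (``$D_v^2\varphi\geq 0$ and $D_x\varphi=0$ at a minimum'') would require $\varphi$ to be twice classically differentiable in $v$ at the minimizer. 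But Theorem \ref{Regul} only yields continuity of $D_v\varphi$, and Theorem \ref{Hypoell} gives $D_v^2\varphi\in L^p_{loc}$ but not pointwise continuity, so the pointwise maximum-principle computation is not available. The paper instead applies the weak comparison principle (Proposition \ref{weak_max}) to $\bar\varphi=\varphi-\varphi(x_0,v_0)$ on a small admissible domain $\Xi$ around the minimizer where, by continuity of $\mathcal{L}\varphi$, one has $\mathcal{L}\varphi\leq -\kappa<0$; since $\bar\varphi\geq 0$ on $\partial_a\Xi$, Proposition \ref{weak_max} forces $0=\bar\varphi(x_0,v_0)\geq C_\ast\kappa>0$, a contradiction — entirely in the weak framework, needing no pointwise second derivatives. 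You should replace your Case 3 argument with this comparison-principle argument; the same device is also what makes your Case 1 at infinity rigorous after truncating to bounded admissible rectangles.
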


\begin{proof}
In order to prove the result we consider several (not mutually exclusive)
subcases, namely:

(a) $\varphi \left( x_{0},v_{0}\right) =\min_{X}\varphi ,\ $with $x_{0}>0,\
v_{0}\in \mathbb{R}$.

(b) $\varphi \left( 0,v_{0}\right) =\min_{X}\varphi ,\ $with $v_{0}\in
\mathbb{R}.$

(c) $\varphi \left( \infty \right) =\min_{X}\varphi .$

(d) $\varphi \left( 0,0\right) =\min_{X}\varphi .$

By assumption, in the case (a) we have that $\varphi \left( x,v\right) -%
\mathcal{L}\varphi \left( x,v\right) =g\left( x,v\right) $ for $\left(
x,v\right) $ in a neighbourhood of $\left( x_{0},v_{0}\right) $ contained in
$\mathcal{U}$. We now claim that $\mathcal{L}\varphi \left(
x_{0},v_{0}\right) \geq 0.$ Suppose the opposite, i.e. $\mathcal{L}\varphi
\left( x_{0},v_{0}\right) <0.$ Then, since $g,\varphi \in C\left( X\right) $
there exists an admissible domain $\Xi $ (cf. Definition \ref{admissible}),
such that $\left( x_{0},v_{0}\right) \in \Xi $ and $\mathcal{L}\varphi
\left( x,v\right) \leq -\kappa $ for $\left( x,v\right) \in \Xi $ with $%
\kappa >0.$ Moreover, we can assume also that $\operatorname{dist}\left( \left(
x_{0},v_{0}\right) ,\partial _{a}\Xi \right) \geq \frac{d\left( \Xi \right)
}{4}$ with $d\left( \Xi \right) $ as in (\ref{T7E3}). Due to (a) we have $%
\varphi \left( x_{0},v_{0}\right) \leq \min_{\partial _{a}\Xi }\varphi .$
Applying Proposition \ref{weak_max} to $\bar{\varphi}=\varphi -\varphi
\left( x_{0},v_{0}\right) $ it then follows that $0=\bar{\varphi}\left(
x_{0},v_{0}\right) \geq C_{\ast }\kappa >0,$ a contradiction. Therefore $%
\mathcal{L}\varphi \left( x_{0},v_{0}\right) \geq 0$ and since $\Omega
_{\sigma }\varphi \left( x_{0},v_{0}\right) =\mathcal{L}\varphi \left(
x_{0},v_{0}\right) $ the result stated in the Lemma follows in this case.

In the case (b)\ we argue similarly. We just take a domain $\Xi $ with the
form $\left( 0,x_{2}\right) \times \left( v_{1},v_{2}\right) $ such that $%
\left( x_{0},v_{0}\right) \in \partial \Xi $, and $\operatorname{dist}\left(
\left( x_{0},v_{0}\right) ,\partial _{a}\Xi \right) \geq \frac{d\left( \Xi
\right) }{4}.$ Notice that we use the identification of the points in the
definition of $X$ (cf. (\ref{DefX})) and that the points of the domain $\Xi $
are to the left of $\left( x_{0},v_{0}\right) $ in the sense of Definition %
\ref{defLeft}. If $\mathcal{L}\varphi \left( x_{0},v_{0}\right) <0$ we
obtain a contradiction as in the case (a) using Proposition \ref{weak_max}.

In the case (c), using the fact that $\mathcal{L}\varphi $ is continuous in $%
X$ \ it follows that, for any given any $\varepsilon ^{\ast }>0$ we can
choose $R$ such that if $\left\vert \left( x,v\right) \right\vert \geq R$ we
have $\varphi \left( x,v\right) \leq \min_{X}\varphi +\varepsilon ^{\ast }.$
Suppose that $\mathcal{L}\varphi \left( \infty \right) <0,$ we claim that
this implies the existence of $\left( \bar{x},\bar{v}\right) $ with $%
\left\vert \left( \bar{x},\bar{v}\right) \right\vert \geq R$ such that $%
\varphi \left( \bar{x},\bar{v}\right) >\min_{X}\varphi +C_{0},$ with $C_{0}>0
$ independent of $\varepsilon ^{\ast }.$ To prove this we choose $\left(
x_{0},v_{0}\right) =\left( x_{0},1\right) $\ with $x_{0}$ large and we
define an admissible domain $\Xi =\left( x_{0}-1,x_{0}+1\right) \times
\left( -2,2\right) .$ By assumption $\mathcal{L}\varphi \left( x,v\right)
\leq -\kappa $ for some $\kappa >0$ independent of $\varepsilon ^{\ast }$
and $\left( x,v\right) \in \Xi .$ Suppose that $\varphi \left( x,v\right)
\leq \min_{X}\varphi +\varepsilon ^{\ast }$ for any $\left( x,v\right) \in
\partial _{a}\Xi .$ Then, a comparison argument, as in the previous cases,
yields $\varphi \left( x_{0},1\right) \leq \min_{X}\varphi +\varepsilon
^{\ast }-C_{1}\kappa $ for some constant $C_{1}>0$ independent of $%
\varepsilon ^{\ast }.$ Notice that the independence of $C_{1}$ follows from
the invariance of the operator $\mathcal{L}$ under translations in $x.$
Since $\varepsilon ^{\ast }$ can be chosen arbitrarily small it then follows
that $\varphi \left( x_{0},1\right) \leq \min_{X}\varphi -\frac{C\kappa }{2}$
if $x_{0}$ is large enough. However, since $\varphi \in C\left( X\right) $
this implies $\varphi \left( \infty \right) \leq \min_{X}\varphi -\frac{%
C\kappa }{2},$ but this contradicts the fact that $\min_{X}\varphi =\varphi
\left( \infty \right) .$ Therefore $\mathcal{L}\varphi \left( \infty \right)
\geq 0$ whence $\varphi \left( \infty \right) \geq \varphi \left( \infty
\right) -\mathcal{L}\varphi \left( \infty \right) =g\left( \infty \right) $
and the result follows in this case.

Suppose now that (d) holds. Notice that in the case of the four operators%
\begin{equation*}
\Omega _{t,sub},\ \Omega _{nt,sub},\ \Omega _{pt,sub},\Omega _{sup}
\end{equation*}
we have that $\Omega _{\sigma }\left( 0,0\right) =\left( \mathcal{L}\varphi
\right) \left( 0,0\right) .$ Moreover, all the functions in the domain of
the four operators satisfy (\ref{phi_decom}).

In the case of the operator $\Omega _{t,sub}$ we have, by definition $\left(
\Omega _{t,sub}\varphi \right) \left( 0,0\right) =0,$ whence the result
immediately follows. In order to show this property for $\Omega _{pt,sub}$,
notice that the asymptotics (\ref{phi_decom}), combined with the fact that $%
F_{\beta }\left( x,v\right) >0$ (cf. Proposition 4.2 in \cite{HVJ2})\ implies
that, since $\varphi \left( 0,0\right) =\min_{X}\varphi ,$ we have $\mathcal{%
A}\left( \varphi \right) \geq 0.$ Therefore, using that $\left( \Omega
_{pt,sub}\varphi \right) \left( 0,0\right) =\mu _{\ast }\left\vert C_{\ast
}\right\vert \mathcal{A}\left( \varphi \right) $, with $\mu _{\ast }>0,$ we
obtain that (iii) also holds for the operator $\Omega _{pt,sub}.$\ In the
case of the operators $\Omega _{nt,sub}$ we have, due to the definition of
its domain, that $\mathcal{A}\left( \varphi \right) =0.$ We now claim that
this implies that $\left( \mathcal{L}\varphi \right) \left( 0,0\right) =0.$
Suppose that $\left( \mathcal{L}\varphi \right) \left( 0,0\right) =\kappa
\neq 0.$ Theorem \ref{AsSingPoint}, combined with the boundary condition $%
\mathcal{A}\left( \varphi \right) =0$ yields the asymptotics:%
\begin{equation}
\varphi \left( x,v\right) -\varphi \left( 0,0\right) -\kappa S\left(
x,v\right) =o\left( x^{\frac{2}{3}}+\left\vert v\right\vert ^{2}\right) \
\label{P1E6}
\end{equation}%
as $\left( x,v\right) \rightarrow \left( 0,0\right) .$ Lemma \ref{super_sol}
implies that $S\left( x,v\right) $ takes positive and negative values in any
neighbourhood of $\left( x,v\right) =\left( 0,0\right) .$ Therefore, if $%
\kappa \neq 0$ we cannot have $\varphi \left( 0,0\right) =\min_{X}\varphi .$
This rules out the possibility (d) in the case of the operator $\Omega
_{nt,sub}.$

It only remains to consider the operator $\Omega _{sup}.$ The asymptotics of
$\varphi \left( x,v\right) $ near the singular point can be computed in this
case using Theorem \ref{AsSingSuper}. We obtain then the asymptotics (\ref%
{P1E6}). Since $S$ changes sign in any neighbourhood of the singular point
we obtain that $\varphi \left( 0,0\right) \neq \min_{X}\varphi $ if $\kappa
\neq 0.$
\end{proof}

\begin{proof}[Proof of Proposition \protect\ref{PropPG}]
Notice that for all the operators
\begin{equation*}
\Omega _{\sigma }\in \left\{ \Omega _{t,sub},\ \Omega _{nt,sub},\ \Omega
_{pt,sub},\ \Omega _{sup}\right\}
\end{equation*}
we have that $1\in \mathcal{D}\left( \Omega _{\sigma }\right) $ and $\Omega
_{\sigma }1=0$ (cf. (\ref{S7E1a}), (\ref{S7E2a}), (\ref{S7E3a}), (\ref{S7E4a}%
)). Therefore (i) in Definition \ref{MarPre} follows. The property (ii)\ in
Definition \ref{MarPre} follows from Lemma \ref{density} and the property
(iii)\ is a consequence of Lemma \ref{minProp}.
\end{proof}

\subsection{The operators $\Omega _{\protect\sigma }$ are Markov generators.}

In this Subsection we prove that the operators $\Omega _{\sigma }$ defined in
Sections \ref{Absorbing}, \ref{Reflecting}, \ref{Mixed}, \ref{Supercrit} are
Markov generators in the sense of Definition \ref{MarGen}. Due to
Proposition \ref{PropPG} we just need to prove that the operators $\Omega
_{\sigma }$ are closed and that $\mathcal{R}\left( I-\lambda \Omega _{\sigma
}\right) =C\left( X\right) $ for $\lambda >0$ small. The proof of these
results is a bit technical, due to the conditions imposed at the singular
point $\left( x,v\right) =\left( 0,0\right) $ in the definition of the
domains $\mathcal{D}(\Omega _{\sigma }).$

\subsubsection{Closure of $\Omega _{\protect\sigma }.$}

We first show that the operators $\Omega _{\sigma }$ are closed, that is,
the graph\\
$\left\{ \left( \varphi ,\Omega _{\sigma }\varphi \right) \ |\
\varphi \in \mathcal{D}(\Omega _{\sigma })\right\} $ are closed in $C\left(
X\right) \times C\left( X\right) $. First we establish the closure property
away from the singular set $\left( 0,0\right) .$

\begin{lemma}
\label{cl_out}Let $\Omega _{\sigma }$ be one of the operators $\Omega
_{t,sub},\ \Omega _{nt,sub},\ \Omega _{pt,sub},\ \Omega _{sup}.$ Assume $%
\left( \varphi _{m},\Omega _{\sigma }\varphi _{m}\right) $ with $\varphi
_{m}\in \mathcal{D}(\Omega _{\sigma })$ converges to $\left( \varphi
,w\right) $ in $C\left( X\right) \times C\left( X\right) $. Then
\begin{equation*}
\varphi ,\ D_{x}\varphi ,\ D_{v}\varphi ,\;D_{v}^{2}\varphi \in
L_{loc}^{p}\left( \mathcal{U}\right)
\end{equation*}
for any $p\in \left( 1,\infty \right) $ and $w=\left(
D_{v}^{2}+vD_{x}\right) \varphi $ in $\mathcal{U}.$
\end{lemma}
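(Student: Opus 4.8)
The plan is to combine the hypoellipticity estimates of Theorem~\ref{Hypoell} with the definition of $\mathcal{L}\varphi$ in the sense of distributions (Definition~\ref{LadjDef}). Since the statement concerns only the interior region $\mathcal{U}$ (in fact local $L^p$ regularity on compact subsets of $\mathcal{U}$, away from the singular point $(0,0)$ and also away from $\infty$), the conditions imposed at $(0,0)$ in the domains $\mathcal{D}(\Omega_\sigma)$ play no role here; what matters is that for each $\varphi_m \in \mathcal{D}(\Omega_\sigma)$ we have $\varphi_m, \mathcal{L}\varphi_m \in C(X)$, that $\mathcal{L}\varphi_m = \Omega_\sigma\varphi_m$ on $\mathcal{U}$, and that the pairs $(\varphi_m, \Omega_\sigma\varphi_m)$ converge uniformly on $X$ to $(\varphi, w)$. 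In particular $\|\varphi_m\|_{L^\infty(X)}$ and $\|\Omega_\sigma\varphi_m\|_{L^\infty(X)}$ are uniformly bounded.

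First I would fix any open set $W$ with $\overline{W}$ a compact subset of $\mathcal{U}$ (so $(0,0)\notin \overline{W}$ and $\overline{W}$ is bounded). Applying Theorem~\ref{Hypoell}(i) to each $\varphi_m$ on a slightly larger admissible neighbourhood of $\overline{W}$, with $\mathcal{L}\varphi_m = \Omega_\sigma\varphi_m \in C(X)$, we obtain that $D_x\varphi_m, D_v\varphi_m, D_v^2\varphi_m \in L^p_{loc}$ on that neighbourhood with norms controlled by $\|\Omega_\sigma\varphi_m\|_\infty$ plus (if needed) $\|\varphi_m\|_\infty$. Since both quantities are uniformly bounded in $m$, the sequences $\{\varphi_m\}$, $\{D_x\varphi_m\}$, $\{D_v\varphi_m\}$, $\{D_v^2\varphi_m\}$ are bounded in $L^p(W)$. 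Passing to a subsequence, they converge weakly in $L^p(W)$; uniform convergence $\varphi_m \to \varphi$ in $C(X)$ identifies the weak-$L^p$ limit of $\{\varphi_m\}$ as $\varphi$, and then the weak limits of the derivative sequences are the corresponding distributional derivatives $D_x\varphi$, $D_v\varphi$, $D_v^2\varphi$, which therefore lie in $L^p(W)$. A standard diagonal/exhaustion argument over an increasing sequence of such sets $W$ upgrades this to $\varphi, D_x\varphi, D_v\varphi, D_v^2\varphi \in L^p_{loc}(\mathcal{U})$ for every $p\in(1,\infty)$.

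Next I would identify $w$ with $(D_v^2 + vD_x)\varphi$ on $\mathcal{U}$. For this I use the weak formulation: for any test function $\zeta \in \mathcal{F}(\mathcal{U}\cap W)$ as in Definition~\ref{LadjDef}, we have $\int_{\mathcal{U}\cap W}\varphi_m\, \mathcal{L}^*(\zeta)\,dxdv = \int_{\mathcal{U}\cap W}(\Omega_\sigma\varphi_m)\zeta\,dxdv$. On the left, $\varphi_m \to \varphi$ uniformly (hence in $L^1$ against the compactly supported $\mathcal{L}^*\zeta$); on the right, $\Omega_\sigma\varphi_m \to w$ uniformly. Passing to the limit gives $\int \varphi\, \mathcal{L}^*(\zeta)\,dxdv = \int w\,\zeta\,dxdv$ for all such $\zeta$, which by Definition~\ref{LadjDef} says precisely $\mathcal{L}\varphi = w$ on $\mathcal{U}$, i.e. $w = (D_v^2 + vD_x)\varphi$ there. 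Since $w\in C(X)$ by hypothesis and we have just shown $D_v^2\varphi, D_x\varphi \in L^p_{loc}(\mathcal{U})$, the claimed identity holds as an $L^p_{loc}$ identity, completing the proof.

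The only mildly delicate point is making sure the hypoellipticity estimate of Theorem~\ref{Hypoell}(i) is applied on a domain that is admissible in the sense of Definition~\ref{admissible} and whose admissible boundary stays a fixed positive distance from $\overline{W}$; this is arranged by covering $\overline{W}$ by finitely many small rectangles $\Xi$ of type (a) in Definition~\ref{admissible}, all contained in $\mathcal{U}$, and summing the local estimates. No genuine obstacle arises because all constants depend only on the geometry of these fixed rectangles and on $\|\Omega_\sigma\varphi_m\|_\infty + \|\varphi_m\|_\infty$, which are uniformly bounded along the convergent sequence.
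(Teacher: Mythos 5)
Your proposal is correct and uses the same two ingredients as the paper: passing to the limit in the weak formulation of Definition~\ref{LadjDef} (testing against $\zeta \in \mathcal{F}(\mathcal{U})$) to identify $w = \mathcal{L}\varphi$, and Theorem~\ref{Hypoell}(i) for the $L^p_{loc}$ regularity of $\varphi$ and its derivatives. The only difference is the order of steps: the paper first identifies $w = \mathcal{L}\varphi \in C(X)$ and then applies hypoellipticity directly to $\varphi$, whereas you first extract uniform $L^p$ bounds along the sequence $\varphi_m$ from Theorem~\ref{Hypoell}(i) and pass to weak limits, which is a slightly more roundabout but entirely equivalent route to the same conclusion.
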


\begin{proof}
Since $w_{n}=\mathcal{L}\left( \varphi _{n}\right) \rightarrow w\in C\left(
X\right) ,$ we have the following, for any $\zeta \in \mathcal{F}\left(
\mathcal{U}\right) $:
\begin{eqnarray*}
\int \varphi \mathcal{L}^{\ast }\left( \zeta \right) &\leftarrow &\int
\varphi _{n}\mathcal{L}^{\ast }\left( \zeta \right) =\int \mathcal{L}\left(
\varphi _{n}\right) \zeta \\
&=&\int w_{n}\zeta \rightarrow \int w\zeta .
\end{eqnarray*}

Thus we get \
\begin{equation*}
w=\mathcal{L}\left( \varphi \right) \in C\left( X\right) .
\end{equation*}

The regularity properties of $\varphi $ are a consequence of the
hypoellipticity properties of the operator $\mathcal{L}$ (cf. Theorem \ref%
{Hypoell}).
\end{proof}

The uniform estimate obtained in Proposition \ref{mu_R} enables us to derive
information near the singular point for sequences $\left\{ \varphi
_{m}\right\} $ such that $\left\{ \Omega _{\sigma }\varphi _{m}\right\} $ is
convergent in $C\left( X\right) .$

\begin{lemma}
\label{limitA}Let $\left\{ \left( \varphi _{m},\Omega _{\sigma }\varphi
_{m}\right) \right\} $ be a sequence of $C\left( X\right) \times C\left(
X\right) $ with $\left\{ \varphi _{m}\right\} \subset \mathcal{D}(\Omega
_{\sigma })$ where $\Omega _{\sigma }$ is any of the operators $\Omega
_{t,sub},\ \Omega _{nt,sub},\ \Omega _{pt,sub}.$ Let us assume that the
sequence $\left\{ \left( \varphi _{m},\Omega _{\sigma }\varphi _{m}\right)
\right\} $ converges to $\left( \varphi ,w\right) $ in $C\left( X\right)
\times C\left( X\right) .$ Then the sequence of numbers $\left\{ \mathcal{A}%
\left( \varphi _{m}\right) \right\} $ defined by means of (\ref{phi_decom})
is a Cauchy sequence and therefore $\lim_{m\rightarrow \infty }\mathcal{A}%
\left( \varphi _{m}\right) =L$ exists.
\end{lemma}

\begin{proof}
In the case of the operators $\Omega _{nt,sub}$ we have, due to (\ref{S7E2})
that $\mathcal{A}\left( \varphi _{m}\right) =0$ and the result follows
trivially. Suppose then that $\Omega _{\sigma }$ is one of the operators $%
\Omega _{t,sub},\ \Omega _{pt,sub}.$ We first show that $\left\{ \mathcal{A}%
\left( \varphi _{m}\right) \right\} $ is uniformly bounded in $m$. Let $%
\varphi _{n}\left( x,v\right) =\varphi _{n}\left( 0,0\right) +\mathcal{A}%
\left( \varphi _{n}\right) F_{\beta }\left( x,v\right) +\psi _{n}\left(
x,v\right) .$ By assumption$~\varphi _{n}\rightarrow \varphi ,~\mathcal{L}%
\varphi _{n}\mathcal{\rightarrow L}\varphi $ in $C\left( X\right) .$ Then $%
\left\vert \mathcal{L}\varphi _{n}\right\vert \leq A,$ for some $0<A<\infty $
and $\left\vert \varphi _{n}\left( x,v\right) -\varphi _{n}\left( 0,0\right)
\right\vert \leq C.$ We consider a super-solution $\bar{\varphi}_{+}$ and a
sub-solution $\bar{\varphi}_{-}$ of the form%
\begin{equation*}
\bar{\varphi}_{\pm }\left( x,v\right) =\bar{C}F_{\beta }\left( x,v\right)
\pm AS\left( x,v\right) ,
\end{equation*}%
where $S\left( x,v\right) $ satisfies $\mathcal{L}S=1$ as in Lemma \ref%
{super_sol}. Then $\bar{\varphi}_{+}\left( x,v\right) \geq \frac{\bar{C}}{2}%
F_{\beta }\left( x,v\right) $ and $\bar{\varphi}_{-}\left( x,v\right) \leq
\frac{\bar{C}}{2}F_{\beta }\left( x,v\right) $ in $x+\left\vert v\right\vert
^{3}=\rho $ if $\rho >0$ is sufficiently small. Then $\left\vert \varphi
_{n}\left( x,v\right) -\varphi _{n}\left( 0,0\right) \right\vert \leq K\bar{%
\varphi}_{\pm }\left( x,v\right) $ for $x+\left\vert v\right\vert ^{3}=\rho ,
$ with $K>0$ sufficiently large. On the other hand since $\varphi _{n}\in
C\left( X\right) ,$ given $\varepsilon >0$ we have $\left\vert \varphi
_{n}\left( x,v\right) -\varphi _{n}\left( 0,0\right) \right\vert \leq
\varepsilon $ if $x+\left\vert v\right\vert ^{3}=\delta $ for any $\delta >0$
sufficiently small, depending on $\varepsilon .$ Then the Weak Maximum
Principle in Proposition \ref{weak_max} applied to $\varphi _{n}\left(
x,v\right) -\varphi _{n}\left( 0,0\right) $ and $\left( \bar{\varphi}_{\pm
}\left( x,v\right) \pm \varepsilon \right) $ yields \
\begin{equation*}
\left\vert \varphi _{n}\left( x,v\right) -\varphi _{n}\left( 0,0\right)
\right\vert \leq \frac{K\bar{C}}{2}F_{\beta }\left( x,v\right) +\varepsilon ,%
\text{ for all }\left( x,v\right) \text{ with }\delta \leq x+\left\vert
v\right\vert ^{3}\leq \rho
\end{equation*}%
and taking the limit $\delta \rightarrow 0,$ $\varepsilon \rightarrow 0$ we
obtain:%
\begin{equation*}
\left\vert \varphi _{n}\left( x,v\right) -\varphi _{n}\left( 0,0\right)
\right\vert \leq \frac{K\bar{C}}{2}F_{\beta }\left( x,v\right) ,\text{ for
all }\left( x,v\right) \text{ with }0\leq x+\left\vert v\right\vert ^{3}\leq
\rho .
\end{equation*}%
This implies%
\begin{equation*}
\left\vert \mathcal{A}\left( \varphi _{n}\right) +\frac{\psi _{n}\left(
x,v\right) }{F_{\beta }\left( x,v\right) }\right\vert \leq \frac{K\bar{C}}{2}%
,~\text{for all }\left( x,v\right) \text{ with }x+\left\vert v\right\vert
^{3}\leq \rho .
\end{equation*}%
Then applying Proposition \ref{mu_R} yields, as $x+\left\vert v\right\vert
^{3}\rightarrow 0,$
\begin{equation*}
\left\vert \mathcal{A}\left( \varphi _{n}\right) \right\vert \leq \frac{K%
\bar{C}}{2}.
\end{equation*}%
Next, we show that $\left\{ \mathcal{A}\left( \varphi _{m}\right) \right\} $
is a Cauchy sequence. A similar argument can be applied to $\varphi
_{n}\left( x,v\right) -\varphi _{m}\left( x,v\right) -\varphi _{n}\left(
0,0\right) -\varphi _{m}\left( 0,0\right) $ for large $n,m$\ to obtain, for
all $\left( x,v\right) $ with $x+\left\vert v\right\vert ^{3}\leq \rho ,$%
\begin{equation}
\left\vert \varphi _{n}\left( x,v\right) -\varphi _{m}\left( x,v\right)
-\varphi _{n}\left( 0,0\right) -\varphi _{m}\left( 0,0\right) \right\vert
\leq \varepsilon F_{\beta }\left( x,v\right) ,\text{ }\   \label{T5E2}
\end{equation}%
where the $\varepsilon >0$ on the right-hand side of (\ref{T5E2}) can be
chosen arbitrarily small due to the fact that $\left\{ \varphi _{n}\right\} $
converges uniformly in $X,$ in particular at $\left( x,v\right) =0$ and for $%
x+\left\vert v\right\vert ^{3}=\rho >0.$ Then:%
\begin{equation*}
\left\vert \mathcal{A}\left( \varphi _{n}\right) -\mathcal{A}\left( \varphi
_{m}\right) +\frac{\psi _{n}\left( x,v\right) -\psi _{m}\left( x,v\right) }{%
F_{\beta }\left( x,v\right) }\right\vert \leq \varepsilon ,~\text{for all }%
\left( x,v\right) \text{ with }x+\left\vert v\right\vert ^{3}\leq \rho
\end{equation*}%
where $\psi _{n},\ \psi _{m}$ are chosen as in (\ref{phi_decom}) and $m,n$
large enough. The definition of $\mathcal{M}$ in (\ref{T5E1}) implies that
\begin{equation}
\left\vert \mathcal{A}\left( \varphi _{n}\right) -\mathcal{A}\left( \varphi
_{m}\right) \right\vert \leq \varepsilon +\frac{\mu \left( R\right) }{%
F_{\beta }\left( x,v\right) },~\text{for all }\left( x,v\right) \text{ with }%
x+\left\vert v\right\vert ^{3}=R\leq \rho   \label{T5E4}
\end{equation}

Notice that the right-hand side of (\ref{T5E4}) is independent of $n,m.$
Using Proposition \ref{mu_R} and taking the limit $R\rightarrow \infty $ we
obtain:%
\begin{equation*}
\left\vert \mathcal{A}\left( \varphi _{n}\right) -\mathcal{A}\left( \varphi
_{m}\right) \right\vert \leq \varepsilon
\end{equation*}%
if $n,m$ are sufficiently large. Therefore $\left\{ \mathcal{A}\left(
\varphi _{m}\right) \right\} $ is a Cauchy sequence and the result follows.
\end{proof}

We show in the following lemma that the limit of functions in the domain of
the operators $\Omega _{\sigma }$ has the asymptotics required near the
singular set.

\begin{lemma}
\label{cl_in}Let $\left\{ \left( \varphi _{m},\Omega _{\sigma }\varphi
_{m}\right) \right\} $ be a sequence of $C\left( X\right) \times C\left(
X\right) $ with $\left\{ \varphi _{m}\right\} \subset \mathcal{D}(\Omega
_{\sigma })$ where $\Omega _{\sigma }$ is any of the operators $\Omega
_{t,sub},\ \Omega _{nt,sub},\ \Omega _{pt,sub},\ \Omega _{sup}.$ Suppose
that the functions $\varphi _{m}$ satisfy (\ref{phi_decom}), i.e., they can
be written as:%
\begin{equation}
\varphi _{m}\left( x,v\right) =\varphi _{m}\left( 0,0\right) +\mathcal{A}%
\left( \varphi _{m}\right) F_{\beta }\left( x,v\right) +\psi _{m}\left(
x,v\right) \   \label{T5E5}
\end{equation}%
where $\psi _{m}$ satisfies the last identity in (\ref{phi_decom}). Let us
assume that the sequence $\left\{ \left( \varphi _{m},\Omega _{\sigma
}\varphi _{m}\right) \right\} $ converges to $\left( \varphi ,w\right) $ in $%
C\left( X\right) \times C\left( X\right) $. Then $\varphi $ satisfies (\ref%
{phi_decom}) for suitable $\mathcal{A}\left( \varphi \right) $ and $\psi .$
Moreover, we have:%
\begin{eqnarray}
\lim_{m\rightarrow \infty }\varphi _{m}\left( 0,0\right) &=&\varphi \left(
0,0\right) ,  \label{T6E1} \\
\lim_{m\rightarrow \infty }\mathcal{A}\left( \varphi _{m}\right) &=&\mathcal{%
A}\left( \varphi \right) ,  \label{T6E2} \\
\lim_{m\rightarrow \infty }\psi _{m}\left( x,v\right) &=&\psi \left(
x,v\right) ,  \label{T6E3} \\
\lim_{m\rightarrow \infty }\mathcal{L}\varphi _{m}\left( x,v\right)
&=&w\left( x,v\right) =\mathcal{L}\varphi \left( x,v\right)  \label{T6E4} \\
\lim_{m\rightarrow \infty }\left( \mathcal{L}\psi _{m}\right) \left(
x,v\right) &=&\left( \mathcal{L}\psi \right) \left( x,v\right) ,
\label{T6E5}
\end{eqnarray}
\end{lemma}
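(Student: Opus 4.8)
The plan is to assemble the statement from Lemmas \ref{cl_out} and \ref{limitA} and Proposition \ref{mu_R}, together with the asymptotic expansions of Theorems \ref{AsSingPoint} and \ref{AsSingSuper}; no genuinely new analysis at the singular point is needed. I would first dispose of the easy identifications. Evaluating the uniform convergence $\varphi_m\to\varphi$ at $(0,0)$ gives \eqref{T6E1}. Lemma \ref{cl_out} gives $w=\mathcal{L}\varphi$ in $\mathcal{U}$ with $\varphi,D_x\varphi,D_v\varphi,D_v^2\varphi\in L^p_{loc}(\mathcal{U})$, which is \eqref{T6E4}. Since $\mathcal{L}$ annihilates constants and $\mathcal{L}F_\beta=0$ (because $F_\beta$ solves \eqref{steadyFPa}, \eqref{BCa}), we have $\mathcal{L}\psi_m=\mathcal{L}\varphi_m=w_m\to w=\mathcal{L}\varphi=\mathcal{L}\psi$, which gives \eqref{T6E5}.

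Next I would produce the decomposition \eqref{phi_decom} for the limit $\varphi$ itself. Since $\varphi\in C(X)$ satisfies \eqref{comp1} and $\mathcal{L}\varphi=w\in C(X)$, Theorem \ref{AsSingPoint} (subcritical case) furnishes a constant $\mathcal{A}(\varphi)\in\mathbb{R}$ with
\[
\left\vert\varphi(x,v)-\varphi(0,0)-\mathcal{A}(\varphi)F_\beta(x,v)-w(0,0)S(x,v)\right\vert=o\left(x^{2/3}+|v|^2\right)
\]
as $x^{2/3}+|v|^2\to 0$, with $S$ as in Lemma \ref{super_sol}. Because $0<\beta<2/3$ when $r<r_c$ and $S=O(x^{2/3}+|v|^2)$, the function $\psi:=\varphi-\varphi(0,0)-\mathcal{A}(\varphi)F_\beta=w(0,0)S+o(x^{2/3}+|v|^2)$ is $o\left(x^{\beta}+|v|^{3\beta}\right)$ in the sense of \eqref{phi_decom}; hence $\varphi$ admits the decomposition \eqref{phi_decom}. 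In the supercritical case one uses Theorem \ref{AsSingSuper} instead (the $\mathcal{A}$--term being absent since $\beta<0$, cf.\ \eqref{T2E5}), and the argument is identical but simpler, with \eqref{T6E2} trivial.

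The real point is the identification $\lim_m\mathcal{A}(\varphi_m)=\mathcal{A}(\varphi)$, i.e.\ \eqref{T6E2}. By Lemma \ref{limitA} the limit $L:=\lim_m\mathcal{A}(\varphi_m)$ exists (for $\Omega_{nt,sub}$ all coefficients vanish and $\mathcal{A}(\varphi)=0$ follows from the uniform decay below). To see $L=\mathcal{A}(\varphi)$, set $\Psi_m:=\varphi_m-\varphi-(\varphi_m(0,0)-\varphi(0,0))$; then $\Psi_m\to 0$ and $\mathcal{L}\Psi_m=w_m-w\to 0$ in $C(X)$, and subtracting the decompositions of $\varphi_m$ and $\varphi$ shows $\Psi_m$ satisfies \eqref{phi_decom} with coefficient $\mathcal{A}(\varphi_m)-\mathcal{A}(\varphi)$. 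Running the comparison-and-rescaling argument of Lemma \ref{limitA} with $\Psi_m$ in place of $\varphi_n-\varphi_m$ — bounding $\Psi_m$ from above and below by $\pm\varepsilon\pm\bar C F_\beta\pm A S$ on two small spheres, applying Proposition \ref{weak_max}, and then invoking the uniform decay $\mu(R)/R^{\beta}\to0$ of Proposition \ref{mu_R} after normalizing $\Vert\mathcal{L}\Psi_m\Vert_\infty$ and the sup of $\Psi_m$ — yields $|\mathcal{A}(\varphi_m)-\mathcal{A}(\varphi)|\le\varepsilon+\mu(R)/F_\beta$ for all $m$, where $\varepsilon$ is arbitrarily small (since $\Psi_m\to0$ uniformly, in particular at $(0,0)$ and on the outer sphere) and $R$ arbitrarily large; letting $R\to\infty$ and then $m\to\infty$ gives \eqref{T6E2}. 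Finally \eqref{T6E3} follows from $\psi_m-\psi=(\varphi_m-\varphi)-(\varphi_m(0,0)-\varphi(0,0))-(\mathcal{A}(\varphi_m)-\mathcal{A}(\varphi))F_\beta$, each term tending to $0$ pointwise by $\varphi_m\to\varphi$, \eqref{T6E1} and \eqref{T6E2} ($F_\beta$ being locally bounded for $\beta>0$, and $\psi_m,\psi$ both $o(R^\beta)$ near the origin). The main obstacle is precisely this bookkeeping in \eqref{T6E2}: one must check that the constant $\mathcal{A}$ delivered by Theorem \ref{AsSingPoint} coincides with the one in the given decomposition \eqref{T5E5} — which holds because both are characterized by the remainder being $o(x^\beta+|v|^{3\beta})$ — and one must correctly feed $\Psi_m$ into the uniform estimate of Proposition \ref{mu_R}.
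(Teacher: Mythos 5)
Your proof is correct, but it routes through the singular-point analysis differently from the paper, and the difference is worth noting.

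The paper's own proof never invokes Theorem \ref{AsSingPoint}. Instead it applies Lemma \ref{limitA} to get $L:=\lim_m\mathcal{A}(\varphi_m)$, forms $\psi:=\lim_m\psi_m=\varphi-\varphi(0,0)-LF_\beta$ directly from \eqref{T5E5}, and then shows $\psi$ has the required $o(R^\beta)$ decay by a uniform estimate on the whole sequence: a comparison argument gives $|\psi_m|\le CF_\beta$ independent of $m$ so that $\psi_m/C\in\mathcal{M}$, and Proposition \ref{mu_R} then yields $|\psi_m(x,v)|\le\mu(R)$ on $x+|v|^3=R$ uniformly in $m$, which passes to $\psi$ in the limit. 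This immediately gives the decomposition of $\varphi$ with $\mathcal{A}(\varphi)=L$ \emph{by construction}, so \eqref{T6E2} and \eqref{T6E3} come for free with no identification step. Your route invokes the heavier Theorem \ref{AsSingPoint} (resp.\ Theorem \ref{AsSingSuper}) to produce $\mathcal{A}(\varphi)$ for the limit, and then you must verify $L=\mathcal{A}(\varphi)$ by an extra comparison-and-rescaling argument on $\Psi_m=\varphi_m-\varphi-(\varphi_m(0,0)-\varphi(0,0))$. That argument is a near-verbatim replay of the proof of Lemma \ref{limitA}, which Proposition \ref{mu_R} also underlies, so the two approaches end up relying on the same uniform-decay mechanism at the singular point; the paper simply packages it so that the constant is the limit $L$ by definition, avoiding the uniqueness-of-decomposition check that you needed to make explicit. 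Both are valid; the paper's version is slightly more economical and avoids dependence on the stronger asymptotic expansion (which also carries the $S$-term that you must then re-absorb into the $o(R^\beta)$ remainder, a step you handled correctly using $\beta<2/3$). Two small points: in the final limit you (as does the paper, likely a typo) write ``letting $R\to\infty$''; what Proposition \ref{mu_R} controls is $\mu(R)/R^\beta\to 0$ as $R\to 0^+$, and $F_\beta\sim R^\beta$ on $x+|v|^3=R$, so the limit is $R\to 0^+$. Also, your invocation of Theorem \ref{AsSingPoint} for the limit requires $\mathcal{L}\varphi=w\in C(X)$, which you correctly obtain from Lemma \ref{cl_out} together with the assumed convergence $\Omega_\sigma\varphi_m\to w$ in $C(X)$.
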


\begin{proof}
We define $\psi _{m}\left( x,v\right) $ by means of (\ref{T5E5}). Using the
assumptions in Lemma \ref{cl_in} as well as Lemma \ref{limitA} we obtain
that $\lim_{m\rightarrow \infty }\psi _{m}\left( x,v\right) $ exists,
uniformly in $X$ and:%
\begin{equation}
\psi \left( x,v\right) :=\lim_{m\rightarrow \infty }\psi _{m}\left(
x,v\right) =\varphi \left( x,v\right) -\varphi \left( 0,0\right) -LF_{\beta
}\left( x,v\right) \   \label{T5E6}
\end{equation}%
with $L$ as in Lemma \ref{limitA}. Notice that, using comparison arguments
as in the Proof of Lemma \ref{limitA} we obtain the uniform estimate $%
\left\vert \psi _{m}\left( x,v\right) \right\vert \leq CF_{\beta }\left(
x,v\right) $ for some suitable $C>0$ independent of $m$ and $x+\left\vert
v\right\vert ^{3}\leq 1.$ Moreover, we have $\psi _{m}\left( x,v\right)
=o\left( x+\left\vert v\right\vert ^{3}\right) $ as $\left( x,v\right)
\rightarrow 0.$ Then $\frac{\psi _{m}}{C}\in \mathcal{M}$ with $\mathcal{M}$
as in (\ref{T5E1}). Using Proposition \ref{mu_R} we obtain the uniform
estimate $\left\vert \psi _{m}\left( x,v\right) \right\vert \leq \mu \left(
R\right) $ if $x+\left\vert v\right\vert ^{3}=R,$ with $\lim_{R\rightarrow 0}%
\frac{\mu \left( R\right) }{R^{\beta }}=0.$ Then $\left\vert \psi \left(
x,v\right) \right\vert \leq \mu \left( R\right) $ if $x+\left\vert
v\right\vert ^{3}=R$, whence $\psi $ satisfies the last identity in (\ref%
{phi_decom}) and therefore $\varphi $ satisfies (\ref{phi_decom}). Using
then (\ref{T5E6}) we obtain $L=\mathcal{A}\left( \varphi \right) .$ We then
have (\ref{T6E2}), (\ref{T6E3}). The identity (\ref{T6E1}) is a consequence
of the convergence of $\left\{ \varphi _{m}\right\} $ in $C\left( X\right) .$
The identity (\ref{T6E4}) follows from the definition of the operator $%
\mathcal{L}$ in Definition \ref{LadjDef} since we can take the limit on the
left-hand side of that formula. Finally (\ref{T6E5}) follows from the fact
that $\mathcal{L}F_{\beta }=0.$
\end{proof}

The closure of the operators $\Omega _{\sigma }$ is just a consequence of
the two previous Lemmas.

\begin{proposition}
\label{Closed}Let $\left\{ \left( \varphi _{m},\Omega _{\sigma }\varphi
_{m}\right) \right\} $ be a sequence of $C\left( X\right) \times C\left(
X\right) $ with $\left\{ \varphi _{m}\right\} \subset \mathcal{D}(\Omega
_{\sigma })$ where $\Omega _{\sigma }$ is any of the operators $\Omega
_{t,sub},\ \Omega _{nt,sub},\ \Omega _{pt,sub},\ \Omega _{sup}.$Assume that $%
\left\{ \left( \varphi _{m},\Omega _{\sigma }\varphi _{m}\right) \right\} $
converges to $\left( \varphi ,w\right) $ in $C\left( X\right) \times C\left(
X\right) $. Then $\varphi \in \mathcal{D}(\Omega _{\sigma })$ and $w=%
\mathcal{L}\varphi .$
\end{proposition}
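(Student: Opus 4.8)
The plan is to reduce everything to the three preparatory lemmas and then check, operator by operator, that the defining conditions of $\mathcal{D}(\Omega _{\sigma })$ pass to the limit. Assume $\left( \varphi _{m},\Omega _{\sigma }\varphi _{m}\right) \rightarrow \left( \varphi ,w\right) $ in $C\left( X\right) \times C\left( X\right) $. By Lemma \ref{cl_out}, $\varphi ,D_{x}\varphi ,D_{v}\varphi ,D_{v}^{2}\varphi \in L_{loc}^{p}\left( \mathcal{U}\right) $, $w=\mathcal{L}\varphi =D_{v}^{2}\varphi +vD_{x}\varphi $ in $\mathcal{U}$, and $\mathcal{L}\varphi =w\in C\left( X\right) $ in the sense of Definition \ref{LadjDef}; in particular $\varphi ,\mathcal{L}\varphi \in C\left( X\right) $. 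Since convergence in $C\left( X\right) $ is uniform and evaluation at the points $\left( 0,0\right) $ and $\infty $ is continuous on $X$, we also get $\left( \mathcal{L}\varphi _{m}\right) \left( 0,0\right) \rightarrow \left( \mathcal{L}\varphi \right) \left( 0,0\right) =w\left( 0,0\right) $ and $\left( \mathcal{L}\varphi _{m}\right) \left( \infty \right) \rightarrow \left( \mathcal{L}\varphi \right) \left( \infty \right) =w\left( \infty \right) $. For the three subcritical operators, Lemma \ref{limitA} gives that $\left\{ \mathcal{A}\left( \varphi _{m}\right) \right\} $ is Cauchy with limit $L$, and Lemma \ref{cl_in} shows that $\varphi $ admits the decomposition (\ref{phi_decom}) with $\mathcal{A}\left( \varphi \right) =L=\lim _{m}\mathcal{A}\left( \varphi _{m}\right) $, together with $\mathcal{L}\varphi _{m}\rightarrow \mathcal{L}\varphi =w$ and $\mathcal{L}\psi _{m}\rightarrow \mathcal{L}\psi $.

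Now I treat the four cases. For $\Omega _{nt,sub}$: by (\ref{S7E2}) each $\varphi _{m}$ has $\mathcal{A}\left( \varphi _{m}\right) =0$, hence $\mathcal{A}\left( \varphi \right) =0$; combined with $\varphi ,\mathcal{L}\varphi \in C\left( X\right) $ and (\ref{phi_decom}) this gives $\varphi \in \mathcal{D}(\Omega _{nt,sub})$. For $\Omega _{t,sub}$: by (\ref{S7E1}) each $\varphi _{m}$ satisfies $\left( \mathcal{L}\varphi _{m}\right) \left( 0,0\right) =0$, and passing to the limit with the uniform convergence above yields $\left( \mathcal{L}\varphi \right) \left( 0,0\right) =0$, so $\varphi \in \mathcal{D}(\Omega _{t,sub})$. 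For $\Omega _{pt,sub}$: by (\ref{S7E3}) each $\varphi _{m}$ satisfies $\left( \mathcal{L}\varphi _{m}\right) \left( 0,0\right) =\mu _{\ast }\left\vert C_{\ast }\right\vert \mathcal{A}\left( \varphi _{m}\right) $; the left side converges to $\left( \mathcal{L}\varphi \right) \left( 0,0\right) $ and the right side to $\mu _{\ast }\left\vert C_{\ast }\right\vert \mathcal{A}\left( \varphi \right) $, whence $\left( \mathcal{L}\varphi \right) \left( 0,0\right) =\mu _{\ast }\left\vert C_{\ast }\right\vert \mathcal{A}\left( \varphi \right) $ and $\varphi \in \mathcal{D}(\Omega _{pt,sub})$. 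For $\Omega _{sup}$: here (\ref{S7E4}) requires only the existence of $\lim _{\left( x,v\right) \rightarrow \left( 0,0\right) }\left( \mathcal{L}\varphi \right) \left( x,v\right) $, which holds at once since $\mathcal{L}\varphi =w\in C\left( X\right) $; no asymptotic expansion is needed, so only Lemma \ref{cl_out} is used in this case.

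In every case, $\Omega _{\sigma }\varphi =\mathcal{L}\varphi =w$ for $\left( x,v\right) \neq \left( 0,0\right) ,\infty $ by Lemma \ref{cl_out}, while at the two special points the prescribed values of $\Omega _{\sigma }\varphi $ (given by (\ref{S7E1a}), (\ref{S7E2a}), (\ref{S7E3a}), (\ref{S7E4a})) coincide with $w\left( 0,0\right) $ and $w\left( \infty \right) $ by the convergences noted above; hence $w=\Omega _{\sigma }\varphi $ and the graph $\left\{ \left( \varphi ,\Omega _{\sigma }\varphi \right) :\varphi \in \mathcal{D}(\Omega _{\sigma })\right\} $ is closed. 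The real content is not in this bookkeeping but in its two inputs: the uniform decay estimate Proposition \ref{mu_R}, which is what prevents the coefficients $\mathcal{A}\left( \varphi _{m}\right) $ from blowing up or oscillating (Lemma \ref{limitA}) and what forces the error term $\psi $ in (\ref{phi_decom}) to keep its $o\left( x^{\beta }+\left\vert v\right\vert ^{3\beta }\right) $ decay in the limit (Lemma \ref{cl_in}). I expect the main subtlety to watch here to be exactly the statement that the pointwise quantity $\left( \mathcal{L}\varphi _{m}\right) \left( 0,0\right) $, defined a priori as $\lim _{\left( x,v\right) \rightarrow \left( 0,0\right) }\left( \mathcal{L}\varphi _{m}\right) \left( x,v\right) $, converges to $\left( \mathcal{L}\varphi \right) \left( 0,0\right) $; this is immediate from the uniform convergence $\mathcal{L}\varphi _{m}\rightarrow \mathcal{L}\varphi $ in $C\left( X\right) $ but relies on having first established, via Lemma \ref{cl_out}, that $\mathcal{L}\varphi $ is itself an element of $C\left( X\right) $ rather than merely a distribution.
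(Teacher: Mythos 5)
Your proposal is correct and follows essentially the same route as the paper, which proves Proposition \ref{Closed} by directly invoking Lemmas \ref{limitA} and \ref{cl_in} (with Lemma \ref{cl_out} implicitly supplying that $\mathcal{L}\varphi = w \in C(X)$). Your write-up simply makes explicit the case-by-case verification of the domain conditions (\ref{S7E1})--(\ref{S7E4}) and the role of uniform convergence in passing the pointwise boundary condition at $(0,0)$ to the limit, which the paper leaves to the reader.
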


\begin{remark}
Notice that Proposition \ref{Closed} just means that the operators $\Omega
_{\sigma }$ are closed.
\end{remark}

\begin{proof}
It is an immediate consequence from Lemmas \ref{limitA} and \ref{cl_in}.
\end{proof}

\subsubsection{$\mathcal{R}\left( I-\protect\lambda \Omega _{\protect\sigma %
}\right) =C\left( X\right) $ for $\protect\lambda >0$.}

In order to conclude the Proof of the fact that the operators $\Omega
_{\sigma }$ are Markov generators it only remains to prove that $\mathcal{R}%
\left( I-\lambda \Omega _{\sigma }\right) =C\left( X\right) $ for $\lambda >0
$ small (cf. Definition \ref{MarGen}). This is equivalent to prove that it
is possible to solve the problems
\begin{equation}
\left( \varphi -\lambda \Omega _{\sigma }\varphi \right) =g,\ \ g\in C\left(
X\right)   \label{solvability}
\end{equation}%
with $\varphi \in C\left( X\right) \cap \mathcal{D}\left( \Omega _{\sigma
}\right) $ for any $\lambda >0$ small, $\Omega _{\sigma }\in \left\{ \Omega
_{t,sub},\ \Omega _{nt,sub},\ \Omega _{pt,sub},\ \Omega _{sup}\right\} $.

The definition of the operators $\Omega _{\sigma }$ in Section \ref%
{Operators} and in particular the choice of domains $\mathcal{D}\left(
\Omega _{\sigma }\right) $ (cf. (\ref{S7E1}), (\ref{S7E2}), (\ref{S7E3}), (%
\ref{S7E4})) allows us to reformulate (\ref{solvability}) by means of PDE
problems with suitable boundary conditions at the singular point $\left(
x,v\right) =\left( 0,0\right) .$\ More precisely, the equation (\ref%
{solvability}) is equivalent in the case of the four operators $\Omega
_{t,sub},\ \Omega _{nt,sub},\ \Omega _{pt,sub},\ \Omega _{sup}$ to the
equation:%
\begin{equation}
\left( \varphi -\lambda \mathcal{L}\varphi \right) =g\ \ \text{in\ \ }%
\mathcal{U}\ \ ,\ \varphi \in C\left( X\right) \ \ ,\ \ \varphi \left(
0,rv\right) =\varphi \left( 0,-v\right) \ \ ,\ \ v>0\   \label{T6E6}
\end{equation}%
where $\varphi $ satisfies (\ref{phi_decom}) and we impose the following
boundary conditions for each of the cases:

\begin{eqnarray}
\varphi \left( 0,0\right) &=&g\left( 0,0\right) \ \ \text{for }\Omega
_{t,sub}\ \ \text{(cf. (\ref{S7E1}))}  \label{T6E7} \\
\mathcal{A}\left( \varphi \right) &=&0\ \ \text{for\ \ }\Omega _{nt,sub}\ \
\text{(cf. (\ref{S7E2}))}  \label{T6E8} \\
\lambda \mu _{\ast }\left\vert C_{\ast }\right\vert \mathcal{A}\left(
\varphi \right) &=&\lambda \left( \mathcal{L}\varphi \right) \left(
0,0\right) =\left( \varphi \left( 0,0\right) -g\left( 0,0\right) \right)
\text{ for }\Omega _{pt,sub}\ \text{(cf. (\ref{S7E3}))\ }  \label{T6E9} \\
&&\text{No boundary condition\ at }\left( 0,0\right) \ \text{for }\Omega
_{sup}\ \ \text{(cf. (\ref{S7E4}))}  \label{T6E10}
\end{eqnarray}

The operator $\mathcal{L}\varphi $ is understood as in Definition \ref%
{LadjDef}. Notice that the function $\mathcal{L}\varphi $ is continuous in $%
\overline{\mathcal{U}}$ and therefore the conditions (\ref{T6E7})-(\ref{T6E9}%
) are meaningful.

We summarize the result just obtained as follows:

\begin{proposition}
\label{Equivalence}The problem (\ref{solvability}) with $\Omega _{\sigma }$
as one of the operators
\begin{equation*}
\Omega _{t,sub},\ \Omega _{nt,sub},\ \Omega _{pt,sub},\ \Omega _{sup}
\end{equation*}
is equivalent to solving the PDE problem (\ref{T6E6}) in the class of
functions $\varphi \in C\left( X\right) ,$ with $\mathcal{L}\varphi \in
C\left( X\right) $ with the boundary conditions (\ref{T6E7}), (\ref{T6E8}), (%
\ref{T6E9}) and (\ref{T6E10}) respectively and where the operator is
understood as in Definition \ref{LadjDef}.
\end{proposition}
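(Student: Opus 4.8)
The plan is to prove Proposition~\ref{Equivalence} by directly unwinding the definitions of the operators $\Omega _{\sigma }$ and of their domains $\mathcal{D}\left( \Omega _{\sigma }\right) $ given in Section~\ref{Operators}, handling the four cases $\Omega _{t,sub},\ \Omega _{nt,sub},\ \Omega _{pt,sub},\ \Omega _{sup}$ in parallel and isolating only the step where the four domains differ, namely the condition imposed at the singular point $\left( x,v\right) =\left( 0,0\right) $. No new analysis is needed; everything is contained in the definitions and in Definition~\ref{LadjDef}.

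For the forward implication I would start from $\varphi \in C\left( X\right) \cap \mathcal{D}\left( \Omega _{\sigma }\right) $ solving $\varphi -\lambda \Omega _{\sigma }\varphi =g$. At every point $\left( x,v\right) \neq \left( 0,0\right) ,\infty $ the formulas (\ref{S7E1a}), (\ref{S7E2a}), (\ref{S7E3a}), (\ref{S7E4a}) give $\Omega _{\sigma }\varphi =\mathcal{L}\varphi $, so $\varphi $ solves $\varphi -\lambda \mathcal{L}\varphi =g$ in $\mathcal{U}$; the compatibility identity $\varphi \left( 0,rv\right) =\varphi \left( 0,-v\right) $ for $v>0$ is built into $\varphi \in C\left( X\right) $, and the asymptotics (\ref{phi_decom}) are part of the definition of $\mathcal{D}\left( \Omega _{\sigma }\right) $ (for $\Omega _{sup}$ the decomposition is vacuous since $\beta <0$, and one only keeps membership in $C\left( X\right) $ together with existence of $\lim _{\left( x,v\right) \rightarrow \left( 0,0\right) }\mathcal{L}\varphi $). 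Evaluating the resolvent equation at $\infty $ gives $\varphi \left( \infty \right) -\lambda \left( \mathcal{L}\varphi \right) \left( \infty \right) =g\left( \infty \right) $, which is automatic since $\varphi ,\mathcal{L}\varphi ,g\in C\left( X\right) $ and is exactly the limit of (\ref{T6E6}) as $x+\left\vert v\right\vert \rightarrow \infty $. Evaluating it at $\left( 0,0\right) $ and inserting the prescribed value of $\Omega _{\sigma }\varphi \left( 0,0\right) $ produces (\ref{T6E7})--(\ref{T6E10}): for $\Omega _{t,sub}$ one gets $\varphi \left( 0,0\right) -\lambda \cdot 0=g\left( 0,0\right) $; for $\Omega _{nt,sub}$ the relation $\mathcal{A}\left( \varphi \right) =0$ is already part of the domain; for $\Omega _{pt,sub}$ one gets $\varphi \left( 0,0\right) -\lambda \mu _{\ast }\left\vert C_{\ast }\right\vert \mathcal{A}\left( \varphi \right) =g\left( 0,0\right) $, equivalently $\lambda \left( \mathcal{L}\varphi \right) \left( 0,0\right) =\varphi \left( 0,0\right) -g\left( 0,0\right) $; and for $\Omega _{sup}$ there is no condition to check.

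For the converse I would take $\varphi \in C\left( X\right) $ with $\mathcal{L}\varphi \in C\left( X\right) $ in the sense of Definition~\ref{LadjDef}, solving (\ref{T6E6}), satisfying (\ref{phi_decom}) when applicable, and obeying the relevant condition among (\ref{T6E7})--(\ref{T6E10}), and show $\varphi \in \mathcal{D}\left( \Omega _{\sigma }\right) $ with $\varphi -\lambda \Omega _{\sigma }\varphi =g$. Since $\mathcal{L}\varphi \in C\left( X\right) $, the limit $\lim _{\left( x,v\right) \rightarrow \left( 0,0\right) }\left( \mathcal{L}\varphi \right) \left( x,v\right) =\left( \mathcal{L}\varphi \right) \left( 0,0\right) $ exists, so the only thing to verify is that this value matches the one demanded in $\mathcal{D}\left( \Omega _{\sigma }\right) $. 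Dividing $\varphi -\lambda \mathcal{L}\varphi =g$ by $\lambda $ and letting $\left( x,v\right) \rightarrow \left( 0,0\right) $ gives $\left( \mathcal{L}\varphi \right) \left( 0,0\right) =\lambda ^{-1}\left( \varphi \left( 0,0\right) -g\left( 0,0\right) \right) $; combining with (\ref{T6E7}) yields $\left( \mathcal{L}\varphi \right) \left( 0,0\right) =0$, with (\ref{T6E9}) yields $\left( \mathcal{L}\varphi \right) \left( 0,0\right) =\mu _{\ast }\left\vert C_{\ast }\right\vert \mathcal{A}\left( \varphi \right) $, for $\Omega _{nt,sub}$ membership is immediate from (\ref{T6E8}), and for $\Omega _{sup}$ from the existence of the limit. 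Then $\Omega _{\sigma }\varphi =\mathcal{L}\varphi $ at every point of $X$ by the defining formulas, and $\varphi -\lambda \Omega _{\sigma }\varphi =g$ holds on $\mathcal{U}$, at $\infty $, and at $\left( 0,0\right) $ by the identities just displayed.

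The proof is essentially a bookkeeping argument, and I do not expect a genuine obstacle; the only point deserving care is the treatment of $\left( 0,0\right) $, where one must track how the three distinct prescriptions for $\Omega _{\sigma }\varphi \left( 0,0\right) $ --- which encode the trapping, nontrapping and partially trapping boundary conditions --- convert, via the scalar identity obtained by evaluating the resolvent equation there, into the four boundary conditions (\ref{T6E7})--(\ref{T6E10}), and to record that for $\Omega _{sup}$ the absence of a singular-point condition is consistent precisely because $F_{\beta }\notin C\left( X\right) $ when $r>r_{c}$.
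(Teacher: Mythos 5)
Your proof is correct and takes essentially the same approach as the paper: the paper does not give a separate argument for this proposition but states it as a summary of the immediately preceding unwinding of the definitions of $\Omega_{\sigma}$ and $\mathcal{D}(\Omega_{\sigma})$, and your bookkeeping — evaluating the resolvent equation at $(0,0)$ and at $\infty$ and matching the prescribed value of $\Omega_{\sigma}\varphi(0,0)$ against (\ref{T6E7})--(\ref{T6E10}) — is exactly that unwinding made explicit.
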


We now consider the solvability of the problem (\ref{T6E6}) with boundary
conditions (\ref{T6E7}), (\ref{T6E8}), (\ref{T6E9}). This will be made using
suitable adaptations of the classical Perron's method (cf. for instance 
\cite{GT}) for harmonic functions for each of the specific problems under
consideration. To this end, we will use the solution for the Dirichlet
problem in admissible domains obtained in Proposition \ref{DirSolv}.

\subsubsection{Some technical results\label{Tech}}

In order to solve the PDE problems stated in Proposition 5.50, 
we will need the following technical results.

\begin{lemma}
\label{TraceIde}Suppose that $\varphi _{1},\varphi _{2}$ with $\varphi
_{k}\in L^{\infty }\left( \mathcal{W}_{k}^{-}\right) ,$ $k=1,2$ are two
subsolutions of (\ref{T6E6}), (\ref{T6E7}) in the sense of Definition \ref%
{SubSuper1}, where\ the domains $\mathcal{W}_{1},\ \mathcal{W}_{2}$ satisfy $%
\mathcal{W}_{1}^{-}\cap \mathcal{W}_{2}^{-}\cap \left\{ x=0\right\} \neq
\varnothing .$ Then the boundary values of $\varphi _{1},\varphi _{2}$
defined in $\mathcal{W}_{1}^{-}\cap \mathcal{W}_{2}^{-}\cap \left\{
x=0\right\} $ in the sense of traces by means of Proposition \ref{Traces}
satisfy:%
\begin{equation}
\left( \max \left\{ \varphi _{1},\varphi _{2}\right\} \right) ^{+}=\max
\left\{ \varphi _{1}^{+},\varphi _{2}^{+}\right\} \ \ a.e.\ \text{in }%
\mathcal{W}_{1}^{-}\cap \mathcal{W}_{2}^{-}\cap \left\{ x=0\right\}
\label{X3}
\end{equation}
\end{lemma}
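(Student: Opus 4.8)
The statement to prove is the equality $\left(\max\{\varphi_1,\varphi_2\}\right)^+ = \max\{\varphi_1^+,\varphi_2^+\}$ for the $x=0^+$ trace of the maximum of two subsolutions of the adjoint-type problem in domains contained in $\{v<0\}$. By Lemma \ref{TraceInequalities} (applied to subsolutions, where $\left(\cdot\right)^+$ here plays the role of $\left(\cdot\right)^-$ in that Lemma since on $\{v<0\}$ the ``forward'' direction of the parabolic evolution for $x$ corresponds to increasing $x$), we already have the inequality
\begin{equation*}
\left(\max\{\varphi_1,\varphi_2\}\right)^+ \geq \max\{\varphi_1^+,\varphi_2^+\}\ \ a.e.\ \text{in }\mathcal{W}_1^-\cap\mathcal{W}_2^-\cap\{x=0\}.
\end{equation*}
So the entire content is the reverse inequality, and for that the weak-convergence obstruction highlighted in the Remark after Lemma \ref{TraceInequalities} must be removed. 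The key is that, in domains contained in $\{v<0\}$, the trace at the vertical line $\{x=x_1\}$ (here $x_1=0$) is attained not merely weakly but in $L^1$ --- this is exactly the content of Lemma \ref{L1traces}.

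\textbf{Main steps.} First I would note that each $\varphi_k$, being a subsolution of \eqref{T6E6}, \eqref{T6E7} in the sense of Definition \ref{SubSuper1}, satisfies an inequality $\mathcal{L}\varphi_k \geq -F_k$ distributionally, which we may rewrite as $\mathcal{L}\varphi_k = \nu_k - F_k$ with $\nu_k\geq 0$ a Radon measure of finite mass near $\{x=0\}$ and $F_k\in L^\infty$; this is precisely the hypothesis of Lemma \ref{L1traces}. Restricting to an arbitrary admissible subdomain $\Xi\subset\subset\left(\mathcal{W}_1^-\cap\mathcal{W}_2^-\right)$ of the form required by Lemma \ref{L1traces} --- a rectangle $\left(0,x_2\right)\times\left(-\beta,-\alpha\right)$ lying in $\{v<0\}$ --- Lemma \ref{L1traces} gives
\begin{equation*}
\varphi_k\left(x,\cdot\right)\to\varphi_k\left(0,\cdot\right)\text{ in }L^1\left(-\beta,-\alpha\right)\text{ as }x\to 0^+,\quad k=1,2.
\end{equation*}
Second, the pointwise map $\left(a,b\right)\mapsto\max\{a,b\}$ is $1$-Lipschitz, so $\max\{\varphi_1\left(x,\cdot\right),\varphi_2\left(x,\cdot\right)\}\to\max\{\varphi_1\left(0,\cdot\right),\varphi_2\left(0,\cdot\right)\}$ in $L^1\left(-\beta,-\alpha\right)$ as $x\to 0^+$. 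Third, $L^1$ convergence of $\max\{\varphi_1\left(x,\cdot\right),\varphi_2\left(x,\cdot\right)\}$ to this limit forces the weak trace $\left(\max\{\varphi_1,\varphi_2\}\right)^+$ --- which is the weak-$*$ limit of averages in $x$, by Proposition \ref{Traces} --- to coincide with the strong $L^1$ limit $\max\{\varphi_1^+,\varphi_2^+\}$. Indeed, for any test function $\zeta\in L^1$,
\begin{equation*}
\frac{1}{\varepsilon}\int_0^\varepsilon\!\!\int \zeta\,\max\{\varphi_1\left(x,\cdot\right),\varphi_2\left(x,\cdot\right)\}\,dv\,dx \longrightarrow \int\zeta\,\max\{\varphi_1^+,\varphi_2^+\}\,dv,
\end{equation*}
because the integrand converges in $L^1$ uniformly as $x\to 0^+$. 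Comparing with the definition of $\left(\max\{\varphi_1,\varphi_2\}\right)^+$ in Proposition \ref{Traces} gives $\left(\max\{\varphi_1,\varphi_2\}\right)^+ = \max\{\varphi_1^+,\varphi_2^+\}$ a.e.\ on $\bar K$, and since $\bar K\subset\subset\left(-\beta,-\alpha\right)$ and the rectangle were arbitrary, \eqref{X3} follows on all of $\mathcal{W}_1^-\cap\mathcal{W}_2^-\cap\{x=0\}$.

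\textbf{Anticipated obstacle.} The delicate point is the verification that Lemma \ref{L1traces} genuinely applies: one must check that the subsolution inequality in the sense of Definition \ref{SubSuper1} really produces a distributional identity $\mathcal{L}\varphi_k=\nu_k-F_k$ with $\nu_k$ a \emph{nonnegative} Radon measure of locally finite mass (so that $\int_{\left(0,\delta_0\right)\times K}\nu_k<\infty$), rather than merely a distribution of order one with no sign. This is where the structure of the problem \eqref{T6E6}, \eqref{T6E7} and the boundedness of the inhomogeneity $F$ must be used; once the measure $\nu_k$ is in hand with the correct sign and finiteness, the parabolic representation formula \eqref{B7} and the regularity of Radon measures in Lemma \ref{L1traces} do the rest. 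A secondary technical care is that the subdomain $\Xi$ must be chosen admissible in the precise sense of Definition \ref{admissible} and with admissible boundary as in the statement of Lemma \ref{L1traces}; since $\mathcal{W}_1^-\cap\mathcal{W}_2^-\cap\{x=0\}$ is an open subset of the line $\{x=0,v<0\}$, this is always possible by localizing, and the identity \eqref{X3}, being local and a.e., then glues together over a countable exhaustion.
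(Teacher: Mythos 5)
Your proposal is correct and follows essentially the same route as the paper's proof: rewrite the subsolution inequality (\ref{sub_sol}) as $\varphi_k - \lambda \mathcal{L}(\varphi_k) - g = -\nu_k$ with $\nu_k$ a nonnegative Radon measure, invoke Lemma \ref{L1traces} to upgrade the weak traces of $\varphi_1,\varphi_2$ at $\{x=0\}$ to $L^1$ convergence in slices, use the $1$-Lipschitz property of $\max$ to pass this $L^1$ convergence to $\max\{\varphi_1,\varphi_2\}$, and identify the weak trace with the strong limit. The paper does not begin with the one-sided inequality from Lemma \ref{TraceInequalities} as you do (that step is harmless but unnecessary once the $L^1$ convergence is in hand), and your "anticipated obstacle" about the sign and local finiteness of $\nu_k$ is precisely the short verification the paper dispatches in its opening line via (\ref{U2E7}); otherwise the arguments coincide.
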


\begin{proof}
The inequality (\ref{sub_sol}) implies that there exists a measure $\nu \in
\mathcal{M}_{+}\left( \mathcal{W}_{1}\cap \mathcal{W}_{2}\right) $ such that:%
\begin{equation}
\varphi -\lambda \mathcal{L}\left( \varphi \right) -g=-\nu  \label{U2E7}
\end{equation}

Due to Lemma \ref{L1traces} we obtain $\varphi _{k}\rightarrow \left(
\varphi _{k}\right) ^{+}$ as $x\rightarrow 0^{+}$ in $L^{1}\left( K\right) $
for $K\subset \mathcal{W}_{1}^{-}\cap \mathcal{W}_{2}^{-}\cap \left\{
x=0\right\} ,\ k=1,2.$ Then $\max \left\{ \varphi _{1},\varphi _{2}\right\}
\rightarrow \max \left\{ \varphi _{1}^{+},\varphi _{2}^{+}\right\} $ in $%
L^{1}\left( K\right) $ due to Lebesgue dominated convergence Theorem. Using
the definition of Traces (see Remark \ref{TraceDef}) it then follows that $%
\left( \max \left\{ \varphi _{1},\varphi _{2}\right\} \right) ^{+}$ is well
defined and (\ref{X3}) holds.
\end{proof}

We will use also the following continuity result for the traces.

\begin{lemma}
\label{TraceConvergence}Suppose that we have a sequence of bounded functions
$\left\{ \varphi _{n}\right\} $ satisfying $-\mathcal{L}\varphi _{n}=\mu
_{n}+g_{n}$ where $g_{n}\in C\left( \Xi \right) ,$ the functions $g_{n}$ are
uniformly bounded and $\mu _{n}\geq 0$ are Radon measures. We assume that $%
\Xi $ is an admissible domain . Suppose also that the sequence $\left\{
\varphi _{n}\right\} $ converges to $\varphi $ in the weak topology and $\mu
_{n}\rightarrow 0$ also in the weak topology. Suppose that $\Gamma $ is a
curve made of horizontal and vertical lines contained in $\Xi .$ Then, given
any test function $\psi \in C^{\infty }\left( \Xi \right) ,$ the following
convergence property holds for the traces of $\varphi _{n}$ and its
derivatives at the curve $\Gamma $%
\begin{equation*}
\int_{\Gamma }\left( n_{v}\psi \partial _{v}\varphi _{n}-n_{v}\partial
_{v}\psi \varphi _{n}+v\varphi _{n}\psi n_{x}\right) ds\rightarrow
\int_{\Gamma }\left( n_{v}\psi \partial _{v}\varphi -n_{v}\partial _{v}\psi
\varphi +v\varphi \psi n_{x}\right) ds
\end{equation*}%
as $n\rightarrow \infty $.
\end{lemma}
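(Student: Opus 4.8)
\textbf{Proof plan for Lemma \ref{TraceConvergence}.}

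The plan is to reduce the statement to the convergence of three separate boundary integrals along the horizontal and vertical pieces of $\Gamma$, and to handle each piece by combining the weak convergence $\varphi_n\rightharpoonup\varphi$ with the uniform regularity estimates for $D_v\varphi_n$ provided by the hypoellipticity results (Theorem \ref{Hypoell} and Theorem \ref{Regul}). First I would fix a test function $\psi\in C^\infty(\Xi)$ and decompose $\Gamma$ into finitely many horizontal segments (on which $n_x=0$, $n_v=\pm1$, so only the term $\int_\Gamma (n_v\psi\,\partial_v\varphi_n-n_v\partial_v\psi\,\varphi_n)\,ds$ survives) and finitely many vertical segments (on which $n_v=0$, $n_x=\pm1$, so only $\int_\Gamma v\varphi_n\psi\,n_x\,ds$ survives). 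It therefore suffices to prove: (a) for a horizontal segment $L$, $\int_L\psi\,\partial_v\varphi_n\,ds\to\int_L\psi\,\partial_v\varphi\,ds$ and $\int_L\partial_v\psi\,\varphi_n\,ds\to\int_L\partial_v\psi\,\varphi\,ds$; and (b) for a vertical segment $L'$, $\int_{L'}v\varphi_n\psi\,ds\to\int_{L'}v\varphi\psi\,ds$.

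For step (b) and for the second integral in (a), the point is that traces of $\varphi_n$ on curves made of coordinate segments are controlled in $L^p$ (uniformly in $n$) by $\|\varphi_n\|_{L^1}+\|\mu_n\|_{\mathcal M}+\|g_n\|_\infty$ via Theorem \ref{Hypoell}(ii); since $\varphi_n\rightharpoonup\varphi$ and $\mu_n\rightharpoonup0$ with $\{g_n\}$ uniformly bounded, these trace norms are uniformly bounded, so along a subsequence the traces of $\varphi_n$ on $L$ (resp.\ $L'$) converge weakly in $L^p$ to some limit; one then identifies this weak limit with the trace of $\varphi$ by testing against $\zeta(x)\chi_{L}$-type functions and a thin-slab approximation as in Proposition \ref{Traces}, using that $\varphi_n\to\varphi$ weakly in $L^1_{loc}(\Xi)$. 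Against the fixed smooth weight $\psi$ this gives the desired convergence, and uniqueness of the limit upgrades subsequential convergence to full convergence.

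For the first integral in (a), involving $\partial_v\varphi_n$, I would argue by duality against the equation: for a smooth cutoff $\chi$ supported near $L$ and a smooth $\psi$, write $\int_L\psi\,\partial_v\varphi_n\,ds$ as (minus) a bulk integral of $\varphi_n$ against $\partial_v$ of a suitable test function plus lower-order terms coming from $-\mathcal L\varphi_n=\mu_n+g_n$, i.e.\ integrate by parts to move the $v$-derivative off $\varphi_n$; the extra terms produced are a bulk integral against $\mu_n$ (which $\to0$ by weak convergence and boundedness of the test function), a bulk integral against $g_n$ (uniformly bounded, against a fixed smooth compactly supported weight — here one uses weak convergence of $g_n$ along a subsequence, or simply that only an $L^1_{loc}$-weak limit is needed), and a bulk integral of $\varphi_n$ against a fixed smooth function, which converges by $\varphi_n\rightharpoonup\varphi$. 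The same manipulation applied to $\varphi$ identifies the limit as $\int_L\psi\,\partial_v\varphi\,ds$. The uniform interior bound $\|D_v\varphi_n\|_{L^\infty(W_\delta)}$ from Theorem \ref{Regul} (applicable on any admissible subdomain not touching the singular point, which is the setting here since $\Xi$ is admissible) guarantees that the trace of $\partial_v\varphi_n$ on $L$ is well defined and the boundary manipulations are legitimate.

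The main obstacle I anticipate is the careful bookkeeping at the corners where a horizontal segment of $\Gamma$ meets a vertical one, and more seriously the identification of the weak $L^p$-limit of the traces $\varphi_n|_L$ (and of $\partial_v\varphi_n|_L$) with the corresponding trace of the weak limit $\varphi$. This identification is not automatic: one must show that the trace operator, as constructed in Proposition \ref{Traces} via thin-slab averages, commutes with weak limits of the family $\{\varphi_n\}$, which uses both the uniform trace estimates (to get compactness) and the structure of the PDE $-\mathcal L\varphi_n=\mu_n+g_n$ with $\mu_n\rightharpoonup0$ (to control the slab averages uniformly, ruling out concentration of mass in an infinitesimal neighborhood of $\Gamma$, exactly as in the proof of Lemma \ref{L1traces}). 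Once this commutation is established, the convergence of each of the three boundary integrals against the fixed smooth weight $\psi$ follows directly, and summing over the finitely many pieces of $\Gamma$ completes the proof.
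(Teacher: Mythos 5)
Your proof takes essentially the same route as the paper's: both arguments hinge on (i) interpreting the boundary expression $\int_\Gamma(n_v\psi\,\partial_v\varphi_n - n_v\partial_v\psi\,\varphi_n + v\varphi_n\psi\,n_x)\,ds$ as a Green's-formula flux and pushing it into the bulk with the equation $-\mathcal L\varphi_n=\mu_n+g_n$, (ii) using the weak convergence of $\mu_n$ to $0$ to kill the measure contribution, (iii) using the weak convergence of $\varphi_n$ for the bulk terms, and (iv) invoking the hypoellipticity estimates of Theorem \ref{Hypoell} for the $\partial_v\varphi_n$ terms. The main organizational difference is that the paper first eliminates $g_n$ entirely by subtracting the solution of $-\mathcal L w_n=g_n$ with zero data on $\partial_a\Xi$, which is uniformly bounded and smooth along $\Gamma$; this makes the reduction to the pure measure case $-\mathcal L\tilde\varphi_n=\mu_n$ explicit and avoids having to argue about weak-$\ast$ limits of $g_n$, whereas you carry $g_n$ through the integration by parts and rely on subsequential weak-$\ast$ convergence of the uniformly bounded $g_n$. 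The paper also phrases the thin-slab / Fubini step as a comparison between $\int_\Gamma$ and $\int_{\tilde\Gamma}$ for nearby contours $\tilde\Gamma$, with the difference controlled by $\int\mu_n$ over a two-dimensional region; your segment-by-segment decomposition and thin-slab averaging as in Proposition \ref{Traces} achieves the same thing slightly less compactly. One small but genuine misstep: you invoke Theorem \ref{Regul} for a uniform $L^\infty$ bound on $D_v\varphi_n$, but that theorem requires $\mathcal L\varphi_n\in C(W)$, which fails here because of the measure part $\mu_n$; the correct tool for the uniform trace estimates on $\partial_v\varphi_n$ in this measure-valued setting is Theorem \ref{Hypoell}(ii), which you also cite — so the argument survives if you drop the reference to Theorem \ref{Regul}.
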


\begin{remark}
The traces of the functions $\varphi _{n}$ can be defined at any side of the
boundary $\Gamma .$
\end{remark}

\begin{proof}
We can first substract from the functions $\varphi _{n}$ the solutions of
the equations $-\mathcal{L}\varphi _{n}=g_{n}$\ with zero boundary
conditions in the admissible boundary of $\Xi .$ These solutions are
uniformly bounded and smooth at the curve $\Gamma $ and then, the
corresponding traces have the desired convergence properties. We can then
assume without loss of generality that $-\mathcal{L}\varphi _{n}=\mu _{n}.$

Given any other curve $\tilde{\Gamma}$ with dimensions comparable to $\Gamma
$ such that $\Gamma \cup \tilde{\Gamma}=\partial \Xi $ for some admissible
domain $\Xi $ we can estimate the difference between the fluxes on $\tilde{%
\Gamma}$ and $\Gamma $ integrating by parts and using the equation $-%
\mathcal{L}\varphi _{n}=\mu _{n}.$ The contribution due to $\mu _{n}$ tends
to zero, due to the weak convergence of the measures. Some of the integrals $%
\int_{\Gamma }\left[ \cdot \cdot \cdot \right] ds$ contain integrations of
the functions $\varphi _{n}.$ The integrals of $\varphi _{n}v$ and related
functions can be represented by means of the integrals in horizontal and
vertical lines using Fubini. In particular, it is possible to approximate
the integral $\int_{\Gamma }\left[ \cdot \cdot \cdot \right] ds$ by similar
integrals in other contours $\tilde{\Gamma}$ due to the weak convergence of
the functions $\varphi _{n}$ as well as the fact that the difference of
values between these integrals converges to zero as $n\rightarrow \infty ,$
because these differences are proportional to the integrals of $\mu _{n}$ on
two-dimensional domains which converge to zero by assumption as $%
n\rightarrow \infty .$ In order to conclude the argument we need to obtain
suitable convergences for the integrals containing the derivatives $\partial
_{v}\varphi _{n}.$ This regularity follows from the hypoellipticity
properties in Theorem \ref{Hypoell}. The same argument above, which allows
to transform convergences in two-dimensional domains into convergences for $%
\int_{\Gamma }\left[ \cdot \cdot \cdot \right] ds$ using the fact that the
difference between these integrals converges to zero, gives the result.\
\bigskip
\end{proof}

\subsubsection{Operator $\Omega _{t,sub}:$ Solvability of (\protect\ref{T6E6}),
(\protect\ref{T6E7}).\label{SolvAbs}}

We first solve (\ref{T6E6}), (\ref{T6E7}). We will prove the following
result:

\begin{proposition}
\label{Case_a_sub}For any $g\in C\left( X\right) $ there exists a unique $%
\varphi \in C\left( X\right) $ which solves (\ref{T6E6}), (\ref{T6E7}).
\end{proposition}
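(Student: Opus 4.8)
The plan is to establish uniqueness from the maximum principle already at our disposal, and existence by a Perron-type construction on the compact space $X$, using the explicit barriers $F_{\beta}$ (Proposition \ref{FbetaSt}) and $S$ (Lemma \ref{super_sol}) together with the asymptotic result Theorem \ref{AsSingPoint} and the uniform decay estimate Proposition \ref{mu_R}.

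For uniqueness, note first that by Proposition \ref{Equivalence} any solution of (\ref{T6E6}), (\ref{T6E7}) actually lies in $\mathcal{D}(\Omega_{t,sub})$: letting $(x,v)\to(0,0)$ in $\varphi-\lambda\mathcal{L}\varphi=g$ and using $\varphi(0,0)=g(0,0)$ forces $\lim_{(x,v)\to(0,0)}\mathcal{L}\varphi(x,v)=0$, which is the condition defining $\mathcal{D}(\Omega_{t,sub})$. Hence if $\varphi_{1},\varphi_{2}$ are two solutions, $w=\varphi_{1}-\varphi_{2}\in\mathcal{D}(\Omega_{t,sub})$ satisfies $w-\lambda\Omega_{t,sub}w=0$, and Lemma \ref{minProp} gives $\min_{X}w\geq 0$ and $\max_{X}w\leq 0$, so $w\equiv 0$.

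For existence I would let $\mathcal{S}^{-}$ be the set of $\underline{\varphi}\in L^{\infty}_{b}(X)$ which are subsolutions, in the distributional sense of Definition \ref{supersolDef}, of $\varphi-\lambda\mathcal{L}\varphi=g$ on $\mathcal{U}$, which respect the reflection relation $\underline{\varphi}(0,-v)=\underline{\varphi}(0,rv)$ and the inequality at $\infty$ in the trace sense of Proposition \ref{Traces}, and which satisfy $\limsup_{(x,v)\to(0,0)}\underline{\varphi}(x,v)\leq g(0,0)$. The constant $\min_{X}g$ belongs to $\mathcal{S}^{-}$ and every member of $\mathcal{S}^{-}$ is $\leq\max_{X}g$ by Proposition \ref{weak_max}, so $u:=\sup_{\mathcal{S}^{-}}\underline{\varphi}$ is a bounded function; dually define $\bar u$ as the infimum over supersolutions $\overline{\varphi}$ with $\liminf_{(x,v)\to(0,0)}\overline{\varphi}\geq g(0,0)$. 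The classical Perron arguments then show $u$ is itself a subsolution and $\bar u$ a supersolution with $u\leq\bar u$ (by Proposition \ref{weak_max}). The step that the pointwise supremum of (possibly discontinuous) subsolutions is again a subsolution compatible with the reflection condition on $\{x=0\}$ is exactly where Lemmas \ref{TraceInequalities}, \ref{TraceIde} (together with the $L^{1}$-trace regularity of Lemma \ref{L1traces}) and the trace continuity of Lemma \ref{TraceConvergence} are needed. The ``bumping'' step — replacing $u$ inside a small admissible domain by the Dirichlet solution of Proposition \ref{DirSolv} and noting that the result is a strictly larger subsolution unless $u$ already solves the equation there — gives $u-\lambda\mathcal{L}u=g$ in $\mathcal{U}$ with $\mathcal{L}u\in C(\mathcal{U})$, and likewise for $\bar u$; Theorems \ref{Hypoell} and \ref{Regul} then give $u,\bar u\in C(\mathcal{U})$, the reflection condition on $\{x=0,\,v\neq 0\}$, and continuity at $\infty$ (the translation-invariance argument of Lemma \ref{minProp}). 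Finally, to control the behaviour at the singular point, I would use barriers $W^{\pm}=g(0,0)\pm\varepsilon+\bar C F_{\beta}\mp AS$ with $A\sim\|g\|_{\infty}$ and $\bar C$ large: since $\beta<\tfrac23$ for $r<r_{c}$, $F_{\beta}$ dominates $S$ near $(0,0)$, so $W^{+}$ (resp.\ $W^{-}$) is a super- (resp.\ sub-) solution of $\varphi-\lambda\mathcal{L}\varphi=g$ on a small $\mathcal{R}_{\delta_{0}}$ and dominates (is dominated by) the bounded function $u$ on $\partial\mathcal{R}_{\delta_{0}}$; applying Proposition \ref{weak_max} on $\mathcal{R}_{\delta_{0}}\setminus\mathcal{R}_{\delta_{1}}$ and letting $\delta_{1}\to 0$ yields $|u-g(0,0)|\leq\bar C F_{\beta}+A|S|\to 0$ near the origin, and similarly for $\bar u$, so $u,\bar u\in C(X)$. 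Theorem \ref{AsSingPoint} then shows $u,\bar u$ satisfy (\ref{phi_decom}) with $\mathcal{L}u(0,0)=\mathcal{L}\bar u(0,0)=0$, hence both belong to $\mathcal{D}(\Omega_{t,sub})$ and solve (\ref{T6E6}), (\ref{T6E7}); by the uniqueness already proved, $u=\bar u$ is the required solution. (Alternatively, Proposition \ref{mu_R} or Proposition \ref{ComparisonSingP} can be used to make $\bar C$ uniform over $\mathcal{S}^{-}$, giving $u\in C(X)$ and the value $g(0,0)$ at the origin directly.)

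The main obstacle I anticipate is precisely this passage at the singular point: proving that the Perron envelopes are continuous at $(0,0)$ with the prescribed value $g(0,0)$ and possess the asymptotic structure (\ref{phi_decom}), since $\mathcal{L}$ degenerates there and the usual parabolic barriers are unavailable. The argument must be built from the explicit homogeneous solution $F_{\beta}$, the particular solution $S$ of $\mathcal{L}S=1$, the comparison principle for domains whose boundary contains the singular point (Proposition \ref{ComparisonSingP}), and the uniform decay rate of Proposition \ref{mu_R}. A secondary technical difficulty is the verification that suprema of discontinuous subsolutions remain subsolutions honouring the reflection condition on $\{x=0\}$, which rests on the trace machinery of Proposition \ref{Traces} and Lemmas \ref{TraceInequalities}, \ref{L1traces}, \ref{TraceIde} and \ref{TraceConvergence}.
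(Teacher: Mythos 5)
Your proposal follows essentially the same route as the paper: uniqueness from the maximum principle (Lemma~\ref{minProp}), and existence by a Perron envelope over sub- and supersolutions in the sense of Definition~\ref{SubSuper1}, with the ``bumping'' step from Proposition~\ref{DirSolv} and Lemma~\ref{comp_nu}, the trace machinery (Proposition~\ref{Traces}, Lemmas~\ref{TraceInequalities}, \ref{L1traces}, \ref{TraceIde}, \ref{TraceConvergence}) to make suprema of discontinuous subsolutions well behaved, and barriers built from $F_{\beta}$ and $S$ to pin the value $g(0,0)$ at the singular point. The explicit uniqueness argument you give is correct and is in fact not spelled out in the paper's proof of this proposition, so it is a useful addition. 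Two smaller remarks. First, the lower barrier $W^{-}=g(0,0)-\varepsilon+\bar C F_{\beta}+AS$ has the wrong sign on the $F_{\beta}$ term: since $F_{\beta}>0$ on $\mathcal{R}_{\delta_{0}}$, for large $\bar C$ this function exceeds the uniform bound $\|g\|_{\infty}$ on $\partial\mathcal{R}_{\delta_{0}}$ and therefore cannot be dominated there by the Perron envelope $u$; the intended barrier is $W^{-}=g(0,0)-\varepsilon-\bar C F_{\beta}+AS$ (i.e.\ $W^{\pm}=g(0,0)\pm\varepsilon\pm\bar C F_{\beta}\mp AS$), which is exactly what produces the two-sided estimate $|u-g(0,0)|\leq\bar C F_{\beta}+A|S|$ you state, and mirrors the pair $\varphi^{\sup},\varphi^{\mathrm{sub}}$ of Lemma~\ref{ExSubSup}. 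Second, the comparison with the barriers must be carried out for each $\underline{\varphi}\in\mathcal{S}^{-}$ separately before taking $u=\sup\underline{\varphi}$; applying Proposition~\ref{weak_max} directly to $u$ in $\mathcal{R}_{\delta_{0}}\setminus\mathcal{R}_{\delta_{1}}$ and letting $\delta_{1}\to 0$ presupposes the smallness of $u$ on $\partial\mathcal{R}_{\delta_{1}}$, which is what one is trying to prove. Related to this, the paper avoids the uncountable supremum altogether by extracting a countable dense family (Lemma~\ref{countable}) and forming the increasing limit (\ref{T7E2}), which is what makes both the weak-$\ast$ closedness argument and the contradiction in the bumping step go through cleanly; your sketch glosses over this point but it can be repaired along the same lines.
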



In order to prove Proposition \ref{Case_a_sub} we need to define a suitable
concept of subsolution and supersolutions for (\ref{T6E6}), (\ref{T6E7}) for
which the boundary condition at the singular point $\left( 0,0\right) $
holds. We recall that the space of functions $L_{b}^{\infty }\left( X\right)
$ has been defined in Definition \ref{SpaceB}. It is worth noticing that the
definition of sub/supersolutions which will be made in the following involve
also the boundary conditions at the singular point, differently from the
sub/supersolutions for the operator $\mathcal{L}$ in Definition \ref%
{supersolDef} where such boundary conditions at $\left( 0,0\right) $ were
not included. The sub/supersolutions for the operator $\mathcal{L}$ in
Definition \ref{supersolDef} are suitable for comparison arguments in
admissible domains, while the sub/supersolutions defined here are suitable
to obtain global results in the domain $X,$ in particular well-posedness
results.

\begin{definition}
\label{SubSuper1}Suppose that $g\in C\left( X\right) .$ We will say that a
function $\varphi \in L_{b}^{\infty }\left( X\right) $ such that the limit $%
\lim_{\left( x,v\right) \rightarrow \left( 0,0\right) }\varphi \left(
x,v\right) =\varphi \left( 0,0\right) $ exists, is a subsolution of (\ref%
{T6E6}), (\ref{T6E7}) if $\varphi \left( 0,0\right) =g\left( 0,0\right) $
and for all $\psi \in \mathcal{F}\left( X\right) $ (cf. (\ref{defF})) with $%
\psi \geq 0$ we have:%
\begin{equation}
\int \left( \psi -\lambda \mathcal{L}^{\ast }\left( \psi \right) \right)
\varphi \leq \int g\psi  \label{sub_sol}
\end{equation}

Given $g\in C\left( X\right) ,$ we will say that $\varphi \in L_{b}^{\infty
}\left( X\right) $ is a supersolution of (\ref{T6E6}), (\ref{T6E7}) if $%
\varphi \left( 0,0\right) =g\left( 0,0\right) $ and for all $\psi \in
\mathcal{F}\left( X\right) $ with $\psi \geq 0$ we have:%
\begin{equation}
\int \left( \psi -\lambda \mathcal{L}^{\ast }\left( \psi \right) \right)
\varphi \geq \int g\psi  \label{super_sol2}
\end{equation}

Suppose that $\mathcal{W}$ is an open subset of $X$. We will say that $%
\varphi \in L_{b}^{\infty }\left( \mathcal{W}\right) $ is a subsolution of (%
\ref{T6E6}), (\ref{T6E7}) in $\mathcal{W}$ if the following conditions hold:
(1) If $\left( 0,0\right) \in \mathcal{W}$ we have $\varphi \left(
0,0\right) =g\left( 0,0\right) $. (2) The inequality (\ref{sub_sol}) holds
for any function $\psi \in \mathcal{F}\left( \mathcal{W}\right) ,$ $\psi
\geq 0$.
\end{definition}

A key property of sub and supersolutions of (\ref{T6E6}), (\ref{T6E7}) is
the following:

\begin{lemma}
\label{maxima}Suppose that $\mathcal{W}_{1},\ \mathcal{W}_{2}$ are two open
subsets of $X$ and that $\varphi _{1}\in L_{b}^{\infty }\left( \mathcal{W}%
_{1}\right) ,\ \varphi _{2}\in L_{b}^{\infty }\left( \mathcal{W}_{2}\right) $
are two subsolutions of (\ref{T6E6}), (\ref{T6E7}) in the sense of
Definition \ref{SubSuper1} in their respective domains. Then the function $%
\varphi $ defined by means of $\varphi =\max \left\{ \varphi _{1},\varphi
_{2}\right\} $ in $\mathcal{W}_{1}\cap \mathcal{W}_{2},\ \varphi =\varphi
_{1}$ in $\mathcal{W}_{1}\diagdown \left( \mathcal{W}_{1}\cap \mathcal{W}%
_{2}\right) ,\ \varphi =\varphi _{2}$ in $\mathcal{W}_{2}\diagdown \left(
\mathcal{W}_{1}\cap \mathcal{W}_{2}\right) $ is a subsolution of (\ref{T6E6}%
), (\ref{T6E7}) in $\mathcal{W}=\left( \mathcal{W}_{1}\cup \mathcal{W}%
_{2}\right) $ in the sense of Definition \ref{SubSuper1}.

Suppose that $\mathcal{W}_{1},\ \mathcal{W}_{2}$ are two open subsets of $X$
and that $\varphi _{1}\in L_{b}^{\infty }\left( \mathcal{W}_{1}\right) ,\
\varphi _{2}\in L_{b}^{\infty }\left( \mathcal{W}_{2}\right) $ are two
supersolutions of (\ref{T6E6}), (\ref{T6E7}) in the sense of Definition \ref%
{SubSuper1} in their respective domains. Then the function $\varphi $
defined by means of $\varphi =\min \left\{ \varphi _{1},\varphi _{2}\right\}
$ in $\mathcal{W}_{1}\cap \mathcal{W}_{2},\ \varphi =\varphi _{1}$ in $%
\mathcal{W}_{1}\diagdown \left( \mathcal{W}_{1}\cap \mathcal{W}_{2}\right)
,\ \varphi =\varphi _{2}$ in $\mathcal{W}_{2}\diagdown \left( \mathcal{W}%
_{1}\cap \mathcal{W}_{2}\right) $ is a supersolution of (\ref{T6E6}), (\ref%
{T6E7}) in $\mathcal{W}=\left( \mathcal{W}_{1}\cup \mathcal{W}_{2}\right) $
in the sense of Definition \ref{SubSuper1}.
\end{lemma}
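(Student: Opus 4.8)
The plan is to check the two clauses of Definition \ref{SubSuper1} for $\varphi:=\max\{\varphi_1,\varphi_2\}$ on $\mathcal{W}=\mathcal{W}_1\cup\mathcal{W}_2$, the assertion for minima of supersolutions then following by applying the result to $-\varphi_1,-\varphi_2$, which are subsolutions of (\ref{T6E6})--(\ref{T6E7}) with $g$ replaced by $-g$. The clause at the singular point is immediate: if $(0,0)\in\mathcal{W}$ then $(0,0)$ lies in both $\mathcal{W}_1$ and $\mathcal{W}_2$, so $\lim_{(x,v)\to(0,0)}\varphi_i(x,v)=\varphi_i(0,0)=g(0,0)$ for $i=1,2$, and hence $\lim_{(x,v)\to(0,0)}\varphi(x,v)=g(0,0)$, which gives both the existence of the limit and the required value. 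Since the admissible test functions in (\ref{sub_sol}) have support disjoint from $(0,0)$ by (\ref{comp2}), what remains is purely the distributional inequality $\varphi-\lambda\mathcal{L}\varphi\le g$ on $\mathcal{W}\setminus\{(0,0)\}$, understood as in (\ref{sub_sol}), with test functions respecting the identification along $\{x=0\}$.

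I would localize this inequality by a partition of unity $\{\chi_\alpha\}$ of a neighbourhood of $\operatorname{supp}\psi$ chosen so that $\chi_\alpha\psi\in\mathcal{F}(\mathcal{W})$ for each $\alpha$; near $\{x=0\}$ this forces the cut-offs to be invariant under the gluing $(0,-v)\sim(0,rv)$, which is possible because that gluing is a smooth diffeomorphism for $v\ne0$. It then suffices to treat three types of small neighbourhoods $\mathcal{W}_\alpha$: those sitting in $\{x>0\}$ inside $\mathcal{W}_1\cap\mathcal{W}_2$; those straddling $\{x=0\}$ near a point $(0,v_0)$ with $v_0\ne0$; and those on which only one of the $\varphi_i$ is relevant, where the inequality is inherited from the hypothesis. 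For the interior case I would use the comparison characterization of subsolutions: by Proposition \ref{DirSolv} and the maximum principle for $\varphi-\lambda\mathcal{L}\varphi$ established in its proof, $\varphi_i\le\widetilde\varphi$ in any admissible domain $\Xi\subset\subset\{x>0\}\cap\mathcal{W}_1\cap\mathcal{W}_2$ whenever $\widetilde\varphi$ solves $\widetilde\varphi-\lambda\mathcal{L}\widetilde\varphi=g$ in $\Xi$ with $\varphi\le\widetilde\varphi$ on $\partial_a\Xi$ in the trace sense; hence $\varphi=\max\{\varphi_1,\varphi_2\}\le\widetilde\varphi$ as well, and comparison against all such $\widetilde\varphi$ is equivalent to (\ref{sub_sol}) in $\Xi$ via the representation formula of Proposition \ref{DirSolv}. (Alternatively one can mollify in $v$ and run the classical chain-rule argument with a smoothed maximum $m_\varepsilon(a,b)=\varepsilon\log(e^{a/\varepsilon}+e^{b/\varepsilon})$, using Theorem \ref{Hypoell} to get $D_v\varphi_i\in L^p_{loc}$, but the comparison route avoids the renormalization subtleties coming from the fact that $D_x\varphi_i$ is only a distribution.)

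The case of a neighbourhood straddling $\{x=0\}$ near $(0,v_0)$, $v_0\ne0$, is the one where the material preceding this lemma is needed. Such a neighbourhood in $X$ glues a one-sided neighbourhood of $(0,v_0)$ in $\{v>0\}$ to one of $(0,-v_0/r)$ in $\{v<0\}$; writing out (\ref{sub_sol}) and integrating by parts on the two sides produces a boundary term along $\{x=0\}$ which, after using $\psi(0,-v)=r^2\psi(0,rv)$ to express both pieces over the same variable $u>0$, has the shape $\int u\,\psi(0,u)\,[\,(\varphi)^{+}_{v>0}(0,u)+(\varphi)^{+}_{v<0}(0,-u/r)\,]\,du$. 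On the $\{v<0\}$ side $\mathcal{L}$ is uniformly parabolic with $x$ playing the role of time, so Lemma \ref{L1traces} gives $L^1$ convergence of the $\varphi_i$ to their traces as $x\to0^+$, and Lemma \ref{TraceIde} then yields $(\varphi)^{+}_{v<0}=\max\{(\varphi_1)^{+}_{v<0},(\varphi_2)^{+}_{v<0}\}$; on the $\{v>0\}$ side Lemma \ref{TraceInequalities} provides only $(\varphi)^{+}_{v>0}\ge\max\{(\varphi_1)^{+}_{v>0},(\varphi_2)^{+}_{v>0}\}$. Because $u>0$ the $\{v>0\}$ contribution enters the estimate with the sign for which this last inequality is the correct direction, so comparing the identity for $\varphi$ with the subsolution identities already valid for $\varphi_1$ and $\varphi_2$ against the same $\psi$ yields (\ref{sub_sol}) for $\varphi$, and with it the lemma.

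The hard part will be this straddling case: one must keep careful track of the signs in the $\{x=0\}$ boundary term so that the asymmetry between the two sides (equality of traces and $L^1$ convergence from $\{v<0\}$ via Lemmas \ref{L1traces} and \ref{TraceIde}, but only the weak-trace inequality from $\{v>0\}$ via Lemma \ref{TraceInequalities}) points the inequality in the right direction, and one must verify that the localizing cut-offs near $\{x=0\}$ genuinely produce elements of $\mathcal{F}(\mathcal{W})$. The interior case, by contrast, is routine once the comparison characterization of subsolutions is in hand, and the minimum-of-supersolutions statement is then a formal consequence.
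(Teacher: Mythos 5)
Your overall architecture is sound: handle the clause at $(0,0)$ first (which, as you say, is immediate because both $\varphi_1$ and $\varphi_2$ take the value $g(0,0)$ there), then establish the distributional inequality, treating test functions supported away from $\{x=0\}$ separately from those whose support meets $\{x=0\}$. Your handling of the straddling case is essentially the paper's: derive a one‑sided trace inequality $\varphi_{k,+}(0,rv)\ge\varphi_{k,-}(0,-v)$ by testing against a sequence of $\mathcal{F}(\mathcal{W})$ functions concentrating at $\{x=0\}$, then combine Lemmas~\ref{L1traces} and \ref{TraceIde} on the $\{v<0\}$ side (where one has $L^1$ convergence to the trace) with Lemma~\ref{TraceInequalities} on the $\{v>0\}$ side, and verify the sign of the $\{x=0\}$ boundary term works in your favor. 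That part is in the spirit of the paper's proof and correctly identifies the asymmetry between the two sides.

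The gap is in what you call the routine interior case. You assert that ``comparison against all such $\widetilde\varphi$ is equivalent to (\ref{sub_sol}) in $\Xi$ via the representation formula of Proposition~\ref{DirSolv},'' and then conclude at once that the maximum of two functions each bounded above by every competitor is again a subsolution. The forward implication (distributional subsolution $\Rightarrow$ comparison) is indeed available via Proposition~\ref{weak_max}; but the converse (``lies below every Dirichlet competitor $\Rightarrow$ distributional subsolution'') is not established anywhere in the paper and is not a formal consequence of the representation formula, because the relevant test integrand $\psi-\lambda\mathcal{L}^*\psi$ is signed: knowing $\varphi\le\widetilde\varphi$ a.e.\ does not let you compare $\int\varphi(\psi-\lambda\mathcal{L}^*\psi)$ with $\int\widetilde\varphi(\psi-\lambda\mathcal{L}^*\psi)$. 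Proving the converse implication for this degenerate hypoelliptic operator, in a framework where subsolutions can be discontinuous, is itself a lemma of roughly the same difficulty as Lemma~\ref{maxima}; in fact it is usually proved by exactly the kind of mollification argument you were trying to avoid. The paper instead proves the interior inequality directly: it mollifies each $\varphi_k$ by a one‑sided (in $x$) mollifier $\zeta_\varepsilon$, shows the mollified function satisfies a pointwise inequality $\varphi_{k,\varepsilon}-\lambda\mathcal{L}\varphi_{k,\varepsilon}\le g+C\varepsilon$ in the interior, uses Sard's lemma to pick smooth level curves $\{\varphi_{1,\varepsilon}=\varphi_{2,\varepsilon}+\rho_n(\varepsilon)\}$, integrates by parts across those interfaces (where the key sign is $n_v\partial_v\varphi_{1,\varepsilon}\le n_v\partial_v(\varphi_{2,\varepsilon}+\rho_n(\varepsilon))$), and only then passes to the limit; notably this mollification is applied on the full $(\mathcal{W}_k^\pm)_\delta$ all the way up to $\{x=0\}$ from one side, not just in a compact interior away from the boundary. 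Your fall‑back smoothed‑maximum route does not repair the gap either: as you yourself observe, the chain rule brings in $D_x\varphi_i$, which is only a distribution, so that computation needs the same care. In short, what you label the ``routine'' half of the proof is actually where the paper's technical work lives, and your sketch of it has a genuine unproved step.
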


\begin{remark}
Notice that the functions $\varphi _{1},\varphi _{2}$ are defined only
almost everywhere. Nevertheless it is well known that the function $\max
\left\{ \varphi _{1},\varphi _{2}\right\} $ can be defined as a $%
L_{b}^{\infty }$ function defined also almost everywhere.
\end{remark}

\begin{proof}
We will follow different strategies in order to obtain the subsolution
inequality for test functions $\psi $ whose support does not intersect $%
\left\{ x=0\right\} $ and test functions $\psi $ with support intersecting $%
\left\{ x=0\right\} .$ Suppose that $\varphi _{1},\varphi _{2}$ are two
subsolutions as in the statement of the Lemma. Suppose that $\zeta
_{\varepsilon }\left( x,v\right) $ is a $C^{\infty }$ mollifier with the
form:%
\begin{equation*}
\zeta _{\varepsilon }\left( x,v\right) =\frac{1}{\varepsilon ^{3}}\zeta
\left( \frac{x}{\varepsilon },\frac{v}{\varepsilon ^{2}}\right) ,\ \
\varepsilon >0,\text{ }\zeta \geq 0,\int_{\mathbb{R}^{2}}\zeta =1,\ \operatorname{%
supp}\left( \zeta \right) \subset \left\{ 0\leq x\leq 1,\ \left\vert
v\right\vert \leq 1\right\}
\end{equation*}%
Given any $\delta >0$ we define:%
\begin{equation}
\mathcal{Z}_{\delta }=\left\{ \left( x,v\right) \in \mathcal{Z}:\operatorname{dist%
}\left( \left( x,v\right) ,\partial \mathcal{Z}\right) \geq \delta \right\}
\label{Ddomain}
\end{equation}

We fix $\delta >0$ and we then define functions $\varphi _{1,\varepsilon
},\varphi _{2,\varepsilon }$ in the sets $\left( \mathcal{W}_{1}^{\pm
}\right) _{\delta },\ \left( \mathcal{W}_{2}^{\pm }\right) _{\delta }\ $%
(cf.\ (\ref{U2E6})) with $2\varepsilon <\delta $ by means of $\varphi
_{k,\varepsilon }=\zeta _{\varepsilon }\ast \varphi _{k},$ $k=1,2.$ Given $%
\psi \in \mathcal{F}\left( \left( \mathcal{W}_{k}^{\pm }\right) _{\delta
}\right) $ with $\psi \geq 0$ we have:%
\begin{eqnarray*}
&&\int_{\left( \mathcal{W}_{k}^{\pm }\right) _{\frac{\delta }{2}}}\left(
\varphi _{k,\varepsilon }-\lambda \mathcal{L}\left( \varphi _{k,\varepsilon
}\right) \right) \psi \\
&=&\int_{\left( \mathcal{W}_{k}^{\pm }\right) _{\frac{\delta }{2}}}\left(
\psi -\lambda \mathcal{L}^{\ast }\left( \psi \right) \right) \varphi
_{k,\varepsilon } \\
&=&\int_{\left( \mathcal{W}_{k}^{\pm }\right) _{\frac{\delta }{2}}}\left(
\psi -\lambda \mathcal{L}^{\ast }\left( \psi \right) \right) \left( \varphi
_{k}\ast \zeta _{\varepsilon }\right) \\
&=&\int_{\left( \mathcal{W}_{k}^{\pm }\right) _{\frac{\delta }{2}}}\left(
\psi _{\varepsilon }-\lambda \left( \zeta _{\varepsilon }\ast \mathcal{L}%
^{\ast }\left( \psi \right) \right) \right) \varphi _{k} \\
&=&\int_{\left( \mathcal{W}_{k}^{\pm }\right) _{\frac{\delta }{2}}}\left(
\psi _{\varepsilon }-\lambda \left( \mathcal{L}^{\ast }\left( \psi
_{\varepsilon }\right) \right) \right) \varphi _{k}-\lambda \int_{\left(
\mathcal{W}_{k}^{\pm }\right) _{\frac{\delta }{2}}}\left( \left( \zeta
_{\varepsilon }\ast \mathcal{L}^{\ast }\left( \psi \right) \right) -\left(
\mathcal{L}^{\ast }\left( \zeta _{\varepsilon }\ast \psi \right) \right)
\right) \varphi _{k}
\end{eqnarray*}

It is relevant to remark that the functions $\psi _{\varepsilon }$ are well
defined in the sets $\left( \mathcal{W}_{k}^{\pm }\right) _{\delta }$ even
if these sets have a nonempty intersection with the line $\left\{
x=0\right\} ,$ due to our choice of the mollifiers $\zeta _{\varepsilon }$
which are supported in the region $x\geq 0.$ On the other hand, the
functions $\psi _{\varepsilon }$ do not belong in general to $\mathcal{F}%
\left( \left( \mathcal{W}_{k}\right) _{\delta }\right) $ because the
condition (\ref{comp1t}) does not necessarily holds.

Notice that:%
\begin{equation*}
\left\vert \left( \zeta _{\varepsilon }\ast \mathcal{L}^{\ast }\left( \psi
\right) \right) -\left( \mathcal{L}^{\ast }\left( \zeta _{\varepsilon }\ast
\psi \right) \right) \right\vert =\left\vert \zeta _{\varepsilon }\ast
\left( v\partial _{x}\psi \right) -v\partial _{x}\left( \zeta _{\varepsilon
}\ast \psi \right) \right\vert
\end{equation*}%
Using then the definition of $\zeta _{\varepsilon }$ we obtain:
\begin{equation*}
\int_{\left( \mathcal{W}_{k}^{\pm }\right) _{\frac{\delta }{2}}}\left\vert
\zeta _{\varepsilon }\ast \left( v\partial _{x}\psi \right) -v\partial
_{x}\left( \zeta _{\varepsilon }\ast \psi \right) \right\vert \leq
C\varepsilon \int_{\left( \mathcal{W}_{k}^{\pm }\right) _{\frac{\delta }{2}%
}}\psi
\end{equation*}%
where we use:%
\begin{eqnarray*}
&&\int_{\left( \mathcal{W}_{k}^{\pm }\right) _{\frac{\delta }{2}}}\left\vert
\zeta _{\varepsilon }\ast \left( v\partial _{x}\psi \right) -v\partial
_{x}\left( \zeta _{\varepsilon }\ast \psi \right) \right\vert dxdv \\
&=&\int_{\left( \mathcal{W}_{k}^{\pm }\right) _{\frac{\delta }{2}%
}}\left\vert \int \left[ \partial _{x}\zeta _{\varepsilon }\left(
x-y,v-w\right) w\psi \left( y,w\right) -v\partial _{x}\zeta _{\varepsilon
}\left( x-y,v-w\right) \psi \left( y,w\right) \right] dydw\right\vert dxdv \\
&=&\int_{\left( \mathcal{W}_{k}^{\pm }\right) _{\frac{\delta }{2}%
}}\left\vert \int \partial _{x}\zeta _{\varepsilon }\left( x-y,v-w\right)
\left( w-v\right) \psi \left( y,w\right) dydw\right\vert dxdv \\
&\leq &C\varepsilon ^{2}\int dydw\psi \left( y,w\right) \int_{\left(
\mathcal{W}_{k}^{\pm }\right) _{\frac{\delta }{2}}}\left\vert \partial
_{x}\zeta _{\varepsilon }\left( x-y,v-w\right) \right\vert dxdv\leq \frac{%
C\varepsilon }{\varepsilon }^{2}\int dydw\psi \left( y,w\right)
\end{eqnarray*}

Then:%
\begin{equation}
\left\vert \int_{\left( \mathcal{W}_{k}^{\pm }\right) _{\frac{\delta }{2}%
}}\left( \varphi _{k,\varepsilon }-\lambda \mathcal{L}\left( \varphi
_{k,\varepsilon }\right) \right) \psi -\int_{\left( \mathcal{W}_{k}^{\pm
}\right) _{\frac{\delta }{2}}}\left( \psi _{\varepsilon }-\lambda \left(
\mathcal{L}^{\ast }\left( \psi _{\varepsilon }\right) \right) \right)
\varphi _{k}\right\vert \leq C\varepsilon \left\Vert \varphi _{k}\right\Vert
_{L^{\infty }}\int_{\left( \mathcal{W}_{k}^{\pm }\right) _{\frac{\delta }{2}%
}}\psi  \label{Point}
\end{equation}

Using that $\varphi _{k}$ are subsolutions in $\mathcal{W}_{k}$ it then
follows that:%
\begin{equation*}
\int_{\left( \mathcal{W}_{k}^{\pm }\right) _{\frac{\delta }{2}}}\left(
\varphi _{k,\varepsilon }-\lambda \mathcal{L}\left( \varphi _{k,\varepsilon
}\right) \right) \psi \leq \int_{\left( \mathcal{W}_{k}^{\pm }\right) _{%
\frac{\delta }{2}}}g\psi +C\varepsilon \left\Vert \varphi _{k}\right\Vert
_{L^{\infty }}\int_{\left( \mathcal{W}_{k}^{\pm }\right) _{\frac{\delta }{2}%
}}\psi
\end{equation*}%
whence the following pointwise estimate follows:%
\begin{equation*}
\left( \varphi _{k,\varepsilon }-\lambda \mathcal{L}\left( \varphi
_{k,\varepsilon }\right) \right) \leq g+C\varepsilon \left\Vert \varphi
_{k}\right\Vert _{L^{\infty }}\ \ \text{in\ }\left( \mathcal{W}_{k}\right)
_{\delta }\text{ , }k=1,2
\end{equation*}

We next obtain the subsolution inequality for test functions $\psi $
supported in $\left( \mathcal{W}_{1}^{\pm }\right) _{\delta }\cup \left(
\mathcal{W}_{2}^{\pm }\right) _{\delta }$ for any $\delta >0.$ Indeed, we
can assume that the set $\left\{ \varphi _{1,\varepsilon }=\varphi
_{2,\varepsilon }\right\} \cap \left[ \left( \mathcal{W}_{1}^{\pm }\right)
_{\delta }\cap \left( \mathcal{W}_{2}^{\pm }\right) _{\delta }\right] $ is
non-empty, since otherwise the result would follow trivially. Then, Sard's
Lemma (cf. \cite{M}) implies that, for any $\varepsilon >0$ arbitrarily
small, there exists a sequence $\rho _{n}\left( \varepsilon \right)
\rightarrow 0$ as $n\rightarrow \infty ,$ such that the curves $\left\{
\varphi _{1,\varepsilon }=\varphi _{2,\varepsilon }+\rho _{n}\left(
\varepsilon \right) \right\} \subset \left( \mathcal{W}_{1}\right) _{\frac{%
\delta }{2}}\cap \left( \mathcal{W}_{2}\right) _{\frac{\delta }{2}}$ are
smooth. These curves separate the regions $\left\{ \varphi _{1,\varepsilon
}>\varphi _{2,\varepsilon }+\rho _{n}\left( \varepsilon \right) \right\} $
and $\left\{ \varphi _{1,\varepsilon }<\varphi _{2,\varepsilon }+\rho
_{n}\left( \varepsilon \right) \right\} .$ We define functions:%
\begin{eqnarray}
\varphi _{\varepsilon }^{\left( n\right) } &=&\max \left\{ \varphi
_{1,\varepsilon },\varphi _{2,\varepsilon }+\rho _{n}\left( \varepsilon
\right) \right\} \ \text{in\ }\left( \mathcal{W}_{1}^{\pm }\right) _{\frac{%
\delta }{2}}\cap \left( \mathcal{W}_{2}^{\pm }\right) _{\frac{\delta }{2}}
\notag \\
\varphi _{\varepsilon }^{\left( n\right) } &=&\varphi _{1,\varepsilon }\text{
in }\left( \mathcal{W}_{1}^{\pm }\right) _{\frac{\delta }{2}}\diagdown
\left( \left( \mathcal{W}_{1}^{\pm }\right) _{\frac{\delta }{2}}\cap \left(
\mathcal{W}_{2}^{\pm }\right) _{\frac{\delta }{2}}\right)   \label{DefMax} \\
\varphi _{\varepsilon }^{\left( n\right) } &=&\varphi _{2,\varepsilon }+\rho
_{n}\left( \varepsilon \right) \ \text{in }\left( \mathcal{W}_{2}^{\pm
}\right) _{\frac{\delta }{2}}\diagdown \left( \left( \mathcal{W}_{1}^{\pm
}\right) _{\frac{\delta }{2}}\cap \left( \mathcal{W}_{2}^{\pm }\right) _{%
\frac{\delta }{2}}\right)   \notag
\end{eqnarray}

We then compute:%
\begin{eqnarray}
&&\int_{\left( \mathcal{W}_{1}^{\pm }\right) _{\frac{\delta }{2}}\cup \left(
\mathcal{W}_{2}^{\pm }\right) _{\frac{\delta }{2}}}\left( \psi -\lambda
\mathcal{L}^{\ast }\left( \psi \right) \right) \varphi _{\varepsilon
}^{\left( n\right) }  \label{Sum} \\
&=&\int_{\left[ \left( \mathcal{W}_{1}^{\pm }\right) _{\frac{\delta }{2}%
}\diagdown \left( \left( \mathcal{W}_{1}^{\pm }\right) _{\frac{\delta }{2}%
}\cap \left( \mathcal{W}_{2}^{\pm }\right) _{\frac{\delta }{2}}\right) %
\right] \cup \left\{ \varphi _{1,\varepsilon }>\varphi _{2,\varepsilon
}+\rho _{n}\left( \varepsilon \right) \right\} }\left( \psi -\lambda
\mathcal{L}^{\ast }\left( \psi \right) \right) \varphi _{1,\varepsilon }+
\notag \\
&&+\int_{\left[ \left( \mathcal{W}_{2}^{\pm }\right) _{\frac{\delta }{2}%
}\diagdown \left( \left( \mathcal{W}_{1}^{\pm }\right) _{\frac{\delta }{2}%
}\cap \left( \mathcal{W}_{2}^{\pm }\right) _{\frac{\delta }{2}}\right) %
\right] \cup \left\{ \varphi _{1,\varepsilon }<\varphi _{2,\varepsilon
}+\rho _{n}\left( \varepsilon \right) \right\} }\left( \psi -\lambda
\mathcal{L}^{\ast }\left( \psi \right) \right) \left( \varphi
_{2,\varepsilon }+\rho _{n}\left( \varepsilon \right) \right)  \notag
\end{eqnarray}

Using the fact that the functions $\varphi _{k,\varepsilon }$ are smooth and
integrating by parts we obtain:%
\begin{eqnarray*}
&&\int_{\left[ \left( \mathcal{W}_{1}^{\pm }\right) _{\frac{\delta }{2}%
}\diagdown \left( \left( \mathcal{W}_{1}^{\pm }\right) _{\frac{\delta }{2}%
}\cap \left( \mathcal{W}_{2}^{\pm }\right) _{\frac{\delta }{2}}\right) %
\right] \cup \left\{ \varphi _{1,\varepsilon }>\varphi _{2,\varepsilon
}+\rho _{n}\left( \varepsilon \right) \right\} }\left( \psi -\lambda
\mathcal{L}^{\ast }\left( \psi \right) \right) \varphi _{1,\varepsilon } \\
&=&\int_{\left[ \left( \mathcal{W}_{1}^{\pm }\right) _{\frac{\delta }{2}%
}\diagdown \left( \left( \mathcal{W}_{1}^{\pm }\right) _{\frac{\delta }{2}%
}\cap \left( \mathcal{W}_{2}^{\pm }\right) _{\frac{\delta }{2}}\right) %
\right] \cup \left\{ \varphi _{1,\varepsilon }>\varphi _{2,\varepsilon
}+\rho _{n}\left( \varepsilon \right) \right\} }\left( \varphi
_{1,\varepsilon }-\lambda \mathcal{L}\left( \varphi _{1,\varepsilon }\right)
\right) \psi + \\
&&-\lambda \int_{\partial \left\{ \varphi _{1,\varepsilon }>\varphi
_{2,\varepsilon }+\rho _{n}\left( \varepsilon \right) \right\} }\left[
n_{v}\varphi _{1,\varepsilon }\partial _{v}\psi -\left( n_{v}\partial
_{v}\varphi _{1,\varepsilon }+vn_{x}\varphi _{1,\varepsilon }\right) \psi %
\right] ds
\end{eqnarray*}%
where $n=\left( n_{x},n_{v}\right) $ is the normal vector to $\partial
\left\{ \varphi _{1,\varepsilon }>\varphi _{2,\varepsilon }+\rho _{n}\left(
\varepsilon \right) \right\} $ pointing outwards from the domain $\left\{
\varphi _{1,\varepsilon }>\varphi _{2,\varepsilon }+\rho _{n}\left(
\varepsilon \right) \right\} .$ Using a similar argument to compute the last
integral in (\ref{Sum}) we obtain, after some cancellations of terms in the
boundary $\partial \left\{ \varphi _{1,\varepsilon }>\varphi _{2,\varepsilon
}+\rho _{n}\left( \varepsilon \right) \right\} :$%
\begin{eqnarray*}
&&\int_{\left( \mathcal{W}_{1}^{\pm }\right) _{\frac{\delta }{2}}\cup \left(
\mathcal{W}_{2}^{\pm }\right) _{\frac{\delta }{2}}}\left( \psi -\lambda
\mathcal{L}^{\ast }\left( \psi \right) \right) \varphi _{\varepsilon
}^{\left( n\right) } \\
&=&\int_{\left( \mathcal{W}_{1}^{\pm }\right) _{\frac{\delta }{2}}\cup
\left( \mathcal{W}_{2}^{\pm }\right) _{\frac{\delta }{2}}}\left( \varphi
_{\varepsilon }^{\left( n\right) }-\lambda \mathcal{L}^{\ast }\left( \varphi
_{\varepsilon }^{\left( n\right) }\right) \right) \psi  \\
&&+\lambda \int_{\partial \left\{ \varphi _{1,\varepsilon }>\varphi
_{2,\varepsilon }+\rho _{n}\left( \varepsilon \right) \right\} }n_{v}\left[
\partial _{v}\varphi _{1,\varepsilon }-\partial _{v}\left( \varphi
_{2\varepsilon }+\rho _{n}\left( \varepsilon \right) \right) \right] \psi ds
\end{eqnarray*}%
where $n$ is the same normal vector as before. Using the pointwise
inequality (\ref{Point}) as well as the fact that, $n_{v}\partial
_{v}\varphi _{1,\varepsilon }\leq n_{v}\partial _{v}\left( \varphi
_{2\varepsilon }+\rho _{n}\left( \varepsilon \right) \right) $ we obtain:%
\begin{equation}
\int_{\left( \mathcal{W}_{1}^{\pm }\right) _{\frac{\delta }{2}}\cup \left(
\mathcal{W}_{2}^{\pm }\right) _{\frac{\delta }{2}}}\left( \psi -\lambda
\mathcal{L}^{\ast }\left( \psi \right) \right) \varphi _{\varepsilon
}^{\left( n\right) }\leq \int_{\left( \mathcal{W}_{1}^{\pm }\right) _{\frac{%
\delta }{2}}\cup \left( \mathcal{W}_{2}^{\pm }\right) _{\frac{\delta }{2}%
}}g\psi +C\varepsilon \left\Vert \varphi _{k}\right\Vert _{L^{\infty
}}\int_{\left( \mathcal{W}_{1}^{\pm }\right) _{\frac{\delta }{2}}\cup \left(
\mathcal{W}_{2}^{\pm }\right) _{\frac{\delta }{2}}}\psi \   \label{IntPo}
\end{equation}%
for any $\psi \in \mathcal{F}\left( \left( \mathcal{W}_{1}^{\pm }\right)
_{\delta }\cup \left( \mathcal{W}_{2}^{\pm }\right) _{\delta }\right) $ with
$\psi \geq 0.$ Notice that these functions might be different from zero in
the line $\left\{ x=0\right\} $ if $\left( \left( \mathcal{W}_{1}^{\pm
}\right) _{\delta }\cup \left( \mathcal{W}_{2}^{\pm }\right) _{\delta
}\right) \cap \left\{ x=0\right\} \neq \varnothing .$ However, they do not
satisfy the condition (\ref{comp1t}) because by definition, these functions
are only defined in $\left\{ v>-x\right\} $ or $\left\{ v<x\right\} .$

We now use Lebesgue's Dominated Convergence Theorem to take the limit $%
\varepsilon \rightarrow 0$ in (\ref{IntPo}). Therefore, using also the fact
that $\delta $ can be chosen arbitrarily small, we obtain:%
\begin{equation}
\int_{\mathcal{W}_{1}^{\pm }\cup \mathcal{W}_{2}^{\pm }}\left( \psi -\lambda
\mathcal{L}^{\ast }\left( \psi \right) \right) \varphi \leq \int_{\mathcal{W}%
_{1}^{\pm }\cup \mathcal{W}_{2}^{\pm }}g\psi  \label{U1E1}
\end{equation}%
where $\psi \in \mathcal{F}\left( \mathcal{W}_{1}^{\pm }\cup \mathcal{W}%
_{2}^{\pm }\right) $ with $\psi \geq 0$ and $\operatorname{supp}\left( \psi
\right) \cap \left\{ x=0\right\} =\varnothing .$

It only remains to extend the validity of (\ref{U1E1}) to arbitrary
functions $\psi \in \mathcal{F}\left( \mathcal{W}_{1}\cup \mathcal{W}%
_{2}\right) $ with $\psi \geq 0$ (which in particular satisfy (\ref{comp1t}))%
$.$ To this end we use Proposition \ref{Traces} which proves the existence
of traces for subsolutions for the operator $\mathcal{L}$. Since $\varphi
_{1},\varphi _{2}$ are subsolutions (\ref{T6E6}), (\ref{T6E7}) and they are
bounded, we obtain that $\left( -\varphi _{1}\right) ,\ \left( -\varphi
_{2}\right) $ are supersolutions of $\mathcal{L}\left( \cdot \right) +\kappa
$ for a suitable constant $\kappa $ in the sense of Definition \ref%
{supersolDef} and any admissible domain $\Xi \subset \mathcal{W}_{k}$ with $%
k=1,2$ respectively. Therefore we can define $\varphi _{1},\varphi _{2}$ in
the sense of traces (see Remark \ref{TraceDef}) at $\left\{ x=0\right\} $ as
$x\rightarrow 0^{+}$ for $v>0$ and $v<0$ respectively. We will denote these
limits as $\varphi _{k,+},\varphi _{k,-}$ with $k=1,2.$ Notice that these
functions are in the spaces $L^{\infty }\left( \left\{ x=0,v>0\right\}
\right) $ and $L^{\infty }\left( \left\{ x=0,v<0\right\} \right) $
respectively. Moreover, we now claim that:%
\begin{equation}
\varphi _{k,+}\left( 0,rv\right) \geq \varphi _{k,-}\left( 0,-v\right) \ \
,\ a.e.\ v>0,\ \left( 0,v\right) \in \left\{ x=0\right\} \cap \mathcal{W}%
_{k}\ \ ,\ \ k=1,2  \label{U1E2}
\end{equation}

The inequality (\ref{U1E2}) is equivalent to
\begin{equation}
\int_{\left\{ v>0\right\} }\varphi _{k,+}\left( 0,r\cdot \right) \zeta
\left( \cdot \right) \geq \int_{\left\{ v>0\right\} }\varphi _{k,-}\left(
0,-\cdot \right) \zeta \left( \cdot \right)  \label{U1E3}
\end{equation}%
for any $\zeta =\zeta \left( v\right) \geq 0,\ \zeta \in C^{\infty }\left(
\left\{ v>0\right\} \right) .$ To prove (\ref{U1E3}) we argue as follows.
Proposition \ref{Traces} implies that $\lim_{x\rightarrow
0^{+}}\int_{\left\{ v>0\right\} }\varphi _{k,\pm }\left( x,r\cdot \right)
\zeta \left( \cdot \right) =\int_{\left\{ v>0\right\} }\varphi _{k,\pm
}\left( 0,r\cdot \right) \zeta \left( \cdot \right) .$ We construct a
function $\psi \in \mathcal{F}\left( \mathcal{W}_{k}\right) $ as follows. We
define a function $\eta =\eta \left( \frac{x}{\varepsilon }\right) $ with $%
\varepsilon >0,\ \eta \in C^{\infty }\left( \left\{ x\geq 0\right\} \right)
, $ $\eta ^{\prime }\leq 0,$ $\eta \left( x\right) =0$ if $x\geq 1,$ $\eta
\left( 0\right) =1.$ We then define $\psi \left( x,v\right) =\frac{\zeta
\left( \frac{v}{r}\right) \eta \left( x\right) }{rv}$ for $v>0$ and $\psi
\left( x,v\right) =\frac{\zeta \left( -v\right) \eta \left( x\right) }{%
\left\vert v\right\vert }$ for $v<0.$ Notice that $r^{2}\psi \left(
0^{+},rv\right) =\psi \left( 0^{+},-v\right) =\frac{\zeta \left( v\right)
\eta \left( x\right) }{v}$ for $v>0.$ Therefore $\psi \in \mathcal{F}\left(
\mathcal{W}_{k}\right) $ (cf. (\ref{defF})). Integrating by parts in (\ref%
{sub_sol})

\begin{equation*}
\int_{\mathcal{W}_{k}}\left( \psi -\lambda D_{v}^{2}\psi -\lambda vD_{x}\psi
\right) \varphi _{k}+\lambda \int_{\left\{ x=0,v>0\right\} }v\varphi _{k,+}%
\frac{\zeta \left( \frac{\cdot }{r}\right) }{rv}+\lambda \int_{\left\{
x=0,v<0\right\} }v\varphi _{k,-}\frac{\zeta \left( v\right) }{\left\vert
v\right\vert }\leq \int g\psi
\end{equation*}

Taking the limit $\varepsilon \rightarrow 0$ we obtain $\frac{1}{r}%
\int_{\left\{ x=0,v>0\right\} }\varphi _{k,+}\zeta \left( \frac{\cdot }{r}%
\right) -\int_{\left\{ x=0,v<0\right\} }\varphi _{k,-}\zeta \left( v\right)
\geq 0,$ whence, after using the change of variables $\frac{\cdot }{r}%
\rightarrow \cdot $ in the first integral we obtain (\ref{U1E3}) whence (\ref%
{U1E2}) follows.

We now claim that it is possible to define the limit values $\varphi
_{+}=\varphi \left( 0^{+},v\right) ,\ \varphi _{-}=\varphi \left(
0^{+},-v\right) $ for $v>0$ in the sense of traces, with $\varphi =\max
\left\{ \varphi _{1},\varphi _{2}\right\} .$ Indeed, this is a consequence
of the fact that $\varphi $ satisfies the subsolution inequality (\ref%
{sub_sol}) for any test function $\psi \geq 0$ with $\operatorname{supp}\left(
\psi \right) \cap \left\{ x=0\right\} \neq \varnothing .$ Therefore, the
argument yielding Proposition \ref{Traces} implies the existence of $\varphi
_{+},$ $\varphi _{-}.$

We now use Lemmas \ref{TraceInequalities}, \ref{TraceIde} as well as the
inequalities (\ref{U1E2}) to prove that:%
\begin{eqnarray*}
\varphi ^{-}\left( 0,-v\right) &=&\max \left\{ \varphi _{1}^{-}\left(
0,-v\right) ,\varphi _{2}^{-}\left( 0,-v\right) \right\} \\
&\leq &\max \left\{ \varphi _{1}^{+}\left( 0,rv\right) ,\varphi
_{2}^{+}\left( 0,rv\right) \right\} \leq \left( \max \left\{ \varphi
_{1}\left( 0,rv\right) ,\varphi _{2}\left( 0,rv\right) \right\} \right)
^{+}=\varphi ^{+}\left( 0,rv\right)
\end{eqnarray*}%
for $a.e.$ $v>0.$ We can then argue as in the derivation of (\ref{U1E2}) to
show that $\varphi $ satisfies also the subsolution inequality at the line $%
\left\{ x=0\right\} .$ This shows that $\varphi $ is a subsolution and
concludes the proof of the result.
\end{proof}

The main idea in Perron's method is that it is possible to construct a
solution of the problem if we can obtain one subsolution $\varphi ^{\operatorname{%
sub}}$ and a supersolution $\varphi ^{\sup }$ satisfying $\varphi ^{\operatorname{%
sub}}\leq \varphi ^{\sup }$. Such sub and supersolutions can be easily
obtained in the case of the problem (\ref{T6E7}), (\ref{T6E8}) and
Definition \ref{SubSuper1}:

\begin{lemma}
\label{ExSubSup}For any $g\in C\left( X\right) $ there exist at least one
subsolution $\varphi ^{\mathrm{sub}}$ and one supersolution $\varphi ^{\sup
}$ in the sense of Definition \ref{SubSuper1} such that:%
\begin{equation}
\varphi ^{\mathrm{sub}}\leq \varphi ^{\sup }  \label{T7E1}
\end{equation}
\end{lemma}

\begin{proof}
Let $\left\vert g\right\vert \leq \left\Vert g\right\Vert _{L^{\infty
}\left( X\right) }$ and let $\varphi ^{\sup }=\min \left\{ g\left(
0,0\right) +KF_{\beta }+\left\Vert g\right\Vert _{L^{\infty }\left( X\right)
}S,\left\Vert g\right\Vert _{L^{\infty }\left( X\right) }\right\} ,\
\allowbreak \varphi ^{\mathrm{sub}}=\max \left\{ g\left( 0,0\right)
-KF_{\beta }-\left\Vert g\right\Vert _{L^{\infty }\left( X\right)
}S,-\left\Vert g\right\Vert _{L^{\infty }\left( X\right) }\right\} ,$ where $%
S\left( x,v\right) $ is a super-solution with $\mathcal{L}S=1$, constructed
in Lemma \ref{super_sol} and $K>0$ must be determined. We have that $g\left(
0,0\right) +KF_{\beta }\left( x,v\right) +\left\Vert g\right\Vert
_{L^{\infty }\left( X\right) }S\left( x,v\right) >0$ in a neighbourhood of
the singular point which will be denoted as $\mathcal{W}_{1},$ if we take $%
K>0$ sufficiently large. Moreover, this function is a supersolution of (\ref%
{T6E6}), (\ref{T6E7}) in $\mathcal{W}_{1}$ by construction. On the other
hand $\left\Vert g\right\Vert _{L^{\infty }\left( X\right) }$ is a
supersolution of (\ref{T6E6}), (\ref{T6E7}) in an open set $\mathcal{W}_{2}$
such that $\mathcal{W}_{1}\cup \mathcal{W}_{2}=X$ and $\mathcal{W}_{1}\cap
\mathcal{W}_{2}\neq \varnothing .$ Therefore $\varphi ^{\sup }$ is a
supersolution due to Lemma \ref{maxima}. To prove that $\varphi ^{\operatorname{%
sub}}$ is a subsolution we use a similar argument.
\end{proof}

We define the following subset of $L_{b}^{\infty }\left( X\right) :$

\begin{definition}
\label{SubSet1}Suppose that $\varphi ^{\mathrm{sub}},\ \varphi ^{\sup }$
are respectively one subsolution and one supersolution in the sense of
Definition \ref{SubSuper1} satisfying (\ref{T7E1}). We then define $\mathcal{%
\tilde{G}}_{sub}\subset L_{b}^{\infty }\left( X\right) $ as:%
\begin{equation}
\mathcal{\tilde{G}}_{sub}\equiv \left\{ \varphi \in L_{b}^{\infty }\left(
X\right) :%
\begin{array}{c}
\varphi \text{ sub-solution of (\ref{T6E6}),\ (\ref{T6E7})} \\
\text{ in the sense of Definition \ref{SubSuper1}}|\ \varphi ^{\mathrm{sub}%
}\leq \varphi \leq \varphi ^{\sup }%
\end{array}%
\right\} .  \label{T7E4}
\end{equation}
\end{definition}

Notice that we have:

\begin{lemma}
\label{ContinuityWeak}The set $\mathcal{\tilde{G}}_{sub}$ is closed in the
weak topology, defined by means of the functionals $\ell _{\psi }\left(
\varphi \right) =\int_{X}\varphi \psi $ with $\psi \in \mathcal{F}\left(
X\right) $ (cf. (\ref{defF})).

\begin{proof}
It is just a consequence of the definition \ref{SubSuper1}, as well as the
fact that the inequalities $\varphi ^{\mathrm{sub}}\leq \varphi \leq
\varphi ^{\sup }$ are preserved by weak limits.
\end{proof}
\end{lemma}

\begin{remark}
\label{distW}We recall that the bounded set $\mathcal{\tilde{G}}_{sub}$
endowed with the weak$-\ast $ topology is metrizable(cf. \cite{Brezis}). We
will denote the corresponding metric as $\operatorname{dist}_{\ \ast }.$
\end{remark}

The following Lemma will be useful in order to show that the supremum of the
set $\mathcal{\tilde{G}}_{sub}$ can be obtained by means of limits of
subsequences.

\begin{lemma}
\label{countable}There exists a countable and dense subset $\mathcal{G}%
_{sub} $ of $\mathcal{\tilde{G}}_{sub}$ in the weak topology in Lemma \ref%
{ContinuityWeak}.
\end{lemma}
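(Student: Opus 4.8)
The plan is to realize $\tilde{\mathcal{G}}_{sub}$ as a compact metrizable space for the weak topology of Lemma \ref{ContinuityWeak}; since a compact metric space is separable, any countable dense subset will then serve as $\mathcal{G}_{sub}$. The starting observation is that $\tilde{\mathcal{G}}_{sub}$ is a bounded subset of $L^\infty(X)$. Indeed, by the construction of $\varphi^{sub},\varphi^{sup}$ in Lemma \ref{ExSubSup} we have $\varphi^{sup}\le\|g\|_{L^\infty(X)}$ and $\varphi^{sub}\ge-\|g\|_{L^\infty(X)}$ pointwise (these functions are obtained as a min, resp.\ a max, with the constant $\pm\|g\|_{L^\infty(X)}$), so every $\varphi$ with $\varphi^{sub}\le\varphi\le\varphi^{sup}$ satisfies $\|\varphi\|_{L^\infty(X)}\le\|g\|_{L^\infty(X)}=:R$. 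Thus $\tilde{\mathcal{G}}_{sub}$ lies in the closed ball $B_R=\{\varphi\in L^\infty(X):\|\varphi\|_{L^\infty(X)}\le R\}$.

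Next I would use that $L^\infty(X)=(L^1(X))^\ast$ with $L^1(X)$ separable (the measure on $X$ is Lebesgue measure on $[0,\infty)\times\mathbb{R}$, the identifications and the added point $\infty$ being null sets, and $L^1$ of a Euclidean domain is separable). By Banach--Alaoglu the ball $B_R$ is weak-$\ast$ compact, and since $L^1(X)$ is separable it is weak-$\ast$ metrizable; this is precisely the assertion recorded in Remark \ref{distW}. On the bounded set $B_R$ the weak-$\ast$ topology coincides with the weak topology of Lemma \ref{ContinuityWeak}, because the latter is generated by the functionals $\ell_\psi(\varphi)=\int_X\varphi\psi$ with $\psi\in\mathcal{F}(X)$, and $\mathcal{F}(X)$ is dense in $L^1(X)$ --- the support requirement (\ref{comp2}) only excludes the two points $(0,0)$ and $\infty$, while the compatibility relations (\ref{comp1t}), (\ref{comp4}) are carried on the null set $\{x=0\}$, so they impose no $L^1$ constraint, and a norm-bounded set's weak-$\ast$ topology is generated by pairings against any $L^1$-dense family. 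By Lemma \ref{ContinuityWeak} the set $\tilde{\mathcal{G}}_{sub}$ is closed in this topology, hence weak-$\ast$ closed in $B_R$, hence weak-$\ast$ compact; as a subspace of a metrizable space it is metrizable, and therefore separable.

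Finally, take $\mathcal{G}_{sub}$ to be any countable subset of $\tilde{\mathcal{G}}_{sub}$ that is dense for this topology; such a set exists by the separability just established, and this is the required countable dense subset. The only step that needs more than bookkeeping is the identification of the two topologies on $B_R$, which reduces to the density of $\mathcal{F}(X)$ in $L^1(X)$; everything else is a routine application of Banach--Alaoglu and of standard facts about separable metric spaces. In particular no new analysis of the operator $\mathcal{L}$ is involved --- all the work has already been done in defining $\tilde{\mathcal{G}}_{sub}$ and establishing its weak closedness in Lemma \ref{ContinuityWeak}.
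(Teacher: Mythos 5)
Your proof is correct, and it takes a genuinely different route from the paper. The paper works directly: it fixes an exhaustion $X=\bigcup_n K_n$ by compact sets, uses separability of $L^2(K_n)$ to cover by countably many balls of each radius $1/N$, picks one element of $\tilde{\mathcal{G}}_{sub}$ from every nonempty intersection, and argues that the resulting countable family is dense in the weak topology because the test functions $\psi\in\mathcal{F}(X)$ are compactly supported, so closeness in $L^2(K_n)$ controls the pairings $\ell_\psi$. You instead go through compactness: you observe that $\tilde{\mathcal{G}}_{sub}$ sits inside a norm ball $B_R$ of $L^\infty$, invoke Banach--Alaoglu and separability of $L^1$ to make $B_R$ weak-$\ast$ compact and metrizable, identify the weak topology of Lemma \ref{ContinuityWeak} with the weak-$\ast$ topology on $B_R$ via density of $\mathcal{F}(X)$ in $L^1(X)$, use Lemma \ref{ContinuityWeak} to conclude $\tilde{\mathcal{G}}_{sub}$ is weak-$\ast$ compact, and finish by separability of compact metric spaces. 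Both are sound: the paper's argument is more elementary and self-contained, while yours is shorter, more structural, and yields the stronger conclusion that $\tilde{\mathcal{G}}_{sub}$ is compact, not merely separable. One small point worth stating explicitly if this were to replace the paper's proof: the identification of the two topologies on $B_R$ (which you justify by $L^1$-density of $\mathcal{F}(X)$) is what lets you pass from ``closed in the $\mathcal{F}$-topology'' in Lemma \ref{ContinuityWeak} to ``weak-$\ast$ closed in $B_R$,'' and hence to compactness; without that step Banach--Alaoglu alone does not close the argument.
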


\begin{proof}
We will prove the result by showing that there exists a countable subset of $%
\mathcal{\tilde{G}}_{sub}$ which is dense in the $L_{loc}^{2}\left( X\right)
$ topology. Since the test functions in $\mathcal{F}\left( X\right) $ are
compactly supported, this is enough to prove the required density property
in the weak topology. We find a countable subset $\mathcal{G}_{sub}$\ of $%
\mathcal{\tilde{G}}_{sub}$ by using the fact that $L^{2}\left( K\right) $ is
separable for any compact subsets $K$: Choose a sequence of compact sets $%
\left\{ K_{n}\right\} _{n=1}^{\infty }$ such that $X=\cup _{n=1}^{\infty
}K_{n},$ $K_{n}\subset K_{n+1}.$ For each $K_{n},$ $L^{2}\left( K_{n}\right)
=\cup _{k_{N}=1}^{\infty }B_{1/N}\left( f_{k_{N}}\right) ,$ where $%
f_{k_{N}}\in L^{2}\left( K_{n}\right) ,k_{N}=1,2,...$ and the distance is
measured in $L^{2}$ norm. Then we choose one element from each $%
B_{1/N}\left( f_{k_{N}}\right) $ if $\mathcal{\tilde{G}}_{sub}\cap
B_{1/N}\left( f_{k_{N}}\right) $ is nonempty and choose none if $\mathcal{%
\tilde{G}}_{sub}\cap B_{1/N}\left( f_{k_{N}}\right) $ is empty. This
selection can be made for each $N=1,2,3,..$ and then for $n=1,2,3...$We call
this subset $\mathcal{G}_{sub}.$ This set is countable and dense in $%
\mathcal{\tilde{G}}_{sub}$ in $L^{2}$ norm. This completes the proof.
\end{proof}

Next we show that the set $\mathcal{\tilde{G}}_{sub}$ is closed under the
maximum function.

\begin{lemma}
\label{max_sub}Let $\varphi _{1},\varphi _{2},...,\varphi _{L}\in \mathcal{%
\tilde{G}}_{sub}$ with $L<\infty $ and $\mathcal{\tilde{G}}_{sub}$ as in
Definition \ref{SubSet2}. Define:%
\begin{equation*}
\bar{\varphi}:=\max \{\varphi _{1},\varphi _{2},...,\varphi _{L}\}
\end{equation*}

Then $\bar{\varphi}\in \mathcal{\tilde{G}}_{sub}$.
\end{lemma}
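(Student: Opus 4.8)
The plan is to reduce the statement to the two-function case, which is exactly Lemma \ref{maxima}, and then iterate. First I would record that membership in $\mathcal{\tilde{G}}_{sub}$ amounts to three requirements on a function $\varphi$: that $\varphi\in L_{b}^{\infty}\left(X\right)$ with the limit $\lim_{\left(x,v\right)\rightarrow\left(0,0\right)}\varphi\left(x,v\right)$ existing and equal to $g\left(0,0\right)$; that $\varphi$ is a subsolution of (\ref{T6E6}), (\ref{T6E7}) in the sense of Definition \ref{SubSuper1}, i.e. (\ref{sub_sol}) holds for every $\psi\in\mathcal{F}\left(X\right)$ with $\psi\geq0$; and that $\varphi^{{sub}}\leq\varphi\leq\varphi^{\sup}$. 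I would then check that each of these three properties is preserved under taking finite maxima.

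The order constraint and the $L_{b}^{\infty}$ bound are immediate. If $\varphi^{{sub}}\leq\varphi_{k}\leq\varphi^{\sup}$ for $k=1,\dots,L$ then pointwise a.e.\ $\varphi^{{sub}}\leq\max_{k}\varphi_{k}\leq\varphi^{\sup}$, and on any compact set the maximum of finitely many bounded functions is bounded by the largest of their bounds. For the behaviour at the singular point, since $\varphi_{k}\left(x,v\right)\rightarrow g\left(0,0\right)$ as $\left(x,v\right)\rightarrow\left(0,0\right)$ for each $k$, the same limit holds for $\bar{\varphi}=\max_{k}\varphi_{k}$, so $\bar{\varphi}\left(0,0\right)=g\left(0,0\right)$. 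One should keep in mind that the $\varphi_{k}$ are only defined almost everywhere, so $\bar{\varphi}$ is understood as the usual essential supremum of a finite family in $L_{b}^{\infty}$, exactly as in the remark following Lemma \ref{maxima}.

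The only substantive point is the subsolution inequality, and this is precisely Lemma \ref{maxima} specialized to $\mathcal{W}_{1}=\mathcal{W}_{2}=X$: the maximum of two subsolutions of (\ref{T6E6}), (\ref{T6E7}) in the sense of Definition \ref{SubSuper1} is again such a subsolution. I would then argue by induction on $L$. The cases $L=1$ is trivial and $L=2$ is Lemma \ref{maxima}. For the inductive step I would write $\bar{\varphi}=\max\{\max\{\varphi_{1},\dots,\varphi_{L-1}\},\varphi_{L}\}$; by the inductive hypothesis $\max\{\varphi_{1},\dots,\varphi_{L-1}\}$ lies in $\mathcal{\tilde{G}}_{sub}$, hence is a subsolution, and applying Lemma \ref{maxima} once more with both domains equal to $X$ shows $\bar{\varphi}$ is a subsolution. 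Combining this with the order, boundedness, and boundary checks from the previous paragraph gives $\bar{\varphi}\in\mathcal{\tilde{G}}_{sub}$.

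There is no real obstacle beyond this bookkeeping; the genuine difficulty — gluing the traces of the $\varphi_{k}$ across the line $\{x=0\}$ while preserving the reflection-type boundary condition — has already been handled in the proof of Lemma \ref{maxima} (via Proposition \ref{Traces} together with Lemmas \ref{TraceInequalities} and \ref{TraceIde}), so here it suffices to invoke that lemma twice through the induction.
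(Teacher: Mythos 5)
Your proof is correct and takes the same route as the paper: the paper's entire proof is the one-line observation that the result follows from Lemma \ref{maxima}, and you have simply spelled out the reduction (finite induction on $L$, plus the routine checks that the ordering constraint, the $L_b^\infty$ bound, and the limit $\bar{\varphi}(0,0)=g(0,0)$ pass to the maximum). Note in passing that the reference to Definition \ref{SubSet2} in the statement is a typo for Definition \ref{SubSet1}, which is how you read it.
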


\begin{proof}
It is just a consequence of Lemma \ref{maxima}.
\end{proof}

We want to give a definition of the largest subsolution in the set $\mathcal{%
\tilde{G}}_{sub}.$ Given that the functions $\varphi $ in $\mathcal{\tilde{G}%
}_{sub}$ are not defined pointwise we cannot just take the supremum $\sup_{%
\mathcal{\tilde{G}}_{sub}}\varphi .$ However, the function that will play
the role of such supremum is the following. We can assume that the countable
family $\mathcal{G}_{sub}$ constructed in Lemma \ref{countable} is $\left\{
\varphi _{j}\right\} _{j\in \mathbb{N}}.$ We then construct the following
finite families of subsolutions:%
\begin{equation*}
\mathcal{G}_{sub}\left( M\right) =\left\{ \varphi _{j}\right\} _{j=1}^{M}
\end{equation*}%
and we then define the following function which will play the role of $\sup_{%
\mathcal{\tilde{G}}_{sub}}\varphi :$%
\begin{equation}
\varphi _{\ast }=\lim_{M\rightarrow \infty }\max_{\mathcal{G}_{sub}\left(
M\right) }\varphi  \label{T7E2}
\end{equation}

Notice that $\max_{\mathcal{G}_{sub}\left( M\right) }\varphi $ is the
maximum of a finite number of functions and therefore is a well defined
quantity. On the other hand, the sequence $\left\{ \max_{\mathcal{G}%
_{sub}\left( M\right) }\varphi \right\} $ is increasing in $M$ and uniformly
bounded by $\varphi ^{\sup },$ therefore the limit on the right-hand side of
(\ref{T7E2}) exists in $L_{loc}^{1}$ and then also in the weak topology
defined in Lemma \ref{ContinuityWeak}.

Our next goal is to show that $\varphi _{\ast }$ is the desired solution of
the problem (\ref{T6E6}), (\ref{T6E7}). To this end we need an auxiliary
result, namely the solvability of the Dirichlet problem in the admissible
domains defined in Definition \ref{admissible} with boundary values in the
admissible boundaries.


The following result will be used to prove that if a subsolution is not a
solution of (\ref{T6E6}), (\ref{T6E7}), it is possible to construct a larger
subsolutions.

\begin{lemma}
\label{IncSub}Suppose that $\bar{\varphi}$ is a subsolution of (\ref{T6E6}),
(\ref{T6E7}) in the sense of Definition \ref{SubSuper1}. Let $\Xi $ be any
admissible domain in the sense of Definition \ref{admissible} contained in $%
X.$ Let us denote as $h$ the boundary values of $\bar{\varphi}$ in $\partial
_{a}\Xi $ obtained from the interior of $\Xi $ (cf. Proposition \ref{Traces}%
). Let$\ \varphi $ be the unique solution of (\ref{solv_D1}), (\ref{solv_D2}%
) obtained in Proposition \ref{DirSolv}. We construct a function $\Phi $ by
means of:\
\begin{equation}
\Phi =\left\{
\begin{array}{c}
\bar{\varphi},\ \left( x,v\right) \notin X\diagdown \Xi  \\
\varphi ,\ \left( x,v\right) \in \Xi
\end{array}%
\right.   \label{T7E5}
\end{equation}%
Then $\Phi $ is a sub-solution of (\ref{T6E6}),(\ref{T6E7}) in the sense of
Definition \ref{SubSuper1}.
\end{lemma}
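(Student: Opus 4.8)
\emph{Plan of proof.} The idea is to deduce Lemma \ref{IncSub} from the gluing property of Lemma \ref{maxima}, once we know that the Dirichlet lift $\varphi$ dominates $\bar\varphi$ inside $\Xi$. First I would record the elementary facts about $\varphi$. By Proposition \ref{DirSolv}, $\varphi$ is the unique solution of $\varphi-\lambda\mathcal{L}\varphi=g$ in $\Xi$ which satisfies the reflection condition $\varphi(0,rv)=\varphi(0,-v)$ for $v>0$ whenever $(0,v)\in\bar\Xi$ and attains $h$ on $\partial_a\Xi$ in the trace sense of Remark \ref{TraceDef}; moreover $\varphi$ is $C^\infty$ in the interior of $\Xi$ by hypoellipticity and classical parabolic regularity. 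Since $\bar\varphi$ is bounded and a subsolution of (\ref{T6E6}), (\ref{T6E7}), the function $-\bar\varphi$ is a supersolution of $\mathcal{L}(\cdot)+\kappa$ on every admissible subdomain for a suitable $\kappa$, so by Proposition \ref{Traces} its interior traces on $\partial_a\Xi$ are well defined; by hypothesis these are exactly $h$. Hence the interior traces of $\varphi$ and $\bar\varphi$ on $\partial_a\Xi$ coincide.

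The key step is to show $\varphi\geq\bar\varphi$ a.e. in $\Xi$. Subtracting the subsolution inequality (\ref{sub_sol}) for $\bar\varphi$ from the identity $\int(\psi-\lambda\mathcal{L}^*\psi)\varphi=\int g\psi$, exactly as in the computation opening the proof of Lemma \ref{maxima}, one gets that $w:=\varphi-\bar\varphi$ satisfies $\int(\psi-\lambda\mathcal{L}^*\psi)w\geq0$ for all $\psi\in\mathcal{F}(\Xi)$ with $\psi\geq0$, i.e. $w$ is a supersolution of $\mathcal{L}(\cdot)-\tfrac{1}{\lambda}$ in $\Xi$ in the sense of Definition \ref{supersolDef}. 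On $\partial_a\Xi$ the trace of $w$ is $\geq0$ by the previous paragraph, while on $\partial\Xi\setminus\partial_a\Xi$ no condition is needed because of the directionality of the transport term of $\mathcal{L}$ (this is exactly the property built into the definition of admissible boundary, cf. Proposition \ref{weak_max}). Therefore the weak maximum principle gives $w\geq0$ in $\Xi$. One caveat: Proposition \ref{weak_max} is stated for $\mathcal{L}(\cdot)+\kappa$ with $\kappa\geq0$, whereas here the zeroth order coefficient is $-\tfrac{1}{\lambda}<0$; but both the proof of Proposition \ref{weak_max} and the interior-point argument in cases (a), (b) of Lemma \ref{minProp} only use the favourable sign of that term, which is reinforced, not destroyed, when the coefficient is negative, so the comparison still applies.

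Finally I would apply Lemma \ref{maxima} with $\mathcal{W}_1=X$, $\varphi_1=\bar\varphi$, and $\mathcal{W}_2=\Xi$, $\varphi_2=\varphi$: the function $\varphi$ solves the equation in the open admissible domain $\Xi$, which by Definition \ref{admissible} does not contain $(0,0)$, so it is a subsolution of (\ref{T6E6}), (\ref{T6E7}) on $\mathcal{W}_2$ in the sense of Definition \ref{SubSuper1}, while $\bar\varphi$ is such a subsolution on $\mathcal{W}_1=X$ by hypothesis. Lemma \ref{maxima} then yields that the function equal to $\max\{\bar\varphi,\varphi\}$ on $\Xi$ and to $\bar\varphi$ on $X\setminus\Xi$ is a subsolution of (\ref{T6E6}), (\ref{T6E7}) on $X$; by the previous step $\max\{\bar\varphi,\varphi\}=\varphi$ on $\Xi$, so this function is precisely $\Phi$ in (\ref{T7E5}). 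The condition at the singular point is then automatic: every admissible domain, hence its closure, is disjoint from a neighbourhood of $(0,0)$, so $\Phi=\bar\varphi$ near $(0,0)$ and $\Phi(0,0)=\bar\varphi(0,0)=g(0,0)$.

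I expect the main obstacle to be the comparison step of the second paragraph near the portion of $\partial\Xi$ lying on $\{x=0\}$ (present for the type (b) admissible domains), where the reflection boundary condition intervenes and $\bar\varphi$ is only bounded measurable: one must verify, using the trace machinery of Proposition \ref{Traces} in the same way as the analysis of the line $\{x=0\}$ in the proof of Lemma \ref{maxima}, that the one-sided traces of $w=\varphi-\bar\varphi$ at $x=0$ obey the inequality needed to run the maximum principle there. The remaining parts of the argument are routine integrations by parts and bookkeeping.
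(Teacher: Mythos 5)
Your argument is in the same spirit as the paper's: both reduce the lemma to the observation that the Dirichlet lift $\varphi$ dominates $\bar\varphi$ in $\Xi$, and both then appeal to the gluing machinery of Lemma \ref{maxima}. You do more work than the paper on the comparison step, which is useful, and the remark that the zeroth-order sign in $w-\lambda\mathcal{L}w\geq 0$ is the favourable one for the maximum principle is correct even though Proposition \ref{weak_max} is only stated for $\kappa\geq 0$.

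The one place where I would push back is the claim that $\varphi$, being a classical solution of $\varphi-\lambda\mathcal{L}\varphi=g$ in $\Xi$, is \emph{automatically} a subsolution of (\ref{T6E6}), (\ref{T6E7}) on $\mathcal{W}_2=\Xi$ in the sense of Definition \ref{SubSuper1}. That definition requires the inequality (\ref{sub_sol}) for every $\psi\in\mathcal{F}(\Xi)$, and (\ref{comp2}) does not force such $\psi$ to vanish near $\partial\Xi$; for those test functions, integration by parts turns the classical equation into
\begin{equation*}
\int_\Xi\bigl(\psi-\lambda\mathcal{L}^*\psi\bigr)\varphi \;=\;\int_\Xi g\psi \;-\;\lambda\int_{\partial\Xi}\Bigl[\bigl(\varphi\,\partial_v\psi-\psi\,\partial_v\varphi\bigr)n_v-\varphi\,v\,\psi\,n_x\Bigr]\,ds,
\end{equation*}
and the boundary integral has no definite sign in general. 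So the hypotheses of Lemma \ref{maxima} are not verified for free: one still has to argue that after gluing, the boundary contributions at $\partial\Xi$ — precisely the ones the paper alludes to in its one-line proof — are controlled by the one-sided inequality between the interior trace of $\varphi$ and that of $\bar\varphi$ obtained from $\varphi\geq\bar\varphi$. In other words, what you cite as a black-box application of Lemma \ref{maxima} is really a re-run of the Lemma \ref{maxima} computation for the specific pair $(\bar\varphi,\varphi)$, using the inequality $\varphi\geq\bar\varphi$ and the matching interior traces on $\partial_a\Xi$ to fix the signs of the jump terms on $\partial\Xi$, including the non-admissible portion. You anticipate this difficulty near $\{x=0\}$, but it is present on all of $\partial\Xi$, and Lemma \ref{maxima}'s stated hypothesis about $\varphi_2$ being a subsolution on $\mathcal{W}_2$ does not, as literally stated, cover it. Absent that verification there is a genuine gap, even if the remedy is exactly the boundary-term bookkeeping the paper itself omits.
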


\begin{proof}
It follows using some arguments analogous to those in the Proof of Lemma \ref%
{maxima}, and therefore we will skip the details of the proof. Notice that
the function $\Phi $ is larger than $\bar{\varphi}$ in $\Xi .$ Therefore the
contribution of the terms arising at the boundary of $\Xi $ in the
definition of subsolution, including at the possible points of discontinuity
of $\Phi ,$ have the sign required for $\Phi $ to be a subsolution.
\end{proof}

\begin{proof}[End of the Proof of Proposition \protect\ref{Case_a_sub}.]
We define $\varphi _{\ast }$ as in (\ref{T7E2}). The limit in (\ref{T7E2})
is pointwise $a.e$ in $X$. Due to Lebesgue's Theorem it then follows that $%
\varphi _{M}$ converges to $\varphi _{\ast }$ in the weak topology
introduced in Lemma \ref{ContinuityWeak}. Applying this Lemma, as well as
the fact that the definition of $\mathcal{\tilde{G}}_{sub}$ (cf. (\ref{T7E4}%
)) and $\varphi ^{\mathrm{sub}},\ \varphi ^{\sup }$ (cf. Lemma \ref%
{ExSubSup}) it follows that $\varphi _{\ast }$ is a subsolution of (\ref%
{T6E6}), (\ref{T6E7}) with $\varphi _{\ast }\in \mathcal{\tilde{G}}_{sub}.$
This implies that $\nu =-\left( \varphi _{\ast }-\lambda \mathcal{L}\left(
\varphi _{\ast }\right) -g\right) $ defines a nonnegative Radon measure. If $%
\nu =0,$ it follows that $\varphi _{\ast }$ is a solution of (\ref{T6E6}), (%
\ref{T6E7}) and the Proposition would easily follow. Suppose then that $\nu
\neq 0.$ Then, there exists an admissible domain $\Xi \subset X$ such that $%
\nu \left( S\right) >0$ with $S=\left\{ \left( x,v\right) :\operatorname{dist}%
\left( \left( x,v\right) ,\partial \Xi \right) \geq \frac{d\left( \Xi
\right) }{4}\right\} $\ with $d\left( \Xi \right) $ as in (\ref{T7E3}).
Suppose that $\varphi $ is the unique solution of (\ref{solv_D1}), (\ref%
{solv_D2}) obtained in Proposition \ref{DirSolv} where $h$ is the trace of $%
\varphi _{\ast }$ in $\partial _{a}\Xi $ obtained from the interior of $\Xi .
$\ We then define $\Phi $ as in (\ref{T7E5}) with $\bar{\varphi}=\varphi
_{\ast }.$ Lemma \ref{IncSub} implies that $\Phi $ is a subsolution of (\ref%
{T6E6}), (\ref{T6E7}) in the sense of Definition \ref{SubSuper1}. On the
other hand, since $\varphi _{\ast }\in \mathcal{\tilde{G}}_{sub}$ we have $%
h\leq \varphi ^{\sup }$ on $\partial _{a}\Xi ,$ where $\varphi ^{\sup }$ is
understood in $\partial _{a}\Xi $ in the sense of trace from the interior of
$\Xi .$ Then $\varphi \leq \varphi ^{\sup }$ due to Proposition \ref%
{weak_max}, whence $\Phi \leq \varphi ^{\sup }.$ Therefore $\Phi \in
\mathcal{\tilde{G}}_{sub}.$ Due to Lemma \ref{comp_nu} and using that $\nu
\left( S\right) >0$ we have that $\Phi >\varphi _{\ast }+\delta $ for some $%
\delta >0$ in an open subset of $\Xi .$ Since $\varphi _{\ast }\geq \varphi
_{j}$ for any $\varphi _{j}\in \mathcal{G}_{sub}$ with $\mathcal{G}_{sub}$
as in Lemma \ref{countable} it then follows that $\operatorname{dist}_{\ast
}\left( \Phi ,\mathcal{G}_{sub}\right) >0,$ where $\operatorname{dist}_{\ast }$
is the metric associated to the weak$-\ast $ topology used in Lemma \ref%
{ContinuityWeak} (cf. Remark \ref{distW}). However, Lemma \ref{countable}
implies that $\mathcal{G}_{sub}$ is dense in $\mathcal{\tilde{G}}_{sub}$ in
the weak$-\ast $ topology, and therefore this gives a contradiction. Then $%
\nu =0$ and thus $\varphi _{\ast }-\lambda \mathcal{L}\left( \varphi _{\ast
}\right) -g=0$ in the sense of distributions. Since $\varphi _{\ast }$ is
bounded, we can use Theorem \ref{Hypoell} to prove that $\varphi _{\ast }\in
W^{1,p}\left( U\right) $ in any bounded set $U\subset X$ whose closure does
not intersect $\left( 0,0\right) $ and any $1<p<\infty .$ Therefore $\varphi
_{\ast }$ is continuous away from the origin. Since $\varphi _{\ast }\in
\mathcal{\tilde{G}}_{sub}$ it also follows that $\varphi _{\ast }$ is
continuous at $\left( 0,0\right) .$ It remains to check that $\varphi _{\ast
}$ is also continuous at the point $\infty .$ Notice that our choice of
sub/supersolutions imply that $\left\vert \varphi _{\ast }\right\vert \leq
\left\Vert g\right\Vert _{L^{\infty }\left( X\right) }.$ We can now prove
that $\varphi _{\ast }\left( x,v\right) \rightarrow g\left( \infty \right) $
as $\left( x,v\right) \rightarrow \infty $ as follows. In any large domain
contained in the regions $\left\{ v>0\right\} $ or $\left\{ v<0\right\} $ we
then obtain, using parabolic theory, that $\varphi _{\ast }$ converges to $%
g\left( \infty \right) .$ Indeed, in the case of $\left\{ v<0\right\} $ we
just integrate the parabolic equation in the direction of increasing $x$ and
it readily follows that $\varphi _{\ast }\left( x,v\right) $ approaches to $%
g\left( \infty \right) .$ If $\left\{ v>0\right\} $, we argue similarly, but
with decreasing $x.$ It then follows also, due to the boundedness of $%
\left\vert \varphi _{\ast }\right\vert $ that $\varphi _{\ast }$ converges
to $g\left( \infty \right) $ if $\left\vert \left( x,v\right) \right\vert
\rightarrow \infty $ and$\ v\rightarrow \infty .$ Therefore $\varphi _{\ast
}\left( x,v\right) \rightarrow g\left( \infty \right) $ if $\left\vert
\left( x,v\right) \right\vert \rightarrow \infty $ and$\ \left\vert
v\right\vert \rightarrow \infty .$ In order to prove the convergence for $%
x\rightarrow \infty $ and $\left\vert v\right\vert $ bounded we argue as
follows. After substracting from $\varphi _{\ast }$ quantities like $g\left(
\infty \right) \pm \varepsilon ,$ with $\varepsilon >0$ it follows that we
only need to show that for large admissible domains $\Xi $ with bounded
values of $\psi $ at the admissible boundary $\partial _{a}\Xi $ and $\psi $
satisfying $\psi -\lambda \mathcal{L}\left( \psi \right) =0$ we have $%
\left\vert \psi \left( x,v\right) \right\vert $ small if the size of $\Xi $
tends to infinity and the distance from $\left( x,v\right) $ to $\partial
_{a}\Xi $ tends to infinity too. This follows from instance from (\ref{X8})
using the fact that $\tau \left( \Xi \right) $ tends to infinity for the
points under consideration with a large probability, and estimating the
remainder by the small probability of having $\tau \left( \Xi \right) $ of
order one.

Therefore $\varphi _{\ast }\left( \infty \right) =g\left( \infty \right) $
and the result follows.
\end{proof}

\subsection{Operator $\Omega _{nt,sub}.$ Solvability of (\protect\ref{T6E6}), (%
\protect\ref{T6E8}).}

We now argue as in the previous case in order to show that the problem (\ref%
{T6E6}), (\ref{T6E8}) can be solved for any $g\in C\left( X\right) .$ The
main difference arises in the analysis of the behaviour of the solution $%
\varphi $ in a neighbourhood of the singular point $\left( 0,0\right) $ and
in particular showing that (\ref{T6E8}) holds. The key point will be to show
that for the maximum of a suitable family of subsolutions $\mathcal{\tilde{G}%
}_{sub}$, we have that $\mathcal{A}\left( \varphi \right) $ is well defined
and $\mathcal{A}\left( \varphi \right) \geq 0.$ If $\mathcal{A}\left(
\varphi \right) >0,$ it turns out that it is possible to construct a larger
subsolution in the family $\mathcal{\tilde{G}}_{sub}.$ This contradiction
will imply $\mathcal{A}\left( \varphi \right) =0.$

Several arguments needed to prove the solvability of the problem (\ref{T6E6}%
), (\ref{T6E8}) are similar to the ones in the previous Section. We will
emphasize just the points where differences arise. The main result that we
prove in this Section is the following.

\begin{proposition}
\label{Case_r}For any $g\in C\left( X\right) $ there exists a unique $%
\varphi \in C\left( X\right) $ which solves (\ref{T6E6}), (\ref{T6E8}).
\end{proposition}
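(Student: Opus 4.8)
The strategy is to reproduce, step by step, the Perron argument of Section~\ref{SolvAbs} used for Proposition~\ref{Case_a_sub}, the only substantive change being the analysis at the singular point, where now the coefficient $\mathcal{A}(\varphi)$ in the expansion (\ref{phi_decom}) must be forced to vanish rather than the value $\varphi(0,0)$ being prescribed. First I would introduce, by analogy with Definition~\ref{SubSuper1}, the notion of a subsolution (resp.\ supersolution) of (\ref{T6E6}), (\ref{T6E8}): a function $\varphi\in L_b^\infty(X)$ for which $\lim_{(x,v)\to(0,0)}\varphi(x,v)$ exists and which satisfies (\ref{sub_sol}) (resp.\ (\ref{super_sol2})) for all $\psi\in\mathcal{F}(X)$, $\psi\ge0$ --- with \emph{no} condition imposed at $(0,0)$, since $\mathcal{A}(\varphi)$ need not be defined for a generic subsolution. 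The analogue of Lemma~\ref{maxima} then holds with essentially the same proof, because the boundary analysis on $\{x=0\}$ via Proposition~\ref{Traces} is unchanged and the value at $(0,0)$ plays no role; and a sub/supersolution pair with $\varphi^{\mathrm{sub}}\le\varphi^{\sup}$ is furnished exactly as in Lemma~\ref{ExSubSup}, using $F_\beta$, $S$ and $\beta<2/3$.

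Next I would carry over verbatim the machinery of Section~\ref{SolvAbs}: the set $\tilde{\mathcal{G}}_{sub}$ of subsolutions lying between $\varphi^{\mathrm{sub}}$ and $\varphi^{\sup}$ is closed in the weak-$\ast$ topology, possesses a countable dense subset $\mathcal{G}_{sub}$ (Lemma~\ref{countable}) and is stable under finite maxima (Lemma~\ref{max_sub}); set $\varphi_\ast=\lim_{M\to\infty}\max_{\mathcal{G}_{sub}(M)}\varphi$. Exactly as for Proposition~\ref{Case_a_sub}, $\nu:=-(\varphi_\ast-\lambda\mathcal{L}\varphi_\ast-g)$ is a nonnegative Radon measure, and if $\nu\ne0$ one picks an admissible $\Xi$ on whose interior set $\nu$ is positive, replaces $\varphi_\ast$ inside $\Xi$ by the Dirichlet solution of Proposition~\ref{DirSolv} (Lemma~\ref{IncSub}), and uses Proposition~\ref{weak_max} together with Lemma~\ref{comp_nu} to produce an element of $\tilde{\mathcal{G}}_{sub}$ exceeding $\varphi_\ast$ on a set of positive measure --- contradicting maximality via the metric $\mathrm{dist}_\ast$. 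Hence $\nu=0$, i.e.\ $\varphi_\ast-\lambda\mathcal{L}\varphi_\ast=g$ with $\mathcal{L}\varphi_\ast\in C(X)$; Theorem~\ref{Hypoell} gives continuity of $\varphi_\ast$ away from $(0,0)$, the argument with (\ref{X8}) gives continuity at $\infty$, and $\varphi^{\mathrm{sub}}\le\varphi_\ast\le\varphi^{\sup}$ gives continuity at $(0,0)$, so $\varphi_\ast\in C(X)$.

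The one genuinely new --- and hardest --- step is to show $\mathcal{A}(\varphi_\ast)=0$. Since $\mathcal{L}\varphi_\ast=\lambda^{-1}(\varphi_\ast-g)\in C(X)$ and $\varphi_\ast$ obeys (\ref{comp1}), Theorem~\ref{AsSingPoint} applies and yields
\begin{equation*}
\varphi_\ast(x,v)=\varphi_\ast(0,0)+\mathcal{A}(\varphi_\ast)\,F_\beta(x,v)+\tfrac{1}{\lambda}\big(\varphi_\ast(0,0)-g(0,0)\big)\,S(x,v)+o\!\left(x^{2/3}+|v|^2\right),
\end{equation*}
so $\mathcal{A}(\varphi_\ast)$ is well defined, and from $\varphi_\ast\le\varphi^{\sup}$ one reads off $\varphi_\ast(0,0)\le g(0,0)$. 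I would first rule out $\mathcal{A}(\varphi_\ast)<0$: in that case $\varphi_\ast<\varphi_\ast(0,0)$ throughout a punctured neighbourhood of $(0,0)$, and --- since $\varphi_\ast(0,0)\le g(0,0)$, the constant $\varphi_\ast(0,0)$ (or, in the borderline case $\varphi_\ast(0,0)=g(0,0)$, a function $\varphi_\ast(0,0)+\varepsilon F_\beta+AS$ with $\varepsilon>0$ small and $A$ large) is a subsolution near $(0,0)$ dominated by $\varphi^{\sup}$ --- the corresponding maximum with $\varphi_\ast$ lies in $\tilde{\mathcal{G}}_{sub}$ and strictly exceeds $\varphi_\ast$ near the origin, contradicting maximality. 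Thus $\mathcal{A}(\varphi_\ast)\ge0$. If $\mathcal{A}(\varphi_\ast)>0$, I would instead work in a small domain $\Lambda_R$ (cf.\ (\ref{LambdaSing})) near $(0,0)$: solving in $\Lambda_R$ the PDE with data $\varphi_\ast$ on $\partial_a\Lambda_R$ and the condition $\mathcal{A}=0$ at the origin produces a solution $\tilde\varphi$ which, by comparison with $\varphi_\ast-\mathcal{A}(\varphi_\ast)F_\beta$ (same equation, $\mathcal{A}=0$, strictly smaller boundary data) and the strong maximum principle, satisfies $\tilde\varphi(0,0)>\varphi_\ast(0,0)$, hence $\tilde\varphi>\varphi_\ast$ in a neighbourhood of $(0,0)$; then $\max\{\varphi_\ast,\tilde\varphi\}$ patched into $\Lambda_R$ is again a member of $\tilde{\mathcal{G}}_{sub}$ strictly larger than $\varphi_\ast$ (Proposition~\ref{weak_max}, Proposition~\ref{ComparisonSingP} and Proposition~\ref{mu_R} supplying the comparisons and the uniform control), a contradiction. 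Therefore $\mathcal{A}(\varphi_\ast)=0$, $\varphi_\ast\in\mathcal{D}(\Omega_{nt,sub})$, and $\varphi_\ast$ solves (\ref{T6E6}), (\ref{T6E8}). I expect the main obstacle to be precisely this last paragraph: setting up solvability of the auxiliary problem in $\Lambda_R$ with the $\mathcal{A}=0$ condition and establishing the quantitative comparison estimates near the singular point, for which Proposition~\ref{mu_R} and the maximum principles must be used with care; everything else parallels Section~\ref{SolvAbs}.

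Finally, uniqueness follows from the comparison principle already available: if $\varphi_1,\varphi_2$ both solve (\ref{T6E6}), (\ref{T6E8}), then $w=\varphi_1-\varphi_2$ satisfies $w-\lambda\mathcal{L}w=0$ with $\mathcal{L}w\in C(X)$ and $\mathcal{A}(w)=\mathcal{A}(\varphi_1)-\mathcal{A}(\varphi_2)=0$ (by Theorem~\ref{AsSingPoint}), so $w\in\mathcal{D}(\Omega_{nt,sub})$ and $w-\lambda\Omega_{nt,sub}w=0$; Lemma~\ref{minProp} applied to $w$ and to $-w$ then gives $\min_X w\ge0\ge\max_X w$, whence $w\equiv0$.
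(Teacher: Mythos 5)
Your Perron scaffolding matches the paper in broad outline, but the heart of the argument — why the supremum of subsolutions satisfies the nontrapping condition $\mathcal{A}(\varphi_\ast)=0$ — is handled by a mechanism you have not reproduced, and the substitute you propose does not close the gap.

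The paper does \emph{not} define subsolutions for (\ref{T6E6}), (\ref{T6E8}) by testing against $\psi\in\mathcal{F}(X)$ as in Definition~\ref{SubSuper1}. Instead, Definition~\ref{SubSuper2} admits test functions of the form $\psi=\theta\zeta G_\gamma+\bar\psi$ with $\gamma\in\{-\tfrac23,\alpha\}$, i.e.\ test functions that blow up at the origin at exactly the rates $G_{-2/3}$ and $G_\alpha$. This enlargement is the crucial new ingredient: testing against $\zeta G_\alpha$ is what shows that the flux $\Psi(\delta)$ in (\ref{W6E3}) has a one-sided limit $\Psi(0^+)\le 0$ (Lemma~\ref{TraceAphi}), and, combined with Proposition~\ref{Cstar} via Lemma~\ref{ConstStar}, that $\mathcal{A}(\varphi)\ge 0$ for every subsolution. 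The sign constraint on $\mathcal{A}$ is thus built into the class, and the final competitor $\varphi_{\ast\ast}=\max\{\varphi_\ast,\varphi_\ast(0,0)+\delta+CS\}$ then rules out $\mathcal{A}(\varphi_\ast)>0$ cleanly. With your ordinary test functions from $\mathcal{F}(X)$, whose supports avoid $(0,0)$ entirely, nothing constrains the coefficient $\mathcal{A}$, and the Perron supremum has no structural reason to satisfy the nontrapping condition.

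Your attempted substitute also has a concrete obstruction. You take $\varphi^{\sup}$ from Lemma~\ref{ExSubSup}, which satisfies $\varphi^{\sup}(0,0)=g(0,0)$, and then use $\varphi_\ast\le\varphi^{\sup}$ to get $\varphi_\ast(0,0)\le g(0,0)$. But this condition is in general violated by the actual solution of (\ref{T6E6}), (\ref{T6E8}): nothing prevents $\mathcal{L}\varphi(0,0)=\lambda^{-1}(\varphi(0,0)-g(0,0))$ from being strictly positive, in which case $\varphi(0,0)>g(0,0)$ and the nontrapping solution is simply not an element of your $\tilde{\mathcal{G}}_{sub}$, so the Perron supremum cannot equal it. (This is precisely why the paper's Lemma~\ref{ExSubSup2} uses the constant supersolution $\|g\|_{L^\infty(X)}$ rather than the Lemma~\ref{ExSubSup} one.) Finally, your plan for ruling out $\mathcal{A}(\varphi_\ast)>0$ — solving an auxiliary Dirichlet problem in $\Lambda_R$ with the boundary condition $\mathcal{A}=0$ at the origin — is circular: solvability of that auxiliary problem is a local form of exactly the existence statement being proved, and for $r<r_c$ the Dirichlet problem in $\Lambda_R$ is not well posed without a prescription at $(0,0)$ (Proposition~\ref{DirSolvSuper} is stated only for $r>r_c$, where $\beta<0$). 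Note also that the paper's analogue of Lemma~\ref{maxima}, namely Lemma~\ref{maximaRef}, is substantially harder than the trapping case precisely because the singular test functions have to be accommodated; your claim that the maximum-of-subsolutions lemma ``holds with essentially the same proof'' understates what that step requires (it invokes Lemmas~\ref{TraceAphi}, \ref{LimitHzero} and a nontrivial stability argument for $\Psi_\varphi(0)$ under rescaling).
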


We will prove Proposition \ref{Case_r} using a method analogous to the proof
of Proposition \ref{Case_a_sub}. However we need to define a different class
of subsolutions, which allows us to identify the boundary condition (\ref%
{T6E8}). The main novelty is that the test functions might have a power law
singularity near the singular point $\left( x,v\right) =\left( 0,0\right).$ We first recall $G_\gamma$ 
 in \cite{HVJ2} (cf. Proposition 3.1 in \cite{HVJ2}): 
\[
G_{\gamma }\left( x,v\right) =x^{\gamma }\Lambda _{\gamma }\left( \zeta
\right) \ ,\ \ \zeta =\frac{v}{\left( 9x\right) ^{\frac{1}{3}}}\ \ \text{%
with }\gamma \in \left\{ -\frac{2}{3},\alpha \left( r\right) \right\}
\]
with%
\[
\Lambda _{\gamma }\left( \zeta \right) =U\left( -\gamma ,\frac{2}{3};-\zeta
^{3}\right) >0\ \ \text{for }\zeta \in \mathbb{R\ }  \label{LambdaU}
\]
where we denote as $U(a,b;z)$ the classical Tricomi confluent hypergeometric
functions (cf. \cite{Ab}). 

\begin{definition}
\label{SubSuper2}Suppose that $g\in C\left( X\right) .$ Let $\zeta =\zeta
\left( x,v\right) $ be a nonnegative $C^{\infty }$ test function supported
in $0\leq x+\left\vert v\right\vert ^{3}\leq 2$, satisfying $\zeta =1$ for $%
0\leq x+\left\vert v\right\vert ^{3}\leq 1$. We will say that a function $%
\varphi \in L_{b}^{\infty }\left( X\right) $ is a subsolution of (\ref{T6E6}%
), (\ref{T6E8}) if for all $\psi $ with $\psi \geq 0,\ \psi \left(
0,-v\right) =r^{2}\psi \left( 0,rv\right) ,\ v>0$ and having the form $\psi
=\theta \zeta G_{\gamma }+\bar{\psi}$ where $\bar{\psi}\in C^{\infty },$ $%
\theta \geq 0$ 
with $\psi $ supported in a set
contained in the ball $\left\vert \left( x,v\right) \right\vert \leq R$ for
some $R>0$ we have:%
\begin{equation}
\int \left( \psi -\lambda \mathcal{L}^{\ast }\left( \psi \right) \right)
\varphi \leq \int g\psi   \label{sub_sol2}
\end{equation}

Given $g\in C\left( X\right) ,$ we will say that $\varphi \in L_{b}^{\infty
}\left( X\right) $ is a supersolution of (\ref{T6E6}), (\ref{T6E8}) if for
all $\psi $ with the same properties as above\ we have:%
\begin{equation}
\int \left( \psi -\lambda \mathcal{L}^{\ast }\left( \psi \right) \right)
\varphi \geq \int g\psi  \label{super_sol3}
\end{equation}
\end{definition}

Notice that there are two differences between the Definitions \ref{SubSuper1}
and \ref{SubSuper2}. In this second definition we do not impose $\varphi
\left( 0,0\right) =g\left( 0,0\right) .$ On the other hand, we have a larger
class of admissible test functions in Definition \ref{SubSuper2}. Notice
that the integral on the left hand side of (\ref{sub_sol2}) is well defined
in spite of the singularity of $G_{\gamma }$ near the singular point because
$\mathcal{L}^{\ast }\left( G_{\gamma }\right) =0.$

Our next goal is to prove the analogous of Lemma \ref{maxima} for the
operator $\Omega _{nt,sub}.$ This will require to prove that it is possible
to define in a suitable sense the quantity $\mathcal{A}\left( \varphi
\right) $ (cf. (\ref{phi_decom})) for sub or supersolutions in the sense of
Definition \ref{SubSuper2}. This will be made by means of the limit as $%
\delta \rightarrow 0^{+}$ of the quantity $\Psi \left( \delta \right) $
defined in the next Lemma.

\begin{lemma}
\label{TraceAphi}Let $g\in C\left( X\right) $ and $\varphi $ be a
subsolution of (\ref{T6E6}), (\ref{T6E8})\ in the sense of Definition \ref%
{SubSuper2}. Let:%
\begin{equation}
\Psi \left( \delta \right) =\int_{\partial \mathcal{R}_{\delta }}\left(
n_{v}G_{\alpha }\partial _{v}\varphi -n_{v}\partial _{v}G_{\alpha }\varphi
+v\varphi G_{\alpha }n_{x}\right) ds\   \label{W6E3}
\end{equation}%
which is well defined in the sense of Traces (cf. Remark \ref{TraceDef}).
Here $n=\left( n_{x},n_{v}\right) $ is the unit normal vector pointing
toward $\mathcal{R}_{\delta }$, where $\mathcal{R}_{\delta }$
is defined in (\ref{domainR}).
Then the limit $\lim_{\delta \rightarrow
0^{+}}\Psi \left( \delta \right) $ exists and we have:%
\begin{equation*}
\lim_{\delta \rightarrow 0^{+}}\Psi \left( \delta \right) \leq 0
\end{equation*}%
Let $\mu =-\varphi +\lambda \mathcal{L}\left( \varphi \right) +g.$ Then $\mu
\geq 0$ defines a Radon measure in $X\diagdown \left\{ \left( 0,0\right)
\right\} $ satisfying:%
\begin{equation}
\int_{\mathcal{R}_{1}\diagdown \left\{ \left( 0,0\right) \right\} }G_{\alpha
}\mu dxdv<\infty   \label{P2E3}
\end{equation}
\end{lemma}

\begin{proof}
Using test functions $\psi \geq 0,$ with support disjoint from $\left(
x,v\right) =\left( 0,0\right) $ we obtain that (\ref{sub_sol2}) implies the
existence of a measure $\mu \geq 0$ such that:%
\begin{equation}
-\varphi +\lambda \mathcal{L}\left( \varphi \right) +g=\mu \geq 0  \label{M1}
\end{equation}%
Notice that part of the support of $\mu $ can be contained in the line $%
\left\{ x=0\right\} .$ Notice that it might be possible to have $\mu \left(
\mathbb{R}^{+}\times \mathbb{R}\right) =\infty .$

Suppose that $\varphi $ is a subsolution of (\ref{T6E6}), (\ref{T6E8}) in
the sense of Definition \ref{SubSuper2}. Multiplying (\ref{M1}) by $%
G_{\alpha }$ and integrating in domains $\mathcal{R}_{\delta _{2}}\diagdown
\mathcal{R}_{\delta _{1}}$ with $\delta _{1}<\delta _{2}$ as well as the
fact that the traces of the function $\varphi $ and some of the derivatives $%
\partial _{v}\varphi $ exist, we obtain:%
\begin{eqnarray}
&&-\Psi \left( \delta _{2}\right) +\Psi \left( \delta _{1}\right)  \notag \\
&=&\frac{1}{\lambda }\int_{\mathcal{R}_{\delta _{2}}\diagdown \mathcal{R}%
_{\delta _{1}}}G_{\alpha }\mu dxdv+\frac{1}{\lambda }\int_{\mathcal{R}%
_{\delta _{2}}\diagdown \mathcal{R}_{\delta _{1}}}G_{\alpha }\varphi dxdv\ -%
\frac{1}{\lambda }\int_{\mathcal{R}_{\delta _{2}}\diagdown \mathcal{R}%
_{\delta _{1}}}G_{\alpha }gdxdv,  \label{Ma2}
\end{eqnarray}%
where $\Psi \left( \delta \right) $ is as in (\ref{W6E3}). Notice that the
existence of the traces of $\varphi ,\ \partial _{v}\varphi $ in the
required boundaries $\partial \mathcal{R}_{\delta _{1}},\ \partial \mathcal{R%
}_{\delta _{2}}$ due to Proposition \ref{Traces}. Notice that the proof of (%
\ref{Ma2}) must be made approximating the characteristic function of the
domain $\mathcal{R}_{\delta _{2}}\diagdown \mathcal{R}_{\delta _{1}}$ by
smooth functions and taking the limit. Combining (\ref{Ma2}) with the fact
that $\mu \geq 0,$ and $\varphi $ and $g$ are bounded implies:%
\begin{equation}
-\Psi \left( \delta _{2}\right) +\Psi \left( \delta _{1}\right) \geq -\frac{C%
}{\lambda }\int_{\mathcal{R}_{\delta _{2}}\diagdown \mathcal{R}_{\delta
_{1}}}G_{\alpha }dxdv  \label{Ma3}
\end{equation}%
We can compute the integrals $\int_{\mathcal{R}_{\delta }}G_{\alpha }dxdv.$
Indeed, using Proposition 3.1 in \cite{HVJ2} and (\ref{domainR}) we obtain:%
\begin{equation*}
\int_{\mathcal{R}_{\delta }}G_{\alpha }dxdv=\int_{\mathcal{R}_{\delta
}}x^{\alpha }\Phi \left( -\frac{v^{3}}{9x}\right) dxdv=A\delta ^{3\left(
1+\alpha \right) +1}
\end{equation*}%
where $A=\int_{-1}^{r}dv\int_{0}^{1}x^{\alpha }\Phi \left( -\frac{v^{3}}{9x}%
\right) dx.$ It is readily seen that $0<A<\infty $ and also that $3\left(
1+\alpha \right) >0.$ The inequality (\ref{Ma3}) then implies that the
function $\Psi \left( \delta \right) -\frac{CA\delta ^{3\left( 1+\alpha
\right) +1}}{\lambda }$ is decreasing. Then the limit $\lim_{\delta
\rightarrow 0^{+}}\left( \Psi \left( \delta \right) -\frac{A\delta ^{3\left(
1+\alpha \right) +1}}{\lambda }\right) $ exists, whence the limit $%
\lim_{\delta \rightarrow 0^{+}}\Psi \left( \delta \right) $ exists too,
where the value of this limit might be $\infty $. Our goal is to show that:%
\begin{equation}
\lim_{\delta \rightarrow 0^{+}}\Psi \left( \delta \right) \leq 0
\label{P1E7}
\end{equation}%
We will consider separately the cases $\lim_{\delta \rightarrow 0^{+}}\Psi
\left( \delta \right) <\infty $ and $\lim_{\delta \rightarrow 0^{+}}\Psi
\left( \delta \right) =\infty .$ Suppose first that $\lim_{\delta
\rightarrow 0^{+}}\Psi \left( \delta \right) <\infty .$ Using (\ref{Ma2})
with $\delta _{2}=1$,\ and taking the limit $\delta _{1}\rightarrow 0$ we
obtain:%
\begin{eqnarray*}
&&-\Psi \left( 1\right) +\Psi \left( 0^{+}\right) \\
&=&\frac{1}{\lambda }\int_{\mathcal{R}_{1}\diagdown \left\{ \left(
0,0\right) \right\} }G_{\alpha }\mu dxdv+\frac{1}{\lambda }\int_{\mathcal{R}%
_{1}\diagdown \left\{ \left( 0,0\right) \right\} }G_{\alpha }\varphi dxdv\ -%
\frac{1}{\lambda }\int_{\mathcal{R}_{1}\diagdown \left\{ \left( 0,0\right)
\right\} }G_{\alpha }gdxdv.\
\end{eqnarray*}

The boundedness of $\varphi $ implies that $\int_{\mathcal{R}_{1}\diagdown
\left\{ \left( 0,0\right) \right\} }G_{\alpha }\varphi dxdv=\int_{\mathcal{R}%
_{1}}G_{\alpha }\varphi dxdv$ is also bounded. Then, using the fact that $%
\mu \geq 0$ and $G_{\alpha }>0$ we obtain:%
\begin{equation}
0\leq \int_{\mathcal{R}_{1}\diagdown \left\{ \left( 0,0\right) \right\}
}G_{\alpha }\mu dxdv<\infty  \label{P1E8}
\end{equation}

We now argue as follows. We choose a test function $\psi =G_{\alpha }\xi ,$
with $\xi =1$ in $\overline{\mathcal{R}_{\delta }}$ and $\xi =0$ in $%
X\diagdown \mathcal{R}_{2\delta },$ with $\delta >0,$ $\xi \geq 0.$ Using
Definition \ref{SubSuper2} we obtain (\ref{M1}). Notice that $\mathcal{L}%
^{\ast }\left( \psi \right) $ is integrable near the origin and $\varphi $
is bounded. Then:%
\begin{equation}
\int \left( \left( \psi -\lambda \mathcal{L}^{\ast }\left( \psi \right)
\right) \varphi -g\psi \right) =\lim_{\delta \rightarrow 0}\int_{X\diagdown
\mathcal{R}_{\delta }}\left( \left( \psi -\lambda \mathcal{L}^{\ast }\left(
\psi \right) \right) \varphi -g\psi \right)  \label{P1E9}
\end{equation}%
Integrating by parts and using (\ref{M1}), (\ref{W6E3}) we obtain:%
\begin{eqnarray*}
&&\int_{X\diagdown \mathcal{R}_{\delta }}\left( \left( \psi -\lambda
\mathcal{L}^{\ast }\left( \psi \right) \right) \varphi -g\psi \right) \\
&=&\lambda \int_{\partial \mathcal{R}_{\delta }}\left( n_{v}G_{\alpha
}\partial _{v}\varphi -n_{v}\partial _{v}G_{\alpha }\varphi +v\varphi
G_{\alpha }n_{x}\right) ds+ \\
&&-\int_{X\diagdown \mathcal{R}_{\delta }}\psi \mu dxdv=\lambda \Psi \left(
\delta \right) -\int_{X\diagdown \mathcal{R}_{\delta }}\psi \mu dxdv
\end{eqnarray*}%
Using Definition \ref{SubSuper2} and (\ref{P1E9}) we arrive at:%
\begin{equation*}
\lim_{\delta \rightarrow 0}\left[ \lambda \Psi \left( \delta \right)
-\int_{X\diagdown \mathcal{R}_{\delta }}\psi \mu dxdv\right] \leq 0
\end{equation*}%
and (\ref{P1E8}) yields $\lim_{\delta \rightarrow 0}\int_{X\diagdown
\mathcal{R}_{\delta }}\psi \mu dxdv=0,$ whence (\ref{P1E7}) follows if $%
\lim_{\delta \rightarrow 0^{+}}\Psi \left( \delta \right) <\infty .$

Suppose now that $\lim_{\delta \rightarrow 0^{+}}\Psi \left( \delta \right)
=\infty .$ Then $\Psi \left( \delta \right) >0$ can be made arbitrarily
large for $\delta $ small. We consider test functions with the form $\psi
=G_{\alpha }\xi $ where $\xi \geq 0$ is a function which takes constant
values in each set $\partial \mathcal{R}_{\rho }$ for each $\rho >0.$
Moreover, we will assume that $\xi =\bar{\xi}\left( \rho \right) $ satisfies
$\bar{\xi}^{\prime }\left( \rho \right) \leq 0$ for $\rho $ small, and $\bar{%
\xi}^{\prime }$ globally bounded. We will assume that $\bar{\xi}$ is
constant for small $\rho ,$ whence $\psi $ is an admissible test function.
We have $\int \left( \psi -\lambda \mathcal{L}^{\ast }\left( \psi \right)
\right) \varphi =\lim_{\delta \rightarrow 0}\int_{X\diagdown \mathcal{R}%
_{\delta }}\left( \psi -\lambda \mathcal{L}^{\ast }\left( \psi \right)
\right) \varphi .$ Integrating by parts we obtain:%
\begin{equation*}
\int_{X\diagdown \mathcal{R}_{\delta }}\left( \psi -\lambda \mathcal{L}%
^{\ast }\left( \psi \right) \right) \varphi =-\lambda \int_{\partial
\mathcal{R}_{\delta }}n_{v}D_{v}\psi \varphi ds+\int_{X\diagdown \mathcal{R}%
_{\delta }}\left( \psi \varphi +\lambda \left( D_{v}\psi D_{v}\varphi
+vD_{x}\psi \varphi \right) \right)
\end{equation*}%
Using then that $\psi =G_{\alpha }\xi $ we rewrite this expression as:%
\begin{eqnarray*}
&&\int_{X\diagdown \mathcal{R}_{\delta }}\left( \psi -\lambda \mathcal{L}%
^{\ast }\left( \psi \right) \right) \varphi  \\
&=&-\lambda \int_{\partial \mathcal{R}_{\delta }}n_{v}\left( \xi
D_{v}G_{\alpha }+G_{\alpha }D_{v}\xi \right) \varphi ds+ \\
&&+\int_{X\diagdown \mathcal{R}_{\delta }}\left( \psi \varphi +\lambda
\left( \xi D_{v}G_{\alpha }D_{v}\varphi +G_{\alpha }D_{v}\xi D_{v}\varphi
+vD_{x}G_{\alpha }\xi \varphi +vG_{\alpha }D_{x}\xi \varphi \right) \right)
\end{eqnarray*}%
We now use that $D_{vv}G_{\alpha }=vD_{x}G_{\alpha }$ to rewrite this
formula as:%
\begin{eqnarray*}
&&\int_{X\diagdown \mathcal{R}_{\delta }}\left( \psi -\lambda \mathcal{L}%
^{\ast }\left( \psi \right) \right) \varphi  \\
&=&-\lambda \int_{\partial \mathcal{R}_{\delta }}n_{v}\left( \xi
D_{v}G_{\alpha }+G_{\alpha }D_{v}\xi \right) \varphi ds+ \\
&&+\int_{X\diagdown \mathcal{R}_{\delta }}\left( \psi \varphi +\lambda
\left( \xi D_{v}G_{\alpha }D_{v}\varphi +G_{\alpha }D_{v}\xi D_{v}\varphi
+D_{vv}G_{\alpha }\xi \varphi +vG_{\alpha }D_{x}\xi \varphi \right) \right)
\end{eqnarray*}%
Then, integrating by parts in the term containing $D_{vv}G_{\alpha }$ we
obtain:%
\begin{eqnarray*}
&&\int_{X\diagdown \mathcal{R}_{\delta }}\left( \psi -\lambda \mathcal{L}%
^{\ast }\left( \psi \right) \right) \varphi  \\
&=&-\lambda \int_{\partial \mathcal{R}_{\delta }}n_{v}\left( \xi
D_{v}G_{\alpha }+G_{\alpha }D_{v}\xi \right) \varphi ds+\lambda
\int_{\partial \mathcal{R}_{\delta }}n_{v}\xi D_{v}G_{\alpha }\varphi ds+ \\
&&+\int_{X\diagdown \mathcal{R}_{\delta }}\left( \psi \varphi +\lambda
\left( \xi D_{v}G_{\alpha }D_{v}\varphi +G_{\alpha }D_{v}\xi D_{v}\varphi
-D_{v}G_{\alpha }D_{v}\left( \xi \varphi \right) +vG_{\alpha }D_{x}\xi
\varphi \right) \right)
\end{eqnarray*}%
whence, after some cancellations and rearrangements of terms:%
\begin{equation}
\int_{X\diagdown \mathcal{R}_{\delta }}\left( \psi -\lambda \mathcal{L}%
^{\ast }\left( \psi \right) \right) \varphi =-\lambda \int_{\partial
\mathcal{R}_{\delta }}n_{v}G_{\alpha }D_{v}\xi \varphi ds+\int_{X\diagdown
\mathcal{R}_{\delta }}\left( \psi \varphi +\lambda Q\right) \   \label{P2E1}
\end{equation}%
where:%
\begin{equation*}
Q=D_{v}\xi \left[ G_{\alpha }D_{v}\varphi -D_{v}G_{\alpha }\varphi \right]
+vG_{\alpha }D_{x}\xi \varphi
\end{equation*}%
We now use the following Fubini's like formula:%
\begin{equation}
\int_{X\diagdown \mathcal{R}_{\delta }}\left( \psi \varphi +\lambda Q\right)
=\int_{X\diagdown \mathcal{R}_{\delta }}\psi \varphi +\lambda \int_{\delta
}^{\infty }d\rho \int_{\partial \mathcal{R}_{\rho }}J_{\rho }Qds_{\rho }\
\label{P2E2}
\end{equation}%
where $ds_{\rho }$ is the arc-length in $\partial \mathcal{R}_{\rho }$ and $%
J_{\rho }$ is obtained by means of the formula:%
\begin{equation*}
\lim_{h\rightarrow 0}\frac{1}{h}\int_{\mathcal{R}_{\rho +h}\diagdown
\mathcal{R}_{\rho }}\omega dxdv=\int_{\partial \mathcal{R}_{\rho }}J_{\rho
}\omega ds_{\rho }
\end{equation*}%
for any continuous test function $\omega .$ It is readily seen that the
function $J_{\rho }$ is well defined by means of elementary arguments.
Moreover, it is given by:%
\begin{equation*}
J_{\rho }=\left\{
\begin{array}{c}
1\ ,\ x\in \left[ 0,\rho ^{3}\right] \ ,\ v=-\rho  \\
r\ ,\ x\in \left[ 0,\rho ^{3}\right] \ ,\ v=r\rho  \\
3\rho ^{2}\ ,\ x=\rho ^{3}\ ,\ -\rho \leq v\leq r\rho
\end{array}%
\right\}
\end{equation*}%
On the other hand, using our choice of $\xi $ we obtain:%
\begin{equation*}
D_{x}\xi =\frac{\bar{\xi}^{\prime }\left( \rho \right) }{3\rho ^{2}}\ \text{%
if}\ -x\leq v^{3}\leq r^{3}x\ ,\ \ D_{x}\xi =0\ \ \text{if\ \ }v^{3}<-x\
\text{or }v^{3}>r^{3}x
\end{equation*}%
\begin{equation*}
D_{v}\xi =-\bar{\xi}^{\prime }\left( \rho \right) \ \ \text{if\ }v^{3}<-x\ \
,\ \ D_{v}\xi =r\bar{\xi}^{\prime }\left( \rho \right) \ \ \text{if\ }%
v^{3}>r^{3}x\ \ ,\ D_{v}\xi =0\ \text{if\ }-x\leq v^{3}\leq r^{3}x\text{\ }\
\end{equation*}%
Then, using also that the normal vector $\left( n_{x},n_{v}\right) $ points
towards $\mathcal{R}_{\rho }$ we obtain:%
\begin{equation*}
J_{\rho }Q=-\bar{\xi}^{\prime }\left( \rho \right) \left[ n_{v}\left[
G_{\alpha }D_{v}\varphi -D_{v}G_{\alpha }\varphi \right] +v\varphi G_{\alpha
}n_{x}\right]
\end{equation*}%
Using this formula in (\ref{P2E1}), (\ref{P2E2}) we obtain:%
\begin{eqnarray*}
\int_{X\diagdown \mathcal{R}_{\delta }}\left( \psi -\lambda \mathcal{L}%
^{\ast }\left( \psi \right) \right) \varphi  &=&-\lambda \int_{\partial
\mathcal{R}_{\delta }}n_{v}G_{\alpha }D_{v}\xi \varphi ds+\int_{X\diagdown
\mathcal{R}_{\delta }}\psi \varphi - \\
&&-\lambda \int_{\delta }^{\infty }\bar{\xi}^{\prime }\left( \rho \right)
d\rho \int_{\partial \mathcal{R}_{\rho }}\left[ n_{v}\left[ G_{\alpha
}D_{v}\varphi -D_{v}G_{\alpha }\varphi \right] +v\varphi G_{\alpha }n_{x}%
\right] ds_{\rho }\
\end{eqnarray*}%
whence, using (\ref{W6E3}) as well as the fact that $\bar{\xi}^{\prime
}\left( \rho \right) =0$ if $\rho $ is sufficiently small, we obtain:
\begin{equation*}
\int_{X\diagdown \mathcal{R}_{\delta }}\left( \psi -\lambda \mathcal{L}%
^{\ast }\left( \psi \right) \right) \varphi =\int_{X\diagdown \mathcal{R}%
_{\delta }}\psi \varphi -\lambda \int_{\delta }^{\infty }\bar{\xi}^{\prime
}\left( \rho \right) \Psi \left( \rho \right) d\rho \
\end{equation*}%
if $\delta $ is sufficiently small. Notice that $\Psi \left( \rho \right) $
is unbounded as $\rho \rightarrow 0.$ Choosing $\bar{\xi}^{\prime }\left(
\rho \right) <0$ for small $\rho ,$ but globally bounded we can make $%
-\int_{\delta }^{\infty }\bar{\xi}^{\prime }\left( \rho \right) \Psi \left(
\rho \right) d\rho $ sufficiently large. On the other hand, we have $%
\lim_{\delta \rightarrow 0}\int_{X\diagdown \mathcal{R}_{\delta }}\left(
\psi -\lambda \mathcal{L}^{\ast }\left( \psi \right) \right) \varphi =\int
\left( \psi -\lambda \mathcal{L}^{\ast }\left( \psi \right) \right) \varphi
\leq \int g\psi \leq C$ and $-\int_{X\diagdown \mathcal{R}_{\delta }}\psi
\varphi \leq C$ for some suitable constant $C$ independent of $\delta .$
Then:%
\begin{equation*}
-\int_{\delta }^{\infty }\bar{\xi}^{\prime }\left( \rho \right) \Psi \left(
\rho \right) d\rho \leq \frac{3C}{\lambda }
\end{equation*}%
for any $\bar{\xi}\geq 0$ bounded$.$ However, as indicated above, it is
possible to choose $\bar{\xi},\ \delta $ yielding $-\int_{\delta }^{\infty }%
\bar{\xi}^{\prime }\left( \rho \right) \Psi \left( \rho \right) d\rho $
arbitrarily large with $\bar{\xi}\geq 0$ bounded. This contradiction then
yields $\lim_{\delta \rightarrow 0^{+}}\Psi \left( \delta \right) <\infty $
whence the Lemma follows.
\end{proof}

We will show now also that it is possible to define the limit value $\varphi
\left( 0,0\right) $ for sub/supersolutions in the sense of Definition \ref%
{SubSuper2}. This will be made by means of the limit as $\delta \rightarrow
0 $ of the quantity $H\left( \delta \right) $ defined below.

\begin{lemma}
\label{LimitHzero}Assume $r<r_{c}.$ Let $g\in C\left( X\right) $ and $%
\varphi $ a subsolution of (\ref{T6E6}), (\ref{T6E8})\ in the sense of
Definition \ref{SubSuper2}. Suppose that the normal vector $\left(
n_{x},n_{v}\right) $ points towards $\mathcal{R}_{\delta }.$ Let:%
\begin{equation*}
H\left( \delta \right) =\int_{\partial \mathcal{R}_{\delta }}\left( n_{v}G_{-%
\frac{2}{3}}\partial _{v}\varphi -n_{v}\partial _{v}G_{-\frac{2}{3}}\varphi
+v\varphi G_{-\frac{2}{3}}n_{x}\right) ds
\end{equation*}%
which is well defined in the sense of Traces (cf. Proposition \ref{Traces}).
Then the limit $\lim_{\delta \rightarrow 0^{+}}H\left( \delta \right)
=H\left( 0\right) $ exists. We also have:%
\begin{equation}
\lim_{\delta \rightarrow 0^{+}}\varphi \left( \delta ^{3}x,\delta v\right) =-%
\frac{H\left( 0\right) }{9^{\frac{2}{3}}\left[ \log \left( r\right) +\frac{%
\pi }{\sqrt{3}}\right] }=\varphi \left( 0,0\right) \   \label{W6E2}
\end{equation}%
in $L_{loc}^{p}$ for some $p>1.$
\end{lemma}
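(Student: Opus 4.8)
The plan is to mirror the structure of the analysis already carried out for $G_\alpha$ in Lemma~\ref{TraceAphi}, but now testing against the weight $G_{-\frac{2}{3}}$ instead of $G_\alpha$, and to exploit the flux computation in Proposition~\ref{LimitFluxes} to pin down the limit value. First I would recall from Lemma~\ref{TraceAphi} that $\mu = -\varphi + \lambda\mathcal{L}(\varphi) + g \ge 0$ defines a Radon measure on $X\diagdown\{(0,0)\}$, and that for subsolutions in the sense of Definition~\ref{SubSuper2} the traces of $\varphi$ and $\partial_v\varphi$ on the boundaries $\partial\mathcal{R}_\delta$ exist (Proposition~\ref{Traces}). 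Multiplying the identity $-\varphi + \lambda\mathcal{L}(\varphi) + g = \mu$ by $G_{-\frac{2}{3}}$, integrating over $\mathcal{R}_{\delta_2}\diagdown\mathcal{R}_{\delta_1}$, and integrating by parts using $\mathcal{L}^*(G_{-\frac{2}{3}})=0$ (equivalently $D_{vv}G_{-\frac{2}{3}} = vD_xG_{-\frac{2}{3}}$), I obtain the analogue of (\ref{Ma2}):
\begin{equation*}
-H(\delta_2) + H(\delta_1) = \frac{1}{\lambda}\int_{\mathcal{R}_{\delta_2}\diagdown\mathcal{R}_{\delta_1}} G_{-\frac{2}{3}}\mu\,dxdv + \frac{1}{\lambda}\int_{\mathcal{R}_{\delta_2}\diagdown\mathcal{R}_{\delta_1}} G_{-\frac{2}{3}}(\varphi - g)\,dxdv .
\end{equation*}
Since $G_{-\frac{2}{3}} \le Cx^{-\frac{2}{3}}$ is integrable near the origin (by the integrability Remark after Proposition~\ref{Fasympt}, which holds for $\gamma = -\frac{2}{3}$ as well since $-\frac{2}{3} > -\frac{6}{5}$), and since $\varphi$, $g$ are bounded, the second integral is $O(\delta_2^{1+3\alpha'})$-type small; the first is monotone in the region of integration because $\mu \ge 0$. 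This shows that $H(\delta) - (\text{a continuous, monotone correction})$ is monotone, hence $\lim_{\delta\to 0^+} H(\delta) = H(0)$ exists in $[-\infty,\infty]$. A boundedness argument, using test functions of the form $\psi = G_{-\frac{2}{3}}\xi$ with $\xi$ a radial cutoff constant near the origin (exactly as in the second half of the proof of Lemma~\ref{TraceAphi}, but now with the weight $G_{-\frac{2}{3}}$, whose $\mathcal{L}^*$-image vanishes), together with the subsolution inequality (\ref{sub_sol2}) and the already-established finiteness $\int_{\mathcal{R}_1}G_\alpha\mu < \infty$ (note $G_{-\frac{2}{3}} \le CG_\alpha$ near the origin since $-\frac{2}{3} > \alpha$ for $r < r_c$, so $\int_{\mathcal{R}_1}G_{-\frac{2}{3}}\mu < \infty$ too), rules out $H(0) = \pm\infty$, so $H(0)$ is finite.

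Next I would establish the crucial identification (\ref{W6E2}). By Theorem~\ref{AsSingPoint} applied to $\varphi$ solving $\mathcal{L}\varphi = h$ away from the origin (here $h = \frac{1}{\lambda}(\varphi - g + \mu)$ — but one must be careful that $\mu$ may be a nonzero measure, so I would instead work directly with the rescaled functions), $\varphi$ has the decomposition $\varphi(x,v) = \varphi(0,0) + \mathcal{A}(\varphi)F_\beta(x,v) + \psi(x,v)$ with $\psi$ subleading. Define $\varphi_\delta(x,v) = \varphi(\delta^3 x, \delta v)$; a compactness/rescaling argument as in Theorem~\ref{AsSingPoint} shows $\varphi_\delta \to \varphi(0,0)$ in $L^p_{loc}$ as $\delta\to0$, since both the $F_\beta$ term (homogeneity $\beta>0$) and the remainder $\psi$ scale to zero. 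To compute $\varphi(0,0)$ in terms of $H(0)$, I would evaluate $H(\delta)$ using the asymptotic decomposition: the $\psi$ contribution and the $\mathcal{A}(\varphi)F_\beta$ contribution to the flux integral $H(\delta)$ vanish as $\delta\to0$ (the former by the subleading estimate plus the hypoellipticity derivative bounds of Theorem~\ref{RegTime}, the latter by a flux cancellation analogous to Proposition~\ref{FluxesGalpha} — one checks $\int_{\partial\mathcal{R}_\delta}[\,n_vG_{-\frac{2}{3}}D_vF_\beta - n_v D_vG_{-\frac{2}{3}}F_\beta + vF_\beta G_{-\frac{2}{3}}n_x\,]ds$ is $O(\delta^{2+\text{something positive}})$ by homogeneity, since $\beta + (-\frac{2}{3}) = \alpha$ and the relevant exponent is positive when... actually this is the delicate bookkeeping). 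The surviving term is $\varphi(0,0)$ times $\int_{\partial\mathcal{R}_\delta}[\,v G_{-\frac{2}{3}}n_x - n_v D_v G_{-\frac{2}{3}}\,]ds$, whose limit as $\delta\to0$ is $-9^{\frac{2}{3}}[\log r + \frac{\pi}{\sqrt3}]$ by Proposition~\ref{LimitFluxes} (noting the sign of the normal). Dividing gives (\ref{W6E2}), with the sign of the denominator ensured by $r < r_c \iff \log r + \frac{\pi}{\sqrt3} < 0$.

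The main obstacle, I expect, will be the rigorous control of the flux contributions from the measure $\mu$ and from the remainder term $\psi$ near $(0,0)$. Unlike in the $G_\alpha$ case where the weight matched the leading singular mode, here $G_{-\frac{2}{3}}$ is the weight conjugate to the \emph{other} scaling exponent, and the subleading corrections to the decomposition $\varphi = \varphi(0,0) + \mathcal{A}(\varphi)F_\beta + \psi$ are only known to be $o(x^\beta + |v|^{3\beta})$ via Proposition~\ref{mu_R} and Theorem~\ref{AsSingPoint}; one must verify that these error terms, paired against $G_{-\frac{2}{3}}$ and differentiated once in $v$, still integrate to something that vanishes on $\partial\mathcal{R}_\delta$ as $\delta\to0$. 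This requires carefully combining the uniform decay estimate $\mu(R) = o(R^\beta)$ of Proposition~\ref{mu_R} with the one-derivative hypoellipticity gain on horizontal/vertical curves (Theorem~\ref{Hypoell}(ii) and Theorem~\ref{RegTime}), together with the exponent arithmetic $\alpha + \beta + \frac{2}{3} = 0$, to see that the relevant boundary integrals carry a strictly positive power of $\delta$. A secondary technical point is that $\varphi$ need not be smooth (it is only a subsolution, with $\mu$ possibly charging $\{x=0\}$), so all the integrations by parts must be done in the trace sense provided by Proposition~\ref{Traces}, approximating the characteristic function of $\mathcal{R}_{\delta_2}\diagdown\mathcal{R}_{\delta_1}$ by smooth functions exactly as in the proof of Lemma~\ref{TraceAphi}.
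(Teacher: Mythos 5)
Your outline for showing that $\lim_{\delta\to 0^+}H(\delta)$ exists is essentially the paper's argument: multiply $-\varphi+\lambda\mathcal{L}(\varphi)+g=\mu$ by $G_{-2/3}$, integrate by parts over annuli $\mathcal{R}_{\delta_2}\setminus\mathcal{R}_{\delta_1}$, and use $\alpha<-\frac{2}{3}$ together with $\int G_\alpha\mu<\infty$ (which you correctly import from Lemma~\ref{TraceAphi}) to control the measure term. One small sharpening: the comparison $G_{-2/3}\le C\delta^{-2-3\alpha}G_\alpha$ on $\mathcal{R}_{\delta}$ gives directly $|H(\delta_2)-H(\delta_1)|\le C\delta_2^{-2-3\alpha}$ with exponent $-2-3\alpha>0$, so $H(\delta)$ is Cauchy; you do not need a separate monotonicity-plus-boundedness argument to rule out $\pm\infty$.

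The genuine gap is in the identification~(\ref{W6E2}). You want to invoke the asymptotic decomposition $\varphi=\varphi(0,0)+\mathcal{A}(\varphi)F_\beta+\psi$ from Theorem~\ref{AsSingPoint} and then compute the flux $H(\delta)$ term by term. But Theorem~\ref{AsSingPoint} is proved only for $\varphi\in C(X)$ with $\mathcal{L}\varphi=h\in C(X)$; here $\varphi$ is merely a subsolution, so $\mathcal{L}\varphi$ is $\frac{1}{\lambda}(\varphi-g+\mu)$ with $\mu$ a possibly nonzero Radon measure which can even charge $\{x=0\}$. The decomposition therefore cannot be applied to $\varphi$ directly, and your parenthetical caveat ("so I would instead work directly with the rescaled functions") is the right instinct but is not carried through: you immediately revert to the decomposition in the flux bookkeeping, and you never supply the ingredient that actually identifies the rescaled limit. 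The missing step is the Liouville theorem, Theorem~\ref{Liouville}(iv): the paper rescales $\varphi_\delta(x,v)=\varphi(\delta^3x,\delta v)$, shows the rescaled measures $\mu_\delta=\delta^2\mu(\delta^3\cdot,\delta\cdot)$ go to zero weakly (using the same $\int G_\alpha\mu<\infty$ bound), extracts a weak limit $\hat\varphi$ solving $\mathcal{L}\hat\varphi=0$ with the compatibility condition (\ref{comp1}), passes the flux $H(\delta)$ to the limit via the trace-continuity Lemma~\ref{TraceConvergence}, and only then invokes Theorem~\ref{Liouville}(iv) to conclude $\hat\varphi$ is a constant. That constant is $\varphi(0,0)$, and Proposition~\ref{LimitFluxes} then reads off $H(0^+)=-9^{2/3}\varphi(0,0)[\log r+\frac{\pi}{\sqrt3}]$. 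Without the Liouville step you have no reason for the rescaled limit to be constant, and the asymptotics you want to substitute into $H(\delta)$ is precisely the thing you would have to establish first.
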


\begin{proof}
We use in Definition \ref{SubSuper2} the test function $\psi =\xi G_{-\frac{2%
}{3}}$ where $\xi =1$ for $\left( x,v\right) \in \mathcal{R}_{\delta
_{2}}\diagdown \mathcal{R}_{\delta _{1}}$ and $\xi =0$ otherwise. We then
obtain:%
\begin{eqnarray*}
-H\left( \delta _{2}\right) +H\left( \delta _{1}\right)  &=&\frac{1}{\lambda
}\int_{\mathcal{R}_{\delta _{2}}\diagdown \mathcal{R}_{\delta _{1}}}G_{-%
\frac{2}{3}}\mu dxdv+ \\
&&+\frac{1}{\lambda }\int_{\mathcal{R}_{\delta _{2}}\diagdown \mathcal{R}%
_{\delta _{1}}}G_{-\frac{2}{3}}\varphi dxdv-\frac{1}{\lambda }\int_{\mathcal{%
R}_{\delta _{2}}\diagdown \mathcal{R}_{\delta _{1}}}G_{-\frac{2}{3}}gdxdv
\end{eqnarray*}%
Using (\ref{P2E3}) combined with the fact that $\alpha <-\frac{2}{3}$ we
obtain $0\leq \frac{1}{\lambda }\int_{\mathcal{R}_{\delta _{2}}\diagdown
\mathcal{R}_{\delta _{1}}}G_{-\frac{2}{3}}\mu dxdv\leq C\left( \delta
_{2}\right) ^{b}$ for $b=-2-3\alpha >0.$ Then, using the boundedness of $%
\varphi $ and $g$ we obtain $\left\vert H\left( \delta _{2}\right) -H\left(
\delta _{1}\right) \right\vert \leq C\left( \delta _{2}\right) ^{b}.$
Choosing $\delta _{1}=\frac{\delta _{2}}{2}$ and iterating, we obtain that
the limit $\lim_{\delta \rightarrow 0}H\left( \delta \right) $ exists and it
is bounded.

In order to prove the convergence of $\varphi _{\delta }\left( x,v\right)
=\varphi \left( \delta ^{3}x,\delta v\right) $ we notice that the functions $%
\varphi _{\delta }$ satisfy:%
\begin{equation}
-\delta ^{2}\varphi _{\delta }+\lambda \mathcal{L}\left( \varphi _{\delta
}\right) +\delta ^{2}g_{\delta }=\mu _{\delta }\   \label{P2E4}
\end{equation}%
where:%
\begin{equation*}
\mu _{\delta }\left( x,v\right) =\delta ^{2}\mu \left( \delta ^{3}x,\delta
v\right) \text{ and}~g_{\delta }\left( x,v\right) =\delta ^{2}g\mu \left(
\delta ^{3}x,\delta v\right) .
\end{equation*}%
Notice that (\ref{P2E3}) yields:\
\begin{equation*}
\int_{\mathcal{R}_{1}\diagdown \left\{ \left( 0,0\right) \right\} }G_{-\frac{%
2}{3}}\mu _{\delta }dxdv\leq C\left( \delta \right) ^{b}
\end{equation*}%
The measures $\mu _{\delta }$ converge to zero in the weak topology in
compact sets. On the other hand we have:\
\begin{equation*}
H\left( \delta \right) =\int_{\partial \mathcal{R}_{1}}\left( n_{v}G_{-\frac{%
2}{3}}\partial _{v}\varphi _{\delta }-n_{v}\partial _{v}G_{-\frac{2}{3}%
}\varphi _{\delta }+v\varphi _{\delta }G_{-\frac{2}{3}}n_{x}\right) ds
\end{equation*}%
The functions $\varphi _{\delta }$ are bounded. Taking suitable subsequences
we obtain convergence to a limit $\hat{\varphi}$ in the weak topology.
Taking the limit in (\ref{P2E4}) we obtain that $\hat{\varphi}$ solves the
following equation in the sense of distributions:%
\begin{equation*}
\mathcal{L}\left( \hat{\varphi}\right) =0
\end{equation*}%
The traces of the functions $\varphi _{\delta }$ at the boundaries of $%
\partial \mathcal{R}_{1}$ are defined uniformly in $\delta ,$ in the sense
that the derivatives of the integrals of $\varphi _{\delta }$ in vertical
lines (and horizontal lines with some derivatives), have uniformly bounded
derivatives (in $L^{1}$ norms), with derivatives converging to zero as $%
\delta \rightarrow 0$) (cf. Lemma \ref{TraceConvergence}). Then, we can take
the limit of $H\left( \delta \right) $ to obtain:%
\begin{equation}
H\left( 0^{+}\right) =\int_{\partial \mathcal{R}_{1}}\left( n_{v}G_{-\frac{2%
}{3}}\partial _{v}\hat{\varphi}-n_{v}\partial _{v}G_{-\frac{2}{3}}\hat{%
\varphi}+v\hat{\varphi}G_{-\frac{2}{3}}n_{x}\right) ds  \label{P2E5}
\end{equation}%
Moreover, the same argument implies that $\hat{\varphi}$ satisfies (\ref%
{comp1}). The point (iv) in Theorem \ref{Liouville} then implies that $\hat{%
\varphi}$ is constant (since it is bounded). We will denote this constant as
$\varphi \left( 0,0\right) .$ Due to (\ref{P2E5}) the limit is independent
of the subsequence and we obtain, using (3.28) in \cite{HVJ2}:%
\begin{equation*}
H\left( 0^{+}\right) =-9^{\frac{2}{3}}\varphi \left( 0,0\right) \left[ \log
\left( r\right) +\frac{\pi }{\sqrt{3}}\right]
\end{equation*}

The convergence takes place in $L^{p}$ for some $p>1,$ due to the
regularizing effects for hypoelliptic operators in Theorem \ref{Hypoell}.
\end{proof}

The following computation shows that the subsolutions of (\ref{T6E6}), (\ref%
{T6E8}) in the sense of Definition \ref{SubSuper2} with the asymptotics (\ref%
{phi_decom}) must have a restriction on the sign of $\mathcal{A}\left(
\varphi \right) .$

\begin{lemma}
\label{ConstStar}Suppose that $\varphi $ is a subsolution of (\ref{T6E6}), (%
\ref{T6E8}) in the sense of Definition \ref{SubSuper2}. Let us assume also
that $\varphi $ satisfies (\ref{phi_decom}). Then:%
\begin{equation}
\lim_{\delta \rightarrow 0^{+}}\Psi (\delta )=C_{\ast }\mathcal{A}\left(
\varphi \right) \   \label{R2}
\end{equation}%
where $\Psi (\delta )$ is as in \eqref{W6E3} and $C_{\ast }$ is as in
(\ref{W1E5a}). Moreover, we have $\mathcal{A}\left( \varphi
\right) \geq 0.$
\end{lemma}

\begin{proof}
Taking the limit $\delta \rightarrow 0$ and using (\ref{phi_decom}), we can
approximate the right-hand side of \eqref{W6E3} as:
\begin{equation*}
\mathcal{A}\left( \varphi \right) \int_{\partial \mathcal{R}_{\delta }}\left[
n_{v}G_{\alpha }D_{v}F_{\beta }-n_{v}D_{v}G_{\alpha }F_{\beta }+vF_{\beta
}G_{\alpha }n_{x}\right] ds\rightarrow C_{\ast }\mathcal{A}\left( \varphi
\right) \text{ as }\delta \rightarrow 0,
\end{equation*}%
where we have used (\ref{W1E5}). This gives (\ref{R2}). We recall that $%
C_{\ast }<0,$ and since $\varphi $ is a subsolution of (\ref{T6E6}), (\ref%
{T6E8}) in the sense of Definition \ref{SubSuper2}, by Lemma \ref{TraceAphi}%
, we obtain that $\mathcal{A}\left( \varphi \right) \geq 0$ whence the
result follows.
\end{proof}

\begin{lemma}
\label{maximaRef}Suppose that $\mathcal{W}_{1},\ \mathcal{W}_{2}$ are two
open subsets of $X$ and that $\varphi _{1}\in L_{b}^{\infty }\left( \mathcal{%
W}_{1}\right) ,\ \varphi _{2}\in L_{b}^{\infty }\left( \mathcal{W}%
_{2}\right) $ are two subsolutions of (\ref{T6E6}), (\ref{T6E8}) in the
sense of Definition \ref{SubSuper2} in their respective domains. Then the
function $\varphi $ defined by means of $\varphi =\max \left\{ \varphi
_{1},\varphi _{2}\right\} $ in $\mathcal{W}_{1}\cap \mathcal{W}_{2},\
\varphi =\varphi _{1}$ in $\mathcal{W}_{1}\diagdown \left( \mathcal{W}%
_{1}\cap \mathcal{W}_{2}\right) ,\ \varphi =\varphi _{2}$ in $\mathcal{W}%
_{2}\diagdown \left( \mathcal{W}_{1}\cap \mathcal{W}_{2}\right) $ is a
subsolution of (\ref{T6E6}), (\ref{T6E8}) in $\mathcal{W}=\left( \mathcal{W}%
_{1}\cup \mathcal{W}_{2}\right) $ in the sense of Definition \ref{SubSuper2}.

Suppose that $\mathcal{W}_{1},\ \mathcal{W}_{2}$ are two open subsets of $X$
and that $\varphi _{1}\in L_{b}^{\infty }\left( \mathcal{W}_{1}\right) ,\
\varphi _{2}\in L_{b}^{\infty }\left( \mathcal{W}_{2}\right) $ are two
supersolutions of (\ref{T6E6}), (\ref{T6E8}) in the sense of Definition \ref%
{SubSuper2} in their respective domains. Then the function $\varphi $
defined by means of $\varphi =\min \left\{ \varphi _{1},\varphi _{2}\right\}
$ in $\mathcal{W}_{1}\cap \mathcal{W}_{2},\ \varphi =\varphi _{1}$ in $%
\mathcal{W}_{1}\diagdown \left( \mathcal{W}_{1}\cap \mathcal{W}_{2}\right)
,\ \varphi =\varphi _{2}$ in $\mathcal{W}_{2}\diagdown \left( \mathcal{W}%
_{1}\cap \mathcal{W}_{2}\right) $ is a supersolution of (\ref{T6E6}), (\ref%
{T6E8}) in $\mathcal{W}=\left( \mathcal{W}_{1}\cup \mathcal{W}_{2}\right) $
in the sense of Definition \ref{SubSuper2}.
\end{lemma}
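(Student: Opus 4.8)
The plan is to follow closely the proof of Lemma~\ref{maxima}, modifying the argument in two places where the new Definition~\ref{SubSuper2} differs from Definition~\ref{SubSuper1}: first, the admissible test functions may now have a power-law singularity $\theta\zeta G_{\gamma}$ at $(0,0)$, and second, there is no pointwise condition $\varphi(0,0)=g(0,0)$ to preserve but instead the constraint $\mathcal{A}(\varphi)\geq 0$ (equivalently $\lim_{\delta\to0^+}\Psi(\delta)\leq 0$) which must be inherited by the maximum. As before, I would mollify: fix $\delta>0$, set $\mathcal{Z}_\delta$ as in \eqref{Ddomain}, convolve $\varphi_k$ against the $x\geq0$-supported mollifiers $\zeta_\varepsilon$ to get smooth $\varphi_{k,\varepsilon}$ on $(\mathcal{W}_k^\pm)_\delta$, and obtain the pointwise commutator estimate \eqref{Point}. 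Away from the line $\{x=0\}$ and away from the origin, the test functions are smooth, so the maximum-of-subsolutions computation (Sard's lemma to smooth the level sets $\{\varphi_{1,\varepsilon}=\varphi_{2,\varepsilon}+\rho_n\}$, integration by parts, sign of the jump $n_v[\partial_v\varphi_{1,\varepsilon}-\partial_v\varphi_{2,\varepsilon}]$) goes through verbatim, yielding \eqref{U1E1} for all $\psi\geq0$ with $\mathrm{supp}(\psi)\cap(\{x=0\}\cup\{(0,0)\})=\varnothing$; the extension to test functions hitting $\{x=0\}$ uses Proposition~\ref{Traces}, Lemmas~\ref{TraceInequalities}, \ref{TraceIde}, exactly as in Lemma~\ref{maxima}.

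The genuinely new point is the behaviour near the singular point $(0,0)$, i.e. test functions of the form $\psi=\theta\zeta G_{\gamma}$ with $\gamma\in\{-\tfrac23,\alpha\}$. Here the key is Lemma~\ref{TraceAphi}: for each of $\varphi_1,\varphi_2$ the limit $\lim_{\delta\to0^+}\Psi_k(\delta)$ exists and is $\leq 0$, and there is a Radon measure $\mu_k\geq0$ with $\int_{\mathcal{R}_1}G_\alpha\mu_k<\infty$. I would first record that $\varphi=\max\{\varphi_1,\varphi_2\}$ also satisfies the subsolution inequality \eqref{sub_sol2} for test functions supported away from the origin (this is the content of the previous paragraph), so the measure $\mu=-\varphi+\lambda\mathcal{L}\varphi+g\geq0$ is defined on $X\setminus\{(0,0)\}$ and is dominated near the origin by $\max\{\mu_1,\mu_2\}$ in the sense that $\int_{\mathcal{R}_1}G_\alpha\mu<\infty$ — indeed on the open set $\{\varphi_1>\varphi_2\}$ one has $\mu=\mu_1$ (plus a nonnegative contribution of the interface, which by the sign of the jump only makes $\mu$ smaller, not larger), and symmetrically; more carefully, the distributional computation of Lemma~\ref{maxima} shows $\mu\leq\mu_1\chi_{\{\varphi_1\geq\varphi_2\}}+\mu_2\chi_{\{\varphi_2\geq\varphi_1\}}$ plus a nonnegative interface measure that is supported on the smooth interface curve and is $G_\alpha$-integrable. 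Consequently the argument of Lemma~\ref{TraceAphi} applies to $\varphi$ itself, giving that $\Psi(\delta)$ (defined by \eqref{W6E3} with $\varphi$) has a limit as $\delta\to0^+$, and that $\lim_{\delta\to0^+}\Psi(\delta)\leq0$; by Lemma~\ref{LimitHzero} the analogous quantity $H(\delta)$ with $G_{-2/3}$ also has a limit, so $\varphi(0,0)=\lim_{\delta\to0}\varphi(\delta^3x,\delta v)$ is well defined. Feeding $\psi=\theta\zeta G_\gamma\xi$ into the computation (exactly as in the proof of Lemma~\ref{TraceAphi}, integrating by parts on $X\setminus\mathcal{R}_\delta$ and using $\mathcal{L}^\ast G_\gamma=0$) yields $\int(\psi-\lambda\mathcal{L}^\ast\psi)\varphi\leq\int g\psi$, i.e. \eqref{sub_sol2} holds for the full class of admissible test functions.

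The main obstacle, which I would treat with care, is precisely the claim that the interface measure produced when forming $\max\{\varphi_1,\varphi_2\}$ does not destroy the bound $\int_{\mathcal{R}_1}G_\alpha\mu<\infty$ nor the sign $\lim_{\delta\to0^+}\Psi(\delta)\leq0$. The point is that in the smoothed picture the extra boundary term on $\partial\{\varphi_{1,\varepsilon}>\varphi_{2,\varepsilon}+\rho_n(\varepsilon)\}$ is $\lambda\int n_v[\partial_v\varphi_{1,\varepsilon}-\partial_v(\varphi_{2,\varepsilon}+\rho_n)]\psi\,ds$ with $n_v\partial_v(\varphi_{1,\varepsilon}-\varphi_{2,\varepsilon})\leq0$ on that curve, so the extra measure is $\leq0$ as a distribution acting on $\psi\geq0$, hence contributes nonpositively to $\mu$; therefore $\mu\leq\mu_1+\mu_2$ pointwise as measures away from the origin and in particular $\int_{\mathcal{R}_1}G_\alpha\mu\leq\int_{\mathcal{R}_1}G_\alpha(\mu_1+\mu_2)<\infty$. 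Likewise the interface term contributes nonpositively to $\Psi(\delta)$ for each $\delta$, so passing to the limit $\lim_{\delta\to0^+}\Psi(\delta)\leq\lim_{\delta\to0^+}(\Psi_1(\delta)+\Psi_2(\delta))\leq0$ — or, more directly, since $\varphi$ is a genuine subsolution of \eqref{sub_sol2} once established, Lemma~\ref{TraceAphi} gives the sign for free. The supersolution statement follows by the symmetric argument with $\min$ in place of $\max$, reversing all inequalities and the sign of the interface jump; this completes the proof.
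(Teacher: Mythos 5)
The proposal contains a sign error that invalidates its central claim, and a secondary circularity. You argue that the distributional measure $\mu = g - \varphi + \lambda\mathcal{L}\varphi$ associated to $\varphi = \max\{\varphi_1,\varphi_2\}$ satisfies $\mu \leq \mu_1 + \mu_2$ because the interface contribution is ``nonpositive.'' This has the sign reversed. The convex kink of $\max$ across the interface curve makes $D_v^2\varphi$ acquire a \emph{positive} Dirac mass (the jump $\partial_v\varphi_2 - \partial_v\varphi_1$ is nonnegative in the direction the bigger function is overtaken), so the interface contributes \emph{positively} to $\mu$; in the paper's integration-by-parts formula the boundary term $n_v[\partial_v\varphi_{1,\varepsilon}-\partial_v\varphi_{2,\varepsilon}]\psi\leq 0$ helps establish $\mu\geq 0$, i.e.\ the subsolution inequality, but it does \emph{not} bound $\mu$ from above by $\mu_1 + \mu_2$. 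Consequently your deduction $\int_{\mathcal{R}_1}G_\alpha\mu \leq \int_{\mathcal{R}_1}G_\alpha(\mu_1+\mu_2)<\infty$ does not follow: the interface measure could a priori fail to be $G_\alpha$-integrable near the origin, and bounding it is exactly what makes this lemma delicate. Your two routes to $\lim_{\delta\to0}\Psi(\delta)\leq 0$ are then both unsupported: the proposed inequality $\Psi(\delta)\leq\Psi_1(\delta)+\Psi_2(\delta)$ does not come from any sign analysis of the interface (the boundary flux of $\max\{\varphi_1,\varphi_2\}$ has no simple relation to the sum of the fluxes), and the fallback ``since $\varphi$ is a genuine subsolution once established, Lemma~\ref{TraceAphi} gives the sign for free'' is circular, since Lemma~\ref{TraceAphi} requires (\ref{sub_sol2}) for singular test functions $\theta\zeta G_\gamma$, which is precisely the inequality you are trying to prove for $\varphi$.

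The paper's argument avoids both issues. It first notes that away from the singular point $\varphi$ is a subsolution by the reasoning of Lemma~\ref{maxima}, so $\mu\geq 0$ on $X\setminus\{(0,0)\}$, and then derives finiteness of the dyadic quantities $\mu_k=\int_{\mathcal{R}_{2^{-k}}\setminus\mathcal{R}_{2^{-(k+1)}}}G_{-2/3}\mu$ (equivalently, existence of $\lim H(\delta)$) by a \emph{contradiction} argument: if $\sum\mu_k=\infty$, one builds a quadratic lower barrier and propagates the losses across dyadic annuli to force $\varphi=-\infty$, contradicting boundedness. With $\varphi(0,0)$ in hand, it then identifies $\lim_{\delta\to0^+}\Psi_\varphi(\delta)$ not through additivity but through a rescaling and case analysis (first whether $\varphi_1(0,0)\lessgtr\varphi_2(0,0)$, then whether $\Psi_{\varphi_1}(0)\lessgtr\Psi_{\varphi_2}(0)$), using $L^p$-convergence of the rescaled functions together with the Liouville theorem to show that $\Psi_\varphi(0)$ equals the appropriate $\Psi_{\varphi_k}(0)\leq 0$. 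Your proposal would need to supply such an argument in place of the $\mu\leq\mu_1+\mu_2$ step.
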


\begin{proof}
It is enough to prove the result for subsolutions, since for supersolutions
the argument is similar. We recall that we use test functions with the form $%
\psi =\theta \zeta G_{\alpha }+\bar{\psi}$ where $\bar{\psi}\in C^{\infty },$
$\theta \geq 0$\ with $G_{\alpha }$ as in (3.8) in \cite{HVJ2}. If $\theta =0$
and the support of $\bar{\psi}$ does not intersect the singular point the
inequality (\ref{sub_sol2}) follows arguing exactly as in the Proof of Lemma %
\ref{maxima}. On the other hand, since $\varphi $ is bounded, we can use a
limit argument as well as the fact that the contributions of the integrals $%
\int \mathcal{L}^{\ast }\left( \bar{\psi}\right) \varphi $ in the region
close to the origin are small if $\bar{\psi}$ is smooth near the singular
point to prove (\ref{sub_sol2}) for $\bar{\psi}\in C^{\infty }$ compactly
supported. The only difficulty is to derive the inequality for $\psi =\theta
\zeta G_{\alpha }+\bar{\psi}$ with $\theta >0$ and $\zeta $ as in Definition %
\ref{SubSuper2}. We will assume in the following that $\theta =1$.

We define $\varphi =\max \left\{ \varphi _{1},\varphi _{2}\right\} .$ We
need to prove that $\int \left( \psi -\lambda \mathcal{L}^{\ast }\left( \psi
\right) \right) \varphi -\int g\psi \leq 0$ for $\psi $ as above. Notice
that:%
\begin{equation*}
\int \left( \psi -\lambda \mathcal{L}^{\ast }\left( \psi \right) \right)
\varphi -\int g\psi =\lim_{\varepsilon \rightarrow 0^{+}}\left[
\int_{X\diagdown \mathcal{R}_{\varepsilon }}\left( \psi -\lambda \mathcal{L}%
^{\ast }\left( \psi \right) \right) \varphi -\int_{X\diagdown \mathcal{R}%
_{\varepsilon }}g\psi \right]
\end{equation*}%
Integrating by parts and using Proposition \ref{Traces} we obtain for $%
\varepsilon >0$:%
\begin{eqnarray*}
&&\int_{X\diagdown \mathcal{R}_{\varepsilon }}\left( \psi -\lambda \mathcal{L%
}^{\ast }\left( \psi \right) \right) \varphi -\int_{X\diagdown \mathcal{R}%
_{\varepsilon }}g\psi \\
&=&\lambda \int_{\partial \mathcal{R}_{\varepsilon }}\left( n_{v}G_{\alpha
}\partial _{v}\varphi -n_{v}\partial _{v}G_{\alpha }\varphi +v\varphi
G_{\alpha }n_{x}\right) ds+ \\
&&+\int_{X\diagdown \mathcal{R}_{\varepsilon }}\psi \left( \varphi -\lambda
\mathcal{L}\left( \varphi \right) \right) -\int_{X\diagdown \mathcal{R}%
_{\varepsilon }}g\psi
\end{eqnarray*}%
We have that $\left( \varphi -\lambda \mathcal{L}\left( \varphi \right)
\right) -g\leq 0$ in the sense of measures as it might be seen arguing as in
the proof of Lemma \ref{maxima} in sets which do not contain the singular
point $\left( 0,0\right) .$ We then obtain:\
\begin{equation}
\int_{X\diagdown \mathcal{R}_{\varepsilon }}\left( \psi -\lambda \mathcal{L}%
^{\ast }\left( \psi \right) \right) \varphi -\int_{X\diagdown \mathcal{R}%
_{\varepsilon }}g\psi \leq \lambda \int_{\partial \mathcal{R}_{\varepsilon
}}\left( n_{v}G_{\alpha }\partial _{v}\varphi -n_{v}\partial _{v}G_{\alpha
}\varphi +v\varphi G_{\alpha }n_{x}\right) ds  \label{W6E1}
\end{equation}%
We now need to examine the sign of the right-hand side of (\ref{W6E1}). To
this end we first prove the existence of $\varphi \left( 0,0\right) $ in
some suitable sense. Using that $-\mu =\left( \varphi -\lambda \mathcal{L}%
\left( \varphi \right) \right) -g\leq 0$ we obtain, after multiplying by $%
G_{-\frac{2}{3}}$ and integrating by parts in the domain $\mathcal{R}%
_{\varepsilon _{2}}\setminus \mathcal{R}_{\varepsilon _{1}}$ with $%
\varepsilon _{1}<\varepsilon _{2}$ sufficiently small:%
\begin{equation*}
\Phi \left( \varepsilon _{2}\right) -\Phi \left( \varepsilon _{1}\right)
\leq \int_{\mathcal{R}_{\varepsilon _{2}}\setminus \mathcal{R}_{\varepsilon
_{1}}}\left( g-\varphi \right) G_{-\frac{2}{3}}\ \ ,\ \ \varepsilon
_{1}<\varepsilon _{2}
\end{equation*}%
where:%
\begin{equation*}
\Phi \left( \varepsilon \right) =\lambda \int_{\partial \mathcal{R}%
_{\varepsilon }}\left( n_{v}G_{-\frac{2}{3}}\partial _{v}\varphi
-n_{v}\partial _{v}G_{-\frac{2}{3}}\varphi +v\varphi G_{-\frac{2}{3}%
}n_{x}\right) ds
\end{equation*}%
and where $n$ is as usual the normal vector pointing towards the origin. It
then follows that $\Phi \left( \varepsilon \right) +C\varepsilon $ is
decreasing for a suitable constant $C$ and $\varepsilon $ sufficiently
small. We have now two possibilities, either $\lim_{\varepsilon \rightarrow
0}\Phi \left( \varepsilon \right) =\infty $ or $\lim_{\varepsilon
\rightarrow 0}\Phi \left( \varepsilon \right) <\infty .$ We now claim that
the first possibility contradicts the boundedness of $\varphi .$ Indeed, to
check this we argue as follows. Suppose that $\lim_{\varepsilon \rightarrow
0}\Phi \left( \varepsilon \right) =\infty .$ We define $\tilde{\varphi}%
=\varphi +R-m$ where $m=\sup_{\mathcal{R}_{1}}\left( \varphi +R\right) $ and
where $R$ satisfies $-\lambda \mathcal{L}\left( R\right) \leq \left\Vert
g-\varphi \right\Vert _{\infty }$ in order to have $-\lambda \mathcal{L}%
\left( \tilde{\varphi}\right) \leq -\mu .$ Notice that $R$ can be chosen as
a quadratic function in $v$ near the singular point as in Lemma \ref%
{super_sol}. Then $\tilde{\varphi}\leq 0$ in $\mathcal{R}_{1}.$ We define a
sequence $\mu _{k}=\int_{\mathcal{R}_{2^{-k}}\setminus \mathcal{R}%
_{2^{-\left( k+1\right) }}}\mu G_{-\frac{2}{3}}.$ We remark that $%
\lim_{\varepsilon \rightarrow 0}\Phi \left( \varepsilon \right) =\infty $ is
equivalent to $\sum_{k}^{\infty }\mu _{k}=\infty .$ Notice that:%
\begin{equation*}
-\lambda \mathcal{L}\left( \tilde{\varphi}\right) \leq -\mu
\end{equation*}%
We can obtain an upper estimate for $\tilde{\varphi}$ as follows. We define
functions $\tilde{\varphi}_{\ell }$ as the solutions of:%
\begin{eqnarray*}
\lambda \mathcal{L}\left( \tilde{\varphi}_{\ell }\right) &=&\mu \chi _{%
\mathcal{R}_{2^{-\ell }}\setminus \mathcal{R}_{2^{-\left( \ell +1\right)
}}}\ \ \text{in\ }\mathcal{R}_{1}\diagdown \mathcal{R}_{2^{-m}}\text{\ ,\ }%
\ell +1<m\ \ ,\ \ell >1 \\
\tilde{\varphi}_{\ell } &=&0\ \ \text{in\ }\partial _{a}\left( \mathcal{R}%
_{1}\diagdown \mathcal{R}_{2^{-m}}\right)
\end{eqnarray*}%
Since $\tilde{\varphi}_{\ell }\leq 0$ we can compare with the function $\hat{%
\varphi}_{\ell }$ which solves:%
\begin{eqnarray*}
\lambda \mathcal{L}\left( \hat{\varphi}_{\ell }\right) &=&\mu \chi _{%
\mathcal{R}_{2^{-\ell }}\setminus \mathcal{R}_{2^{-\left( \ell +1\right)
}}}\ \ \text{in\ }\mathcal{R}_{2^{-\left( \ell -1\right) }}\setminus
\mathcal{R}_{2^{-\left( \ell +2\right) }}\text{\ ,\ }\ell +1<m\ \ ,\ \ \ell
>1 \\
\hat{\varphi}_{\ell } &=&0\ \ \text{in\ }\partial _{a}\left( \mathcal{R}%
_{2^{-\left( \ell -1\right) }}\setminus \mathcal{R}_{2^{-\left( \ell
+2\right) }}\right)
\end{eqnarray*}%
Then $\tilde{\varphi}_{\ell }\leq \hat{\varphi}_{\ell }$ in $\mathcal{R}%
_{2^{-\left( \ell -1\right) }}\setminus \mathcal{R}_{2^{-\left( \ell
+2\right) }}.$ Proposition \ref{weak_max} combined with a rescaling argument
yields $\hat{\varphi}_{\ell }\leq -c_{0}\mu _{\ell }$\ in $\partial \mathcal{%
R}_{2^{-\ell }}$ for some $c_{0}>0$ independent on $\ell .$ Then $\tilde{%
\varphi}_{\ell }\leq -c_{0}\mu _{\ell }$\ in $\partial \mathcal{R}_{2^{-\ell
}}.$ On the other hand we have $\tilde{\varphi}_{\ell }=0$ in $\partial _{a}%
\mathcal{R}_{1}.$ We can then use comparison with a function with the form $%
w_{\ell }=-c_{1}\mu _{\ell }\left( 1-KF_{\beta }\right) $ with $K$ chosen
large enough and $c_{1}$ sufficiently small to guarantee that $w_{\ell }\geq
0$ in $\partial _{a}\mathcal{R}_{1}$ and $w_{\ell }\geq \tilde{\varphi}%
_{\ell }$ in $\partial _{a}\mathcal{R}_{2^{-\ell }}.$ This implies $w_{\ell
}\geq \tilde{\varphi}_{\ell }$ in $\mathcal{R}_{1}\diagdown \mathcal{R}%
_{2^{-\ell }}.$ Therefore, since $K$ is independent of $\ell ,$ it follows
that $\tilde{\varphi}_{\ell }\leq -\frac{c_{1}}{2}\mu _{\ell }$ in $\mathcal{%
R}_{2^{-J}}\diagdown \mathcal{R}_{2^{-\ell }}$ for some $J>0$ independent of
$\ell .$ We now use that $\tilde{\varphi}\leq \sum_{\ell =J}^{m}\tilde{%
\varphi}_{\ell }$ for any $m.$ Then $\tilde{\varphi}\leq -\frac{c_{1}}{2}%
\sum_{\ell =J}^{m}\mu _{\ell }$ in $\mathcal{R}_{2^{-J}}\diagdown \mathcal{R}%
_{2^{-\left( J+1\right) }}$ and since $\sum_{k}^{\infty }\mu _{k}=\infty $
it then follows that $\tilde{\varphi}=-\infty $ in $\mathcal{R}%
_{2^{-J}}\diagdown \mathcal{R}_{2^{-\left( J+1\right) }}.$ However, we know
that $\tilde{\varphi}$ is bounded. This contradiction yields $%
\sum_{k}^{\infty }\mu _{k}<\infty .$ Then $\mu _{k}\rightarrow 0$ as $%
k\rightarrow \infty .$ We can then argue as in the derivation of (\ref{W6E2}%
) to prove that the limit $\lim_{\delta \rightarrow 0^{+}}\varphi \left(
\delta ^{3}x,\delta v\right) =\varphi \left( 0,0\right) $ exists in $%
L_{loc}^{p}$ for some $p>1.$

We now claim that $\varphi \left( 0,0\right) =\max \left\{ \varphi
_{1}\left( 0,0\right) ,\varphi _{2}\left( 0,0\right) \right\} .$ Suppose
without loss of generality that $\varphi _{2}\left( 0,0\right) \geq \varphi
_{1}\left( 0,0\right) .$ We then have $\varphi \left( \delta ^{3}x,\delta
v\right) \geq \varphi _{2}\left( \delta ^{3}x,\delta v\right) $ whence $%
\varphi \left( 0,0\right) \geq \varphi _{2}\left( 0,0\right) .$ On the other
hand we have $\varphi _{1}\left( \delta ^{3}x,\delta v\right) =\varphi
_{1}\left( 0,0\right) +\varepsilon _{1,\delta }\left( x,v\right) ,$ $\varphi
_{2}\left( \delta ^{3}x,\delta v\right) =\varphi _{2}\left( 0,0\right)
+\varepsilon _{2,\delta }\left( x,v\right) ,$ where $\varepsilon _{k,\delta
}\left( x,v\right) \rightarrow 0$ as $\delta \rightarrow 0$ in $%
L_{loc}^{p},\ k=1,2.$ Then $\varphi \left( \delta ^{3}x,\delta v\right) \leq
\varphi _{2}\left( 0,0\right) +\left\vert \varepsilon _{1,\delta }\left(
x,v\right) \right\vert +\left\vert \varepsilon _{2,\delta }\left( x,v\right)
\right\vert ,$ whence $\varphi \left( 0,0\right) \leq \varphi _{2}\left(
0,0\right) $ follows. Therefore $\varphi \left( 0,0\right) =\max \left\{
\varphi _{1}\left( 0,0\right) ,\varphi _{2}\left( 0,0\right) \right\} .$
Arguing now as in the Proof of Lemma \ref{TraceAphi} we obtain that there
exists the limit $\lim_{\delta \rightarrow 0^{+}}\Psi \left( \delta \right) $
with $\Psi \left( \delta \right) $ defined as in (\ref{W6E3}). This limit
might take perhaps the value $+\infty .$ We will denote this limit as $\Psi
_{\varphi }\left( 0\right) $ in order to make explicit the dependence on the
function $\varphi .$ Lemma \ref{TraceAphi} implies the existence of the
corresponding limits $\Psi _{\varphi _{1}}\left( 0\right) ,\ \Psi _{\varphi
_{2}}\left( 0\right) $ for the subsolutions $\varphi _{1},\ \varphi _{2}$
respectively. Moreover, we have $\Psi _{\varphi _{1}}\left( 0\right) \leq
0,\ \Psi _{\varphi _{2}}\left( 0\right) \leq 0.$

We now distinguish two different cases. Suppose first that $\varphi
_{1}\left( 0,0\right) <\varphi _{2}\left( 0,0\right) ,$ where the values $%
\varphi _{k}\left( 0,0\right) ,\ k=1,2$ are defined as in (\ref{W6E2}). We
then argue as follows. Taking into account (\ref{P2E3}) we obtain that $%
\varepsilon _{k,\delta }\left( x,v\right) =\left\vert \varphi _{k}\left(
\delta ^{3}x,\delta v\right) -\varphi _{k}\left( 0,0\right) \right\vert ,\
k=1,2$ satisfies:\
\begin{equation*}
\left\Vert \varepsilon _{k}\right\Vert _{L^{p}\left( K\right) }<<C\left(
\delta \right) ^{-3\alpha -2}\ \ \text{as\ \ }\delta \rightarrow 0^{+}
\end{equation*}%
for some $p>1.$ This estimate follows from (\ref{P2E3}) using a rescaling
argument in the equation satisfied by $\varphi _{k}.$

Notice that $-3\alpha -2>0.$ We have also that:%
\begin{equation*}
-\varepsilon _{1}\leq \varphi \left( \delta ^{3}x,\delta v\right) -\varphi
_{2}\left( \delta ^{3}x,\delta v\right) \leq \varepsilon _{2}
\end{equation*}%
whence:%
\begin{equation}
\left\Vert \varphi \left( \delta ^{3}\cdot ,\delta \cdot \right) -\varphi
_{2}\left( \delta ^{3}\cdot ,\delta \cdot \right) \right\Vert _{L^{p}\left(
K\right) }<<C\left( \delta \right) ^{-3\alpha -2}  \label{P3E2}
\end{equation}%
We now study the behaviour of the function $\Psi _{\varphi }\left( \delta
\right) $ as $\delta \rightarrow 0.$ We denote as $\varphi ^{\delta }$ the
rescaled function $\varphi ^{\delta }=\varphi \left( \delta ^{3}\cdot
,\delta \cdot \right) $ and we define similarly $\varphi _{k}^{\delta }$ for
$k=1,2.$ Then it follows that:
\begin{equation}
\Psi _{\varphi }\left( \delta \zeta \right) =\delta ^{2+3\alpha
}\int_{\partial \mathcal{R}_{\zeta }}\left( n_{v}G_{\alpha }\partial
_{v}\varphi ^{\delta }-n_{v}\partial _{v}G_{\alpha }\varphi ^{\delta
}+v\varphi ^{\delta }G_{\alpha }n_{x}\right) ds\   \label{P3E1}
\end{equation}%
with $\zeta \in \left[ \frac{1}{2},2\right] .$ We now need to approximate
this integral by the one in which $\varphi ^{\delta }$ is replaced by $%
\varphi _{2}^{\delta }.$ This requires to obtain some continuity of the
integrand of the right-hand side of (\ref{P3E1}) using (\ref{P3E2}). This
requires to obtain estimates for $\left( \partial _{v}\varphi ^{\delta
}-\partial _{v}\varphi _{2}^{\delta }\right) .$ Since we need these
estimates only in the part of the boundaries $\partial \mathcal{R}_{\zeta }$
where $\left\vert n_{v}\right\vert =1$ and $n_{x}=0$ we can obtain the
result using classical parabolic estimates. Notice that the problem reduces
to obtaining estimates for the derivatives with respect to $v$ of the
solutions of $\mathcal{L}\left( w\right) =\nu $ where $\nu $ is a measure.
Standard theory yields estimates of the form $\int \left\vert \partial
_{v}w\right\vert dxdv\leq C\left\Vert \nu \right\Vert ,$ in bounded sets. It
then follows, combining this estimate with (\ref{P3E2}), (\ref{P3E1}) as
well as Fubini's Theorem that $\int_{\left[ \frac{1}{2},2\right] }\left\vert
\Psi _{\varphi }\left( \delta \zeta \right) -\Psi _{\varphi _{2}}\left(
\delta \zeta \right) \right\vert d\zeta \rightarrow 0$ as $\delta
\rightarrow 0.$ Therefore, there exists $\zeta =\zeta _{\delta }$ such that $%
\left\vert \Psi _{\varphi }\left( \delta \zeta _{\delta }\right) -\Psi
_{\varphi _{2}}\left( \delta \zeta _{\delta }\right) \right\vert \rightarrow
0$ as $\delta \rightarrow 0,$ whence $\Psi _{\varphi }\left( 0\right) =\Psi
_{\varphi _{2}}\left( 0\right) .$ In this case, we can use (\ref{W6E1}) to
obtain, taking the limit $\varepsilon \rightarrow 0$ that:
\begin{equation*}
\int_{X\diagdown \left\{ 0,0\right\} }\left( \psi -\lambda \mathcal{L}^{\ast
}\left( \psi \right) \right) \varphi -\int_{X\diagdown \left\{ 0,0\right\}
}g\psi \leq \lambda \Psi _{\varphi _{2}}\left( 0\right)
\end{equation*}%
Using then Lemma \ref{TraceAphi} we obtain $\Psi _{\varphi _{2}}\left(
0\right) \leq 0,$ whence the Subsolution inequality (\ref{sub_sol2}) follows.

The case $\varphi _{1}\left( 0,0\right) >\varphi _{2}\left( 0,0\right) $ is
similar. Suppose then that $\varphi _{1}\left( 0,0\right) =\varphi
_{2}\left( 0,0\right) .$ Lemma \ref{TraceAphi} implies the existence of the
limits $\Psi _{\varphi _{1}}\left( 0\right) ,\ \Psi _{\varphi _{2}}\left(
0\right) .$ Notice that we have also the existence of $\Psi _{\varphi
}\left( 0\right) $ although this limit can take the value $\infty .$

We define rescaled functions as $\Phi _{k}^{\delta }\left( x,v\right) =\frac{%
\varphi _{k}\left( \delta ^{3}x,\delta v\right) -\varphi _{k}\left(
0,0\right) }{\delta ^{3\beta }},\ k=1,2.$ We then have, using Lemma \ref%
{ConstStar}, that $\Phi _{k}^{\delta }$ converges to $\mathcal{A}\left(
\varphi _{k}\right) F_{\beta }$ with $\mathcal{A}\left( \varphi _{k}\right) =%
\frac{1}{C_{\ast }}\Psi _{\varphi _{k}}\left( 0\right) .$ The convergence
takes place in $L_{loc}^{p}$ with $p>1$.\ Moreover, arguing as in the
previous case we obtain also convergence of $\partial _{v}\Phi _{k}^{\delta
} $ to $\mathcal{A}\left( \varphi _{k}\right) \partial _{v}F_{\beta }$ in $%
L_{loc}^{p}.$ We now have two possibilities. Either $\Psi _{\varphi
_{1}}\left( 0\right) \neq \Psi _{\varphi _{2}}\left( 0\right) $ or $\Psi
_{\varphi _{1}}\left( 0\right) =\Psi _{\varphi _{2}}\left( 0\right) .$ In
the first case, suppose without loss of generality that $\Psi _{\varphi
_{1}}\left( 0\right) >\Psi _{\varphi _{2}}\left( 0\right) .$ We recall that $%
\varphi =\max \left\{ \varphi _{1},\varphi _{2}\right\} .$ We define also $%
\Phi ^{\delta }\left( x,v\right) =\frac{\varphi \left( \delta ^{3}x,\delta
v\right) -\varphi \left( 0,0\right) }{\delta ^{3\beta }}.$ Notice that $%
\varphi _{1}\left( 0,0\right) =\varphi _{2}\left( 0,0\right) =\varphi
_{k}\left( 0,0\right) .$ We then have, arguing as in the case $\varphi
_{1}\left( 0,0\right) \neq \varphi _{2}\left( 0,0\right) $ that $\Phi
^{\delta }$ converges to $\Phi _{1}^{\delta }$ in $L_{loc}^{p}.$ Using the
continuity of the functionals $\Psi _{\varphi }\left( 0\right) $ on the
functions $\varphi $ obtained above, it then follows that $\Psi _{\varphi
}\left( 0\right) =\Psi _{\varphi _{1}}\left( 0\right) \leq 0$ whence the
inequality (\ref{sub_sol2}) follows also in this case. It remains to examine
the case $\Psi _{\varphi _{1}}\left( 0\right) =\Psi _{\varphi _{2}}\left(
0\right) .$ In this case we have that the functions $\Phi _{k}^{\delta }$
for $k=1,2$ converge to $\mathcal{A}\left( \varphi _{1}\right) F_{\beta }=%
\mathcal{A}\left( \varphi _{2}\right) F_{\beta }$ in $L_{loc}^{p}.$ Then $%
\Phi ^{\delta }=\max \left\{ \Phi _{1}^{\delta },\Phi _{2}^{\delta }\right\}
$ converges to the same limit in $L_{loc}^{p}.$ We have also convergence for
the derivatives $\partial _{v}\Phi ^{\delta }$ using the regularizing
effects as usual. Then, using the continuity of the functionals $\Psi
_{\varphi }\left( 0\right) $ with respect to this topology we obtain $\Psi
_{\varphi }\left( 0\right) =\Psi _{\varphi _{1}}\left( 0\right) =\Psi
_{\varphi _{2}}\left( 0\right) ,$ whence $\Psi _{\varphi }\left( 0\right)
\leq 0$ and (\ref{sub_sol2}) follows. Therefore $\varphi $ is a subsolution
and the result follows.
\end{proof}

We can now conclude the proof of Proposition \ref{Case_r}, arguing as in the
case of trapping boundary conditions.

We can obtain in this case easily one subsolution and one supersolution
which are ordered.

\begin{lemma}
\label{ExSubSup2}For any $g\in C\left( X\right) $ there exist at least one
subsolution $\varphi ^{\mathrm{sub}}$ and one supersolution $\varphi ^{\sup
}$ in the sense of Definition \ref{SubSuper2} such that:%
\begin{equation}
\varphi ^{\mathrm{sub}}\leq \varphi ^{\sup }  \label{ineqSubSup}
\end{equation}
\end{lemma}

\begin{proof}
We can just take $\varphi ^{\mathrm{sub}}=-\left\Vert g\right\Vert
_{L^{\infty }\left( X\right) },\ \varphi ^{\sup }=\left\Vert g\right\Vert
_{L^{\infty }\left( X\right) }.$ We need to prove that $\int \mathcal{L}%
^{\ast }\left( \psi \right) \varphi \leq 0$ if $\varphi $ is a constant and $%
\psi $ is any test function as in the Definition \ref{SubSuper2}. We can
assume that $\varphi =1.$ We write $\int \mathcal{L}^{\ast }\left( \psi
\right) =\lim_{\delta \rightarrow 0^{+}}\int_{X\diagdown \mathcal{R}_{\delta
}}\mathcal{L}^{\ast }\left( \psi \right) $ with $\mathcal{R}_{\delta }$ as
in (\ref{domainR}). Integrating by parts and using that $\psi $ is compactly
supported, as well as the fact that $\psi \left( 0,-v\right) =r^{2}\psi
\left( 0,rv\right) ,\ v>0$ we obtain:%
\begin{equation*}
\int_{X\diagdown \mathcal{R}_{\delta }}\mathcal{L}^{\ast }\left( \psi
\right) =\int_{\partial \mathcal{R}_{\delta }}\left[ D_{v}\psi n_{v}-v\psi
n_{x}\right]
\end{equation*}%
where $n=\left( n_{x},n_{v}\right) $ is the normal vector to $\partial
\mathcal{R}_{\delta }$ pointing towards $\mathcal{R}_{\delta }$ and where $%
\psi =\theta G_{\gamma }+\bar{\psi}$ if $\delta $ is sufficiently small and $%
\gamma \in \left\{ -\frac{2}{3},\alpha \right\} $. The regularity of $\bar{%
\psi}$ implies that
\begin{equation*}
\lim_{\delta \rightarrow 0^{+}}\int_{\partial \mathcal{R}_{\delta }}\left[
D_{v}\bar{\psi}n_{v}-v\bar{\psi}n_{x}\right] =0.
\end{equation*}
On the other hand we have
\begin{equation*}
\lim_{\delta \rightarrow 0^{+}}\int_{\partial \mathcal{R}_{\delta }}\left[
D_{v}G_{\alpha }n_{v}-vG_{\alpha }n_{x}\right] =0
\end{equation*}%
due to Proposition 3.3  in \cite{HVJ2}. Finally we notice that%
\begin{equation*}
\lim_{\delta \rightarrow 0^{+}}\int_{\partial \mathcal{R}_{\delta }}\left[
D_{v}G_{-\frac{2}{3}}n_{v}-vG_{-\frac{2}{3}}n_{x}\right] \leq 0
\end{equation*}
due to Proposition 3.2 in \cite{HVJ2} whence the result follows.
\end{proof}

We can then define $\mathcal{\tilde{G}}_{sub}$ as in Definition the
following subset of $L_{b}^{\infty }\left( X\right) :$

\begin{definition}
\label{SubSet2}Suppose that $\varphi ^{\mathrm{sub}},\ \varphi ^{\sup }$
are respectively one subsolution and one supersolution in the sense of
Definition \ref{SubSuper2} satisfying (\ref{ineqSubSup}). We then define $%
\mathcal{\tilde{G}}_{sub}\subset L_{b}^{\infty }\left( X\right) $ as:%
\begin{equation}
\mathcal{\tilde{G}}_{sub}\equiv \left\{ \varphi \in L_{b}^{\infty }\left(
X\right) :%
\begin{array}{c}
\varphi \text{ sub-solution of (\ref{T6E6}),\ (\ref{T6E8})} \\
\text{ in the sense of Definition \ref{SubSuper2}}\ |\ \varphi ^{\operatorname{sub%
}}\leq \varphi \leq \varphi ^{\sup }%
\end{array}%
\right\} .  \label{T7E6b}
\end{equation}
\end{definition}

We now remark that Lemmas \ref{ContinuityWeak} and \ref{countable} hold
without any changes. Moreover, the proof of Lemma \ref{max_sub} can be
adapted to include this case.

\begin{lemma}
\label{max_sub_ref}Let $\varphi _{1},\varphi _{2},...,\varphi _{L}\in
\mathcal{\tilde{G}}_{sub}$ with $L<\infty $ and $\mathcal{\tilde{G}}_{sub}$
as in Definition \ref{SubSet2}. Define:%
\begin{equation*}
\bar{\varphi}:=\max \{\varphi _{1},\varphi _{2},...,\varphi _{L}\}
\end{equation*}

Then $\bar{\varphi}\in \mathcal{\tilde{G}}_{sub}$.
\end{lemma}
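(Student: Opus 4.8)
The plan is to follow verbatim the strategy already used for Lemma \ref{max_sub}, replacing the role of Lemma \ref{maxima} by its nontrapping analogue, Lemma \ref{maximaRef}. First I would reduce to the case $L=2$ by induction on $L$: since $\max\{\varphi_1,\dots,\varphi_L\}=\max\{\max\{\varphi_1,\dots,\varphi_{L-1}\},\varphi_L\}$, it is enough to show that the binary maximum of two elements of $\mathcal{\tilde{G}}_{sub}$ is again in $\mathcal{\tilde{G}}_{sub}$; then the intermediate maxima remain in $\mathcal{\tilde{G}}_{sub}$ and the inductive step applies. It is important to phrase the induction so that the membership (not merely the subsolution property) is propagated at each stage, which is why the bound $\varphi^{{sub}}\le\bar\varphi\le\varphi^{\sup}$ must be checked together with the subsolution property.

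For $L=2$, let $\varphi_1,\varphi_2\in\mathcal{\tilde{G}}_{sub}$ and set $\bar\varphi=\max\{\varphi_1,\varphi_2\}$. Both $\varphi_1$ and $\varphi_2$ are subsolutions of (\ref{T6E6}), (\ref{T6E8}) in the sense of Definition \ref{SubSuper2} on all of $X$. Applying Lemma \ref{maximaRef} with $\mathcal{W}_1=\mathcal{W}_2=X$ — so that $\mathcal{W}_1\cap\mathcal{W}_2=\mathcal{W}_1\cup\mathcal{W}_2=X$ and the function produced by the lemma is simply $\max\{\varphi_1,\varphi_2\}$ everywhere — we conclude that $\bar\varphi$ is a subsolution of (\ref{T6E6}), (\ref{T6E8}) in the sense of Definition \ref{SubSuper2}. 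It then remains only to verify the pointwise bounds in Definition \ref{SubSet2}: $\varphi^{{sub}}\le\varphi_1\le\bar\varphi$ a.e. gives the lower bound, and $\varphi_1\le\varphi^{\sup}$, $\varphi_2\le\varphi^{\sup}$ a.e. give $\bar\varphi=\max\{\varphi_1,\varphi_2\}\le\varphi^{\sup}$ a.e.; here one uses, as in the remark following Lemma \ref{maxima}, that the a.e.\ maximum of two $L_b^{\infty}$ functions is again a well-defined $L_b^{\infty}$ function. Hence $\bar\varphi\in\mathcal{\tilde{G}}_{sub}$, which completes the case $L=2$ and, by the induction above, the general finite case.

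I do not expect a genuine obstacle: the entire substance is contained in Lemma \ref{maximaRef}, which already handles both the interior subsolution inequality and — the only delicate point in this setting — the behaviour at the singular point, namely the fact that the limiting fluxes $\lim_{\delta\to0^+}\Psi(\delta)$ and the value $\varphi(0,0)$ behave correctly under taking maxima (so that $\mathcal{A}(\bar\varphi)\ge 0$ and the test functions of the form $\theta\zeta G_\gamma+\bar\psi$ are handled). The proof of the present Lemma is therefore purely formal once Lemma \ref{maximaRef} is in hand; the mild care needed is simply to run the induction on genuine members of $\mathcal{\tilde{G}}_{sub}$, which is guaranteed by carrying the sub/supersolution bounds along at each step.
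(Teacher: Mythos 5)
Your argument is correct and coincides with the paper's own proof: reduce to $L=2$ by iterating the binary maximum, invoke Lemma \ref{maximaRef} with $\mathcal{W}_1=\mathcal{W}_2=X$ to get the subsolution property, and observe that the barrier inequalities $\varphi^{\mathrm{sub}}\le\bar\varphi\le\varphi^{\sup}$ are preserved under $\max$. Nothing is missing; the extra care you take in phrasing the induction is harmless but not strictly needed since the paper treats it as immediate.
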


\begin{proof}
It is enough to prove the result for $L=2.$ The result then follows from
Lemma \ref{maximaRef}. Notice that the inequalities $\varphi ^{\mathrm{sub}%
}\leq \varphi \leq \varphi ^{\sup }$ in Definition (\ref{T7E6b}) are
preserved by the maxima.
\end{proof}

We remark that Proposition \ref{DirSolv} and Lemma \ref{comp_nu} are
independent of the boundary conditions imposed at the singular point $\left(
x,v\right) =\left( 0,0\right) $ and therefore can be applied to subsolutions
as in Definition \ref{SubSet2}. Lemma \ref{IncSub} can be proved for the
subsolutions in the class $\mathcal{\tilde{G}}_{sub}$ given in Definition %
\ref{SubSet2} with minor cases with respect to the previous case, because
the definition of $\Phi $ in (\ref{T7E5}) modifies the subsolution $\bar{%
\varphi}$ just in a set $\Xi $ which does not intersect the singular point $%
\left( 0,0\right) .$ We can now conclude the Proof of Proposition \ref%
{Case_r}.

\begin{proof}[End of the Proof of Proposition \protect\ref{Case_r}]
We define $\varphi _{\ast }$ as in (\ref{T7E2}) with $\mathcal{\tilde{G}}%
_{sub}$ as in Definition \ref{SubSet2}. Arguing as in the Proof of
Proposition \ref{Case_a_sub} we obtain that $\varphi _{\ast }\in \mathcal{%
\tilde{G}}_{sub}$ with $\mathcal{\tilde{G}}_{sub}$ as in Definition \ref%
{SubSet2}. We define $\nu =-\left( \varphi _{\ast }-\lambda \mathcal{L}%
\left( \varphi _{\ast }\right) -g\right) .$ Due to Definition \ref{SubSuper2}
we have that $\nu $ is a nonnegative Radon measure. If $\nu \left(
X\diagdown \left\{ 0,0\right\} \right) >0$ we can argue exactly as in the
Proof of Proposition \ref{Case_a_sub} to derive a contradiction. Therefore $%
\nu \left( X\diagdown \left\{ 0,0\right\} \right) =0$ and then, $\varphi
_{\ast }$ satisfies the problem:%
\begin{equation*}
\varphi _{\ast }-\lambda \mathcal{L}\left( \varphi _{\ast }\right) -g=0\ ,\
\left( x,v\right) \in X\diagdown \left\{ 0,0\right\} \ ,\ \varphi _{\ast
}\in C\left( X\right)
\end{equation*}
Theorem \ref{AsSingPoint} then implies:%
\begin{equation*}
\varphi _{\ast }\left( x,v\right) =\varphi _{\ast }\left( 0,0\right) +%
\mathcal{A}\left( \varphi _{\ast }\right) F_{\beta }\left( x,v\right) +\psi
_{\ast }\left( x,v\right)
\end{equation*}

Using Lemma \ref{TraceAphi} and Lemma \ref{ConstStar} we obtain that $%
\mathcal{A}\left( \varphi _{\ast }\right) \geq 0.$ Suppose that $\mathcal{A}%
\left( \varphi _{\ast }\right) >0.$ Then $\varphi _{\ast }\left( x,v\right)
\geq \varphi _{\ast }\left( 0,0\right) $ in a neighbourhood of the singular
point $\left( 0,0\right) .$\ Then $\varphi _{\ast }\left( 0,0\right)
<\varphi ^{\sup },$ where $\varphi ^{\sup }$ is as in Definition \ref%
{SubSet2}, because otherwise we would have $\varphi _{\ast }=\varphi ^{\sup
} $ and then $\mathcal{A}\left( \varphi _{\ast }\right) =0.$ Therefore,
there exists $\delta >0$ such that $\varphi _{\ast \ast }=\max \left\{
\varphi _{\ast },\varphi _{\ast }\left( 0,0\right) +\delta +CS\left(
x,v\right) \right\} $ with $S\left( x,v\right) $ as in Lemma \ref{super_sol}
and $C>0$ is a subsolution of (\ref{T6E6}), (\ref{T6E8}) in a neighbourhood
of the singular point satisfying $\varphi _{\ast \ast }>\varphi _{\ast },$
since $S$ is bounded by $\left( x^{\frac{2}{3}}+v^{2}\right) $ and therefore
$\mathcal{A}\left( \varphi \right) F_{\beta }$ gives a larger contribution
for $\left( x,v\right) $ close to $\left( 0,0\right) .$ However, this would
contradict the definition of $\varphi _{\ast }$ by means of (\ref{T7E2}). It
then follows from this contradiction that $\mathcal{A}\left( \varphi _{\ast
}\right) =0.$ Therefore $\varphi _{\ast }$ solves (\ref{T6E6}),\ (\ref{T6E8}%
) and the Proposition follows.
\end{proof}

\subsection{Operator $\Omega _{pt,sub}.$ Solvability of (\ref{T6E6}), (%
\ref{T6E9}).}

We consider now the problem (\ref{T6E6}), (\ref{T6E9}). Since the arguments
used are similar to the ones in the previous Sections we will just describe
in detail the points where differences arise. We prove in this Section that:

\begin{proposition}
\label{Case_m}For any $g\in C\left( X\right) $ there exists a unique $%
\varphi \in C\left( X\right) $ which solves (\ref{T6E6}), (\ref{T6E9}).
\end{proposition}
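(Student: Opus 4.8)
The plan is to obtain $\varphi$ by a Perron--type construction, exactly as in the proofs of Propositions~\ref{Case_a_sub} and~\ref{Case_r}. By Proposition~\ref{Equivalence} it is enough to solve the PDE problem~\eqref{T6E6},~\eqref{T6E9}; uniqueness is immediate, since $\Omega_{pt,sub}$ is a Markov pregenerator (Proposition~\ref{PropPG}) and hence $I-\lambda\Omega_{pt,sub}$ satisfies the minimum principle. The one new feature is that the condition imposed at the singular point, $\varphi(0,0)-g(0,0)=\lambda\mu_{\ast}|C_{\ast}|\mathcal{A}(\varphi)$, couples the two traces $\varphi(0,0)$ and $\mathcal{A}(\varphi)$; so the class of subsolutions must be chosen so that both quantities are well defined for every subsolution and so that the one--sided relation $\varphi(0,0)-g(0,0)\le\lambda\mu_{\ast}|C_{\ast}|\mathcal{A}(\varphi)$ can be imposed (and $\ge$ for supersolutions).

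Concretely, I would keep the test functions $\psi=\theta\zeta G_{\gamma}+\bar\psi$, $\gamma\in\{-\tfrac23,\alpha\}$, $\bar\psi\in C^{\infty}$, $\theta\ge0$, of Definition~\ref{SubSuper2}, so that for any $\varphi\in L_{b}^{\infty}(X)$ satisfying the test inequality~\eqref{sub_sol2} the flux functionals $H_{\varphi}(\delta)$ and $\Psi_{\varphi}(\delta)$ of Lemmas~\ref{LimitHzero} and~\ref{TraceAphi} have limits as $\delta\to0^{+}$; one then sets $\varphi(0,0):=-H_{\varphi}(0)/\big(9^{2/3}[\log r+\tfrac{\pi}{\sqrt3}]\big)$ and $\mathcal{A}(\varphi):=\Psi_{\varphi}(0)/C_{\ast}$, the latter being $\ge0$. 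A subsolution of~\eqref{T6E6},~\eqref{T6E9} is an $L_b^\infty(X)$ function satisfying~\eqref{sub_sol2} together with $\varphi(0,0)-g(0,0)\le\lambda\mu_{\ast}|C_{\ast}|\mathcal{A}(\varphi)$, and symmetrically for supersolutions. As in Lemma~\ref{ExSubSup2}, the constants $\varphi^{\mathrm{sub}}=-\|g\|_{L^{\infty}(X)}$ and $\varphi^{\sup}=\|g\|_{L^{\infty}(X)}$ form an ordered pair: the test inequality for constants is Lemma~\ref{ExSubSup2} (using Propositions~\ref{LimitFluxes},~\ref{FluxesGalpha}), while for constants $\mathcal{A}=0$ and $\pm\|g\|-g(0,0)$ has the sign required by the singular--point inequality.

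The main obstacle is the analogue of Lemma~\ref{maximaRef}: if $\varphi_1,\varphi_2$ are subsolutions of~\eqref{T6E6},~\eqref{T6E9} then so is $\varphi=\max\{\varphi_1,\varphi_2\}$. The test inequality~\eqref{sub_sol2} for $\varphi$ is obtained exactly as in Lemma~\ref{maximaRef}, whose proof also yields that $H_{\varphi}(\delta),\Psi_{\varphi}(\delta)$ converge, that $\varphi(0,0)=\max\{\varphi_1(0,0),\varphi_2(0,0)\}$, and that $\mathcal{A}(\varphi)$ is the value of $\mathcal{A}$ of the factor with the larger value at the origin, which equals $\max\{\mathcal{A}(\varphi_1),\mathcal{A}(\varphi_2)\}$ when $\varphi_1(0,0)=\varphi_2(0,0)$ (because $F_{\beta}>0$). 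A short case analysis then gives the singular--point inequality for $\varphi$ from those for $\varphi_1,\varphi_2$, using $\lambda\mu_{\ast}|C_{\ast}|\ge0$: if $\varphi_1(0,0)\ne\varphi_2(0,0)$ both traces of $\varphi$ come from the dominating factor, and if $\varphi_1(0,0)=\varphi_2(0,0)$ one uses $\varphi(0,0)-g(0,0)=\varphi_j(0,0)-g(0,0)\le\lambda\mu_{\ast}|C_{\ast}|\mathcal{A}(\varphi_j)\le\lambda\mu_{\ast}|C_{\ast}|\mathcal{A}(\varphi)$. I expect the delicate point inside this step to be the same one that is delicate in Lemma~\ref{maximaRef}, namely the analysis of $\Psi_{\varphi}(0)$ and $H_{\varphi}(0)$ for the maximum; in addition one must check that the singular--point inequality passes to the supremum $\varphi_{\ast}=\lim_{M}\max_{\mathcal{G}_{sub}(M)}\varphi$ of~\eqref{T7E2}, which follows from the uniform asymptotic estimate of Proposition~\ref{mu_R} (arguing as in Lemma~\ref{limitA}) together with the continuity of the flux functionals at fixed $\delta>0$ under weak convergence (Lemma~\ref{TraceConvergence}).

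With the maximum lemma in hand, Lemmas~\ref{ContinuityWeak},~\ref{countable},~\ref{max_sub_ref} carry over, one forms $\varphi_{\ast}$ and checks $\varphi_{\ast}\in\mathcal{\tilde{G}}_{sub}$ as in Definition~\ref{SubSet2}. Setting $\nu=-(\varphi_{\ast}-\lambda\mathcal{L}\varphi_{\ast}-g)\ge0$, if $\nu(X\setminus\{(0,0)\})>0$ one enlarges $\varphi_{\ast}$ inside a suitable admissible domain using Proposition~\ref{DirSolv}, the lower bound of Lemma~\ref{comp_nu}, and the comparison Proposition~\ref{weak_max} to stay below $\varphi^{\sup}$, contradicting maximality exactly as in the proof of Proposition~\ref{Case_a_sub}; hence $\nu(X\setminus\{(0,0)\})=0$, so by Theorem~\ref{Hypoell} $\varphi_{\ast}$ is continuous away from the origin and solves $\varphi_{\ast}-\lambda\mathcal{L}\varphi_{\ast}=g$ there. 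Theorem~\ref{AsSingPoint} then gives $\varphi_{\ast}=\varphi_{\ast}(0,0)+\mathcal{A}(\varphi_{\ast})F_{\beta}+\tfrac1\lambda(\varphi_{\ast}(0,0)-g(0,0))S+o(x^{2/3}+|v|^{2})$, with $\mathcal{A}(\varphi_{\ast})\ge0$, and the singular--point inequality gives $\varphi_{\ast}(0,0)-g(0,0)\le\lambda\mu_{\ast}|C_{\ast}|\mathcal{A}(\varphi_{\ast})$. If this were strict with gap $\eta>0$, fix $\delta\in(0,\eta)$ and $C$ large and replace $\varphi_{\ast}$, in a small punctured neighbourhood of $(0,0)$ only, by $\max\{\varphi_{\ast},\,\varphi_{\ast}(0,0)+\delta+\mathcal{A}(\varphi_{\ast})F_{\beta}+CS\}$; since $F_{\beta},S\to0$ at the origin while $\delta$ is a fixed constant, this strictly exceeds $\varphi_{\ast}$ near $(0,0)$, it is a PDE subsolution there because $\mathcal{L}(\mathcal{A}(\varphi_{\ast})F_{\beta}+CS)=C$, it has the same $\mathcal{A}$ and shifted value $\varphi_{\ast}(0,0)+\delta$ (so it still satisfies the singular--point inequality, by $\delta<\eta$), and for $\delta$ and the neighbourhood small it stays $\le\varphi^{\sup}$; by the maximum lemma it lies in $\mathcal{\tilde{G}}_{sub}$ and is larger than $\varphi_{\ast}$, a contradiction. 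Hence equality holds, $\varphi_{\ast}\in\mathcal{D}(\Omega_{pt,sub})$ solves~\eqref{T6E6},~\eqref{T6E9}, and continuity at $\infty$ (so that $\varphi_{\ast}\in C(X)$) is obtained verbatim as in the proof of Proposition~\ref{Case_a_sub}.
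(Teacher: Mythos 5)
Your proof is essentially the paper's approach — Perron's method on a class of subsolutions with controlled behaviour at the singular point, the max lemma proved via the case analysis of Lemma~\ref{maximaRef}, the flux functionals $H$ and $\Psi$ to extract $\varphi(0,0)$ and $\mathcal{A}(\varphi)$, and the final perturbation argument to upgrade the singular--point inequality to equality. The one noteworthy difference is in how the singular--point condition is encoded: you define a subsolution as a Definition~\ref{SubSuper2} subsolution \emph{plus} the separate inequality $\varphi(0,0)-g(0,0)\le\lambda\mu_{\ast}|C_{\ast}|\mathcal{A}(\varphi)$, whereas the paper's Definition~\ref{SubSuperMixed} folds it into the test inequality~\eqref{sub_solmixed} via the extra term $\tfrac{\theta}{\mu_{\ast}}(\varphi(0,0)-g(0,0))$, from which Lemma~\ref{TraceAphiMixed} derives the combined constraint $\Psi(0)+\tfrac{1}{\lambda\mu_{\ast}}(\varphi(0,0)-g(0,0))\le 0$ directly. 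Your formulation therefore also imposes $\Psi(0)\le 0$, i.e.\ $\mathcal{A}(\varphi)\ge 0$, separately — a priori a smaller class of subsolutions than the paper allows, since the paper's definition permits $\mathcal{A}(\varphi)<0$ when $\varphi(0,0)<g(0,0)$. This does not break the argument: the constant $\varphi^{\mathrm{sub}}$ is still in your class, your class is closed under maxima and the admissible--domain replacement of Lemma~\ref{IncSub}, your Perron supremum satisfies the equation away from the origin, and your perturbation argument (which correctly needs $\mathcal{A}(\varphi_{\ast})\ge 0$ so that $\mathcal{A}(\varphi_{\ast})F_{\beta}+CS$ is a Definition~\ref{SubSuper2}--admissible comparator) forces equality. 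By uniqueness (from the Markov generator property) the resulting $\varphi_{\ast}$ is the solution — which, as a by--product of your approach, must indeed satisfy $\mathcal{A}(\varphi)\ge 0$. Your final perturbation $\max\{\varphi_{\ast},\,\varphi_{\ast}(0,0)+\delta+\mathcal{A}(\varphi_{\ast})F_{\beta}+CS\}$ is also slightly cleaner than the paper's, which carries an extra $(1-\delta)$ factor on $F_{\beta}$ that plays no essential role. One point worth spelling out, as the paper does implicitly when invoking $\varphi_{\ast}(0,0)<\varphi^{\sup}$, is that there is room to increase by $\delta$: if $\mathcal{A}(\varphi_{\ast})>0$ then $\varphi_{\ast}>\varphi_{\ast}(0,0)$ near the origin forces $\varphi_{\ast}(0,0)<\|g\|$, and if $\mathcal{A}(\varphi_{\ast})=0$ then the assumed strict inequality gives $\varphi_{\ast}(0,0)<g(0,0)\le\|g\|$.
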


We introduce a new concept of sub and supersolutions which allows to
identify the boundary condition (\ref{T6E9}) near the singular point $\left(
x,v\right) =\left( 0,0\right) .$ The rationale behind this definition is to
obtain the inequality $\mathcal{L}\left( \varphi \right) \left( 0,0\right)
\leq \mu _{\ast }\left\vert C_{\ast }\right\vert \mathcal{A}\left( \varphi
\right) $ at the singular point. However, since we are interested in
subsolutions in $L_{b}^{\infty }\left( X\right) $ some care is needed in
order to define $\mathcal{L}\left( \varphi \right) \left( 0,0\right) .$
Notice that the inequality $\varphi -\lambda \mathcal{L}\left( \varphi
\right) \leq g,$ suggests the inequality $\varphi \left( 0,0\right) -g\left(
0,0\right) \leq \lambda \mu _{\ast }\left\vert C_{\ast }\right\vert \mathcal{%
A}\left( \varphi \right) .$

\begin{definition}
\label{SubSuperMixed}Suppose that $g\in C\left( X\right) .$ Let $\zeta
=\zeta \left( x,v\right) $ a nonnegative $C^{\infty }$ test function
supported in $0\leq x+\left\vert v\right\vert ^{3}\leq 2$, satisfying $\zeta
=1$ for $0\leq x+\left\vert v\right\vert ^{3}\leq 1$. We will say that a
function $\varphi \in L_{b}^{\infty }\left( X\right) ,$ such that the limit $%
\lim_{\left( x,v\right) \rightarrow \left( 0,0\right) }\varphi \left(
x,v\right) =\varphi \left( 0,0\right) $ exists, is a subsolution of (\ref%
{T6E6}), (\ref{T6E9}) if for all $\psi $ with $\psi \geq 0,\ \psi \left(
0,-v\right) =r^{2}\psi \left( 0,rv\right) ,\ v>0$ and having the form $\psi
=\theta \zeta G_{\gamma }+\bar{\psi}$ where $\bar{\psi}\in C^{\infty },$ $%
\theta \geq 0$\ with $G_{\gamma }$, $\gamma \in
\left\{ -\frac{2}{3},\alpha \right\} $ and with $\psi $ supported in a set
contained in the ball $\left\vert \left( x,v\right) \right\vert \leq R$ for
some $R>0$ we have:%
\begin{equation}
\int \left( \psi -\lambda \mathcal{L}^{\ast }\left( \psi \right) \right)
\varphi +\frac{\theta }{\mu _{\ast }}\left( \varphi \left( 0,0\right)
-g\left( 0,0\right) \right) \leq \int g\psi  \label{sub_solmixed}
\end{equation}%
Given $g\in C\left( X\right) ,$ we will say that $\varphi \in L_{b}^{\infty
}\left( X\right) $ is a supersolution of (\ref{T6E6}), (\ref{T6E8}) if for
all $\psi $ with the same properties as above\ we have:%
\begin{equation}
\int \left( \psi -\lambda \mathcal{L}^{\ast }\left( \psi \right) \right)
\varphi +\frac{\theta }{\mu _{\ast }}\left( \varphi \left( 0,0\right)
-g\left( 0,0\right) \right) \geq \int g\psi  \label{super_solmixed}
\end{equation}
\end{definition}

The integral on the left hand side of (\ref{sub_solmixed}), (\ref%
{super_solmixed}) is well defined in spite of the singularity of $G_{\gamma
} $ near the singular point because $\mathcal{L}^{\ast }\left( G_{\gamma
}\right) =0.$

We can then prove the following variation of Lemma \ref{TraceAphi}.

\begin{lemma}
\label{TraceAphiMixed}Let $g\in C\left( X\right) $ and $\varphi $ be a
subsolution of (\ref{T6E6}), (\ref{T6E9})\ in the sense of Definition \ref%
{SubSuperMixed}. Let:%
\begin{equation}
\Psi \left( \delta \right) =\int_{\partial \mathcal{R}_{\delta }}\left(
n_{v}G_{\alpha }\partial _{v}\varphi -n_{v}\partial _{v}G_{\alpha }\varphi
+v\varphi G_{\alpha }n_{x}\right) ds  \label{Z1E2}
\end{equation}%
which is well defined in the sense of Traces (cf. Proposition \ref{Traces}).
Moreover, the limit $\lim_{\delta \rightarrow 0^{+}}\Psi \left( \delta
\right) $ exists and we have:%
\begin{equation}
\lim_{\delta \rightarrow 0^{+}}\Psi \left( \delta \right) +\frac{1}{\lambda
\mu _{\ast }}\left( \varphi \left( 0,0\right) -g\left( 0,0\right) \right)
\leq 0\   \label{Z1E1}
\end{equation}%
Let $\mu =-\varphi +\lambda \mathcal{L}\left( \varphi \right) +g.$ Then $\mu
\geq 0$ defines a Radon measure in $X\diagdown \left\{ \left( 0,0\right)
\right\} $ satisfying:%
\begin{equation}
\int_{\mathcal{R}_{1}\diagdown \left\{ \left( 0,0\right) \right\} }G_{\alpha
}\mu dxdv<\infty   \label{Z1E3}
\end{equation}%
\bigskip
\end{lemma}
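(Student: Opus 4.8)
The plan is to adapt essentially verbatim the proof of Lemma~\ref{TraceAphi}, inserting at each point where the subsolution inequality is invoked the extra boundary term $\frac{\theta}{\mu_{\ast}}\left(\varphi\left(0,0\right)-g\left(0,0\right)\right)$ coming from Definition~\ref{SubSuperMixed}. First I would record that testing \eqref{sub_solmixed} against nonnegative $\psi$ whose support is disjoint from $\left(0,0\right)$ (so $\theta=0$) yields, exactly as in \eqref{M1}, the existence of a nonnegative Radon measure $\mu=-\varphi+\lambda\mathcal{L}\left(\varphi\right)+g$ on $X\diagdown\left\{\left(0,0\right)\right\}$; at this stage $\mu$ may have infinite total mass, and part of its support may lie on $\left\{x=0\right\}$. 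Multiplying this identity by $G_{\alpha}$ and integrating over $\mathcal{R}_{\delta_{2}}\diagdown\mathcal{R}_{\delta_{1}}$, using the existence of traces of $\varphi,\partial_{v}\varphi$ on $\partial\mathcal{R}_{\delta_{1}},\partial\mathcal{R}_{\delta_{2}}$ from Proposition~\ref{Traces} and the smooth-approximation argument for the characteristic function of the annulus, gives the analogue of \eqref{Ma2}. Since $\mu\ge0$ and $\varphi,g$ are bounded, this produces the monotonicity estimate $-\Psi\left(\delta_{2}\right)+\Psi\left(\delta_{1}\right)\ge-\frac{C}{\lambda}\int_{\mathcal{R}_{\delta_{2}}\diagdown\mathcal{R}_{\delta_{1}}}G_{\alpha}dxdv$, and the explicit computation $\int_{\mathcal{R}_{\delta}}G_{\alpha}dxdv=A\delta^{3\left(1+\alpha\right)+1}$ with $A>0$ and $3\left(1+\alpha\right)>0$ shows that $\Psi\left(\delta\right)-\frac{CA}{\lambda}\delta^{3\left(1+\alpha\right)+1}$ is decreasing, hence $\lim_{\delta\rightarrow0^{+}}\Psi\left(\delta\right)$ exists in $\left(-\infty,+\infty\right]$.

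Next I would split into the two cases $\lim_{\delta\rightarrow0^{+}}\Psi\left(\delta\right)<\infty$ and $=\infty$, following Lemma~\ref{TraceAphi}. In the finite case, letting $\delta_{1}\rightarrow0$ in the analogue of \eqref{Ma2} with $\delta_{2}=1$ and using boundedness of $\varphi$ gives $0\le\int_{\mathcal{R}_{1}\diagdown\left\{\left(0,0\right)\right\}}G_{\alpha}\mu\,dxdv<\infty$, which is \eqref{Z1E3}. To get \eqref{Z1E1} I would test \eqref{sub_solmixed} against $\psi=\theta\zeta G_{\alpha}\xi$ with $\xi=1$ on $\overline{\mathcal{R}_{\delta}}$, $\xi=0$ outside $\mathcal{R}_{2\delta}$, $\xi\ge0$; here the extra term $\frac{\theta}{\mu_{\ast}}\left(\varphi\left(0,0\right)-g\left(0,0\right)\right)$ appears. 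Integrating by parts on $X\diagdown\mathcal{R}_{\delta}$ and using \eqref{M1}, the right-hand side reorganizes into $\lambda\Psi\left(\delta\right)-\int_{X\diagdown\mathcal{R}_{\delta}}\psi\mu\,dxdv$ plus $\frac{\theta}{\mu_{\ast}}\left(\varphi\left(0,0\right)-g\left(0,0\right)\right)$; since \eqref{Z1E3} forces $\int_{X\diagdown\mathcal{R}_{\delta}}\psi\mu\,dxdv\rightarrow0$ and the subsolution inequality gives the whole expression $\le0$ in the limit, one obtains $\lambda\lim_{\delta\rightarrow0^{+}}\Psi\left(\delta\right)+\frac{1}{\mu_{\ast}}\left(\varphi\left(0,0\right)-g\left(0,0\right)\right)\le0$, i.e. \eqref{Z1E1} after dividing by $\lambda$. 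In the case $\lim_{\delta\rightarrow0^{+}}\Psi\left(\delta\right)=\infty$ I would reproduce the Fubini-type argument of Lemma~\ref{TraceAphi} with test functions $\psi=\theta\zeta G_{\alpha}\xi$, $\xi$ constant on each $\partial\mathcal{R}_{\rho}$ with $\bar\xi'\le0$ near $0$ and globally bounded; the extra finite term $\frac{\theta}{\mu_{\ast}}\left(\varphi\left(0,0\right)-g\left(0,0\right)\right)$ is harmless, so the conclusion $-\int_{\delta}^{\infty}\bar\xi'\left(\rho\right)\Psi\left(\rho\right)d\rho\le\frac{C}{\lambda}$ for all bounded $\bar\xi\ge0$ still contradicts the possibility of making that integral arbitrarily large, hence $\lim_{\delta\rightarrow0^{+}}\Psi\left(\delta\right)<\infty$ after all. (One must first check that the limit $\lim_{\left(x,v\right)\rightarrow\left(0,0\right)}\varphi\left(x,v\right)=\varphi\left(0,0\right)$ used in the statement indeed exists; this follows as in Lemma~\ref{LimitHzero} from the already-established finiteness of $\int_{\mathcal{R}_{1}}G_{\alpha}\mu$, which via $\alpha<-\tfrac23$ gives $\int_{\mathcal{R}_{1}}G_{-\frac23}\mu<\infty$ and hence the existence of $H\left(0^{+}\right)$ and of the rescaled limit.)

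I expect the main obstacle to be exactly the bookkeeping in the case $\lim_{\delta\rightarrow0^{+}}\Psi\left(\delta\right)=\infty$: one has to make sure that inserting the boundary term $\frac{\theta}{\mu_{\ast}}\left(\varphi\left(0,0\right)-g\left(0,0\right)\right)$ into the integration-by-parts identity does not spoil the sign structure, which requires knowing that $\varphi\left(0,0\right)$ is a genuine finite constant obtained as an $L^{p}_{loc}$-rescaling limit before one even attempts the divergence argument --- so the logical order is: (1) produce $\mu\ge0$; (2) monotonicity and existence of $\lim\Psi\left(\delta\right)$; (3) in the finite sub-case derive \eqref{Z1E3}; (4) use \eqref{Z1E3} and $\alpha<-\tfrac23$ to run the Lemma~\ref{LimitHzero}-type argument giving the finite value $\varphi\left(0,0\right)$; (5) only then treat the $\infty$ sub-case and conclude finiteness plus \eqref{Z1E1}. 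Apart from that, every ingredient (Proposition~\ref{Traces} for traces, Proposition~\ref{weak_max} and Lemma~\ref{comp_nu} for the comparison estimates used in step~(4), Proposition~\ref{LimitFluxes} for the flux of $G_{-\frac23}$, and the hypoellipticity of Theorem~\ref{Hypoell} for the $L^{p}$ convergence) is already available, so the proof is a routine, if lengthy, transcription of Lemma~\ref{TraceAphi} with the single extra term tracked throughout.
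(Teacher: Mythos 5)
Your plan is correct and is exactly the approach the paper takes: the published proof states only that the argument of Lemma~\ref{TraceAphi} carries over after inserting the extra boundary term $\frac{\theta}{\mu_{\ast}}\left(\varphi\left(0,0\right)-g\left(0,0\right)\right)$ wherever the subsolution inequality is invoked, which is precisely what you do. The one slip is your closing worry about needing to \emph{establish} the existence of the finite limit $\varphi\left(0,0\right)$ via a Lemma~\ref{LimitHzero}-type argument before running the $\lim_{\delta\rightarrow0^{+}}\Psi\left(\delta\right)=\infty$ sub-case: unlike Definition~\ref{SubSuper2}, Definition~\ref{SubSuperMixed} already builds the existence of $\lim_{\left(x,v\right)\rightarrow\left(0,0\right)}\varphi\left(x,v\right)=\varphi\left(0,0\right)$ into the meaning of subsolution, so this number is part of the given data. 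Your step~(4), and the ordering concern it creates (you would not yet have \eqref{Z1E3} when treating the $\infty$ sub-case), can simply be dropped; with that simplification the rest of your write-up is a faithful transcription of the paper's argument, in the same logical order as Lemma~\ref{TraceAphi}: first the annulus-monotonicity yielding existence of the limit in $\left(-\infty,+\infty\right]$, then the dichotomy, deriving \eqref{Z1E3} and \eqref{Z1E1} in the finite case and a contradiction in the infinite case.
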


\begin{proof}
The proof is similar to the one of Lemma \ref{TraceAphi}. The only
difference is that for test functions as in Definition \ref{SubSuperMixed}
we obtain the inequality:%
\begin{equation*}
\int \left( \left( \psi -\lambda \mathcal{L}^{\ast }\left( \psi \right)
\right) \varphi -g\psi \right) +\frac{\theta }{\mu _{\ast }}\left( \varphi
\left( 0,0\right) -g\left( 0,0\right) \right) \leq 0
\end{equation*}%
instead of (\ref{M1}). Arguing then as in the proof of Lemma \ref{TraceAphi}
we obtain (\ref{Z1E1}). The rest of the argument is basically identical to
the proof of Lemma \ref{TraceAphi} with minor changes.
\end{proof}

We now remark that several of the arguments used in Section \ref{SolvAbs}
can be adapted with minor changes to the case considered in this Section.
Indeed, Lemma \ref{LimitHzero} holds if instead of considering subsolutions
of (\ref{T6E6}), (\ref{T6E8})\ in the sense of Definition \ref{SubSuper2} we
consider subsolutions of (\ref{T6E6}), (\ref{T6E9})\ in the sense of
Definition \ref{SubSuperMixed}. Moreover, the limit value $\varphi \left(
0,0\right) $ is the same value as the limit $\lim_{\left( x,v\right)
\rightarrow \left( 0,0\right) }\varphi \left( x,v\right) $ whose existence
was assumed in Definition \ref{SubSuperMixed}. Actually, the use of Lemma %
\ref{LimitHzero} could be avoided in the case of partially trapping Boundary
Conditions, due to the existence of the limit $\lim_{\left( x,v\right)
\rightarrow \left( 0,0\right) }\varphi \left( x,v\right) $ in the Definition
of subsolutions, however, it will be convenient to have also the
corresponding version of Lemma \ref{LimitHzero} in order to adapt the
argument yielding to the corresponding version of Lemma \ref{maximaRef}. The
proof of a new version of Lemma \ref{maximaRef} in which the subsolutions
considered are subsolutions of (\ref{T6E6}), (\ref{T6E9})\ in the sense of
Definition \ref{SubSuperMixed} follows with minor changes, just replacing at
several places inequalities like $\Psi _{\varphi }\left( 0\right) \leq 0$ by
$\Psi _{\varphi }\left( 0\right) +\frac{1}{\lambda \mu _{\ast }}\left(
\varphi \left( 0,0\right) -g\left( 0,0\right) \right) \leq 0$.

We can now adapt Lemma \ref{ExSubSup2} as follows:

\begin{lemma}
\label{ExSubSupMixed}For any $g\in C\left( X\right) $ there exist at least
one subsolution $\varphi ^{\mathrm{sub}}$ and one supersolution $\varphi
^{\sup }$ in the sense of Definition \ref{SubSuperMixed} such that:%
\begin{equation}
\varphi ^{\mathrm{sub}}\leq \varphi ^{\sup }  \label{ineqSubSupMix}
\end{equation}
\end{lemma}

\begin{proof}
We can just take $\varphi ^{\mathrm{sub}}=-\left\Vert g\right\Vert
_{L^{\infty }\left( X\right) },\ \varphi ^{\sup }=\left\Vert g\right\Vert
_{L^{\infty }\left( X\right) }.$ The integral terms in (\ref{sub_solmixed}),
(\ref{super_solmixed}) can the estimated as in the proof of Lemma \ref%
{ExSubSup2}. Using then that \ $\left( \varphi ^{\mathrm{sub}}-g\left(
0,0\right) \right) \leq 0$ and $\left( \varphi ^{\sup }-g\left( 0,0\right)
\right) \leq 0$ the result follows.
\end{proof}

We can then define $\mathcal{\tilde{G}}_{sub}$ as in Definition the
following subset of $L_{b}^{\infty }\left( X\right) :$

\begin{definition}
\label{SubSetMixed}Suppose that $\varphi ^{\mathrm{sub}},\ \varphi ^{\sup }$
are respectively one subsolution and one supersolution in the sense of
Definition \ref{SubSuperMixed} satisfying (\ref{ineqSubSupMix}). We then
define $\mathcal{\tilde{G}}_{sub}\subset L_{b}^{\infty }\left( X\right) $ as:%
\begin{equation}
\mathcal{\tilde{G}}_{sub}\equiv \left\{ \varphi \in L_{b}^{\infty }\left(
X\right) :%
\begin{array}{c}
\varphi \text{ sub-solution of (\ref{T6E6}),\ (\ref{T6E9})} \\
\text{ in the sense of Definition \ref{SubSuperMixed}}\ |\ \varphi ^{%
\mathrm{sub}}\leq \varphi \leq \varphi ^{\sup }%
\end{array}%
\right\} .  \label{T7E6}
\end{equation}
\end{definition}

Lemmas \ref{ContinuityWeak} and \ref{countable} hold without any changes and
the proof of Lemma \ref{max_sub} can be adapted to include this case.

\begin{lemma}
\label{max_sub_Mixed}Let $\varphi _{1},\varphi _{2},...,\varphi _{L}\in
\mathcal{\tilde{G}}_{sub}$ with $L<\infty $ and $\mathcal{\tilde{G}}_{sub}$
as in Definition \ref{SubSuperMixed}. Define:%
\begin{equation*}
\bar{\varphi}:=\max \{\varphi _{1},\varphi _{2},...,\varphi _{L}\}
\end{equation*}

Then $\bar{\varphi}\in \mathcal{\tilde{G}}_{sub}$.
\end{lemma}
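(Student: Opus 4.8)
The plan is to follow exactly the strategy of the proof of Lemma~\ref{max_sub_ref}. First I would observe that, since
$\max\{\varphi_1,\dots,\varphi_L\}=\max\{\max\{\varphi_1,\dots,\varphi_{L-1}\},\varphi_L\}$,
a straightforward induction on $L$ reduces the statement to the case $L=2$: once we know that the maximum of two elements of $\mathcal{\tilde{G}}_{sub}$ lies in $\mathcal{\tilde{G}}_{sub}$, the general case follows by iteration. Moreover, since $\varphi^{{sub}}\le\varphi_k\le\varphi^{\sup}$ a.e.\ for $k=1,2$, the bounds $\varphi^{{sub}}\le\max\{\varphi_1,\varphi_2\}\le\varphi^{\sup}$ hold a.e., so the only point that needs to be verified is that $\bar\varphi:=\max\{\varphi_1,\varphi_2\}$ is again a subsolution of (\ref{T6E6}), (\ref{T6E9}) in the sense of Definition~\ref{SubSuperMixed}.

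For this I would invoke the version of Lemma~\ref{maximaRef} adapted to subsolutions in the sense of Definition~\ref{SubSuperMixed}, whose proof was indicated in the paragraph preceding Lemma~\ref{ExSubSupMixed}. Concretely, one repeats the argument of Lemma~\ref{maximaRef} almost verbatim: away from the singular point the subsolution inequality for test functions supported off $(0,0)$ follows from the mollification and Sard's-lemma argument of Lemma~\ref{maxima}; near $(0,0)$ one uses (i) the analog of Lemma~\ref{LimitHzero} to obtain the limit values $\varphi_k(0,0)$, $\bar\varphi(0,0)$ together with $\bar\varphi(0,0)=\max\{\varphi_1(0,0),\varphi_2(0,0)\}$; (ii) the analog of Lemma~\ref{TraceAphiMixed} to obtain the existence of $\Psi_{\varphi_k}(0)$, $\Psi_{\bar\varphi}(0)$ and the bounds $\Psi_{\varphi_k}(0)+\frac{1}{\lambda\mu_{\ast}}(\varphi_k(0,0)-g(0,0))\le0$; and (iii) the rescaling and continuity arguments of Lemma~\ref{maximaRef}, which identify $\Psi_{\bar\varphi}(0)$ with one of $\Psi_{\varphi_1}(0),\Psi_{\varphi_2}(0)$ after the case analysis on whether $\varphi_1(0,0)$ and $\varphi_2(0,0)$ coincide and on the relative sizes of $\Psi_{\varphi_1}(0),\Psi_{\varphi_2}(0)$. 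Every inequality of the form $\Psi_\varphi(0)\le0$ appearing in the proof of Lemma~\ref{maximaRef} is replaced by $\Psi_\varphi(0)+\frac{1}{\lambda\mu_{\ast}}(\varphi(0,0)-g(0,0))\le0$; since $\bar\varphi(0,0)=\max\{\varphi_1(0,0),\varphi_2(0,0)\}$, the extra term $\frac{\theta}{\mu_{\ast}}(\bar\varphi(0,0)-g(0,0))$ in (\ref{sub_solmixed}) is bounded by the corresponding term for whichever of $\varphi_1,\varphi_2$ attains the larger value at the origin, and (\ref{sub_solmixed}) for $\bar\varphi$ follows.

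The main obstacle is the near-singular-point analysis in step (iii), as it already was in Lemma~\ref{maximaRef}: one must handle the degenerate case $\varphi_1(0,0)=\varphi_2(0,0)$ and $\Psi_{\varphi_1}(0)=\Psi_{\varphi_2}(0)$, where the leading behaviour of $\bar\varphi$ near $(0,0)$ is governed by $\mathcal{A}(\varphi_k)F_\beta$ with $\mathcal{A}(\varphi_k)=\frac{1}{C_{\ast}}\Psi_{\varphi_k}(0)$ (Lemma~\ref{ConstStar}), and one needs the continuity of the functionals $\varphi\mapsto\varphi(0,0)$ and $\varphi\mapsto\Psi_\varphi(0)$ with respect to $L^p_{loc}$ convergence of the rescaled functions, together with the hypoellipticity estimate for $\partial_v$ from Theorem~\ref{Hypoell}, in order to pass to the limit in the boundary integrals defining $\Psi$. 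Once these (by now routine) modifications of the proof of Lemma~\ref{maximaRef} are in place, the argument is a word-for-word repetition of the proof of Lemma~\ref{max_sub_ref}.
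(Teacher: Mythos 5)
Your proposal is correct and follows exactly the paper's route: reduce to $L=2$ by induction, note that the pointwise bounds $\varphi^{{sub}}\le\bar\varphi\le\varphi^{\sup}$ are preserved under taking maxima, and invoke the version of Lemma~\ref{maximaRef} adapted to Definition~\ref{SubSuperMixed} (which the paper indicates just replaces $\Psi_\varphi(0)\le 0$ by $\Psi_\varphi(0)+\frac{1}{\lambda\mu_\ast}(\varphi(0,0)-g(0,0))\le 0$). The paper's own proof is simply a pointer to Lemma~\ref{max_sub_ref}, so your more detailed unpacking of the near-singular case analysis is a faithful elaboration rather than a different argument.
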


\begin{proof}
It is similar to the Proof of Lemma \ref{max_sub_ref}.
\end{proof}

We remark that Proposition \ref{DirSolv} and Lemma \ref{comp_nu} are
independent of the boundary conditions imposed at the singular point $\left(
x,v\right) =\left( 0,0\right) $ and therefore can be applied to subsolutions
as in Definition \ref{SubSuperMixed}. Lemma \ref{IncSub} can be proved for
the subsolutions in the class $\mathcal{\tilde{G}}_{sub}$ given in
Definition \ref{SubSuperMixed} with minor cases with respect to the case of
trapping boundary conditions. We can now conclude the Proof of Proposition %
\ref{Case_m}.

\begin{proof}[End of the Proof of Proposition \protect\ref{Case_m}]
We define $\varphi _{\ast }$ as in (\ref{T7E2}) with $\mathcal{\tilde{G}}%
_{sub}$ as in Definition \ref{SubSetMixed}. Arguing as in the Proof of
Proposition \ref{Case_a_sub} we obtain that $\varphi _{\ast }\in \mathcal{%
\tilde{G}}_{sub}$ with $\mathcal{\tilde{G}}_{sub}$ as in Definition \ref%
{SubSetMixed}. We define $\nu =-\left( \varphi _{\ast }-\lambda \mathcal{L}%
\left( \varphi _{\ast }\right) -g\right) .$ Due to Definition \ref%
{SubSuperMixed} we have that $\nu $ is a nonnegative Radon measure. If $\nu
\left( X\diagdown \left\{ 0,0\right\} \right) >0$ we can argue as in the
Proof of Propositions \ref{Case_a_sub}, \ref{Case_r} to derive a
contradiction. Therefore $\nu \left( X\diagdown \left\{ 0,0\right\} \right)
=0$ and then, $\varphi _{\ast }$ satisfies the problem:%
\begin{equation*}
\varphi _{\ast }-\lambda \mathcal{L}\left( \varphi _{\ast }\right) -g=0\ ,\
\left( x,v\right) \in X\diagdown \left\{ 0,0\right\} \ ,\ \varphi _{\ast
}\in C\left( X\right)
\end{equation*}

We now remark that Theorem \ref{AsSingPoint} implies:%
\begin{equation*}
\varphi _{\ast }\left( x,v\right) =\varphi _{\ast }\left( 0,0\right) +%
\mathcal{A}\left( \varphi _{\ast }\right) F_{\beta }\left( x,v\right) +\psi
_{\ast }\left( x,v\right)
\end{equation*}

Using Lemma \ref{TraceAphiMixed} and Lemma \ref{ConstStar} we obtain that $%
\varphi _{\ast }\left( 0,0\right) -g\left( 0,0\right) \leq \lambda \mu
_{\ast }\left\vert C_{\ast }\right\vert \mathcal{A}\left( \varphi _{\ast
}\right) .$ Suppose that $\varphi _{\ast }\left( 0,0\right) -g\left(
0,0\right) <\lambda \mu _{\ast }\left\vert C_{\ast }\right\vert \mathcal{A}%
\left( \varphi _{\ast }\right) .$ We can obtain a larger subsolution in the
family $\mathcal{\tilde{G}}_{sub}$ as follows. If $\varphi _{\ast }\left(
0,0\right) =\varphi ^{\sup }$ we would obtain that $\varphi _{\ast \ast
}=\max \left\{ \varphi _{\ast },\varphi _{\ast }\left( 0,0\right) +\delta
+CS\left( x,v\right) +\mathcal{A}\left( \varphi _{\ast }\right) \left(
1-\delta \right) F_{\beta }\right\} $ with $S\left( x,v\right) $ as in Lemma %
\ref{super_sol} is a subsolution of (\ref{T6E6}),\ (\ref{T6E9}) in a
neighbourhood of the singular point satisfying $\varphi _{\ast \ast
}>\varphi _{\ast },$ for a constant $C$ depending only of $g$ if $\delta $
is sufficiently small, since $S$ is bounded by $\left( x^{\frac{2}{3}%
}+v^{2}\right) $ and therefore $\mathcal{A}\left( \varphi _{\ast }\right)
F_{\beta }$ gives a larger contribution for $\left( x,v\right) $ close to $%
\left( 0,0\right) .$However this contradicts the definition of $\varphi
_{\ast },$ whence $\varphi _{\ast }\left( 0,0\right) -g\left( 0,0\right)
\leq \lambda \mu _{\ast }\left\vert C_{\ast }\right\vert \mathcal{A}\left(
\varphi _{\ast }\right) .$ Then $\mathcal{L}\left( \varphi _{\ast }\right)
\left( 0,0\right) =\mu _{\ast }\left\vert C_{\ast }\right\vert \mathcal{A}%
\left( \varphi _{\ast }\right) ,$ whence $\varphi _{\ast }\in \mathcal{D}%
\left( \Omega _{pt,sub}\right) $ and the result follows.
\end{proof}

\subsection{Operator $\Omega _{\sup }.$ Solvability of (\protect\ref{T6E6}), (%
\protect\ref{T6E10}).}

In the supercritical case $r>r_{c}$ we cannot impose a boundary condition at
the singular point. However, we can use also Perron's method in this case.
We will include a reference to (\ref{T6E10}) in order to make clear that we
refer to the supercritical case. We will prove the following result:

\begin{proposition}
\label{Case_super}For any $g\in C\left( X\right) $ there exists a unique $%
\varphi \in C\left( X\right) $ which solves (\ref{T6E6}), (\ref{T6E10}).
\end{proposition}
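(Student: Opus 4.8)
The plan is to prove Proposition \ref{Case_super} by the same Perron-type scheme used for Propositions \ref{Case_a_sub}, \ref{Case_r} and \ref{Case_m}, with Theorem \ref{AsSingSuper} taking over the role played by Theorem \ref{AsSingPoint} in the subcritical cases. Uniqueness is immediate and does not require any extra work: by Proposition \ref{Equivalence} it suffices to produce $\varphi\in C\left(X\right)$ with $\mathcal{L}\varphi\in C\left(X\right)$ solving (\ref{T6E6}), (\ref{T6E10}), and if $\varphi_{1},\varphi_{2}$ are two such functions then $w=\varphi_{1}-\varphi_{2}\in\mathcal{D}\left(\Omega_{sup}\right)$ satisfies $w-\lambda\Omega_{sup}w=0$; applying property (iii) of Markov pregenerators (established in Lemma \ref{minProp}) to $w$ and to $-w$ gives $\min_{X}w\geq 0$ and $\max_{X}w\leq 0$, hence $w\equiv 0$. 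So the whole content is existence.

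For existence, the key simplification relative to the subcritical cases is that for $r>r_{c}$ there is \emph{no} boundary condition at the singular point, so there is no need to enlarge the class of test functions: one may work with subsolutions $\varphi\in L_{b}^{\infty}\left(X\right)$ in the sense of Definition \ref{SubSuper1} but \emph{without} the requirement $\varphi\left(0,0\right)=g\left(0,0\right)$, i.e. with the single inequality (\ref{sub_sol}) tested against $\psi\in\mathcal{F}\left(X\right)$, $\psi\geq0$. Then the machinery of Section \ref{SolvAbs} transfers essentially verbatim: the maximum of finitely many such subsolutions is again a subsolution (the proof of Lemma \ref{maxima} applies, with the paragraph verifying the condition at $\left(0,0\right)$ simply deleted), the constants $\varphi^{{sub}}=-\Vert g\Vert_{L^{\infty}\left(X\right)}$ and $\varphi^{\sup}=\Vert g\Vert_{L^{\infty}\left(X\right)}$ form an ordered pair of sub/supersolutions as in Lemma \ref{ExSubSup2}, and the set $\mathcal{\tilde{G}}_{sub}$ of subsolutions lying between them admits a countable weakly dense subset $\mathcal{G}_{sub}$ (Lemmas \ref{ContinuityWeak}, \ref{countable}), so that $\varphi_{\ast}:=\lim_{M\to\infty}\max_{\mathcal{G}_{sub}\left(M\right)}\varphi\in\mathcal{\tilde{G}}_{sub}$. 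Setting $\nu:=-\left(\varphi_{\ast}-\lambda\mathcal{L}\varphi_{\ast}-g\right)$, a nonnegative Radon measure on $X\diagdown\left\{\left(0,0\right)\right\}$, one argues exactly as at the end of the proof of Proposition \ref{Case_a_sub}: if $\nu\left(X\diagdown\left\{\left(0,0\right)\right\}\right)>0$ one picks an admissible domain $\Xi$ carrying a definite amount of $\nu$, replaces $\varphi_{\ast}$ on $\Xi$ by the solution of the Dirichlet problem with its own interior trace as data (Lemma \ref{IncSub}, Proposition \ref{DirSolv}) and uses Lemma \ref{comp_nu} to obtain a strictly larger element of $\mathcal{\tilde{G}}_{sub}$, contradicting the maximality of $\varphi_{\ast}$. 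Hence $\nu$ is supported at $\left(0,0\right)$, so $\varphi_{\ast}-\lambda\mathcal{L}\varphi_{\ast}=g$ in $X\diagdown\left\{\left(0,0\right)\right\}$; by Theorem \ref{Hypoell} $\varphi_{\ast}$ is continuous away from $\left(0,0\right)$, and continuity at $\infty$ together with $\varphi_{\ast}\left(\infty\right)=g\left(\infty\right)$ follows by the parabolic argument used at the end of the proof of Proposition \ref{Case_a_sub}.

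The step I expect to be the main obstacle is establishing continuity of $\varphi_{\ast}$ at the singular point, together with the asymptotics needed for $\mathcal{L}\varphi_{\ast}\in C\left(X\right)$. Here one rescales: $\varphi_{\delta}\left(x,v\right):=\varphi_{\ast}\left(\delta^{3}x,\delta v\right)$ is bounded and satisfies $\mathcal{L}\varphi_{\delta}=\frac{\delta^{2}}{\lambda}\left(\varphi_{\delta}-g\left(\delta^{3}\cdot,\delta\cdot\right)\right)\to 0$, so by Theorem \ref{Hypoell} a subsequence converges in $L_{loc}^{p}$ to a bounded $\hat{\varphi}$ with $\mathcal{L}\hat{\varphi}=0$ satisfying (\ref{comp1}). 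The crucial point is the supercritical analogue of Theorem \ref{Liouville}(iv): such $\hat{\varphi}$ must be constant. This cannot be proved with the $F_{\beta}$-barrier of the subcritical argument, since $\beta<0$ for $r>r_{c}$ makes $F_{\beta}$ unbounded at the origin; instead one proceeds as in the proof of Theorem \ref{Liouville}(iv) using the bounded supersolution $S$ of Lemma \ref{super_sol} as barrier near $\left(0,0\right)$ and the fact that for $r>r_{c}$ the only homogeneous solution of $\mathcal{L}=0$ with homogeneity exponent in $\left[0,\tfrac{2}{3}\right]$ compatible with (\ref{comp1}) is the constant (Proposition \ref{classif}). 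Once $\hat{\varphi}$ is known to be constant, the oscillation of $\varphi_{\ast}$ on $\partial\mathcal{R}_{\delta}$ tends to $0$, so the limit $\varphi_{\ast}\left(0,0\right):=\lim_{\left(x,v\right)\to\left(0,0\right)}\varphi_{\ast}\left(x,v\right)$ exists and $\varphi_{\ast}\in C\left(X\right)$; Theorem \ref{AsSingSuper}, applied with $h=\mathcal{L}\varphi_{\ast}=\frac{1}{\lambda}\left(\varphi_{\ast}-g\right)$, then yields $\left|\varphi_{\ast}\left(x,v\right)-\varphi_{\ast}\left(0,0\right)-h\left(0,0\right)S\left(x,v\right)\right|=o\left(x^{\frac{2}{3}}+\left|v\right|^{2}\right)$, and since $S=O\left(x^{\frac{2}{3}}+\left|v\right|^{2}\right)$ this is consistent with $\varphi_{\ast}\in C\left(X\right)$ and also shows $\mathcal{L}\varphi_{\ast}$ extends continuously to $\left(0,0\right)$, so $\mathcal{L}\varphi_{\ast}\in C\left(X\right)$. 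Therefore $\varphi_{\ast}\in\mathcal{D}\left(\Omega_{sup}\right)$ and solves (\ref{T6E6}), (\ref{T6E10}), which together with the uniqueness established above proves the Proposition.
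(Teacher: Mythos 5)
Your uniqueness argument (via Proposition \ref{Equivalence} and the minimum property of Lemma \ref{minProp}) is fine and makes explicit something the paper leaves implicit. The existence proof, however, diverges from the paper in a way that opens a genuine gap.

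The paper's crucial supercritical ingredient is Proposition \ref{DirSolvSuper}: for $r>r_{c}$ the Dirichlet problem $\varphi-\lambda\mathcal{L}\varphi=g$ can be solved, with uniqueness, directly in the domains $\Lambda_{R}$ whose closure \emph{contains} the singular point. Uniqueness there is obtained by comparison with the barrier $\varepsilon F_{\beta}$, which is effective precisely because $\beta<0$ makes $F_{\beta}$ blow up at $\left(0,0\right)$ and therefore dominate any bounded function on $\partial\mathcal{R}_{\delta}$ for small $\delta$. With this solvability in hand, the paper's Perron argument is run in both the admissible domains of Definition \ref{admissible} \emph{and} in the $\Lambda_{R}$'s; a nonzero $\nu$ anywhere, including at $\left(0,0\right)$, then produces a strictly larger subsolution and a contradiction. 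Your proposal uses only the admissible domains, hence can only conclude that $\nu$ vanishes on $X\diagdown\left\{\left(0,0\right)\right\}$, and shifts the whole burden of handling the singular point onto a separate continuity argument.

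That continuity argument is where the gap lies. You invoke a ``supercritical analogue of Theorem \ref{Liouville}(iv)'' for bounded global solutions of $\mathcal{L}\hat{\varphi}=0$, but no such statement appears in the paper — (iv) is explicitly restricted to $r<r_{c}$, and the paper never needs a bounded Liouville theorem for $r>r_{c}$ because Proposition \ref{DirSolvSuper} substitutes for it. Your sketch of a proof of this missing Liouville theorem is also problematic: you propose to replace the $F_{\beta}$-barrier by the bounded function $S$ from Lemma \ref{super_sol}, but $S$ satisfies $\mathcal{L}S=1$, not $\mathcal{L}S=0$, and vanishes like $x^{2/3}+\left\vert v\right\vert^{2}$ at the origin, so it cannot dominate a bounded $\hat{\varphi}$ near $\left(0,0\right)$. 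You dismiss $F_{\beta}$ because it is ``unbounded at the origin,'' but that unboundedness is exactly what one wants in a barrier for bounded functions near a singularity — it is the mechanism the paper exploits in Proposition \ref{DirSolvSuper}. Finally, even granting constancy of the blow-up limit $\hat{\varphi}$, the inference ``the oscillation of $\varphi_{\ast}$ on $\partial\mathcal{R}_{\delta}$ tends to $0$'' requires showing that the constant does not depend on the blow-up subsequence, a point that the subcritical proofs (Theorem \ref{AsSingPoint}, Lemma \ref{LimitHzero}) handle with a compensating-flux argument and that your outline leaves unaddressed. The most efficient repair is to follow the paper: prove Proposition \ref{DirSolvSuper} using the $\varepsilon F_{\beta}$ barrier, admit $\Lambda_{R}$ as allowable domains in the Perron machine, and obtain $\nu=0$ on all of $X$ directly, after which continuity at $\left(0,0\right)$ follows from the stochastic (or approximation) representation of the solution in $\Lambda_{R}$.
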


The concept of sub and supersolutions which we will use in this case is the
following:

\begin{definition}
\label{SubSuperSuper}Suppose that $g\in C\left( X\right) .$ We will say that
a function $\varphi \in L_{b}^{\infty }\left( X\right) ,$ is a subsolution
of (\ref{T6E6}), (\ref{T6E10}) if for all $\psi \in C^{\infty }\left(
\overline{\mathcal{U}}\right) $ with $\psi \geq 0,\ \psi \left( 0,-v\right)
=r^{2}\psi \left( 0,rv\right) ,\ v>0$ and with $\psi $ supported in a set
contained in the ball $\left\vert \left( x,v\right) \right\vert \leq R$ for
some $R>0$ we have:%
\begin{equation}
\int \left( \psi -\lambda \mathcal{L}^{\ast }\left( \psi \right) \right)
\varphi \leq \int g\psi \   \label{Z1E4}
\end{equation}
Given $g\in C\left( X\right) ,$ we will say that $\varphi \in L_{b}^{\infty
}\left( X\right) $ is a supersolution of (\ref{T6E6}), (\ref{T6E10}) if for
all $\psi $ with the same properties as above\ we have:%
\begin{equation}
\int \left( \psi -\lambda \mathcal{L}^{\ast }\left( \psi \right) \right)
\varphi \geq \int g\psi  \label{Z1E5}
\end{equation}
\end{definition}

\begin{remark}
Notice that the assumption $\psi \in C^{\infty }\left( \overline{\mathcal{U}}%
\right) $ does not imply any condition in the values of the derivatives of $%
\psi $ at $\left\{ x=0\right\} ,$ except those which follow from the
boundary condition $\psi \left( 0,-v\right) =r^{2}\psi \left(
0,rv\right) ,\ v>0.$
\end{remark}

We can now construct easily bounded sub and supersolutions in the sense of
Definition \ref{SubSuperSuper}:

\begin{lemma}
\label{ExSubSupSuper}For any $g\in C\left( X\right) $ there exist at least
one subsolution $\varphi ^{\mathrm{sub}}$ and one supersolution $\varphi
^{\sup }$ in the sense of Definition \ref{SubSuperSuper} such that:%
\begin{equation}
\varphi ^{\mathrm{sub}}\leq \varphi ^{\sup }  \label{Z1E6}
\end{equation}
\end{lemma}

\begin{proof}
We can just take $\varphi ^{\mathrm{sub}}=-\left\Vert g\right\Vert
_{L^{\infty }\left( X\right) },\ \varphi ^{\sup }=\left\Vert g\right\Vert
_{L^{\infty }\left( X\right) }.$ The proof can be made then by means of a
small adaptation of the one of Lemma \ref{ExSubSup2}.
\end{proof}

We can then define $\mathcal{\tilde{G}}_{sub}$ as in Definition the
following subset of $L_{b}^{\infty }\left( X\right) :$

\begin{definition}
\label{SubSetSuper}Suppose that $\varphi ^{\mathrm{sub}},\ \varphi ^{\sup }$
are respectively one subsolution and one supersolution in the sense of
Definition \ref{SubSuperSuper} satisfying (\ref{Z1E6}). We then define $%
\mathcal{\tilde{G}}_{sub}\subset L_{b}^{\infty }\left( X\right) $ as:%
\begin{equation}
\mathcal{\tilde{G}}_{sub}\equiv \left\{ \varphi \in L_{b}^{\infty }\left(
X\right) :%
\begin{array}{c}
\varphi \text{ sub-solution of (\ref{T6E6}),\ (\ref{T6E10})} \\
\text{ in the sense of Definition \ref{SubSuperSuper}}\ |\ \varphi ^{%
\mathrm{sub}}\leq \varphi \leq \varphi ^{\sup }%
\end{array}%
\right\} .  \label{Z1E7}
\end{equation}
\end{definition}

In this case we do not need to study the subsolutions near the singular
point with the same level of detail as in the subcritical cases. However, we
will need to be able to solve the Dirichlet problem with boundary values in
the admissible boundaries for domains containing the singular point. The
following result generalizes Proposition \ref{DirSolv} to the case of
domains containing the singular point if $r>r_{c}:$

\begin{proposition}
\label{DirSolvSuper}Suppose that $r>r_{c}$ and let $\Lambda _{R}$ as in (\ref%
{LambdaSing}) for some $R>0$. Let us assume also that $g\in C\left(
\overline{\Lambda _{R}}\right) ,$ and $h\in L^{\infty }\left( \partial
_{a}\Lambda _{R}\right) $ where $\partial _{a}\Lambda _{R}$ is the
corresponding admissible boundary of $\Lambda _{R}.$ Then, there exists a
unique classical solution of the problem (\ref{solv_D1}), (\ref{solv_D2})
where the boundary condition (\ref{solv_D2}) is achieved in the sense of
trace defined in Proposition \ref{Traces}.
\end{proposition}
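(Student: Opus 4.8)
\textbf{Proof proposal for Proposition \ref{DirSolvSuper}.}

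The plan is to reduce the solvability in the domain $\Lambda_R$ containing the singular point to the already-established solvability on admissible domains not touching $(0,0)$ (Proposition \ref{DirSolv}), together with the asymptotic description near the singular point obtained in Theorem \ref{AsSingSuper} and the comparison principle in Proposition \ref{ComparisonSingP}. First I would treat the case $h \in C(\partial_a \Lambda_R)$; the general case $h \in L^\infty(\partial_a \Lambda_R)$ then follows by density in $L^1(\partial_a \Lambda_R)$ exactly as in the proof of Proposition \ref{DirSolv}, since the comparison principle gives uniform control of the solution by $\|h\|_{L^\infty}$. For continuous $h$, I would use Perron's method adapted to this setting: define the class of subsolutions $\varphi \in L_b^\infty(\Lambda_R)$ of $\varphi - \lambda \mathcal{L}\varphi = g$ in $\Lambda_R$ satisfying the compatibility condition \eqref{comp1} and having boundary trace $\leq h$ on $\partial_a \Lambda_R$, and take $\varphi_* $ to be the supremum (realized via a countable dense family and finite maxima, as in Lemma \ref{countable} and Lemma \ref{max_sub}). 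The key structural facts needed — that the maximum of two subsolutions is a subsolution, that replacing a subsolution by the Dirichlet solution on an interior admissible subdomain produces a larger subsolution (Lemma \ref{IncSub}), and that a strictly positive defect measure forces a strict increase (Lemma \ref{comp_nu}) — are all available and do not see the singular point, since the relevant admissible subdomains $\Xi \subset \Lambda_R \setminus \{(0,0)\}$ avoid the origin. This yields a function $\varphi_*$ solving $\varphi_* - \lambda \mathcal{L}\varphi_* = g$ in $\Lambda_R \setminus \{(0,0)\}$ with the correct trace on $\partial_a \Lambda_R$.

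The second step, and the one I expect to be the genuine obstacle, is to show that $\varphi_*$ extends continuously across the singular point and that no spurious singular behaviour is introduced there — in other words, that the solution really is classical on $\overline{\Lambda_R}$ and that uniqueness holds. Here is where the case $r > r_c$ is genuinely easier than the subcritical one: by Theorem \ref{AsSingSuper}, any bounded solution $\varphi$ of $\mathcal{L}\varphi = h$ (with $h = \frac{1}{\lambda}(\varphi - g) \in C(X)$ after localizing) automatically satisfies $|\varphi(x,v) - \varphi(0,0) - h(0,0)S(x,v)| = o(x^{2/3} + |v|^2)$ as $(x,v) \to (0,0)$, so there is no free constant $\mathcal{A}(\varphi)$ to pin down (the exponent $\beta = \beta(r) < 0$ for $r > r_c$, so the $F_\beta$-mode is not admissible among bounded functions). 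Thus boundedness alone forces the right asymptotics, and in particular $\varphi_*$ has a well-defined limit at $(0,0)$. To get boundedness of $\varphi_*$ I would note that the sub/supersolutions $\varphi^{\mathrm{sub}} = -\|g\|_\infty$, $\varphi^{\sup} = \|g\|_\infty$ (plus a correction of the form $\pm KF_{something}$ is not needed here) bound the Perron family, so $|\varphi_*| \leq \|g\|_\infty$ on $\Lambda_R$. Then hypoellipticity (Theorem \ref{Hypoell}) gives interior $W^{1,p}_{loc}$ regularity away from the origin, and classical parabolic theory gives regularity up to the admissible boundary and up to $\{x = 0, v \neq 0\}$ via \eqref{comp1}, exactly as in Theorem \ref{Regul}.

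For uniqueness, the difference $w$ of two solutions solves $w - \lambda \mathcal{L}w = 0$ in $\Lambda_R$, vanishes on $\partial_a \Lambda_R$, satisfies \eqref{comp1}, and is bounded; applying Theorem \ref{AsSingSuper} to $e^{t/\lambda}$-type transforms (or directly to $w$, whose associated $h(0,0) = \frac{1}{\lambda}w(0,0)$) shows $w$ has no singular mode at the origin, and then Proposition \ref{ComparisonSingP} — which in the supercritical case requires no extra condition at $(0,0)$ — applied to $\pm w + \varepsilon$ yields $w \equiv 0$ after letting $\varepsilon \to 0$. The main delicacy I anticipate is verifying carefully that the Perron-produced $\varphi_*$ indeed lands in $C(\overline{\Lambda_R})$ including at $(0,0)$: one must rule out oscillation of $\varphi_*$ near the origin, which I would do by the same rescaling-and-compactness argument used in the proof of Theorem \ref{AsSingSuper}, extracting limits of $\varphi_*(\delta^3 x, \delta v)$ and invoking the Liouville theorem (Theorem \ref{Liouville}(iii)) to conclude the limit is independent of the subsequence and equal to a constant, which is then $\varphi_*(0,0)$. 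Once continuity at the origin is in hand, the defect measure $\nu = -(\varphi_* - \lambda\mathcal{L}\varphi_* - g)$ is seen to be zero by the same contradiction argument as in Proposition \ref{Case_a_sub}, and the proposition follows.
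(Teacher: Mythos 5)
Your proposal reaches the right conclusion but takes a genuinely different route from the paper, and one of your citations for uniqueness does not quite fit.

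For existence, you propose Perron's method on $\Lambda_R$ with the Dirichlet solver on interior admissible subdomains avoiding the origin, plus a rescaling/compactness/Liouville argument to establish continuity of $\varphi_*$ at $(0,0)$. The paper instead uses a direct approximation: solve the Dirichlet problem on the punctured domains $\Lambda_R \setminus \mathcal{R}_\delta$ with data $h$ on $\partial_a \Lambda_R$ and, say, $0$ on the inner piece of the boundary; the maximum principle gives the uniform bound $\|\varphi_\delta\|_\infty \leq \|g\|_\infty + \|h\|_\infty$, hypoellipticity (Theorem \ref{Hypoell}) gives $W^{1,p}_{\mathrm{loc}}$ compactness away from the origin, and one passes to a limit along a subsequence $\delta_n \to 0$. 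This sidesteps the delicacies you flag (continuity at $(0,0)$, barriers for the trace at $\partial_a \Lambda_R$), whereas Perron's method pushes them to the forefront and also requires adjusting your super/subsolutions (the constants $\pm\|g\|_\infty$ need not dominate/minorize $h$ on $\partial_a\Lambda_R$; one should take $\pm(\|g\|_\infty+\|h\|_\infty)$ at least). Your route could be made to work but is substantially heavier.

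For uniqueness there is a real misstep: Proposition \ref{ComparisonSingP} is stated for the operator $\mathcal{L}\varphi = 0$, not the resolvent equation $\varphi - \lambda\mathcal{L}\varphi = 0$, and if $w - \lambda\mathcal{L}w = 0$ then $\mathcal{L}(\pm w + \varepsilon) = \pm w/\lambda \neq 0$, so that proposition does not apply to $\pm w + \varepsilon$. The invocation of Theorem \ref{AsSingSuper} is also superfluous here. The paper's argument is shorter and exploits exactly what makes the supercritical case easy: $F_\beta$ with $\beta=\beta(r)<0$ is unbounded near $(0,0)$ and is a strict supersolution of the resolvent equation, since $\varepsilon\bigl(F_\beta - \lambda\mathcal{L}F_\beta\bigr) = \varepsilon F_\beta > 0$. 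Given a bounded $\psi$ with $\psi - \lambda\mathcal{L}\psi = 0$ and $\psi = 0$ on $\partial_a\Lambda_R$, one chooses $\delta$ small enough that $|\psi| \le \varepsilon F_\beta$ on $\partial\mathcal{R}_\delta$, applies comparison in $\Lambda_R\setminus\mathcal{R}_\delta$, and then sends $\delta\to 0$, $\varepsilon\to 0$. You should replace the appeal to Proposition \ref{ComparisonSingP} with this barrier argument (or with a resolvent comparison principle on punctured domains proved by the same duality technique as Proposition \ref{weak_max}).
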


\begin{proof}
The existence of solutions can be obtained considering the sequence of
solutions of the problems (\ref{solv_D1}), (\ref{solv_D2}) in the domains $%
\Lambda _{R}\diagdown \mathcal{R}_{\delta }$ with $\delta >0$ small and
boundary conditions $\varphi =h~\ $on $\partial _{a}\Lambda _{R}$ and, say, $%
\varphi =0$ on $\partial _{a}\left( \Lambda _{R}\diagdown \mathcal{R}%
_{\delta }\right) \cap \partial \mathcal{R}_{\delta }.$ The solutions of
these problems, which will be denoted as $\varphi _{\delta }$ are uniformly
bounded by $\left\Vert g\right\Vert _{L^{\infty }}+\left\Vert h\right\Vert
_{L^{\infty }}$ due to the maximum principle. Moreover, the derivatives of
the functions $\varphi _{\delta }$ are bounded in $W_{loc}^{1,p}$ due to
Theorem \ref{Hypoell}.$\ $Therefore, a standard compactness argument allows
to find a subsequence $\varphi _{\delta _{n}}$ converging uniformly to a
weak solution of (\ref{solv_D1}), (\ref{solv_D2}).

In order to prove uniqueness we consider the difference $\psi $ of two
bounded solutions of (\ref{solv_D1}), (\ref{solv_D2}) in $\Lambda _{R}$
satisfies $\left( \psi -\lambda \mathcal{L}\psi \right) =0$ in $\Lambda
_{R}, $ $\psi =0$ on $\partial _{a}\Xi .$ We can construct now a positive
supersolution with the form $\varepsilon F_{\beta },$ $\varepsilon >0,$
since $\varepsilon \left( F_{\beta }-\lambda \mathcal{L}F_{\beta }\right)
=\varepsilon F_{\beta }>0.$ Since $\psi $ is bounded, and $\beta <0$ for $%
r>r_{c}$ we obtain that $\left\vert \psi \right\vert \leq \varepsilon
F_{\beta }$ if we choose $\delta $ small enough (depending on $\varepsilon $%
). We can then use a comparison argument in the domain $\Lambda
_{R}\diagdown \mathcal{R}_{\delta }$ whence $\left\vert \psi \right\vert
\leq \varepsilon F_{\beta }$ in $\Lambda _{R}$ for $\varepsilon >0$
arbitrary. Taking the limit $\varepsilon \rightarrow 0$ we obtain $\psi =0$
and the uniqueness result follows.
\end{proof}

\begin{proof}[End of the Proof of Proposition \protect\ref{Case_super}]
We define $\varphi _{\ast }$ as in (\ref{T7E2}) with $\mathcal{\tilde{G}}%
_{sub}$ as in Definition \ref{SubSetSuper}. Arguing as in the Proof of
Propositions \ref{Case_a_sub}, \ref{Case_r}, \ref{Case_m} we obtain that $%
\varphi _{\ast }\in \mathcal{\tilde{G}}_{sub}.$We define $\nu =-\left(
\varphi _{\ast }-\lambda \mathcal{L}\left( \varphi _{\ast }\right) -g\right)
.$ Due to Definition \ref{SubSetSuper} we have that $\nu $ is a nonnegative
Radon measure. If $\nu \neq 0$ we consider a domain $\Xi $ that is, either
one of the admissible domains $\Xi $ in Definition \ref{admissible} or one
of the domains $\Lambda _{R}$ containing the singular point as in the
statement of Proposition \ref{DirSolvSuper}. We can then argue as in the
Proof of Propositions \ref{Case_a_sub}, \ref{Case_r}, \ref{Case_m} to derive
a contradiction. Therefore $\nu =0$ and then, $\varphi _{\ast }\in C\left(
X\right) $ gives the desired solution of the problem (\ref{T6E6}), (\ref%
{T6E10}).
\end{proof}

\subsection{Solvability of the adjoint problems (\protect\ref{T2E8})-(\protect
\ref{T2E10}), (\protect\ref{T3E1})-(\protect\ref{T3E4}), (\protect\ref{T3E5}%
)-(\protect\ref{T3E9}), (\protect\ref{T2E3})-(\protect\ref{T2E6}).}

We can now prove Theorem \ref{AAW}.

\begin{proof}[Proof of Theorem \ref{AAW}]
Theorem \ref{AAW} is a consequence of the Hille-Yosida Theorem (cf. Theorem %
\ref{HY}), as well as Proposition \ref{PropPG}, Proposition \ref{Closed} and
Propositions \ref{Case_a_sub}, \ref{Case_r}, \ref{Case_m} and \ref%
{Case_super}.
\end{proof}


%
%
%

\section{Weak solutions for the original problem.\label{weakSolDef}}

Our next goal is to define suitable measure valued solutions of the problem
(\ref{S0E1})-(\ref{S0E3}) by means of the corresponding adjoint problems. 
To this end, we argue by duality. We will use the index $%
\sigma $ to denote each of the four cases considered in Subsection \ref%
{Operators} of Section \ref{AdjProblems}, namely, in the case of subcritical
values of $r,$ we can use trapping, nontrapping or partially trapping
boundary conditions. We will consider also the supercritical case. The
following definition will be used to define measure valued solutions of the
problem (\ref{S0E1})-(\ref{S0E3}) with all the boundary conditions
considered above.

\begin{definition}
\label{weakSol}Given $P_{0}\in \mathcal{M}_{+}\left( X\right) $ we define a
measure valued function $P_{\sigma }\in C\left( \left[ 0,\infty \right) :%
\mathcal{M}_{+}\left( X\right) \right) $ by means of:%
\begin{equation}
\int \varphi \left( dP_{\sigma }\left( \cdot ,t\right) \right) =\int \left(
S_{\sigma }\left( t\right) \varphi \right) \left( dP_{0}\right) \ \ ,\ \
t\geq 0  \label{A3}
\end{equation}%
for any $\varphi \in C\left( \left[ 0,\infty \right) :C\left( X\right)
\right) $
\end{definition}

\begin{remark}
Notice that the notation in (\ref{A3}) must be understood as follows. Let $%
\psi \left( \cdot ,t\right) =S_{\sigma }\left( t\right) \varphi .$ Then, the
right-hand side of (\ref{A3}) is equivalent to $\int \psi \left( \cdot
,t\right) dP_{0}.$ The left-hand side of (\ref{A3}) is just $\int \varphi
\left( \cdot \right) dP_{\sigma }\left( t\right) .$
\end{remark}

It is convenient to write in detail the weak formulation of (\ref{S0E1})-(%
\ref{S0E3}) satisfied by each of the measures $P_{\sigma }.$

\begin{definition}
\label{WAC}Suppose that $0<r<r_{c}$ and $P_{0}\in \mathcal{M}_{+}\left(
X\right) .$ We will say that $P\in C\left( \left[ 0,\infty \right) :\mathcal{%
M}_{+}\left( X\right) \right) $ is a weak solution of (\ref{S0E1})-(\ref%
{S0E3}) with trapping boundary conditions if for any $T\in \left[ 0,\infty
\right) $ and $\varphi \ $such that, for any time $t\in \left[ 0,T\right] ,$
$\varphi \left( \cdot ,t\right) ,\mathcal{L}\varphi \left( \cdot ,t\right)
,\varphi _{t}\left( \cdot ,t\right) \in C\left( X\right) ,\ \varphi \left(
\cdot ,t\right) $ satisfies (\ref{phi_decom})\ and such that $\mathcal{L}%
\varphi \left( 0,0,t\right) =0$ for any $t\geq 0,$ the following identity
holds:%
\begin{equation}
\int_{0}^{T}\int_{X\diagdown \left\{ \left( 0,0\right) \right\} }\left[
\varphi _{t}\left( dP\left( \cdot ,t\right) ,t\right) +\mathcal{L}\varphi
\left( dP\left( \cdot ,t\right) ,t\right) \right] +\int_{X}\varphi \left(
dP_{0}\left( \cdot \right) ,0\right) -\int_{X}\varphi \left( dP\left( \cdot
,T\right) ,T\right) =0,  \label{S7E5}
\end{equation}%
\ where $\mathcal{L}$ is as in (\ref{diffOp}).
\end{definition}

\begin{definition}
\label{WRC}Suppose that $0<r<r_{c}$ and $P_{0}\in \mathcal{M}_{+}\left(
X\right) .$ We will say that $P\in C\left( \left[ 0,\infty \right) :\mathcal{%
M}_{+}\left( X\right) \right) $ is a weak solution of (\ref{S0E1})-(\ref%
{S0E3}) with nontrapping boundary conditions if for any $T\in \left[
0,\infty \right) $ and $\varphi $ such that, for any time $t\in \left[ 0,T%
\right] ,$ $\varphi \left( \cdot ,t\right) ,\mathcal{L}\varphi \left( \cdot
,t\right) ,\varphi _{t}\left( \cdot ,t\right) \in C\left( X\right) ,\
\varphi \left( \cdot ,t\right) $ satisfies (\ref{phi_decom})\ and\ $\mathcal{%
A}\left( \varphi \right) \left( \cdot ,t\right) =0$ the identity (\ref{S7E5})
holds,\ where $\mathcal{L}$ is as in (\ref{diffOp}).
\end{definition}

\begin{definition}
\label{WMC}Suppose that $0<r<r_{c}$ and $P_{0}\in \mathcal{M}_{+}\left(
X\right) .$ We will say that $P\in C\left( \left[ 0,\infty \right) :\mathcal{%
M}_{+}\left( X\right) \right) $ is a weak solution of (\ref{S0E1})-(\ref%
{S0E3}) with partially trapping boundary conditions if for any $T\in \left[
0,\infty \right) $ and and $\varphi $ such that, for any time $t\in \left[
0,T\right] ,$ $\varphi \left( \cdot ,t\right) ,\mathcal{L}\varphi \left(
\cdot ,t\right) ,\varphi _{t}\left( \cdot ,t\right) \in C\left( X\right) ,\
\varphi \left( \cdot ,t\right) $ satisfies (\ref{phi_decom})\ and\ $\mathcal{%
L}\varphi \left( 0,0,t\right) =\mu _{\ast }\left\vert C_{\ast }\right\vert
\mathcal{A}\left( \varphi \right) ,$ the identity (\ref{S7E5}) holds, where $%
\mathcal{L}$ is as in (\ref{diffOp}).
\end{definition}

\begin{definition}
\label{WSuper}Suppose that $r>r_{c}$ and $P_{0}\in \mathcal{M}_{+}\left(
X\right) .$ We will say that $P\in C\left( \left[ 0,\infty \right) :\mathcal{%
M}_{+}\left( X\right) \right) $ is a weak solution of (\ref{S0E1})-(\ref%
{S0E3}) for supercritical boundary conditions if for any $T\in \left[
0,\infty \right) $ and $\varphi \in C_{c}^{2}\left( \left[ 0,T\right) \times
X\right) $ the identity (\ref{S7E5}) holds, where $\mathcal{L}$ is as in (%
\ref{diffOp}).
\end{definition}

Our next goal is to prove the following result:

\begin{proposition}
\label{CharMeas}Suppose that we define measure valued functions
\begin{equation*}
P_{t,sub},\ P_{nt,sub},\ P_{pt,sub},\ P_{sup}\in C\left( \left[ 0,\infty
\right) :\mathcal{M}_{+}\left( X\right) \right)
\end{equation*}
as in Definition \ref{weakSol} and initial datum $P_{0}$. Then, they are
weak solutions of (\ref{S0E1})-(\ref{S0E3})\ with trapping, nontrapping,
partially trapping and supercritical boundary conditions respectively.
Moreover, the functions $P_{t,sub},\ P_{nt,sub},\ P_{pt,sub},\ P_{sup}$ are
the unique solutions of (\ref{S0E1})-(\ref{S0E3})\ with trapping,
nontrapping, partially trapping and supercritical boundary conditions in the
sense of Definitions \ref{WAC}, \ref{WRC}, \ref{WMC} and \ref{WSuper}
respectively.
\end{proposition}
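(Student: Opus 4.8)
\textbf{Proof proposal for Proposition \ref{CharMeas}.} The plan is to split the statement into two parts: (i) the measures $P_\sigma$ defined via \eqref{A3} are weak solutions in the sense of Definitions \ref{WAC}--\ref{WSuper}, and (ii) such weak solutions are unique. For part (i) I would proceed as follows. Fix one of the four cases, say the operator $\Omega_\sigma$ with associated Markov semigroup $S_\sigma(t)$ from Theorem \ref{AAW}. Let $T>0$ and let $\varphi$ be a test function as in the corresponding Definition (so that for each fixed $t$ one has $\varphi(\cdot,t)\in\mathcal{D}(\Omega_\sigma)$, using that the boundary condition imposed on $\varphi$ at $(0,0)$ in each Definition is precisely the one built into $\mathcal{D}(\Omega_\sigma)$ in \eqref{S7E1}, \eqref{S7E2}, \eqref{S7E3}, \eqref{S7E4}). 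Define $\Phi(t,s)=\big(S_\sigma(s)\varphi(\cdot,t)\big)$ and consider the function $t\mapsto \int_X \Phi(t,T-t)\,dP_0$. Differentiating in $t$, using the Hille-Yosida identity $\frac{d}{ds}S_\sigma(s)f=\Omega_\sigma S_\sigma(s)f$ from Theorem \ref{HY} together with $\partial_t\varphi=-\mathcal{L}\varphi=-\Omega_\sigma\varphi$ on $\mathcal{U}$ (this is where the hypothesis $\varphi_t+\mathcal{L}\varphi$ having the prescribed value at $(0,0)$ enters), one obtains that this function has derivative $\int_X\big(\varphi_t+\Omega_\sigma\varphi\big)(\cdot,t)\,dP_\sigma(t)$ after using \eqref{A3} to convert $S_\sigma$-averages against $P_0$ into averages against $P_\sigma$. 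Integrating from $0$ to $T$ and observing that the endpoint values are $\int_X\varphi(\cdot,0)\,dP_\sigma(0)=\int_X\varphi(\cdot,0)\,dP_0$ and $\int_X\varphi(\cdot,T)\,dP_\sigma(T)$, one recovers exactly \eqref{S7E5}. The splitting of the domain $X\diagdown\{(0,0)\}$ versus $X$ in \eqref{S7E5} is harmless because $P_\sigma(t)$ may charge $\{(0,0)\}$ but $\mathcal{L}\varphi$ extends continuously there, and $\varphi_t(0,0,t)=-\mathcal{L}\varphi(0,0,t)$ by the constraint in the definition of $\mathcal{D}(\Omega_\sigma)$, so the contribution of the point mass is accounted for; I would make this bookkeeping explicit.

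For part (ii), uniqueness, I would use the standard duality argument. Suppose $P$ is any weak solution with initial datum $P_0$ in the sense of the relevant Definition. Given an arbitrary target $\varphi_0\in\mathcal{D}(\Omega_\sigma)$ and $T>0$, set $\varphi(\cdot,t)=S_\sigma(T-t)\varphi_0$. By Theorem \ref{AAW}, $\varphi$ satisfies $\varphi_t-\mathcal{L}\varphi=0$ on $\mathcal{U}\times(0,T)$ — equivalently $\varphi_t+\mathcal{L}\varphi$ has the form required so that $\varphi$ is an admissible test function in \eqref{S7E5} — and $\varphi(\cdot,t)\in\mathcal{D}(\Omega_\sigma)$ for all $t$, in particular it satisfies the asymptotics \eqref{phi_decom} and the boundary constraint at $(0,0)$ required by Definitions \ref{WAC}--\ref{WSuper}. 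Plugging this $\varphi$ into \eqref{S7E5}, the first integral vanishes because the integrand is $\big(\varphi_t+\mathcal{L}\varphi\big)dP$ and this is zero away from $(0,0)$, while at $(0,0)$ the constraint $\varphi_t(0,0,t)=-\mathcal{L}\varphi(0,0,t)$ kills the point-mass contribution; hence \eqref{S7E5} reduces to $\int_X\varphi_0\,dP(T)=\int_X\big(S_\sigma(T)\varphi_0\big)\,dP_0$. Since $\varphi_0$ ranges over a dense subset $\mathcal{D}(\Omega_\sigma)$ of $C(X)$ (density by Lemma \ref{density}) and both sides are continuous in $\varphi_0$ with respect to the uniform norm, this identity determines $\int_X\varphi_0\,dP(T)$ for all $\varphi_0\in C(X)$, i.e. $P(T)$ is uniquely determined and equals $P_\sigma(T)$ by \eqref{A3}. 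As $T$ is arbitrary, $P=P_\sigma$.

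The main obstacle I anticipate is the careful treatment of the singular point $(0,0)$ in the differentiation-in-time/integration-by-parts step of part (i), and correspondingly in verifying that the test functions $S_\sigma(T-t)\varphi_0$ of part (ii) really qualify as admissible test functions in \eqref{S7E5}. Concretely: one must check that $t\mapsto S_\sigma(t)\varphi_0$ is $C^1$ with values in $C(X)$ and that $\mathcal{L}$ of it stays in $C(X)$ (both given by Theorem \ref{AAW} and Theorem \ref{HY}), that the asymptotic expansion \eqref{phi_decom} holds with $\mathcal{A}$ depending continuously on $t$ — this follows from membership in $\mathcal{D}(\Omega_\sigma)$ combined with the uniform asymptotics of Proposition \ref{mu_R} and Theorem \ref{AsSingPoint}/\ref{AsSingSuper} — and that the boundary relation at $(0,0)$ (e.g. $\mathcal{L}\varphi(0,0,t)=\mu_\ast|C_\ast|\mathcal{A}(\varphi)(\cdot,t)$ in the partially trapping case) is preserved along the semigroup, which is exactly the statement that $S_\sigma(t)$ maps $\mathcal{D}(\Omega_\sigma)$ into itself, again from Theorem \ref{HY}. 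Once these regularity and compatibility facts are in hand, the duality computation is routine; I would therefore organize the write-up so that the bulk of the work is citing Theorems \ref{AAW}, \ref{HY} and Lemma \ref{density}, and the only genuinely new computation is the Fubini-type manipulation identifying $\frac{d}{dt}\int_X\big(S_\sigma(T-t)\varphi_0\big)dP(t)$ with the integrand of \eqref{S7E5}.
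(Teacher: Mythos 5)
Your part (ii) (uniqueness) is sound and in essence identical to the paper's argument: given a weak solution $P$, feed the backward adjoint solution $\varphi(\cdot,t)=S_\sigma(T-t)\varphi_0$ into the weak formulation~\eqref{S7E5}, use $\varphi_t+\mathcal{L}\varphi=0$, and conclude $\int\varphi_0\,dP(T)=\int S_\sigma(T)\varphi_0\,dP_0$; density of $\mathcal{D}(\Omega_\sigma)$ in $C(X)$ (Lemma \ref{density}) then pins down $P(T)$. The paper phrases this as a proof by contradiction with $P=P_1-P_2$, but the key inputs and logic are the same.

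Your part (i) (showing $P_\sigma$ is a weak solution) contains a genuine gap. You write that the derivative computation uses ``$\partial_t\varphi=-\mathcal{L}\varphi=-\Omega_\sigma\varphi$ on $\mathcal{U}$'', but the test functions appearing in Definitions \ref{WAC}--\ref{WSuper} are \emph{not} required to satisfy any PDE in $t$: they only have the regularity $\varphi,\ \mathcal{L}\varphi,\ \varphi_t\in C(X)$, the decomposition \eqref{phi_decom}, and the boundary constraint on $\mathcal{L}\varphi(0,0,t)$ (or $\mathcal{A}(\varphi)$). If you impose $\varphi_t=-\mathcal{L}\varphi$, the integrand $\varphi_t+\mathcal{L}\varphi$ in~\eqref{S7E5} vanishes identically and you have proved~\eqref{S7E5} only for the very special test functions used in part (ii), not for the general class required by the definition of weak solution. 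Relatedly, your choice $\Phi(t,T-t)=S_\sigma(T-t)\varphi(\cdot,t)$ introduces a sign and time-index mismatch: differentiating gives $\int_X S_\sigma(T-t)[\varphi_t-\Omega_\sigma\varphi](\cdot,t)\,dP_0 = \int_X[\varphi_t-\Omega_\sigma\varphi](\cdot,t)\,dP_\sigma(T-t)$, which is not $\int_X[\varphi_t+\Omega_\sigma\varphi](\cdot,t)\,dP_\sigma(t)$. The correct object to differentiate is $g(t)=\int_X S_\sigma(t)\varphi(\cdot,t)\,dP_0 = \int_X\varphi(\cdot,t)\,dP_\sigma(t)$, with no assumption whatsoever on $\varphi_t$; the product rule together with $\partial_t(S_\sigma(t)\psi)=\Omega_\sigma S_\sigma(t)\psi$ and the commutativity of $S_\sigma$ with $\Omega_\sigma$ on the domain yields $g'(t)=\int_X[\varphi_t+\Omega_\sigma\varphi](\cdot,t)\,dP_\sigma(t)$, and integrating from $0$ to $T$ produces~\eqref{S7E5} after identifying $\Omega_\sigma\varphi$ with $\mathcal{L}\varphi$. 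This is the paper's computation; it is important that it works for arbitrary admissible $\varphi$, since that is what makes~\eqref{S7E5} a nontrivial statement.
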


\begin{proof}
We take a smooth test function $\varphi $ depending on the variables $\left(
x,v,t\right) $. We will assume that the function $\varphi \left( \cdot
,t\right) \in \mathcal{D}\left( \Omega _{\sigma }\right) $ for any $t\in %
\left[ 0,T\right] .$ The definition of the Probability measure $P$ yields:%
\begin{equation*}
\int \varphi \left( dP_{\sigma }\left( \cdot ,t\right) ,\bar{t}\right) =\int
\left( S_{\sigma }\left( t\right) \varphi \right) \left( dP_{0},\bar{t}%
\right)
\end{equation*}%
for any fixed $\bar{t}>0,$ $t>0.$ In particular, taking for $\bar{t}$ the
values $\left( t+h\right) $ with $h$ small, we obtain:%
\begin{equation*}
\int \varphi \left( dP_{\sigma }\left( \cdot ,t\right) ,t+h\right) =\int
\left( S_{\sigma }\left( t\right) \varphi \right) \left( dP_{0},t+h\right)
\end{equation*}%
Differentiating this equation with respect to $h$ and taking $h=0,$ it then
follows that:%
\begin{equation}
\int \varphi _{t}\left( dP_{\sigma }\left( \cdot ,t\right) ,t\right) =\int
\left( S_{\sigma }\left( t\right) \varphi _{t}\right) \left( dP_{0},t\right)
\label{A1}
\end{equation}%
For a regularity of $S_{\sigma }$ and a general semigroup theory, see \cite{P}.
We now write $S_{\sigma }\left( t\right) \varphi _{t}\left( \cdot ,t\right) =%
\frac{d}{dt}\left( S_{\sigma }\left( t\right) \varphi \left( \cdot ,t\right)
\right) -\partial _{t}\left( S_{\sigma }\left( t\right) \right) \varphi
\left( \cdot ,t\right) .$ Integrating then (\ref{A1}) in the time interval $%
\left( 0,T\right) $ it then follows that:%
\begin{eqnarray*}
\int_{0}^{T}\int \varphi _{t}\left( dP_{\sigma }\left( \cdot ,t\right)
,t\right) dt &=&\int_{0}^{T}\int \left[ \frac{d}{dt}\left( S_{\sigma }\left(
t\right) \varphi \left( dP_{0},t\right) \right) -\partial _{t}\left(
S_{\sigma }\left( t\right) \right) \varphi \left( dP_{0},t\right) \right] dt
\\
&=&\int_{0}^{T}\left[ \frac{d}{dt}\left( \int \left( S_{\sigma }\left(
t\right) \varphi \left( dP_{0},t\right) \right) \right) -\int \partial
_{t}\left( S_{\sigma }\left( t\right) \right) \varphi \left( dP_{0},t\right) %
\right] dt \\
&=&\int S_{\sigma }\left( T\right) \varphi \left( dP_{0},T\right) -\int
\left( S_{\sigma }\left( 0\right) \varphi \left( dP_{0},0\right) \right)  \\
&&-\int_{0}^{T}\int \partial _{t}\left( S_{\sigma }\left( t\right) \right)
\varphi \left( dP_{0},t\right) dt \\
&=&\int \varphi \left( dP_{\sigma }\left( \cdot ,T\right) ,T\right) -\int
\varphi \left( dP_{0},0\right) -\int_{0}^{T}\int \Omega _{\sigma }S_{\sigma
}\left( t\right) \varphi \left( dP_{0},t\right) dt \\
&=&\int \varphi \left( dP_{\sigma }\left( \cdot ,T\right) ,T\right) -\int
\varphi \left( dP_{0},0\right) -\int_{0}^{T}\int \Omega _{\sigma }\varphi
\left( dP_{\sigma }\left( \cdot ,t\right) ,t\right) dt
\end{eqnarray*}%
where we have used that for any function $\psi \in \mathcal{D}\left( \Omega
_{\sigma }\right) $ we have $\partial _{t}\left( S_{\sigma }\left( t\right)
\right) \psi =\Omega _{\sigma }S_{\sigma }\left( t\right) \psi ,$ whence:%
\begin{eqnarray*}
&&\int \varphi \left( dP_{\sigma }\left( \cdot ,T\right) ,T\right) -\int
\varphi \left( dP_{0},0\right)  \\
&=&\int_{0}^{T}\int \left[ \varphi _{t}\left( dP_{\sigma }\left( \cdot
,t\right) ,t\right) +\Omega _{\sigma }\varphi \left( dP_{\sigma }\left(
\cdot ,t\right) ,t\right) \right] dt
\end{eqnarray*}

Using that the action of the operators $\Omega _{\sigma }$ in their
respective domains is given by $\mathcal{L}$ we obtain the existence
result.\

Uniqueness can be proved using the solvability of the adjoint problems
obtained in \cite{HVJ2}. Suppose that $P_{1},P_{2}$ are two solutions
of (\ref{S0E1})-(\ref{S0E3})\ with trapping boundary conditions in the sense
of Definition \ref{WAC}. Then $P=P_{1}-P_{2}$ satisfies (\ref{S7E5}) with $%
P_{0}=0$ for any function $\varphi $ with the regularity requested in the
Definition \ref{WAC}. Suppose that $P$ does not vanish identically. Then,
for some $T>0$ there exists a function $\tilde{\psi}\in C\left( X\right) $
such that $\int_{X}\tilde{\psi}\left( dP\left( \cdot ,T\right) \right) \neq
0.$ Since $\mathcal{D}\left( \Omega _{t}\right) $ is dense in $C\left(
X\right) $ there exists $\psi \in \mathcal{D}\left( \Omega _{t}\right) $
such that $\int_{X}\psi \left( dP\left( \cdot ,T\right) \right) \neq 0.$
The main result in \cite{HVJ2} implies the existence of a function $\varphi $ $%
C^{1}\left( \left[ 0,T\right] ;C\left( X\right) \right) $ satisfying $%
\varphi _{t}\left( \cdot ,t\right) +\mathcal{L}\varphi \left( \cdot
,t\right) =0$ in $X,$ for $t\in \left[ 0,T\right] $ and $\mathcal{L}\varphi
\left( 0,0,t\right) =0$ for any $t\in \left[ 0,T\right] ,$ with the initial
condition $\varphi \left( \cdot ,T\right) =\psi \left( \cdot \right) .$ This
function satisfies the regularity conditions required in Definition \ref{WAC}%
. It then follows from (\ref{S7E5}) that $\int_{X}\psi \left( dP\left( \cdot
,T\right) \right) =0,$ but this gives a contradiction, whence the uniqueness
result follows for trapping boundary conditions. The proof in the case of
non-trapping, partially trapping and supercritical boundary conditions is
similar and the details will be omitted.
\end{proof}

We can also prove the following result.

\begin{theorem}
Suppose that we define measures $P_{t,sub},\ P_{nt,sub},\ P_{pt,sub},\
P_{sup}\in C\left( \left[ 0,\infty \right) :\mathcal{M}_{+}\left( X\right)
\right) $ as in Definition \ref{weakSol} and initial datum $P_{0}$. Then,
these measures can be decomposed as:%
\begin{equation}
dP_{\sigma }\left( \cdot ,t\right) =m_{\sigma }\left( t\right) \delta
_{\left( 0,0\right) }+p_{\sigma }\left( x,v,t\right) dxdv\ \ ,\ \ \ t\geq 0\
\label{A4}
\end{equation}%
where for each $\sigma $ we have $p_{\sigma }\left( \cdot ,t\right) \in
L^{1}\left( X\right) $ and $m_{\sigma }\left( t\right) \geq 0.$ If $%
m_{\sigma }\left( 0\right) =0$ we have also $m_{nt,sub}\left( t\right)
=m_{sup}\left( t\right) =0$ for any $t\geq 0.$ Moreover, if $P_{0}$ is not
identically zero, we have also $m_{t,sub}\left( t\right) >0,\
m_{pt,sub}\left( t\right) >0$ for any $t>0.$

The functions $p_{\sigma }\left( \cdot ,t\right) $ are infinitely
differentiable for $\left( x,v\right) \neq \left( 0,0\right) $ and they
satisfy (\ref{S0E2}).
\end{theorem}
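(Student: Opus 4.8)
The plan is to derive everything from the duality definition \eqref{A3} together with the structural results already established for the semigroups $S_{\sigma}(t)$ and their generators $\Omega_{\sigma}$. First I would fix $t>0$ and decompose the measure $dP_{\sigma}(\cdot,t)\in\mathcal{M}_{+}(X)$ into its atomic part at the singular point and the rest: write $dP_{\sigma}(\cdot,t)=m_{\sigma}(t)\delta_{(0,0)}+\tilde{P}_{\sigma}(\cdot,t)$ with $m_{\sigma}(t)=P_{\sigma}(\{(0,0)\},t)\geq 0$ and $\tilde{P}_{\sigma}$ the restriction to $X\setminus\{(0,0)\}$ (we ignore any mass at $\infty$, or argue separately that it is zero using that $S_\sigma(t)1=1$ and the decay of $S_\sigma(t)\varphi$ at $\infty$). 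The content of \eqref{A4} is then that $\tilde{P}_{\sigma}(\cdot,t)$ is absolutely continuous. For this I would test \eqref{A3} against functions $\varphi\in\mathcal{D}(\Omega_{\sigma})$ supported away from $(0,0)$, use that on such functions $\Omega_{\sigma}\varphi=\mathcal{L}\varphi$ in the sense of Definition~\ref{LadjDef}, and invoke the hypoellipticity estimates of Theorem~\ref{Hypoell}(ii): $P_{\sigma}(\cdot,t)$ restricted to an admissible domain $\Xi$ satisfies $\mathcal{L}P_{\sigma}=\nu$ for a Radon measure $\nu$ (coming from the time derivative), so by Theorem~\ref{Hypoell}(ii) the $L^{p}$ norm of $P_{\sigma}$ on $\Xi$ is controlled, which forces $\tilde{P}_{\sigma}(\cdot,t)$ to be a function $p_{\sigma}(\cdot,t)\in L^{1}_{loc}(X\setminus\{(0,0)\})$; global $L^{1}$ membership follows since $P_\sigma$ is a finite measure.

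Next I would prove the smoothness and the boundary condition. Away from $(0,0)$ and $\infty$, the function $p_{\sigma}(x,v,t)$ is a distributional solution of $\partial_{t}p=\mathcal{L}^{*}$-adjoint equation, i.e.\ $\partial_t p + v\partial_x p - \partial_{vv}p=0$, because by Proposition~\ref{CharMeas} the measures $P_{\sigma}$ satisfy the weak identity \eqref{S7E5} for a large class of test functions, and for test functions compactly supported away from the singular point this weak identity is exactly the distributional form of the PDE. The operator $D_{v}^{2}+vD_{x}-\partial_t$ is hypoelliptic (Kolmogorov; cf.\ \cite{HBook}), so $p_{\sigma}\in C^{\infty}$ in $\{(x,v,t):(x,v)\neq(0,0),\,t>0\}$, including up to the boundary $\{x=0,v\neq 0\}$ by classical parabolic regularity treating $x$ as time, exactly as in the proof of Theorem~\ref{Regul}. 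The boundary condition \eqref{S0E2}, namely $P(0,-v,t)=r^{2}P(0,rv,t)$ for $v>0$, comes out of the weak formulation \eqref{S7E5} once one integrates by parts in an admissible domain of type (b) abutting $\{x=0\}$: the boundary term on $L^{*}$ is $\int v\,\varphi\,P\,dvdt$, and since the admissible test functions $\varphi$ satisfy the adjoint compatibility $\varphi(0,-v,t)=\varphi(0,rv,t)$ (cf.\ \eqref{S6E3}/\eqref{comp1}), arbitrariness of $\varphi$ forces the stated jump relation on $P$; this is the same computation as in the derivation of \eqref{S6E3}, run in reverse.

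Then I would pin down the behaviour of the atomic mass $m_{\sigma}(t)$. Testing \eqref{A3} or \eqref{S7E5} with the constant function $\varphi\equiv 1$ (which lies in every $\mathcal{D}(\Omega_{\sigma})$ with $\Omega_{\sigma}1=0$) gives total-mass conservation: $m_{\sigma}(t)+\int_{X}p_{\sigma}(\cdot,t)\,dxdv=\int dP_{0}$, so $m_{\sigma}$ is bounded and $m_{\sigma}(t)\ge0$ automatically. For the nontrapping and supercritical cases, the domains $\mathcal{D}(\Omega_{nt,sub})$ and $\mathcal{D}(\Omega_{sup})$ contain test functions $\varphi$ that are genuinely supported near $(0,0)$ with $\varphi(0,0)=0$ but $\mathcal{L}\varphi(0,0)$ prescribed; feeding such $\varphi$ into the duality identity and using that $S_\sigma(t)\varphi$ stays in the domain shows that no mass can accumulate at $(0,0)$ if $m_{\sigma}(0)=0$, because the flux term $\kappa a_{-2/3}(t)$ identified in \eqref{S8E1} has the wrong sign to feed the atom — more precisely, the weak formulation forbids $\partial_t m_\sigma>0$ unless $m_\sigma(0)>0$, hence $m_{nt,sub}(t)=m_{sup}(t)=0$. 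For trapping and partially trapping, the positivity $m_{t,sub}(t)>0$, $m_{pt,sub}(t)>0$ for $t>0$ when $P_{0}\not\equiv 0$ is a strict-positivity statement: I would obtain it by a barrier/Harnack argument in the spirit of Lemma~\ref{comp_nu} and Proposition~\ref{strong_max} applied to $S_\sigma(t)\varphi$ for suitable nonnegative $\varphi$ peaked at $(0,0)$, showing $\int\varphi\,dP_\sigma(t)>0$, i.e.\ the semigroup moves a strictly positive amount of mass into the trapping boundary condition in finite time. I expect the main obstacle to be precisely this last point — quantifying the strict inflow of mass into the atom at $(0,0)$ for the trapping and partially trapping semigroups — since it requires combining the asymptotic description \eqref{phi_decom} of $\mathcal{D}(\Omega_\sigma)$ near the singular point with the positivity-preservation of the Markov semigroup and a lower bound of Harnack type, rather than any single off-the-shelf estimate.
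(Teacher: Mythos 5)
Your proposal is structurally aligned with the paper's proof on the decomposition, smoothness and the boundary condition: test against $\varphi\equiv 1$ for mass conservation, use interior hypoellipticity (Kolmogorov operator) for $p_\sigma\in C^\infty$ away from the singular point, and carry the regularity up to $\{x=0,v<0\}$ as in \cite{HJV} and then to $\{x=0,v>0\}$ via (\ref{S0E2}). This part is essentially what the paper does, and your invocation of Theorem~\ref{Hypoell}(ii) to rule out singular parts of $\tilde P_\sigma$ away from $(0,0)$ is the intended mechanism.

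The genuine gap is in the control of $m_\sigma(t)$, i.e.\ precisely the part you flagged as the main obstacle, and it is larger than your proposal acknowledges. Your argument that $m_{nt,sub}(t)=m_{sup}(t)=0$ rests on ``the flux term $\kappa a_{-2/3}(t)$ has the wrong sign to feed the atom,'' but the asymptotics (\ref{Forigin}) for $P$ were derived only formally in the paper and are never proved for the constructed solutions; the weak formulations of Definitions \ref{WRC} and \ref{WSuper} do not mention $a_{-2/3}$ at all, so nothing in the rigorous setup allows you to read off a sign from (\ref{S8E1}). The paper instead constructs, for each $\bar t>0$ and each small $\delta>0$, explicit self-similar test functions of the form (\ref{A7a}), namely $\varphi(x,v,t)=2\delta^\gamma[(\bar t-t)+\delta]^{-\gamma}\Phi\bigl(x(\bar t-t+\delta)^{-3/2},\, v(\bar t-t+\delta)^{-1/2}\bigr)$, built from $F_\beta$ plus a corrector $W$, satisfying $\varphi_t+\mathcal L\varphi\leq 0$ in the sense of measures and lying in the relevant domain $\mathcal D(\Omega_\sigma)$; inserting these into the duality identity and sending $\delta\to 0$ forces the atom to vanish. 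Conversely, for $m_{t,sub},m_{pt,sub}>0$ the paper does not use an abstract Harnack/positivity argument on $S_\sigma(t)\varphi$, but builds explicit self-similar \emph{sub}solutions $\Phi(\xi,\eta)=a-F_\beta(\xi,\eta)-W(\xi,\eta)$ (trapping) and $\varphi(x,v,t)=a-F_\beta(\xi,\eta)-(\bar t-t)^{1-3\beta/2}W(\xi,\eta)$ (partially trapping), truncated at zero, lying in $\mathcal D(\Omega_{t,sub})$ or $\mathcal D(\Omega_{pt,sub})$ by virtue of $\mathcal L F_\beta=0$ and the normalization (\ref{A5}); duality then gives the inequality (\ref{A6}), whose right-hand side is made strictly positive by the strong maximum principle (Proposition~\ref{strong_max}) applied in the interior. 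Both halves of the $m_\sigma$ claim therefore hinge on these self-similar constructions, which are absent from your proposal; your heuristic about the sign of the particle flux and a generic Harnack inequality do not by themselves produce admissible test functions in $\mathcal D(\Omega_\sigma)$ or close the argument.
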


\begin{proof}
Using in (\ref{S7E5}) the test function $\varphi =1$ we obtain that, in the
case of the four considered operators:%
\begin{equation*}
\int_{X}dP_{\sigma }\left( \cdot ,t\right) =\int_{X}dP_{0}\ \ ,\ \ t\geq 0
\end{equation*}

We define $m_{\sigma }\left( t\right) =\int_{\left\{ \left( 0,0\right)
\right\} }dP_{\sigma }\left( \cdot ,t\right) .$ We define also the measures $%
dP_{\sigma }\left( \cdot ,t\right) -m_{\sigma }\left( t\right) \delta
_{\left( 0,0\right) }.$ Using Proposition \ref{CharMeas} it then follows
that this family of measures solves (\ref{S0E1}), (\ref{S0E2}) outside the
singular point $\left( x,v\right) =\left( 0,0\right) .$ Using classical
interior hypoellipticity results (\cite{HBook}, \cite{H}), we obtain the representation (\ref{A4})
where $p_{\sigma }\in C^{\infty }$ in the set $\left\{ x>0\right\} $. On the
other hand, we can prove that $p_{\sigma }\in C^{\infty }$ for $x=0,\ v<0$
using the fact that the characteristic curves move away from the domain $%
\left\{ x>0\right\} $ in that region. Actually, it is possible to argue
basically as in the proof of boundary regularity derived in \cite{HVJ}. More
precisely, introducing a cutoff function supported in the region $\left\{
v<0\right\} $ which takes the value $1$ in the region where the regularity
of $p$ can be obtained, we would obtain a new function $\tilde{p}$ solving
the same Fokker-Planck equation, but with some source terms containing the
derivative $p_{v}$ and the function $p.$ The arguments in \cite{HVJ} then
allow to prove that $p\in C^{\infty }\left\{ x\geq 0,\ v<0\right\} .$ Using
then (\ref{S0E2}) we would obtain also regularity in $\left\{ x=0,\
v>0\right\} $. Uniform regularity in compact sets of $\left\{ x=0,\
v>0\right\} $ would then follow as in \cite{HVJ}.

It then follows, since $dP_{\sigma }\left( \cdot ,t\right) $ is a Radon
measure, and therefore, outer regular that $p_{\sigma }\in L^{\infty }\left(
\left( 0,T\right) :L^{1}\left( X\right) \right) .$

In order to obtain the stated results for the masses $m_{\sigma }\left(
t\right) $ we need to study the asymptotic behaviour of some solutions of
the adjoint problems with initial data approaching the characteristic
function of the singular point. The specific form of the solutions under
consideration depends on the specific boundary conditions used.

In the cases of trapping and partially trapping boundary conditions we
construct some particular test functions with the form:%
\begin{equation*}
\varphi \left( x,v,t\right) =\Phi \left( \frac{x}{\left( \bar{t}-t\right) ^{%
\frac{3}{2}}},\frac{v}{\left( \bar{t}-t\right) ^{\frac{1}{2}}}\right) =\Phi
\left( \xi ,\eta \right)
\end{equation*}%
where $\Phi $ satisfies:%
\begin{equation*}
-\frac{3}{2}\xi \partial _{\xi }\Phi -\frac{1}{2}\eta \partial _{\eta }\Phi
\leq \partial _{\eta \eta }\Phi +\eta \partial _{\xi }\Phi
\end{equation*}%
in the sense of measures. Notice that this is equivalent to obtaining the
inequalities $\varphi _{t}\left( \cdot ,t\right) +\mathcal{L}\varphi \left(
\cdot ,t\right) \geq 0$. In the case of trapping boundary conditions we look
for functions with the form:%
\begin{equation*}
\Phi \left( \xi ,\eta \right) =a-F_{\beta }\left( \xi ,\eta \right) -W\left(
\xi ,\eta \right)
\end{equation*}%
where we choose $W$ satisfying:%
\begin{equation*}
\partial _{\eta \eta }W+\eta \partial _{\xi }W\leq C_{0}\left( \xi
\left\vert \partial _{\xi }F_{\beta }\right\vert +\left\vert \eta
\right\vert \left\vert \partial _{\eta }F_{\beta }\right\vert \right)
\end{equation*}%
and the function $W$ satisfies $\left\vert W\left( \xi ,\eta \right)
\right\vert \leq C\left( \left\vert \eta \right\vert ^{2}+\left\vert \xi
\right\vert ^{\frac{2}{3}}\right) F_{\beta }.$ The existence of the function
$W$ and the previous estimate can be proved as Lemma \ref{super_sol}.
Choosing $a>0$ sufficiently small we obtain that $\left\vert W\left( \xi
,\eta \right) \right\vert <F_{\beta }\left( \xi ,\eta \right) $ in the
neighbourhood of the singular point where $\Phi >0.$ Moreover, by
construction we obtain $\mathcal{L}\left( \Phi \right) \left( 0,0\right) =0$
due to the fact that $\mathcal{L}\left( F_{\beta }\right) =0.$ We then
define a test function $\tilde{\Phi}$ as:%
\begin{equation*}
\tilde{\Phi}=\frac{1}{a}\max \left\{ \Phi ,0\right\}
\end{equation*}

Then $\tilde{\Phi}\left( 0,0\right) =1$. Using the duality formula (\ref{A3}%
) we then obtain:%
\begin{equation}
\int_{\mathcal{R}_{\delta }}P_{\sigma }\left( dxdv,\bar{t}\right) \geq \int_{%
\mathcal{R}_{\rho }}P_{\sigma }\left( dxdv,\frac{\bar{t}}{2}\right) \ \ ,\ \
\bar{t}>0\   \label{A6}
\end{equation}%
where $\rho >0$ is small but it can be chosen independently of $\delta ,$
and $\delta >0$ can be taken arbitrarily small. The integral on the right
can be estimated uniformly from below, because $P$ is strictly positive in a
set with the form $\mathcal{R}_{\rho }\diagdown \mathcal{R}_{\frac{\rho }{2}%
}\cap \left\{ x>0\right\} $ for any fixed $\bar{t}>0$ due to the strong
maximum principle. This type of strong maximum principle arguments in
interior domains have been used also in \cite{HVJ}. Taking the limit $\delta
\rightarrow 0$ we obtain $\int_{\left\{ \left( 0,0\right) \right\}
}P_{\sigma }\left( dxdv,\bar{t}\right) >0$ for any $\bar{t}>0.$ This gives $%
m_{t,sub}\left( \bar{t}\right) >0.$

In the case of partially trapping boundary conditions we will look for test
functions satisfying $\varphi _{t}\left( \cdot ,t\right) +\mathcal{L}\varphi
\left( \cdot ,t\right) \geq 0$ in the sense of measures, constructed by
means of the auxiliary function:
\begin{equation*}
\varphi \left( x,v,t\right) =a-F_{\beta }\left( \xi ,\eta \right) -\left(
\bar{t}-t\right) ^{1-\frac{3\beta }{2}}W\left( \xi ,\eta \right)
\end{equation*}%
where $\left( \xi ,\eta \right) =\left( \frac{x}{\left( \bar{t}-t\right) ^{%
\frac{3}{2}}},\frac{v}{\left( \bar{t}-t\right) ^{\frac{1}{2}}}\right) .$
Notice that for this function we have $\mathcal{A}\left( \varphi \right)
=-\left( \bar{t}-t\right) ^{-\frac{3\beta }{2}}$ and $\mathcal{L}\left(
\varphi \right) \left( 0,0\right) =-$ $\left( \bar{t}-t\right) ^{-\frac{%
3\beta }{2}}\mathcal{L}\left( W\right) \left( 0,0\right) .$ Moreover, since $%
\beta <\frac{1}{6}$ we have $\left( 1-\frac{3\beta }{2}\right) >0.$ The
boundary condition for $\varphi $ then becomes:

\begin{equation}
\mathcal{L}\left( W\right) \left( 0,0\right) =\mu _{\ast }\left\vert C_{\ast
}\right\vert  \label{A5}
\end{equation}

Writing the equation we obtain that $\varphi $ satisfies $\varphi _{t}\left(
\cdot ,t\right) +\mathcal{L}\varphi \left( \cdot ,t\right) \geq 0$ if:\
\begin{equation*}
\mathcal{L}\left( W\right) \leq -C\left( \left\vert W\right\vert +F_{\beta
}\right)
\end{equation*}%
for a suitable constant $C>0$ and where we have to impose the boundary
condition (\ref{A5}). The resulting function $W$ is quadratic near the
singular point. Therefore, for small $\left\vert \left( \xi ,\eta \right)
\right\vert ,$ and given that $\left( 1-\frac{3\beta }{2}\right) >0$ it
follows that $F_{\beta }\left( \xi ,\eta \right) >>\left( \bar{t}-t\right)
^{1-\frac{3\beta }{2}}W\left( \xi ,\eta \right) .$ Then, if we choose $a>0$
small enough, \ and we define:%
\begin{equation*}
\tilde{\varphi}=\max \left( \varphi ,0\right)
\end{equation*}%
it follows that $\tilde{\varphi}$ is a test function, whose support expands
in a self-similar way and that it can be used as in the case of trapping
boundary conditions to prove that $m_{pt,sub}\left( t\right) >0$.

Notice that these test functions yield that $\int_{\left\{ 0\right\}
}dP_{\sigma }\left( \cdot ,\bar{t}\right) >0$ for any $\bar{t}>0,$ using (%
\ref{A6}), as well as the strong maximum principle which guarantees that $%
P\left( \cdot ,\frac{\bar{t}}{2}\right) >0$ in a neighbourhood of the origin
in the set $\left\{ x>0\right\} .$

In order to prove that $m_{nt,sub}\left( t\right) =m_{sup}\left( t\right) =0$
we need to obtain test functions satisfying $\varphi _{t}\left( \cdot
,t\right) +\mathcal{L}\varphi \left( \cdot ,t\right) \leq 0$ as well as $%
\mathcal{A}\left( \varphi \right) =0$ and taking initial values "close" to
the characteristic function of the singular point and approaching to zero
very fast.

It is natural to look for test functions which at least in some regions will
have the form:%
\begin{equation}
\varphi \left( x,v,t\right) =2\delta ^{\gamma }\left[ \left( \bar{t}%
-t\right) +\delta \right] ^{-\gamma }\Phi \left( \frac{x}{\left( \bar{t}%
-t+\delta \right) ^{\frac{3}{2}}},\frac{v}{\left( \bar{t}-t+\delta \right) ^{%
\frac{1}{2}}}\right) \   \label{A7a}
\end{equation}%
for $\delta >0$ fixed. Plugging this in the equation $\varphi _{t}+\mathcal{L%
}\left( \varphi \right) =0,$ and looking for functions satisfying $\varphi
_{t}\left( \cdot ,t\right) +\mathcal{L}\varphi \left( \cdot ,t\right) \leq 0$
we obtain the inequality:%
\begin{equation}
\gamma \Phi +\frac{3}{2}\xi \Phi _{\xi }+\frac{1}{2}\eta \Phi _{\eta }+%
\mathcal{L}\left( \Phi \right) \leq 0  \label{A7}
\end{equation}

We need to obtain a function $\Phi $ satisfying (\ref{A7}) in the sense of
measures. We consider first the case $r<r_{c}$ with non-trapping boundary
conditions. To this end we consider a function $\Phi $ smooth outside the
origin, and homogeneous function, say $\Phi _{1}$ satisfying $\frac{3}{2}\xi
\Phi _{\xi }+\frac{1}{2}\eta \Phi _{\eta }=-a\Phi $ with $a>0$ fixed,
independent of $\gamma .$ Notice that we can assume that $\Phi _{1}$ tends
to infinity as $\left( \xi ,\eta \right) $ approaches the singular point.
Then, the inequality (\ref{A7}) holds for $\left\vert \xi \right\vert
+\left\vert \eta \right\vert $ large, because the contribution of the term $%
\mathcal{L}\left( \Phi \right) $ becomes smaller than $-a\Phi $ for large $%
\left\vert \xi \right\vert +\left\vert \eta \right\vert $ and $\gamma \Phi $
gives a small contribution if $\gamma $ is small. On the other hand, in
order to obtain a bounded function for $\left\vert \xi \right\vert
+\left\vert \eta \right\vert $ of order one we construct a function $\Phi
_{2}$ which takes the value $1$ at the origin and is corrected by means of
one function $W$ satisfying $C\gamma +\mathcal{L}\left( W\right) \leq 0.$
This function can be constructed satisfying (\ref{A7}) in any bounded
region, as large as desired, if $\gamma $ is chosen small, because then the
contribution of the corrective term is small. Taking the minimum of $\Phi
_{1},\ \Phi _{2}$ we obtain the desired test function satisfying (\ref{A7})
in the sense of measures and the result follows.

In the supercritical case we can obtain a functions satisfying (\ref{A7}) as
follows. The function $F_{\beta }$ satisfies $\mathcal{L}\left( F_{\beta
}\right) =0$ and in the supercritical case $\beta <0.$ Then the contribution
of the terms $\frac{3}{2}\xi \Phi _{\xi }+\frac{1}{2}\eta \Phi _{\eta }$ is
negative close to the origin. Moreover, if we choose $\gamma $ sufficiently
small we obtain that $\gamma \Phi +\frac{3}{2}\xi \Phi _{\xi }+\frac{1}{2}%
\eta \Phi _{\eta }$ is negative. In order to obtain a bounded test function
we construct an auxiliary function, taking the value $\Phi \left( 0,0\right)
=1$ and corrected by a term $W$ satisfying $C\gamma +\mathcal{L}\left(
W\right) \leq 0$ in the usual manner. Taking then the minimum between both
functions we obtain a new bounded test function satisfying (\ref{A7}) in the
sense of measures .

Notice that in both cases (supercritical case and subcritical with
nontrapping boundary conditions), the function (\ref{A7a}) is larger than
one in a neighbourhood of the singular point for short times, but it
decreases to values of order $\delta $ in times of order, say $\sqrt{\delta }
$ if $\delta $ is small. This implies, taking the limit $\delta \rightarrow 0
$ that the mass at the singular point at any time $\bar{t}>0$ is zero, using
the duality formula $\int_{\left\{ \left( 0,0\right) \right\} }dP_{\sigma
}\left( \cdot ,\bar{t}\right) \varphi \left( \bar{t},\cdot \right) \leq
\int_{X}dP_{\sigma }\left( \cdot ,\frac{\bar{t}}{2}\right) \varphi \left(
\cdot ,\frac{\bar{t}}{2}\right) \leq \varepsilon _{0},$ where $\varepsilon
_{0}$ can be made arbitrarily small if $\delta $ is small.
\end{proof}


\noindent{\bf Acknowledgements.} 
The authors would like to thank the Hausdorff Center for Mathematical
Sciences of the University of Bonn and Pohang Mathematics Institute where
part of this work was done. HJH is partly supported by the Basic Science
Research Program (2015R1A2A2A0100251) through the National
Research Foundation of Korea (NRF). JJ is supported in part by NSF grants
DMS-1608492 and DMS-1608494. The authors acknowledge support through the CRC
1060 The mathematics of emergent effects at the University of Bonn, that is
funded through the German Science Foundation (DFG). We thank Seongwon Lee
for helping with figures in the paper.






\end{document}